\documentclass{amsart}
\usepackage{graphicx}
\usepackage[margin=1.3in]{geometry}
\usepackage{amssymb}
\usepackage{amsthm}
\usepackage{tikz-cd}
\usepackage{amsmath}
\usepackage{quiver, multicol}
\usepackage{mathtools}
\usepackage{adjustbox}
\usepackage[dvipsnames]{xcolor}
\usepackage{todonotes}
\usepackage{bbold}
\usepackage{mathrsfs}
\usepackage{eucal}
\usepackage[%
    colorlinks,%
    bookmarksdepth=3,%
    citecolor=blue%
]{hyperref}

\usepackage[
backend=biber,
style=alphabetic,
sorting=nyt,
maxbibnames=99,
]{biblatex}
\usepackage{thmtools}
\usepackage{cleveref}
\usepackage{float}

\newtheorem{theorem}{Theorem}[section]
\newtheorem{prop}[theorem]{Proposition}
\newtheorem{lemma}[theorem]{Lemma}
\newtheorem{cor}[theorem]{Corollary}

\newtheorem*{theoremA}{Theorem A}
\newtheorem*{corollaryB}{Corollary B}
\newtheorem*{corollaryC}{Corollary C}
\newtheorem*{theoremD}{Theorem D}
\newtheorem*{corollaryE}{Corollary E}
\theoremstyle{definition}
\newtheorem{definition}[theorem]{Definition}
\theoremstyle{remark}
\newtheorem{remark}[theorem]{Remark}
\newtheorem{example}[theorem]{Example}
\newtheorem{hypothesis}[theorem]{Hypothesis}

\newcommand{\Z}{
\mathbb{Z}
}
\newcommand{\Q}{
\mathbb{Q}
}
\newcommand{\R}{
\mathbb{R}
}

\newcommand{\F}{
\mathbb{F}
}
\newcommand{\K}{
\mathsf{K}
}
\newcommand{\D}{
\mathsf{D}
}
\newcommand{\unit}{
\mathbb{1}
}
\newcommand{\cone}{
\operatorname{cone}
}
\newcommand{\fib}{
\operatorname{fib}
}

\newcommand{\id}{
\operatorname{id}
}
\newcommand{\Hom}{
\operatorname{Hom}
}
\newcommand{\Map}{
\operatorname{Map}
}
\newcommand{\End}{
\operatorname{End}
}

\newcommand{\hocolim}{
\operatorname{hocolim}
}

\newcommand{\Spec}{
\operatorname{Spec}
}
\newcommand{\Spc}{
\operatorname{Spc}
}
\newcommand{\Pic}{
\operatorname{Pic}
}

\newcommand{\supp}{
\operatorname{supp}
}

\newcommand{\ideal}[1]{
\langle #1 \rangle
}
\newcommand{\Loc}{
\operatorname{Loc}
}
\newcommand{\add}{
\mathsf{add}
}
\newcommand{\thick}{
\mathsf{thick}
}

\newcommand{\Top}{
\mathsf{Top}
}
\newcommand{\Set}{
\mathsf{Set}
}

\newcommand{\DM}{
\mathsf{DM}
}
\newcommand{\DTM}{
\mathsf{DTM}
}
\newcommand{\DAM}{
\mathsf{DAM}
}
\newcommand{\DATM}{
\mathsf{DATM}
}
\newcommand{\DQM}{
\mathsf{DQM}
}
\newcommand{\AM}{
\mathsf{AM}
}
\newcommand{\gm}{
\mathsf{gm}
}

\newcommand{\SH}{
\mathsf{SH}
}
\newcommand{\Sp}{
\mathsf{Sp}
}

\newcommand{\Shv}{
\mathsf{Shv}
}
\newcommand{\CAlg}{
\mathsf{CAlg}
}
\newcommand{\p}{
\mathfrak{p}
}
\newcommand{\et}{
\mathsf{\acute{e}t}
}
\newcommand{\per}{
\operatorname{per}
}
\newcommand{\im}{
\operatorname{im}
}

\newcommand{\Proj}{
\operatorname{Proj}
}
\newcommand{\Sper}{
\operatorname{Sper}
}
\DeclareMathOperator{\DMet}{
\DM_{\acute{e}t}
}

\title{Tate and Artin--Tate Motivic Tensor-Triangular Geometry}
\author{Jean Paul Schemeil and Fraser Sparks}
\address{School of Mathematical Sciences, University of Nottingham, Nottingham, NG7 2RD, UK}
\email{jeanpaul.schemeil@nottingham.ac.uk}
\email{fraser.sparks@nottingham.ac.uk}
\date{\today}

\begin{document}

\begin{abstract}
We compute the Balmer spectra of geometric Tate and Artin--Tate motives with $\F_p$-coefficients over any field whose characteristic differs from $p$. For Tate motives, we find the bigraded comparison map to be a homeomorphism to the homogeneous spectrum of the motivic cohomology of the base field. For Artin--Tate motives, we describe the spectrum as a gluing of the spectrum of Artin motives computed by Balmer--Gallauer with the punctured Tate spectrum. We determine all specialisations between these pieces and the resulting topology. Under the hypothesis that the spectrum of rational Tate motives is a singleton, we also compute the integral Tate spectrum after inverting the exponential characteristic of the base field. As a consequence of our results, we identify the punctured Tate spectrum as an open subspace of the spectrum of the category of geometric Voevodsky motives, and draw some conclusions concerning the latter. Finally, we include an appendix where we define generalised periodicity in the context of tensor-triangular geometry, and explore its applications in motivic settings.
\end{abstract}

\maketitle

\tableofcontents

\section{Introduction}
Motives are notoriously enigmatic objects, and motivic tensor-triangular geometry aims to classify them up to the best reasonable notion of equivalence. It does so via geometric means: one associates to $\DM_\gm(k;R)$ a spectral space
\[\DM_\gm(k;R) \longmapsto \Spc(\DM_\gm(k;R))\]
called its \emph{Balmer spectrum}---or simply \emph{spectrum}---whose topology governs the classification of well-behaved subcategories of Voevodsky's category of geometric motives $\DM_\gm(k;R)$ \cite{Voevodsky:TriangulatedMotives} (namely, the thick tensor-triangulated ideals \cite{Balmer:tt-geometry}). Its points are the prime thick tensor ideals of $\DM_\gm(k;R)$.

As such, the classification problem for motives can be recast into the following question:

\begin{center}
    \emph{What is $\Spc(\DM_\gm(k;R))$?}
\end{center}

We should emphasise that this question is not an easy one. For instance, Vishik's isotropic realisations \cite{Vishik:IsotropicMotives} produce a non-exhaustive family of $2^{\mathfrak c}$ points in the spectrum of $\DM_\gm(\R;\Z)$, where $\mathfrak c=2^{\aleph_0}$ is the cardinality of the continuum \cite[Example~5.14]{Vishik:MoravaKTheory}. A complete description of this space must therefore account for these points, identify the remaining ones, and determine both the specialisation relations among them and the topology.

A complementary approach is to study full tt-subcategories generated by appropriately chosen collections of motives. By \cite[Corollary~1.8]{Balmer:Conservative}, the inclusion of a full rigid tt-subcategory
\[\begin{tikzcd}
    \mathcal{K} \arrow[r, hookrightarrow] & \DM_{\gm}(k;R)
\end{tikzcd}\]
induces a continuous surjective restriction map
\[\begin{tikzcd}
    \Spc(\DM_{\gm}(k;R)) \arrow[r, twoheadrightarrow] & \Spc(\mathcal{K}).
\end{tikzcd}\]
Thus, computing $\Spc(\mathcal{K})$ not only classifies the thick tensor ideals of $\mathcal{K}$, but it also provides information about the spectrum of $\DM_\gm(k;R)$.

Tate, Artin and Artin--Tate motives were among the first examples studied from this perspective. Under suitable vanishing assumptions on motivic cohomology, Peter \cite{Peter:ArtinTate} computed the rational Tate and Artin--Tate spectra, covering in particular algebraic extensions both of $\Q$ and of finite fields. Later, Gallauer \cite{Gallauer:TateSpectrum} computed the spectrum of Tate motives with finite coefficients over algebraically closed fields, and with integral coefficients when these fields also satisfy the vanishing assumptions considered by Peter. Balmer--Gallauer extended these computations to Tate and Artin--Tate motives over real closed fields \cite{Balmer:ThreeRealArtinTates}, and later determined the spectrum of Artin motives over arbitrary fields \cite{balmer2025geometry, Balmer:Artin}.

Most of these computations relied on representation-theoretic models for the motivic categories in question. For Tate motives over algebraically closed fields, Gallauer's finite-coefficient computation uses a description of the category in terms of filtered vector spaces. Similarly, Balmer--Gallauer's computation of the spectrum of Artin--Tate motives with $\F_2$-coefficients over real closed fields is obtained by passing to the derived category of filtered $C_2$-representations with a suitable exact structure. Artin motives, in turn, admit a description in terms of permutation modules over the absolute Galois group of the base field. These descriptions facilitate the computation of the motivic tt-spectra in question via passage to the corresponding representation-theoretic category, where one can utilise algebraic methods.

Over more general fields, however, analogous descriptions of Tate and Artin--Tate motives with finite coefficients in terms of filtered Galois representations depend on additional conjectural hypotheses (whose validity is intimately related to the \emph{Koszulity conjecture} on the Milnor $K$-theory ring of the field $k$) \cite{Positselski:MixedArtinTate}. This motivates the development of methods that do not require such algebraic models. Recent work by the authors provided complete computations using tools internal to the motivic category. In \cite{Sparks:IsotropicDATM}, the second named author computed the spectra of isotropic Tate, Artin and Artin--Tate motives over flexible fields with $\F_2$-coefficients, using weight structures to establish tensor-nilpotence properties of morphisms. The first named author in \cite{schemeil2026motivic} computed the spectrum of motives of quadrics over real closed fields with $\F_2$-coefficients, using a jointly conservative family of functors obtained by base change and smashing localisations to reduce the problem to the spectra of cellular motivic categories, which are then studied using weight structure techniques inspired by the work of the second named author.

In this paper, we develop these motivic methods further to extend the computations of the Balmer spectra of Tate and Artin--Tate motives with $\F_p$-coefficients from algebraically closed and real closed fields to arbitrary fields of characteristic different from $p$. The success of our approach suggests that studying motivic tt-categories directly, without passing to a representation-theoretic presentation of them, can yield many advantages: indeed, when tackling these beasts head on, one is offered up a platter of deep geometric and arithmetic results pertinent to the field $k$, which do not necessarily admit analogues in the representation-theoretic world.

We now describe the main results, the principal ingredients of their proofs, and some consequences for the spectrum of all geometric motives.

\subsection{Tate motives}
One of the principal tools for relating tt-geometry to commutative algebra is the comparison map, initially defined by Balmer \cite{Balmer:Spectra}. Let $\mathcal{K}$ be an essentially small tt-category, and $u_1, \dots, u_r \in \Pic(\mathcal{K})$ a collection of $\otimes$-invertible objects. In nice enough cases, we can associate to this data the $\Z^r$-graded ring
\[R^*_{u_1, \dots, u_r}:=\bigoplus_{(n_1, \dots, n_r) \in \Z^r} \Hom_{\mathcal{K}}(\unit, u_1^{\otimes n_1} \otimes \dots \otimes u_r^{\otimes n_r}).\]
There is a continuous comparison map \cite[Appendix A by Ivo Dell'Ambrogio]{Gallauer:Periods}
\[\begin{tikzcd}
    \varrho^h:\Spc(\mathcal{K}) \arrow[r] & \Spec^h(R^*_{u_1, \dots, u_r})
\end{tikzcd}\]
defined by
\[\varrho^h(\mathcal{P})=\langle f \in R^*_{u_1, \dots , u_r} \mid f \text{ homogeneous, } \cone(f) \notin \mathcal{P} \rangle.\]
In general, this map need not be injective nor surjective. However, when it is a homeomorphism, $\Spc(\mathcal{K})$ is entirely described by the homogeneous spectrum of a multigraded ring.

We apply this construction to the category of geometric Tate motives
\[\DTM_\gm(k;\F_p)=\thick(\unit(n) \mid n \in \Z) \subseteq \DM_\gm(k;\F_p).\]
There are two distinguished $\otimes$-invertible objects in this category, namely the Tate twist $\unit(1)$ and the suspension of the unit $\unit[1]$. The corresponding bigraded ring is the motivic cohomology of the base field modulo $p$: 
\[H^{*,*}_{\mathcal{M}}(k;\F_p)=\bigoplus_{a,b \in \Z}\Hom(\unit, \unit(a)[b]).\]
Our first main result of this paper is that this invariant completely determines the spectrum of $\DTM_\gm(k;\F_p)$.

\begin{theoremA}
    Let $k$ be a field and $p \neq \operatorname{char}(k)$ a prime. The comparison map
    \[\begin{tikzcd}
        \varrho^h: \Spc(\DTM_\gm(k;\F_p)) \arrow[r] & \Spec^h(H^{*,*}_{\mathcal{M}}(k;\F_p))
    \end{tikzcd}\]
    is a homeomorphism.
\end{theoremA}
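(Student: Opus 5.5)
We plan to prove that $\varrho^h$ is surjective, injective, and a homeomorphism onto its image. We use Balmer's general criteria for the comparison map: for a rigid tt-category, $\varrho^h$ is always surjective when the graded ring is coherent/noetherian-like enough. Surjectivity in fact holds in complete generality for the multigraded map by Dell'Ambrogio's appendix to \cite{Gallauer:Periods}, extending \cite{Balmer:Spectra}. The real content is injectivity, together with identifying the topology. For both we would use the criterion that $\varrho^h$ is a homeomorphism provided every object of $\DTM_\gm(k;\F_p)$ has support cut out by homogeneous elements. Concretely, the criterion is that every morphism $f$ between generators which is ``invisible'' to the ring is $\otimes$-nilpotent, and that the supports of generators $\cone(f)$ for homogeneous $f$ classify thick ideals.

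The key structural input is the Beilinson--Lichtenbaum/Bloch--Kato (norm residue) theorem. It gives $H^{*,*}_{\mathcal M}(k;\F_p)$ explicitly: in weights $b\le a$ it is Galois cohomology, with Bott-type periodicity when $\mu_p\subset k$. Possibly after first adjoining $\mu_p$ and descending along the finite étale extension of degree prime to $p$ (which is faithful on spectra, by a Balmer-type descent argument), we would reduce to the case $\mu_p\subset k$. In that case there is a Bott element $\tau\in H^{0,1}$. Inverting $\tau$ (a smashing localisation) gives the étale category, whose Tate part is controlled by $H^*_{\et}(k;\F_p)$. Killing $\tau$ gives a category whose Tate part is governed by the weight structure, with heart the graded vector spaces. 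The spectrum should then decompose as a gluing of $\Spc$ of the $\tau$-inverted part (the open piece $\{\tau\neq0\}$) and $\Spc$ of the $\cone(\tau)$-supported part (the closed piece $V(\tau)$). Since $\Spec^h$ decomposes compatibly, it suffices to prove that $\varrho^h$ is bijective on each piece.

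For the closed piece we would use weight-structure arguments in the spirit of \cite{Sparks:IsotropicDATM} and \cite{schemeil2026motivic}. The goal is to show that the points supported on $\cone(\tau)$ are detected by weight-graded homology functors, and that these match the homogeneous primes of $H^{*,*}/\tau$ containing $\tau$.

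For the open piece, $\tau$-inverted Tate motives are modules over étale cohomology. Here we must show that a morphism $f$ killed by all residue-type invariants is $\otimes$-nilpotent, using nilpotence of the augmentation ideal of $H^*_{\et}(k;\F_p)$ in the relevant sense. Elements of positive degree in Milnor $K$-theory mod $p$ are not nilpotent in general, however, so instead we would prove directly a graded nilpotence theorem. The statement is: a map $f:\unit(a)[b]\to \unit$ in $\DTM$ with zero image in every residue field of $\Spec^h$ is $\otimes$-nilpotent. The approach is to reduce, via a filtered colimit over finitely generated subfields and a dévissage on Galois cohomology (Milnor $K$-theory being generated in degree one by symbols), to symbols $\{a_1,\dots,a_n\}$. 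For these, $\cone$ is a Rost/norm-variety-type Tate motive, whose tensor behaviour is known.

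This nilpotence step, which is needed to get injectivity on the open piece, is the main obstacle. The topology then follows because $\varrho^h$ is a continuous bijection sending the supports $\supp(\cone(f))$ to the basic closed sets $V(f)$, so it is closed on a basis.
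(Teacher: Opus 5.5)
There are genuine gaps, and the step you single out as the main obstacle is not where the difficulty lies. Your graded nilpotence theorem, as formulated, concerns a single map $f:\unit(a)[b]\to\unit$, i.e.\ a homogeneous element of $H^{*,*}_{\mathcal{M}}(k;\F_p)$. But $f^{\otimes n}$ is just its $n$-th power, and an element vanishing in every graded residue field lies in the homogeneous nilradical. So the statement is automatic, carries no information beyond the ring, and cannot yield injectivity. The proposed reduction is also off: the cone of a symbol map $\unit\to\unit(n)[n]$ is a two-cell Tate motive, not a Rost or norm-variety motive, and Rost motives of nonzero symbols are not Tate over $k$.

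Injectivity needs control of \emph{all} morphisms, for instance tt-functors detecting $\otimes$-nilpotence on each piece whose targets have known spectra.
\begin{itemize}
\item On the open piece, the paper proves $(\DM_\gm(k;\F_p)/\ideal{\cone(\tau)})^\natural\simeq\DM_\et^\vee(k;\F_p)$, using that by Beilinson--Lichtenbaum the colimit of motivic cohomology along $\tau$ is étale cohomology. It then invokes Gallauer's theorem that the comparison map of $\DM_\et^\vee(k;\F_p)$ with respect to $\unit(1)[1]$ is a homeomorphism onto $\Spec^h(K^M_*(k)/p)$, restricted to Tate motives.
\item On the closed piece, a heart of graded $\F_p$-vector spaces (the Chow-type heart $\add(\unit(n)[2n]\mid n\in\Z)$) is the wrong choice. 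Its bounded homotopy category has a one-point spectrum, so it cannot see $V(\tau)\cong\Spec^h(K^M_*(k)/p)$. For $p=2$ and $k$ formally real, $V(\tau)$ contains the primes $\mathfrak{p}_\alpha+(\tau)$ for all $\alpha\in\Sper(k)$.
\end{itemize}
The missing idea is the Milnor weight structure with heart $\mathcal{H}^{\mathsf{T}}=\add(\unit(n)[n]\mid n\in\Z)$. Orthogonality holds because motivic cohomology of a field vanishes above the diagonal, and the heart is equivalent to finitely generated graded free $K^M_*(k)/p$-modules. Its weight complex functor $t$ kills $\tau$ and detects $\otimes$-nilpotence on $\cone(\tau)$, because $\Hom(\unit,\cone(\tau)(i)[j])=0$ unless $j=i+1$. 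Hence $\im(\Spc(t))=\supp(\cone(\tau))$, and Dell'Ambrogio--Stevenson identifies $\Spc(\K^b(\mathcal{H}^{\mathsf{T}}))$ with $\Spec^h(K^M_*(k)/p)$ compatibly with $\varrho^h$.

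Your final step is circular. The equalities $(\varrho^h)^{-1}(V(f))=\supp(\cone(f))$ only express continuity. To conclude that $\varrho^h$ is closed, you would need the sets $\supp(\cone(f))$, $f$ homogeneous, to form a basis of closed subsets of $\Spc(\DTM_\gm(k;\F_p))$, which is essentially the theorem. A homeomorphism on $U(\cone(\tau))$ and on $\supp(\cone(\tau))$ separately does not suffice either: the identity map from a discrete two-point space onto the Sierpiński space restricts to homeomorphisms on the open point and on its closed complement. You must lift the specialisations $\mathfrak{m}\rightsquigarrow\mathfrak{m}+(\tau)$ and $\mathfrak{p}_\alpha\rightsquigarrow\mathfrak{p}_\alpha+(\tau)$ across the two pieces. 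The paper does this with Hochster's criterion (a bijective spectral map lifting specialisations is a homeomorphism), pulling back the known specialisations along base change to $\overline{k}$ and to the real closures $k_\alpha$.

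Two smaller points.
\begin{itemize}
\item Surjectivity of the multigraded comparison map is not a general fact you can quote. The available results assume coherence-type hypotheses on the graded ring, which you have not checked; in the paper surjectivity simply falls out of the decomposition.
\item Faithfulness of base change to $k(\zeta_p)$ only gives surjectivity on spectra. Injectivity, and the computation of $\Spec^h(H^{*,*}_{\mathcal{M}}(k;\F_p))$, require separate arguments; the paper uses the class $\beta\in H^{0,d}_{\mathcal{M}}(k;\F_p)$, with $d=[k(\zeta_p):k]$, whose base change is $\tau^d$.
\end{itemize}
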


The target of the comparison map admits an explicit description. For $p$ odd, it is the two-point Sierpiński space. At the prime $2$, the Beilinson--Lichtenbaum conjecture (really, theorem \cite{voevodsky2011motivic}) gives
\[H_{\mathcal{M}}^{*,*}(k;\F_2) \simeq (K_*^M(k)/2)[\tau],\]
where $K^M_*(k)/2$ is the Milnor $K$-theory mod $2$, and $\tau$ is the unique non-zero class in $H_{\mathcal{M}}^{0,1}(k;\F_2)$. Thornton's computation \cite{thornton2016homogeneous} describes the homogeneous spectrum of $K^M_*(k)/2$ in terms of the space of orderings (or real spectrum) $\Sper(k)$ of $k$ equipped with the Harrison topology. Combining this description with Theorem~A gives the following explicit computations. Recall that, as is standard, whenever we draw spectral spaces, specialisations are depicted via non-horizontal lines between points: such a line indicates that the lower altitude point specialises to the higher altitude one.

\begin{corollaryB}
    If $p$ is an odd prime or $k$ is not formally real, $\Spc(\DTM_\gm(k;\F_p))$ is the Sierpiński space:
    \[\begin{tikzcd}[ampersand replacement=\&,sep=scriptsize]
	   {\ideal{0}} \\
	   \\
	   {\mathcal{P}_{\acute{e}t}}
	   \arrow[no head, from=1-1, to=3-1]
    \end{tikzcd}\]
    where $\mathcal{P}_{\acute{e}t}$ is the kernel of étale realisation restricted to Tate motives.

    If $p=2$ and $k$ is formally real, then $\Spc(\DTM_\gm(k;\F_2))$ is described as follows:
    \[\begin{tikzcd}[ampersand replacement=\&,sep=scriptsize]
	   \&\&\& {\ideal{0}} \\
	   \\
	   {\mathcal{P}^{\alpha}} \& \dots \& {\mathcal{P}^{\beta}} \& {\mathcal{P}_{\acute{e}t}} \\
	   \\
	   {\mathcal{P}^{\alpha}_{\acute{e}t}} \& \dots \& {\mathcal{P}^{\beta}_{\acute{e}t}}
	   \arrow[no head, from=1-4, to=3-1]
	   \arrow[dashed, no head, from=1-4, to=3-2]
	   \arrow[no head, from=1-4, to=3-3]
	   \arrow[no head, from=1-4, to=3-4]
	   \arrow[no head, from=3-1, to=5-1]
	   \arrow[dashed, no head, from=3-2, to=5-2]
	   \arrow[no head, from=3-3, to=5-3]
	   \arrow[no head, from=3-4, to=5-1]
	   \arrow[dashed, no head, from=3-4, to=5-2]
	   \arrow[no head, from=3-4, to=5-3]
    \end{tikzcd}\]
    where $\alpha, \dots, \beta \in \Sper(k)$ range over all orderings, and
    \[\mathcal{P}_{\acute{e}t}=\langle \cone(\tau) \rangle.\]
    A subset $Z$ is closed if and only if it is specialisation closed and 
    \[W_1(Z)=\{\alpha \in \Sper(k) \mid \mathcal{P}^{\alpha} \in Z\}, \ W_2(Z)=\{\alpha \in \Sper(k) \mid \mathcal{P}^{\alpha}_{\acute{e}t} \in Z\}\]
    are both closed subsets of $\Sper(k)$.
\end{corollaryB}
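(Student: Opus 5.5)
Since Theorem~A identifies $\Spc(\DTM_\gm(k;\F_p))$ with $\Spec^h(H^{*,*}_{\mathcal{M}}(k;\F_p))$ via $\varrho^h$, the plan is to compute the homogeneous spectrum of the bigraded ring and then transport the answer back, naming the points by the primes they correspond to. The first step is to describe the ring. By the Beilinson--Lichtenbaum theorem and norm residue isomorphism, $H^{*,*}_{\mathcal{M}}(k;\F_p)$ in the range of interest is $H^{*}_{\et}(k;\mu_p^{\otimes *})$ in nonnegative weight with the Bott-type element. For $p$ odd (or $\mu_p\subseteq k$ after a prime-to-$p$ extension), every homogeneous element of positive cohomological degree is nilpotent. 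Indeed, Milnor $K$-theory mod $p$ is a graded-commutative ring in which odd-degree elements square to zero, and $\{a,a\}=\{a,-1\}$ is killed by $p$ odd. Hence the only non-nilpotent homogeneous elements live in cohomological degree $0$, i.e.\ are powers of the degree-$(0,1)$ class $\tau$ (or its analogue after adjoining $\mu_p$, where a transfer argument shows that a power of the Bott element descends). Consequently $\Spec^h$ has exactly two points: the ideal of nilpotents $\mathfrak{m}_0$ and the ideal $\mathfrak{m}_0+(\tau)$. The first is the generic point and the second is closed, so we get the Sierpiński space. The same argument works for $p=2$ when $k$ is not formally real, since then $-1$ is a sum of squares, so $\{-1\}$ is nilpotent in $K^M_*(k)/2$ and hence all positive-degree elements are nilpotent, because $\{a\}^2=\{a\}\{-1\}$. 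Pulling back along $\varrho^h$, the closed point is the prime whose associated homogeneous prime contains $\tau$. Since $\cone(\tau)$ is killed by étale realisation while $\unit$ is not, this prime contains $\cone(\tau)$, and it equals $\mathcal{P}_{\acute{e}t}=\langle\cone(\tau)\rangle$. The generic point is $\ideal{0}$, which is prime because the category is local with nilpotent-detecting structure, as Theorem~A guarantees.

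For $p=2$ and $k$ formally real, write $A=K^M_*(k)/2$, so the ring is $A[\tau]$ with bigrading. Homogeneous primes of $A[\tau]$ split according to whether they contain $\tau$:
\begin{itemize}
\item those containing $\tau$ are homogeneous primes of $A$;
\item those not containing $\tau$ correspond, after inverting $\tau$, to homogeneous primes of $A[\tau^{\pm1}]$, i.e.\ again to homogeneous primes of $A$.
\end{itemize}
Thornton's theorem states that $\Spec^h(A)$ consists of the irrelevant-type prime $A_{>0}+\text{nil}$ (the unique maximal one) together with one minimal prime $\mathfrak{q}_\alpha$ for each ordering $\alpha\in\Sper(k)$. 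Here $\mathfrak{q}_\alpha$ is generated by the symbols $\{a\}$ with $a>_\alpha 0$, and the quotient is $\F_2[\{-1\}]$. The topology on the minimal primes is the Harrison topology. This yields the picture:
\begin{itemize}
\item $\mathcal{P}^\alpha$ corresponds to $\mathfrak{q}_\alpha$ (with $\tau$ not in it);
\item $\mathcal{P}^\alpha_{\acute{e}t}$ corresponds to $\mathfrak{q}_\alpha+(\tau)$;
\item $\mathcal{P}_{\acute{e}t}$ corresponds to $A_{>0}+(\tau)$, i.e.\ the maximal ideal $A_{>0}+\text{nil}+(\tau)$;
\item $\ideal{0}$ corresponds to $A_{>0}$ plus nilpotents, without $\tau$.
\end{itemize}
Inclusions of these ideals give exactly the drawn specialisations, remembering that $\varrho^h$ reverses inclusion.

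The main obstacle is the topology. Closed subsets of $\Spec^h$ are the sets $V(I)$ for homogeneous $I$. I would show that such a set is specialisation closed and that its traces on each of the two copies of $\Sper(k)$ are Harrison-closed. For the converse, I would take a specialisation-closed $Z$ with closed traces $W_1$ and $W_2$ and write $Z$ as an intersection of basic closed sets $V(f)$. For this I would use symbols $f=\{a_1,\dots,a_n\}$, which vanish at $\mathfrak{q}_\alpha$ exactly when some $a_i>_\alpha0$, and the products $\tau f$. Spectrality (quasi-compactness of $\Sper(k)$) then reduces arbitrary closed sets to such intersections.
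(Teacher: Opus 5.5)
Your route is the same as the paper's: transport along the homeomorphism $\varrho^h$ of Theorem~A, using Thornton's description and the splitting of $\Spec^h((K^M_*(k)/p)[\tau])$ into $V(\tau)$ and $D(\tau)$. Your analysis of the homogeneous spectrum itself is fine, including the reduction of the topology to sets $V(f)$ for symbols $f$ and for $\tau f$. The genuine problem is that your dictionary between tt-primes and homogeneous primes is inverted.

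By definition of $\varrho^h$, a homogeneous $f$ lies in $\varrho^h(\mathcal{P})$ exactly when $\cone(f)\notin\mathcal{P}$. So the primes containing $\cone(\tau)$ are those lying over $D(\tau)$, not over $V(\tau)$. Your inference ``its homogeneous prime contains $\tau$, hence it contains $\cone(\tau)$'' is therefore backwards. Independently, $\ideal{0}$ cannot be the generic point. In the Balmer spectrum $\overline{\{\mathcal{P}\}}=\{\mathcal{Q}\mid\mathcal{Q}\subseteq\mathcal{P}\}$, so $\ideal{0}$, when prime, is closed; here it is the unique closed point, since the category is rigid and local. The correct matching is
\[
\ideal{0}\leftrightarrow\mathfrak{m}+(\tau),\qquad
\mathcal{P}_{\acute{e}t}=\ideal{\cone(\tau)}\leftrightarrow\mathfrak{m},\qquad
\mathcal{P}^{\alpha}\leftrightarrow\mathfrak{p}_\alpha+(\tau),\qquad
\mathcal{P}^{\alpha}_{\acute{e}t}\leftrightarrow\mathfrak{p}_\alpha ,
\]
that is, each of your four assignments has $\tau$ toggled. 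For instance, $\cone(\tau)\neq 0$ forces $\tau\in\varrho^h(\ideal{0})$. Your list even contradicts the inclusion-reversal you invoke: it sends $\ideal{0}\subsetneq\mathcal{P}_{\acute{e}t}$ to $\mathfrak{m}\subsetneq\mathfrak{m}+(\tau)$. Transporting the Zariski specialisations along it would give $\ideal{0}\rightsquigarrow\mathcal{P}_{\acute{e}t}$ and $\mathcal{P}^\alpha\rightsquigarrow\mathcal{P}^\alpha_{\acute{e}t}$, the opposite of the drawn relations; in fact $\mathcal{P}^\alpha\subseteq\mathcal{P}^\alpha_{\acute{e}t}$. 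So ``inclusions of these ideals give exactly the drawn specialisations'' fails as written, and this matters because the statement is precisely about which named point sits where.

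Once the orientation is fixed, the identification of $\mathcal{P}_{\acute{e}t}$ still needs an argument. Saying ``$\cone(\tau)$ is killed by étale realisation while $\unit$ is not'' only gives $\ideal{\cone(\tau)}\subseteq\ker(\Re_{\acute{e}t})\subsetneq\DTM_\gm(k;\F_p)$.
\begin{itemize}
\item To get $\ideal{\cone(\tau)}=\mathcal{P}_{\acute{e}t}$, use rigidity: $\ideal{\cone(\tau)}$ is the intersection of all primes containing $\cone(\tau)$. These are $\mathcal{P}_{\acute{e}t}$ together with the $\mathcal{P}^\alpha_{\acute{e}t}\supseteq\mathcal{P}_{\acute{e}t}$, so the intersection is $\mathcal{P}_{\acute{e}t}$. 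This is the argument the paper uses for $\mathcal{P}^\alpha$ and $\mathcal{P}^\alpha_{\acute{e}t}$.
\item For the kernel of étale realisation, the paper uses that $\Re_{\acute{e}t}$ factors as the localisation $L_\tau$ followed by a fully faithful functor, so $\ker(\Re_{\acute{e}t})=\ker(L_\tau)=\ideal{\cone(\tau)}$.
\item In the Sierpiński case you can instead note that all tt-ideals are radical and correspond to Thomason subsets. Hence $\mathcal{P}_{\acute{e}t}$ is the only nonzero proper tt-ideal, and $\ker(\Re_{\acute{e}t})$ must equal it.
\end{itemize}
When $\mu_p\not\subseteq k$, replace $\tau$ by the class $\beta$ throughout, so the two homogeneous primes are $\sqrt{0}$ and $\sqrt{0}+(\beta)$.
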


As a consequence, we observe that at odd primes the Tate spectrum is independent of the base field, whilst at the prime $2$ the additional points and their topology are controlled by the orderings of this field. These computations recover the previously known spectra over algebraically closed and real closed fields of Gallauer and Balmer--Gallauer.

We briefly outline the strategy of our proof. Suppose first that $k$ contains a primitive $p^{th}$ root of unity. A choice of such determines a class $\tau \in H^{0,1}_{\mathcal{M}}(k;\F_p)$ and an isomorphism
\[H^{*,*}_{\mathcal{M}}(k;\F_p) \simeq (K_*^M(k)/p)[\tau].\]
The homogeneous spectrum of $(K_*^M(k)/p)[\tau]$ decomposes into the closed subset $V(\tau)$ and its open complement $D(\tau)$, each
homeomorphic to $\Spec^h(K_*^M(k)/p)$. This suggests studying the corresponding complementary pieces of the Tate spectrum
\[\supp(\cone(\tau)), \ U(\cone(\tau)) \subseteq \Spc(\DTM_\gm(k;\F_p))\] 
separately. 

To isolate the closed piece $\supp(\cone(\tau))$, we construct a bounded $\otimes$-compatible weight structure on $\DTM_\gm(k;\F_p)$ whose heart is 
\[\mathcal{H}^\mathsf{T} \coloneqq \add(\unit(n)[n] \mid n \in \Z).\]
We call this the \emph{Milnor weight structure}. The associated weight complex functor
\[\begin{tikzcd}
    t:\DTM_\gm(k;\F_p) \arrow[r] & \K^b(\mathcal{H}^\mathsf{T})
\end{tikzcd}\]
kills $\tau$ and, in contrast to the Chow weight complex functor in general, it also detects the tensor-nilpotence of morphisms on $\cone(\tau)$. In particular, this implies that 
\[\im(\Spc(t))= \supp(\cone(\tau)).\]
To compute the spectrum of $\K^b(\mathcal{H}^\mathsf{T})$, we prove that the heart $\mathcal{H}^\mathsf{T}$ is equivalent to the category of finite rank free graded modules over $K^M_*(k)/p$. Dell'Ambrogio--Stevenson's computation \cite{dell2014even} then identifies $\Spc(\K^b(\mathcal{H}^\mathsf{T}))$ via the graded comparison map with $\Spec^h(K^M_*(k)/p)$. We use this description to show that $\Spc(t)$ is a homeomorphism onto its image, and that $\varrho^h$ restricts to a homeomorphism
\[\supp(\cone(\tau)) \cong V(\tau).\]

For the open piece $U(\cone(\tau))$, we prove that, up to idempotent completion, the localisation at $\tau$ identifies geometric
Tate motives with rigid étale Tate motives. In fact, we will establish
the corresponding statement for all geometric motives:
\[(\DM_\gm(k;\F_p)/\ideal{\cone(\tau)})^\natural \simeq \DM_\et^\vee(k;\F_p).\]
Restricting to Tate motives and applying Gallauer's
computation of the spectrum of étale motives \cite{gallauer2021note}, we identify $U(\cone(\tau))$ with $\Spec^h(K_*^M(k)/p)$. Under this identification, $\varrho^h$ restricts to a homeomorphism
\[U(\cone(\tau)) \cong D(\tau).\]
To prove that $\varrho^h$ is a homeomorphism on $\Spc(\DTM_\gm(k;\F_p))$, we determine the specialisations between the closed and open piece using generators of the prime tt-ideals and, when $p=2$ and $k$ is formally real, base change to the real closures. Finally, we remove the root of unity assumption
by showing that adjoining a primitive $p^{th}$ root of unity does not change  the Tate spectrum nor the homogeneous spectrum of the motivic cohomology.

\subsection{Integral Tate motives}
The finite-coefficient computation also determines the spectrum of integral Tate motives for fields $k$ such that 
\[\Spc(\DTM_\gm(k;\Q))=\{\ideal{0}\}.\]
Conjecturally, this should be true for all fields, and it is known to be the case for those that satisfy the vanishing hypothesis in rational motivic cohomology considered by Peter \cite{Peter:ArtinTate} and Gallauer \cite{Gallauer:TateSpectrum}.

\begin{corollaryC}
    Let $k$ be a field such that $\Spc(\DTM_\gm(k;\Q))=\{\ideal{0}\}$, and let $e$ be its exponential characteristic. If $k$ is not formally real, $\Spc(\DTM_\gm(k; \Z[1/e]))$ is described as follows:
    \[\begin{tikzcd}[ampersand replacement=\&,sep=scriptsize]
	   {\mathcal{P}_{p_1,0}} \& {\mathcal{P}_{p_2,0}} \& \dots \& {\mathcal{P}_{p_l,0}} \& \dots \\
	   \\
	   {\mathcal{P}_{p_1,1}} \& {\mathcal{P}_{p_2,1}} \& \dots \& {\mathcal{P}_{p_l,1}} \& \dots \\
	   \\
	   \\
	   \&\&\& {\mathcal{Q}}
	   \arrow[no head, from=1-1, to=3-1]
	   \arrow[no head, from=1-2, to=3-2]
	   \arrow[dashed, no head, from=1-3, to=3-3]
	   \arrow[no head, from=1-4, to=3-4]
	   \arrow[dashed, no head, from=1-5, to=3-5]
	   \arrow[no head, from=3-1, to=6-4]
	   \arrow[no head, from=3-2, to=6-4]
	   \arrow[dashed, no head, from=3-3, to=6-4]
	   \arrow[no head, from=3-4, to=6-4]
	   \arrow[dashed, no head, from=3-5, to=6-4]
    \end{tikzcd}\]
    where $p_i$ range over all prime numbers different from $e$. Moreover, a proper subset $Z$ is closed if and only if it is finite and specialisation closed.
    
    If $k$ is formally real, $\Spc(\DTM_\gm(k; \Z))$ is described as follows:
    \[\begin{tikzcd}[ampersand replacement=\&,sep=scriptsize]
	   \&\&\& {\mathcal{P}_{2,0}} \& {\mathcal{P}_{3,0}} \& \cdots \& {\mathcal{P}_{q,0}} \& \cdots \\
	   \\
	   {\mathcal{P}_{2,1,\alpha}} \& \cdots \& {\mathcal{P}_{2,1, \beta}} \& {\mathcal{P}_{2,1}} \& {\mathcal{P}_{3,1}} \& \cdots \& {\mathcal{P}_{q,1}} \& \cdots \\
	   \\
	   {\mathcal{P}_{2,2,\alpha}} \& \cdots \& {\mathcal{P}_{2,2,\beta}} \\
	   \&\&\&\&\&\& {\mathcal{Q}}
	   \arrow[no head, from=1-4, to=3-1]
	   \arrow[dashed, no head, from=1-4, to=3-2]
	   \arrow[no head, from=1-4, to=3-3]
	   \arrow[no head, from=1-4, to=3-4]
	   \arrow[no head, from=1-5, to=3-5]
	   \arrow[dashed, no head, from=1-6, to=3-6]
	   \arrow[no head, from=1-7, to=3-7]
	   \arrow[dashed, no head, from=1-8, to=3-8]
	   \arrow[no head, from=3-1, to=5-1]
	   \arrow[dashed, no head, from=3-2, to=5-2]
	   \arrow[no head, from=3-3, to=5-3]
	   \arrow[no head, from=3-4, to=5-1]
	   \arrow[dashed, no head, from=3-4, to=5-2]
	   \arrow[no head, from=3-4, to=5-3]
	   \arrow[no head, from=3-4, to=6-7]
	   \arrow[no head, from=3-5, to=6-7]
	   \arrow[dashed, no head, from=3-6, to=6-7]
	   \arrow[no head, from=3-7, to=6-7]
	   \arrow[dashed, no head, from=3-8, to=6-7]
    \end{tikzcd}\]
    where $q$ ranges over all prime numbers. Moreover, a specialisation closed subset $Z$ is closed if and only if it satisfies one of the following conditions:
    \begin{enumerate}
    \item If $\mathcal{Q} \in Z$, then
    \[Z=\overline{\{\mathcal{Q}\}}\cup W,\]
    where $W$ is a closed subset of 
    \[\supp(\cone(2)) \cong \Spc(\DTM_\gm(k;\F_2)).\] 
    \item If $\mathcal{Q} \not \in Z$, then
    \[Z=Z_{p_1}\cup\cdots\cup Z_{p_n},\]
    where $p_1,\dots,p_n$ are distinct primes and 
    \[Z_{p_j}\subseteq \supp(\cone(p_j))\cong \Spc(\DTM_\gm(k;\F_{p_j})), \quad j=1,\dots,n\]
    are closed.
    \end{enumerate}
\end{corollaryC}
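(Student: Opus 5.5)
We deduce Corollary~C from Theorem~A and Corollary~B by the standard "fibre over $\Spec(\Z)$" argument for integral tt-categories. Set $\mathcal{K}=\DTM_\gm(k;\Z[1/e])$; in the formally real case $e=1$, since formally real fields have characteristic zero. The unit's endomorphism ring maps to $\Z[1/e]$, so Balmer's comparison map gives a continuous map $\rho:\Spc(\mathcal{K})\to\Spec(\Z[1/e])$, and we compute its fibres.

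\textbf{Generic fibre.} The fibre over $(0)$ is the spectrum of the localisation $\mathcal{K}\otimes\Q$. Up to idempotent completion, which does not change spectra, this is $\DTM_\gm(k;\Q)$. By hypothesis its spectrum is a single point, which we call $\mathcal{Q}$.

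\textbf{Fibre over a prime $p\neq e$.} The fibre over $(p)$ is $\supp(\cone(p))$ intersected with the localisation at $p$. Here I would use the standard fact that $\supp(\cone(p))$ is homeomorphic to $\Spc$ of the category of $\cone(p)$-modules. Change of coefficients $\mathcal{K}\to\DTM_\gm(k;\F_p)$ is the extension of scalars along the commutative algebra $\unit/p$, and I would show that it induces a homeomorphism
\[
\Spc(\DTM_\gm(k;\F_p))\cong\supp(\cone(p)).
\]
The route is a Balmer-type descent, or nilpotence argument: $\cone(p)$ is a separable ring up to $\otimes$-nilpotence, or one can invoke Gallauer's analogous argument in \cite{Gallauer:TateSpectrum}. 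Corollary~B then identifies each fibre:
\begin{itemize}
\item for odd $p$, or for $k$ not formally real, the fibre is a Sierpiński space $\mathcal{P}_{p,0}\rightsquigarrow\mathcal{P}_{p,1}$, with $\mathcal{P}_{p,1}$ the étale point;
\item for $p=2$ and $k$ formally real, the fibre is the ordering-indexed space of Corollary~B.
\end{itemize}

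\textbf{Specialisations across fibres.} Every point $\mathcal{P}$ in a fibre over $(p)$ specialises towards $\mathcal{Q}$, because $\rho$ reverses specialisations. To find which closed-fibre points lie in $\overline{\{\mathcal{Q}\}}$, I would use the étale realisation with $\Z_\ell$- or $\Z/\ell^n$-coefficients. The étale points $\mathcal{P}_{p,1}$ are the kernels of $\mathcal{K}\to\DTM_\et$, and rationally étale realisation is faithful enough that $\mathcal{Q}\subseteq\mathcal{P}_{p,1}$. Concretely, an object lying in $\mathcal{Q}$ (rationally zero) is killed by some integer $N$, hence has $p$-torsion étale realisation which vanishes after considering the appropriate prime. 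Conversely, the points $\mathcal{P}_{p,0}$ and $\mathcal{P}_{2,1,\alpha}$ should \emph{not} contain $\mathcal{Q}$. I would exhibit explicit objects:
\begin{itemize}
\item for $\mathcal{P}_{p,0}$, use $\cone(\tau_p)$ relative to integral twists; an appropriate integral Bott-type cone is rationally zero but not étale-trivial mod $p$;
\item for the real points, use the cone of $\rho=[-1]$ composed with real realisation. The object $\unit/2\otimes\cone(\rho)$ does not appear in $\mathcal{Q}$-closure arguments, but for $\mathcal{P}_{2,1,\alpha}$ real-étale realisation at $\alpha$ detects it.
\end{itemize}
This leads to the drawn picture: $\mathcal{Q}$ is the unique closed point of the generic part, lying below every $\mathcal{P}_{q,1}$ and only those.

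\textbf{Topology.} The topology is pinned down by two facts. Closed sets not containing $\mathcal{Q}$ are supports of objects that are rationally trivial. Each such object is killed by some integer $N$, so its support lies in the finitely many fibres over primes dividing $N$, and within each fibre it is closed by the homeomorphism with Corollary~B. Conversely, a closed $Z\ni\mathcal{Q}$ contains $\overline{\{\mathcal{Q}\}}$, and the remaining part must be closed in $\supp(\cone(2))$; only the $p=2$ real part carries nontrivial topology beyond $\overline{\{\mathcal{Q}\}}$. In the non-formally-real case, Corollary~B makes each fibre finite, so this reduces to "finite and specialisation closed".

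\textbf{Main obstacle.} I expect the hardest step to be the precise determination of $\overline{\{\mathcal{Q}\}}$, i.e.\ showing that $\mathcal{Q}\subseteq\mathcal{P}_{p,1}$ but $\mathcal{Q}\not\subseteq\mathcal{P}_{p,0}$ and $\mathcal{Q}\not\subseteq\mathcal{P}_{2,1,\alpha}$. I would first try to identify each prime via its generators from Theorem~A, namely $\mathcal{P}_{p,1}=\ideal{\cone(p),\ldots}$ and so on, and then compare these with the generators of $\mathcal{Q}$.
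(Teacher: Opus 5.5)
\textbf{Gap: the specialisations between $\mathcal{Q}$ and the closed fibres are reversed.} This undermines the step you flag as the main one. Continuity of $\varrho$ preserves specialisations; it does not reverse them. So it only rules out $\mathcal{P}\rightsquigarrow\mathcal{Q}$ for $\mathcal{P}$ over $(p)$, because $(p)\not\rightsquigarrow(0)$, and it produces no specialisation at all. Your key claim $\mathcal{Q}\subseteq\mathcal{P}_{p,1}$ is false. Every prime over $(p)$ lies in $\supp(\cone(p))$, so it does not contain $\cone(p)$, whereas $\cone(p)\in\mathcal{Q}$. Your argument fails exactly when $p\mid N$: $cc_p^*(\cone(p))\simeq\unit\oplus\unit[1]$ has non-zero mod-$p$ étale realisation.

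The actual relations form a chain $\mathcal{P}_{p,0}\subsetneq\mathcal{P}_{p,1}\subsetneq\mathcal{Q}$, that is, $\mathcal{Q}\rightsquigarrow\mathcal{P}_{p,1}\rightsquigarrow\mathcal{P}_{p,0}$. In the paper's pictures the lower point specialises to the higher one. Hence $\overline{\{\mathcal{Q}\}}=\{\mathcal{Q}\}\cup\bigcup_{p\neq e}\{\mathcal{P}_{p,0},\mathcal{P}_{p,1}\}$, which contains the $\mathcal{P}_{p,0}$ as well, contrary to your ``only those''. For the real points, the fact to prove is $\mathcal{P}_{2,i,\alpha}\not\subseteq\mathcal{Q}$. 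For example, $\cone(\rho)\in\mathcal{P}_{2,1,\alpha}\subseteq\mathcal{P}_{2,2,\alpha}$, while $2\rho=0$ integrally, so $\cone(\rho)\otimes\Q\neq 0$. Your target $\mathcal{Q}\not\subseteq\mathcal{P}_{2,1,\alpha}$ holds automatically by the $\cone(2)$ remark and is irrelevant.

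\textbf{Missing idea.} The substantive input is that an object with vanishing mod-$p$ étale realisation is torsion, i.e.\ $\mathcal{P}_{p,1}\subseteq\mathcal{Q}$. Your plan has no mechanism for this, and it is a statement about objects outside $\mathcal{Q}$, so no argument about integers killing objects of $\mathcal{Q}$ can give it. The paper uses the graded comparison map for $\unit(1)[1]$,
$\varrho^h_{diag}:\Spc(\DTM_\gm(k;\Z[1/e]))\to\Spec^h(K^M_*(k)[1/e])$.
Integral étale realisation gives a section of it; this uses the integral étale computation, where the rational hypothesis enters a second time. So $\Spc(\Re_{\acute{e}t})$ is a homeomorphism onto an image containing $\mathcal{Q}$, $\mathcal{P}_{p,1}$ and $\mathcal{P}_{2,2,\alpha}$. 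In Thornton's description, $K^M_{>0}\subseteq(p)+K^M_{>0}$, while the ordering primes are incomparable with $K^M_{>0}$. Transported, this gives $\mathcal{Q}\rightsquigarrow\mathcal{P}_{p,1}$ and $\mathcal{Q}\not\rightsquigarrow\mathcal{P}_{2,i,\alpha}$.

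\textbf{Effect on the topology.} In the non-formally-real case, ``proper closed $=$ finite and specialisation closed'' holds precisely because $\overline{\{\mathcal{Q}\}}$ is the whole space. So $\{\mathcal{Q}\}$ is dense, not closed, and case (1) of the real case likewise needs the correct $\overline{\{\mathcal{Q}\}}$.

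\textbf{What does work.} Your handling of closed sets avoiding $\mathcal{Q}$ is correct and more direct than the paper's detour through motivic cohomology: a compact rationally trivial $X$ has $N\cdot\id_X=0$, so $\supp(X)\subseteq\supp(\cone(N))$. The fibre identification, however, is only gestured at. The paper obtains the image from Balmer's surjectivity theorem and then separates the pulled-back primes with explicit objects such as $cc_{p*}(\cone(\beta))$ and $cc_{2*}(\cone(\tau))$.
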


\subsection{Artin--Tate motives}
Now we turn to the category of geometric Artin--Tate motives
\[\DATM_\gm(k;\F_p)=\mathsf{thick}\left(M(E)(n) \mid E/k \text{ finite separable, } n \in \Z \right).\]
It contains as full subcategories the categories of geometric Tate motives and Artin motives; in particular, the inclusions induce surjective restrictions
\[\begin{tikzcd}
    \Spc(\DTM_\gm(k;\F_p)) & \arrow[l, twoheadrightarrow]  \Spc(\DATM_\gm(k;\F_p)) \arrow[r, twoheadrightarrow] & \Spc(\DAM_\gm(k;\F_p)).
\end{tikzcd}\]
By Theorem A and \cite{Balmer:Artin}, the two outer spectra are now known over arbitrary fields of characteristic different from $p$. A natural question is whether or not together these spaces contain enough information to determine the spectrum of Artin--Tate motives and, if so, how this information fits together. Our next result gives an affirmative answer to this question, and describes the interaction: the Artin spectrum can be identified with a closed subspace of $\Spc(\DATM_\gm(k;\F_p))$, and its open complement is homeomorphic to the punctured Tate spectrum $\Spc(\DTM_\gm(k;\F_p)) \setminus \{\ideal{0}\}$. The specialisations between these pieces are determined by base changing to the algebraic and, at the prime $2$, real closures (if they exist) of the base field $k$.

To state the main result, let $\overline{k}$ be an algebraic closure of $k$, and for each ordering $\alpha \in \Sper(k)$, let $k_\alpha$ be the corresponding real closure. We shall write $a_E$ for the restriction to $\Spc(\DATM_\gm(k;\F_p))$ of the isotropic point of \cite{Vishik:MoravaKTheory} associated to a field extension $E/k$, and we write $b_\alpha$ for the image of the generic point of $\Spc(\DAM_\gm(k_\alpha;\F_2))$ (see \cite[Theorem 12.3]{Balmer:ThreeRealArtinTates}) in $\Spc(\DAM_{\gm}(k;\F_2))$ under the map induced by base change.

\begin{theoremD}
    Let $k$ be a field and $p \neq \operatorname{char}(k)$ be a prime. Then,
    \[\Spc(\DATM_\gm(k;\F_p))\]
    contains a closed subspace
    \[V \cong \Spc(\DAM_\gm(k;\F_p))\]
    whose open complement is 
    \[U \cong \Spc(\DTM_\gm(k;\F_p)) \setminus \{\ideal{0}\}.\]
    Under these identifications, the specialisations from points in $U$ to points in $V$ are given as follows:
    \begin{itemize}
        \item $\mathcal{P}_{\acute{e}t} \rightsquigarrow a_{\overline{k}}$;
        \item if $k$ is formally real and $p=2$, then for every $\alpha \in \Sper(k)$ we have
        \begin{enumerate}
            \item $\mathcal{P}^{\alpha} \rightsquigarrow a_{k_\alpha}$;
            \item $\mathcal{P}^{\alpha}_{\acute{e}t} \rightsquigarrow \mathcal{Q}$ if and only if $\mathcal{Q} \in \overline{\{b_\alpha\}}$.
        \end{enumerate}
    \end{itemize}
    A subset $Z \subseteq \Spc(\DATM_\gm(k;\F_p))$ is closed if and only if it is specialisation closed and its intersections with $U$ and $V$ are both closed in the respective subspaces.
\end{theoremD}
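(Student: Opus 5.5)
We decompose $\Spc(\DATM_\gm(k;\F_p))$ along a single object whose support separates the Artin part from the Tate part. The natural candidate is $\cone(\tau)$, after first reducing to the case $\mu_p\subseteq k$ as in the Tate case. We set $V=\supp(\cone(\tau))$ and $U=U(\cone(\tau))$, so that $V$ is closed and $U$ is its open complement.

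\emph{The open piece.} By the equivalence
\[(\DM_\gm(k;\F_p)/\ideal{\cone(\tau)})^\natural\simeq \DM_\et^\vee(k;\F_p),\]
restricted to Artin--Tate motives, $U$ is identified with the spectrum of rigid étale Artin--Tate motives. Étale motives with $\F_p$-coefficients are modelled by $\mu_p$-twisted Galois modules, and $M(E)$ becomes a direct summand of a sum of Tate objects after suitable localisation. Combined with Gallauer's computation of étale spectra \cite{gallauer2021note}, this should show that the restriction $U\to U(\cone(\tau))\subseteq\Spc(\DTM_\gm)$ is a homeomorphism. By Theorem A and Corollary B, the target is exactly the punctured Tate spectrum $\Spc(\DTM_\gm(k;\F_p))\setminus\{\ideal{0}\}$, and $\ideal{0}$ lies in $\supp(\cone(\tau))$ in the Tate spectrum.

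\emph{The closed piece.} We extend the Milnor weight structure to a bounded $\otimes$-weight structure on $\DATM_\gm$ with heart $\add(M(E)(n)[n])$. The weight complex functor lands in $\K^b$ of this heart, which should be equivalent to a category of graded permutation-type modules. The aim is to prove the analogue of the Tate statement: $t$ detects $\otimes$-nilpotence on $\cone(\tau)$, so that $\im\Spc(t)=V$. We then compare with Artin motives via the restriction $\Spc(\DATM_\gm)\to\Spc(\DAM_\gm)$. Since $\cone(\tau)$ kills the twist direction modulo Milnor classes, this restriction should be a homeomorphism on $V$: surjectivity comes from \cite{Balmer:Conservative}, and injectivity comes from showing that primes containing $\cone(\tau)$ are generated by their Artin parts, using the graded structure of the heart over $K^M_*$. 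This is the step I expect to be the main obstacle. Detecting tensor-nilpotence through the weight complex requires controlling morphisms $M(E)\to M(F)(a)[b]$, that is, motivic cohomology of étale algebras, uniformly in $E$ and $F$. To get this uniformity I would first try reducing via Galois descent and the projection formula to the Tate case over finite extensions.

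\emph{Specialisations and topology.} For specialisations from $U$ to $V$, we use base change to $\overline{k}$ and to the real closures $k_\alpha$, where the spectra are known by Gallauer and Balmer--Gallauer. The point $\mathcal{P}_{\acute e t}$ is the pullback of the étale prime over $\overline{k}$. There it specialises to the isotropic point, which restricts to $a_{\overline k}$. Similarly, $\mathcal{P}^\alpha$ and $\mathcal{P}^\alpha_{\acute et}$ pull back from $k_\alpha$, giving $\mathcal{P}^\alpha\rightsquigarrow a_{k_\alpha}$ and $\mathcal{P}^\alpha_{\acute et}\rightsquigarrow b_\alpha$, hence to every point of $\overline{\{b_\alpha\}}$. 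For the converse, suppose $\mathcal{P}\rightsquigarrow\mathcal{Q}$. We exhibit, for each $\mathcal{Q}$ not in the listed closures, an object of $\mathcal{Q}$ not in $\mathcal{P}$; the natural choice is a cone of $\tau$ tensored with $M(E)$ for $E$ chosen using the Balmer--Gallauer description of Artin primes.

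Finally, the topology follows from the general fact that a subset of a spectral space glued from an open $U$ and closed $V$ is closed iff it is specialisation closed with closed traces. To verify this, we check that the constructible topology is compatible, using that $U$ is quasi-compact, being the complement of the support of one object.
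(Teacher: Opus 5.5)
\textbf{There is a genuine gap: splitting along $\cone(\tau)$ alone fails when $p=2$ and $k$ is formally real.}

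In the Tate spectrum, $\varrho^h(\mathcal{P}^\alpha)=\mathfrak{p}_\alpha+(\tau)$ contains $\tau$, so $\cone(\tau)\notin\mathcal{P}^\alpha$. Hence $U(\cone(\tau))\cong D(\tau)$ contains only $\mathcal{P}_{\acute{e}t}$ and the $\mathcal{P}^\alpha_{\acute{e}t}$. It is \emph{not} the punctured Tate spectrum, which also contains the real points $\mathcal{P}^\alpha$; the statement itself places $\mathcal{P}^\alpha$ in $U$.

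Dually, your $V=\supp(\cone(\tau))\subseteq\Spc(\DATM_\gm(k;\F_2))$ also contains these $\mathcal{P}^\alpha$, and restriction to Artin motives is not injective on it. The points $\mathcal{P}^\alpha$ and $a_{k_\alpha}$ are distinct (only the former contains $\cone(\rho)$), yet both restrict to the same Artin prime. Already for $k$ real closed, Balmer--Gallauer's six-point space gives
$\supp(\cone(\tau))=\{\ideal{M(\overline{k})},\ideal{\widetilde{M}(\overline{k})},\ideal{M(\overline{k}),\widetilde{M}(\overline{k})},\ideal{\cone(\rho)}\}$.
That is four points, against the three of $\Spc(\DAM_\gm(k;\F_2))$. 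So the injectivity you flag as the main obstacle is false, not merely hard.

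The correct closed piece is $V=\supp(\cone(\tau)\otimes\cone(\rho))$. Its complement $U(\cone(\tau))\cup U(\cone(\rho))$ needs an identification of $U(\cone(\rho))$, and your plan has no tool for this. The paper uses Bachmann's equivalence $\mathcal{SH}(k)[\rho^{-1}]\simeq\Shv(\Sper(k);\Sp)$ to show that $\DM(k;\F_p)[\rho^{-1}]$ is compactly generated by $\unit$. Consequently $U(\cone(\rho))\subseteq\Spc(\DATM_\gm)$ is homeomorphic to the corresponding open of $\Spc(\DTM_\gm)$. This is what rules out Artin--Tate primes in $\supp(\cone(\tau))\cap U(\cone(\rho))$ beyond a copy of $\Sper(k)$.

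\textbf{The mechanism for identifying the closed piece is also missing.} This applies even when $p$ is odd or $k$ is not formally real, where $\rho$ is $\otimes$-nilpotent and the closed piece does equal your $\supp(\cone(\tau))$.
\begin{itemize}
\item Detection of nilpotence by $t$ on $\cone(\tau)$ is routine, not the obstacle. Decompose $E\otimes_kF$ into fields and use that their mod-$p$ motivic cohomology lies on and below the diagonal, with $\tau$ injective.
\item The real difficulty is $\Spc(\K^b(\mathcal{H}^{\mathsf{AT}}))$, for which no algebraic model is available. The paper sidesteps it with the grading functor $\operatorname{gr}:\K^b(\mathcal{H}^{\mathsf{AT}})\to\K^b(\AM_\gm(k;\F_p))$, which kills maps between different diagonal twists.
\item Then $\Psi=\operatorname{gr}\circ t$ is a retraction of $\iota:\DAM_\gm(k;\F_p)\hookrightarrow\DATM_\gm(k;\F_p)$. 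So $\Spc(\Psi)$ is automatically a homeomorphism onto its image.
\item That image is $\supp(\cone(\tau)\otimes\cone(\rho))$. The key input is that a pure symbol $\unit\to M(E)(n)[n]$ with $n\geq1$ is $\otimes$-nilpotent on $\cone(\rho)$. This follows by pigeonholing over the at most $[E:k]$ embeddings $E\to L_\lambda$, together with $\gamma^2=\rho^{\deg\gamma}\gamma$.
\end{itemize}

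The same retraction gives the ``only if'' direction for specialisations, which your proposal leaves vague. If $\mathcal{P}\rightsquigarrow\mathcal{Q}\in V$, then $\Spc(\iota)(\mathcal{P})\rightsquigarrow\mathcal{Q}$. Here $\Spc(\iota)(\mathcal{P}_{\acute{e}t})=a_{\overline{k}}$ and $\Spc(\iota)(\mathcal{P}^\alpha)=a_{k_\alpha}$, which are closed in $\Spc(\DAM_\gm)$ by Gallauer, and $\Spc(\iota)(\mathcal{P}^\alpha_{\acute{e}t})=b_\alpha$.

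\textbf{Two smaller remarks.}
\begin{itemize}
\item $M(E)$ is not in general a summand of Tate objects in $\DM_\et^\vee(k;\F_p)$: its étale realisation is the permutation module $\F_p[\Gamma_k/\Gamma_E]$. You do not need this, though. Gallauer's theorem, together with the descent of homeomorphic comparison maps to full rigid subcategories, already gives $U(\cone(\tau))\cong\Spec^h(K^M_*(k)/p)$.
\item Your closing topological argument is correct: closed traces plus specialisation-closed implies pro-constructible, using that $U$ is quasi-compact. It applies verbatim to the paper's $U=U(\cone(\tau)\otimes\cone(\rho))$, and is a cleaner alternative to the paper's formula $Z=f^{-1}(Z_V)\cap g^{-1}(\overline{Z_U})$.
\end{itemize}
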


As a result, if $p$ is odd or $k$ is not formally real, $\Spc(\DATM_\gm(k;\F_p))$ is obtained from the spectrum of Artin motives by adjoining one open point, whose unique specialisation is the closed point $a_{\overline{k}}$, as shown in the following picture.

\begin{figure}[H]
\begin{tikzcd}[ampersand replacement=\&,sep=scriptsize,execute at end picture={\draw[dashed,thick,blue!40] (-0.6,0.75) ellipse (3.5cm and 1cm);}]
	\&\& {\textcolor{blue!80}{\Spc(\DAM_\gm(k;\F_p))}} \& {a_{\overline{k}}} \& \\
	\\
	\&\&\& {\mathcal{P}_{\acute{e}t}} \&
	\arrow[no head, from=1-4, to=3-4]
\end{tikzcd}
\end{figure}

If $p=2$ and $k$ is formally real, the specialisations of the additional points from the punctured Tate spectrum to the spectrum of Artin motives are determined by the real closures, and we obtain the following picture.

\begin{figure}[H]
\begin{tikzcd}[ampersand replacement=\&, execute at end picture={\draw[dashed,thick,blue!40] (0,1.3) ellipse (5.3cm and 1.4cm); \node[blue!80] at (0,3.2) {$\Spc(\DAM_\gm(k;\F_2))$};}]
	\&\&\& {} \&\& \\
	{a_{k_\alpha}} \& \cdots \&\& {a_{k_{\beta}}} \&\& {a_{\overline{k}}} \\
	\& {b_\alpha} \& \cdots \&\& {b_{\beta}} \\
	\\
	{\mathcal{P}^{\alpha}} \&\& \cdots \& {\mathcal{P}^{\beta}} \&\& {\mathcal{P}_{\acute{e}t}} \\
	\& {\mathcal{P}^{\alpha}_{\acute{e}t}} \&\& \cdots \& {\mathcal{P}^{\beta}_{\acute{e}t}}
	\arrow[no head, no head, from=2-1, to=3-2]
	\arrow[no head, from=2-1, to=5-1]
	\arrow[no head, no head, from=2-4, to=3-5]
	\arrow[no head, from=2-4, to=5-4]
	\arrow[no head, from=2-6, to=5-6]
	\arrow[no head, no head, from=3-2, to=2-6]
	\arrow[no head, from=3-2, to=6-2]
	\arrow[no head, no head, from=3-5, to=2-6]
	\arrow[no head, from=3-5, to=6-5]
	\arrow[no head, from=5-1, to=6-2]
	\arrow[no head, from=5-4, to=6-5]
	\arrow[no head, from=6-2, to=5-6]
	\arrow[no head, from=6-5, to=5-6]
\end{tikzcd}
\end{figure}

We note that the specialisations
\[b_\alpha \rightsquigarrow a_{k_\alpha}, \, b_{\alpha} \rightsquigarrow a_{\overline{k}}; \ \alpha \in \Sper(k),\]
within the spectrum of Artin motives may in principle factor through intermediate points that are not shown in the diagram.

We briefly discuss the main ingredients of the proof. Suppose first that $k$ contains a primitive $p^{th}$ root of unity. The first new input is the construction of a tt-functor
\[\begin{tikzcd}
    \Psi:\DATM_\gm(k;\F_p) \arrow[r] & \DAM_\gm(k;\F_p)
\end{tikzcd}\]
which is a retraction of the inclusion of Artin motives. To construct it, we extend the Milnor weight structure defined for Tate motives to $\DATM_\gm(k;\F_p)$, with heart
\[\mathcal{H}^{\mathsf{AT}} \coloneqq \add(M(E)(n)[n] \mid E/k \text{ finite separable, } n \in \Z)^\natural.\]
As in the Tate case, we show that the associated weight complex functor $t$ detects tensor-nilpotence on $\cone(\tau)$. We then construct a grading functor
\[\begin{tikzcd}
    \operatorname{gr}:\K^b(\mathcal{H}^{\mathsf{AT}}) \arrow[r] & \K^b(\AM_\gm(k;\F_p)) \simeq \DAM_\gm(k;\F_p)
\end{tikzcd}\]
which removes the data coming from the positive degrees from the Milnor $K$-theory. The composite $\Psi \coloneqq \operatorname{gr}\circ t$
restricts to the identity on Artin motives, and we prove that $\Psi$ detects tensor-nilpotence on $\cone(\tau) \otimes \cone(\rho)$. In particular, this allows us to identify 
\[V:=\supp(\cone(\tau) \otimes \cone(\rho)) \cong \Spc(\DAM_\gm(k;\F_p)).\]
Its open complement is
\[U:=U(\cone(\tau) \otimes \cone(\rho))=U(\cone(\tau))\cup U(\cone(\rho)).\]
We already know that étale realisation detects the open piece $U(\cone(\tau))$, which thanks to Gallauer's computation, can be identified with $\Spec^h(K^M_*(k)/p)$. To compute $U(\cone(\rho))$,
we use Bachmann's real étale comparison theorem \cite{Bachmann:Real} to show that, after inverting $\rho$, the inclusion
\[\begin{tikzcd}
    (\DTM_\gm(k;\F_p)[\rho^{-1}])^\natural \arrow[r, hookrightarrow] & (\DM_\gm(k;\F_p)[\rho^{-1}])^\natural
\end{tikzcd}\]
is an equivalence. As a consequence, the open $U(\cone(\rho))$ in $\Spc(\DATM_\gm(k;\F_p))$ can be identified with the corresponding open in $\Spc(\DTM_\gm(k;\F_p))$ (and, moreover, the same is true in $\Spc(\DM_\gm(k;\F_p))$). Combining everything together, we have identified
\[V \cong \Spc(\DAM_\gm(k;\F_p)), \ U \cong \Spc(\DTM_\gm(k;\F_p)) \setminus \{\ideal{0}\}.\]
We then determine the specialisation relations by base changing to algebraic and real closures of the base field, and prove that this information is sufficient to describe the topology. We conclude the section by removing the root of unity assumption.

\subsection{Some remarks about the spectrum of geometric motives}
The results established along the way allow us to return to the
original problem of understanding the spectrum of all geometric motives. Our identifications of the étale and real étale localisations show that the corresponding open regions in $\Spc(\DM_\gm(k;\F_p))$ are detected at the level of Tate motives. Together, they identify the punctured Tate spectrum as an open
subspace of $\Spc(\DM_{\gm}(k;\F_p))$.

\begin{corollaryE}
    Let $k$ be a field and let $p\neq \operatorname{char}(k)$ be a prime. There is an open embedding
    \[\begin{tikzcd}
        \Spc(\DTM_{\gm}(k;\F_p))\setminus\{\ideal{0}\} \arrow[r, hookrightarrow] & \Spc(\DM_{\gm}(k;\F_p)).
    \end{tikzcd}\]
    In particular, if $k$ has infinitely many orderings, then $\Spc(\DM_{\gm}(k;\F_2))$ is non-Noetherian.
\end{corollaryE}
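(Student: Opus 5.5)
The open embedding comes from combining the two localisation results sketched in the introduction. First I reduce to the case where $k$ contains a primitive $p^{th}$ root of unity; the general case is handled at the end. Under this assumption we have the class $\tau$, and for $p=2$ also $\rho=[-1]$. The punctured Tate spectrum is
\[U_{\mathsf T}:=\Spc(\DTM_\gm(k;\F_p))\setminus\{\ideal{0}\}=U(\cone(\tau))\cup U(\cone(\rho)).\]
When $p$ is odd or $k$ is not formally real, $\rho$ is tensor-nilpotent or zero, so $U(\cone(\rho))=\varnothing$ and only the $\tau$-part contributes. The equality above follows from Theorem~A and Corollary~B: the generic point $\ideal{0}$ is exactly the point lying in both $\supp(\cone(\tau))$ and $\supp(\cone(\rho))$. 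Let $r\colon\Spc(\DM_\gm(k;\F_p))\to\Spc(\DTM_\gm(k;\F_p))$ be the restriction map induced by the inclusion. Since $r^{-1}(U(\cone(f)))=U(\cone(f))$, the preimage of $U_{\mathsf T}$ is the open set $U(\cone(\tau))\cup U(\cone(\rho))\subseteq\Spc(\DM_\gm(k;\F_p))$. It therefore suffices to show that $r$ restricts to a homeomorphism over each of the two opens, and that these homeomorphisms agree on the overlap. Agreement is automatic, since all the maps are restrictions of the same $r$.

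For the $\tau$-open, I use that $U(\cone(\tau))$ is homeomorphic to the spectrum of the Verdier quotient by $\ideal{\cone(\tau)}$, and that idempotent completion does not change the spectrum. The equivalence $(\DM_\gm(k;\F_p)/\ideal{\cone(\tau)})^\natural\simeq\DM_\et^\vee(k;\F_p)$ then identifies $U(\cone(\tau))\subseteq\Spc(\DM_\gm)$ with $\Spc(\DM_\et^\vee(k;\F_p))$. The corresponding statement for Tate motives identifies the open in $\Spc(\DTM_\gm)$ with the spectrum of rigid étale Tate motives. Gallauer's computation describes both spectra as $\Spec^h(K^M_*(k)/p)$ via the comparison map, so the restriction between them is a homeomorphism. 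The step I expect to need most care is compatibility: I must check that the restriction map commutes with the two comparison maps, both of which are induced by the same graded ring $H^{*,*}$ with $\tau$ inverted.

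For the $\rho$-open, I use that $U(\cone(\rho))$ is the spectrum of $(\mathcal K[\rho^{-1}])^\natural$. The equivalence $(\DTM_\gm[\rho^{-1}])^\natural\simeq(\DM_\gm[\rho^{-1}])^\natural$, deduced from Bachmann's theorem, then makes $r$ a homeomorphism on this piece directly. Gluing the two pieces, $r$ restricts to a homeomorphism $r^{-1}(U_{\mathsf T})\to U_{\mathsf T}$. Its inverse is the desired open embedding.

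To remove the root of unity, let $k'=k(\zeta_p)$, an extension of degree prime to $p$, and argue by transfer. Base change $\Spc(\DM_\gm(k'))\to\Spc(\DM_\gm(k))$ is surjective, and the Tate spectra of $k$ and $k'$ agree. A cleaner alternative is to define the embedding for $k$ directly: take $\tau$ in degree $(0,p-1)$ with $\tau^{p-1}$ defined over $k$ via Galois descent, and rerun the argument above with $\cone(\tau^{p-1})$; the étale quotient statement then holds after an argument with the Galois-invariant summand. I expect this descent step to be the main technical obstacle.

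For non-Noetherianity: if $k$ has infinitely many orderings, Corollary~B shows $U_{\mathsf T}\supseteq U(\cone(\rho))\cong\Sper(k)$, the latter with the Harrison topology. An infinite Boolean space is not Noetherian, and an open subspace of a Noetherian spectral space is Noetherian. Hence $\Spc(\DM_\gm(k;\F_2))$ is not Noetherian.
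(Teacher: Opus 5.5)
Your treatment of the case $\zeta_p\in k$ is the paper's argument: the étale identification of $U(\cone(\tau))$ via Gallauer's theorem, plus the Bachmann-based equivalence $(\DTM_\gm(k;\F_p)[\rho^{-1}])^\natural\simeq(\DM_\gm(k;\F_p)[\rho^{-1}])^\natural$. Your explicit gluing through the restriction map $r$, using $r^{-1}(U(\cone(f)))=U(\cone(f))$ and naturality of the comparison maps, spells out what the paper leaves implicit.

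The gap is the case $\zeta_p\notin k$. You leave two unexecuted sketches and name Galois descent as the main obstacle, but no descent is needed, and the step that actually finishes the proof is missing. It goes as follows.
\begin{itemize}
\item If $\zeta_p\notin k$ then $p$ is odd, so $U(\cone(\rho))=\varnothing$. The punctured Tate spectrum is then the single point $U(\cone(\beta))$, where $\beta\in H^{0,d}_{\mathcal{M}}(k;\F_p)$ satisfies $f^*(\beta)=\tau^d$. Your ``$\tau^{p-1}$ defined over $k$'' is a power of $\beta$.
\item So you only need the open $r^{-1}(U(\cone(\beta)))=U(\cone(\beta))\subseteq\Spc(\DM_\gm(k;\F_p))$ to be a single point.
\item Since $f^*$ is faithful, $\Spc(f^*)$ is surjective. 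The preimage of $U(\cone(\beta))$ is $U(\cone(\tau^d))=U(\cone(\tau))\subseteq\Spc(\DM_\gm(k(\zeta_p);\F_p))$. By the étale computation over $k(\zeta_p)$, this is homeomorphic to $\Spec^h(K^M_*(k(\zeta_p))/p)=\{\mathfrak{m}\}$.
\item Hence $U(\cone(\beta))$ is a singleton, nonempty because $r$ is surjective. The embedding is then trivially a homeomorphism onto an open.
\end{itemize}
Your first sketch (``surjectivity plus agreement of the Tate spectra'') becomes this proof only once the preimage computation and the singleton observation are supplied. Your second route would also work, since the Beilinson--Lichtenbaum colimit argument runs verbatim with $\beta$ in place of $\tau$, but it is unnecessary.

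In the non-Noetherian step, $U(\cone(\rho))$ is not homeomorphic to $\Sper(k)$. It contains both $\mathcal{P}^\alpha$ and $\mathcal{P}^\alpha_{\acute{e}t}$ for every ordering $\alpha$, since $\rho$ lies in neither $\mathfrak{p}_\alpha$ nor $\mathfrak{p}_\alpha+(\tau)$. The copy of $\Sper(k)$ is the smaller open $U(\cone(\rho)\oplus\cone(\tau))=\{\mathcal{P}^\alpha_{\acute{e}t}\mid\alpha\in\Sper(k)\}\cong\Proj(K^M_*(k)/2)$, which is what the paper uses; with that correction your argument is fine. (The open embedding is not even needed here. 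A continuous surjection from a Noetherian space has Noetherian target, so non-Noetherianity of $\Spc(\DTM_\gm(k;\F_2))$ passes to $\Spc(\DM_\gm(k;\F_2))$ through the surjective restriction map.) Finally, $\ideal{0}$ is the unique closed point of the Tate spectrum, not its generic point.
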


\subsection{Generalised periodicity}
The localisations that we have encountered throughout the paper suggest a new invariant that can be associated to prime tt-ideals. Inverting $\tau$ identifies $\unit(1)$ with $\unit$, while inverting $\rho$ identifies $\unit(1)[1]$ with $\unit$. These periodicity phenomena involve both Tate twists and suspensions, and are not recorded by
periodicity with respect to suspension alone.

In Appendix A, we extend Gallauer's theory of periodicity
\cite{Gallauer:Periods} to arbitrary $\otimes$-invertible objects $u \in \Pic(\mathcal{K})$. For an essentially small tt-category $\mathcal{K}$ and a prime $\mathcal{P} \in\Spc(\mathcal{K})$, we define its \emph{generalised periodicity} to be the subgroup
\[\begin{tikzcd}
    \per_\mathcal{K}(\mathcal{P}) \coloneqq \ker(\Pic(\mathcal{K}) \arrow[r] & \Pic(\mathcal{K}/\mathcal{P}))
\end{tikzcd}\]
of $\Pic(\mathcal{K})$, that is, the kernel of the map on Picard groups induced by the Verdier quotient of $\mathcal{K}$ by $\mathcal{P}$. In particular, a $\otimes$-invertible object $u$ in $\mathcal{K}$ is a \emph{period} of $\mathcal{P}$ if it becomes isomorphic to the unit in $\mathcal{K}/\mathcal{P}$.
Generalised periodicity is compatible with specialisation, that is, if $\mathcal{P}\rightsquigarrow \mathcal{Q}$, then $\operatorname{per}_{\mathcal{K}}(\mathcal{Q}) \subseteq \operatorname{per}_{\mathcal{K}}(\mathcal{P})$; as a consequence, the notion provides obstructions to possible specialisation relations between tt-primes. The association
\[\mathcal{P} \longmapsto \operatorname{per}_\mathcal{K}(\mathcal{P})\]
defines a period map, and we see how to equip the set of subgroups of $\Pic(\mathcal{K})$ with a natural topology which makes this map continuous.

We apply our theory to the spectrum of Tate motives and the spectrum of motives of quadrics over real closed fields $\Spc(\DQM_\gm(F; \F_2))$ computed in \cite{schemeil2026motivic}. We will see that generalised periodicity distinguishes every point in $\Spc(\DQM_\gm(F; \F_p))$ at every prime $p$. We will see that the same is true for Tate motives when $p$ is an odd prime or $k$ admits at most one ordering. When $p=2$ and $k$ admits multiple orderings, we will see that generalised periodicity distinguishes the four types of points in Corollary~B, but does not distinguish different orderings.

\subsubsection*{Acknowledgements}
We are grateful to our advisor Alexander Vishik for his support, many valuable conversations, and helpful comments on early drafts of this document. We thank Martin Gallauer for useful discussions and comments on a preliminary version of this manuscript. We also thank Colin Ni for sharing with us his results on the tt-geometry of filtered permutation modules, which inspired us to consider the region $\supp(\cone(\tau)) \cap U(\cone(\rho))$ when computing $\supp(\cone(\tau)) \subseteq \Spc(\DATM_\gm(k;\F_2))$ in the formally real case. Finally, we thank Daniel Marlowe for discussions which helped improve the proof of \Cref{prop:spectralsheavescompactgeneration} and enhance our understanding of $\infty$-sheaves. This work was supported by the EPSRC Studentship [EP/W524402/1] Project Reference~2926287 for JPS, and the EPSRC Studentship [EP/W524402/1] Project Reference~2883661 for FS.

\subsubsection*{AI Disclosure}
No AI tools were used at any stage of the research or writing of this document.
\section{Preliminaries}
\subsection{The spectrum of a tt-category}
Introduced by Balmer \cite{Balmer:tt-geometry}, tensor-triangular geometry, oft abbreviated simply to tt-geometry, can be summarised as the study of tensor-triangulated (or `tt-')categories via geometric means, in an analogous fashion to how one studies commutative rings geometrically in scheme theory. One associates to a tt-category\footnote{That is, an essentially small triangulated category with a compatible symmetric monoidal structure.} $\mathcal{K}$ a topological space $\Spc(\mathcal{K})$ equipped with a notion of \emph{support} for objects---for each $a \in \mathcal{K}$, we have a closed subset $\supp(a) \subseteq \Spc(\mathcal{K})$. These subsets are subject to natural axioms, and the pair $(\Spc(\mathcal{K}),\supp(-))$ is universal with respect to these properties for $\mathcal{K}$. We call the space $\Spc(\mathcal{K})$ the \emph{Balmer spectrum} (or simply \emph{spectrum}) of $\mathcal{K}$.

Explicitly, $\Spc(\mathcal{K})$ is the set of \emph{prime tt-ideals}\footnote{A \emph{tt-ideal} of $\mathcal{K}$ is a full, thick, triangulated subcategory of $\mathcal{K}$ closed under tensoring with arbitrary objects of $\mathcal{K}$. A proper tt-ideal is \emph{prime} if whenever $a \otimes b$ lies in the tt-ideal, necessarily $a$ lies in it or $b$ does.} $\mathcal{P} \subseteq \mathcal{K}$, with the topology determined by the basis of closed subsets
\[\supp(a) \coloneqq \{\mathcal{P} \in \Spc(\mathcal{K}) \mid a \notin \mathcal{P}\}.\] 

The following result of Balmer demonstrates the importance of the spectrum when it comes to the classification problem for $\mathcal{K}$: it tells us that classifying the radical tt-ideals of $\mathcal{K}$ is equivalent to knowing its spectrum.

\begin{definition}\label{def:radicalttideal}
    A tt-ideal $\mathcal{I}$ is \emph{radical} if $a \in \mathcal{I}$ whenever $a^{\otimes n} \in \mathcal{I}$ for some positive integer $n$.
\end{definition}
\begin{definition}\label{def:thomasonsubset}
    A subset $V \subseteq \Spc(\mathcal{K})$ is \emph{Thomason} if it can be written
    \[V=\bigcup_{i \in I}V_i\]
    where each $V_i$ is closed with quasi-compact complement.
\end{definition}
With these definitions, we can state \cite[Theorem~4.10]{Balmer:tt-geometry}:
\begin{theorem}[Balmer]\label{theorem:classificationofttideals}
    There is an order-preserving bijection
    \[\{\text{radical $tt$-ideals of $\mathcal{K}$}\} \longleftrightarrow \{\text{Thomason subsets of $\Spc(\mathcal{K})$}\}\]
    given by sending a radical tt-ideal to its support:
    \[\mathcal{I} \longmapsto \supp(\mathcal{I}) \coloneqq \bigcup_{a \in \mathcal{I}}\supp(a)\]
\end{theorem}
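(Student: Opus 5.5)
The plan is to write down the inverse map explicitly and check that both composites are the identity. For a Thomason subset $V \subseteq \Spc(\mathcal{K})$, set
\[\mathcal{K}_V \coloneqq \{a \in \mathcal{K} \mid \supp(a) \subseteq V\}.\]
Both $\mathcal{I} \mapsto \supp(\mathcal{I})$ and $V \mapsto \mathcal{K}_V$ are clearly order-preserving, so it suffices to prove three things: each map lands where claimed, and the two composites are identities.

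First I would record the elementary properties of support, each of which follows directly from the definition of prime:
\begin{enumerate}
\item $\supp(0)=\emptyset$ and $\supp(\unit)=\Spc(\mathcal{K})$;
\item $\supp(a\oplus b)=\supp(a)\cup\supp(b)$ and $\supp(a[1])=\supp(a)$;
\item $\supp(c)\subseteq\supp(a)\cup\supp(b)$ for every distinguished triangle $a\to b\to c\to a[1]$;
\item $\supp(a\otimes b)=\supp(a)\cap\supp(b)$, and hence $\supp(a^{\otimes n})=\supp(a)$.
\end{enumerate}
These immediately show that $\mathcal{K}_V$ is a radical tt-ideal for any subset $V$.

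Next I need two topological facts. The first is that each open set $U(a)=\Spc(\mathcal{K})\setminus\supp(a)$ is quasi-compact. The second is that every closed subset of $\Spc(\mathcal{K})$ is quasi-compact, or at least that $\supp(a)$ is quasi-compact for each $a$. Both reduce to the same kind of argument: from an open cover with no finite subcover one produces a prime ideal by a Zorn's lemma argument, contradicting the cover. The core of that argument is the following prime-existence lemma:
\begin{quote}
if $\mathcal{J}$ is a tt-ideal and $\mathcal{S}$ is a $\otimes$-multiplicative collection of objects with $\mathcal{J}\cap\mathcal{S}=\emptyset$, then a tt-ideal maximal among those containing $\mathcal{J}$ and disjoint from $\mathcal{S}$ is prime.
\end{quote}
Primality of such a maximal ideal follows by the usual trick: if $a\otimes b\in\mathcal{P}$ with $a,b\notin\mathcal{P}$, then the ideals $\langle\mathcal{P},a\rangle$ and $\langle\mathcal{P},b\rangle$ both meet $\mathcal{S}$, and tensoring the witnesses together produces an element of $\mathcal{S}$ inside $\mathcal{P}$. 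A consequence I will use repeatedly is that every radical ideal is the intersection of the primes containing it. Here radicality is exactly what allows taking $\mathcal{S}=\{a^{\otimes n}\}$ for a given $a\notin\mathcal{I}$.

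With these in hand, $\supp(\mathcal{I})$ is Thomason, since it is a union of the sets $\supp(a)$, each closed with quasi-compact complement $U(a)$.

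For the composite $\mathcal{K}_{\supp(\mathcal{I})}=\mathcal{I}$, the inclusion $\supseteq$ is trivial. Conversely, suppose $\supp(a)\subseteq\bigcup_{b\in\mathcal{I}}\supp(b)$. Each $\supp(b)$ need not be open, so I would instead argue directly with primes. If $a\notin\mathcal{I}$, apply the prime-existence lemma with $\mathcal{S}=\{a^{\otimes n}\}$ to obtain a prime $\mathcal{P}\supseteq\mathcal{I}$ with $a\notin\mathcal{P}$. Then $\mathcal{P}\in\supp(a)$, yet $\mathcal{P}\notin\supp(b)$ for every $b\in\mathcal{I}$, a contradiction.

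For $\supp(\mathcal{K}_V)=V$, the inclusion $\subseteq$ is immediate. For the reverse, write $V=\bigcup_i V_i$ with $V_i$ closed and $\Spc(\mathcal{K})\setminus V_i$ quasi-compact; it suffices to show each such $V_i$ equals $\supp(c)$ for some object $c$. Since the sets $\supp(b)$ form a basis of closed subsets, $V_i=\bigcap_{b\in B}\supp(b)$ for some family $B$. Then the complement $\bigcup_{b\in B}U(b)$ is quasi-compact, so finitely many $b_1,\dots,b_n$ suffice, and
\[V_i=\supp(b_1\otimes\cdots\otimes b_n).\]
The main obstacle is the prime-existence lemma and the quasi-compactness statements built on it. This is the only place where Zorn's lemma and the interplay between tensor and triangulated structure enter; everything else is formal bookkeeping with the support properties listed above.
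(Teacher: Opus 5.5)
Your argument is correct. The paper does not prove this statement itself; it cites \cite[Theorem~4.10]{Balmer:tt-geometry}. Your route is essentially Balmer's original proof: the explicit inverse $V \mapsto \mathcal{K}_V$, the Zorn's lemma prime-existence lemma, radical ideals as intersections of primes, quasi-compactness of the opens $U(a)$, and the identification of closed sets with quasi-compact complement as single supports $\supp(b_1 \otimes \cdots \otimes b_n)$.
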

\begin{remark}\label{remark:rigiditymeansallidealsareradical}
    In almost all cases we are interested in, the tt-category $\mathcal{K}$ in question is \emph{rigid}, meaning every object is strongly dualisable. In this case, \emph{every} tt-ideal is automatically radical, since a standard argument involving the unit-counit adjunction shows $a$ is a retract of $a \otimes a^\vee \otimes a$ (here $a^\vee$ denotes the dual of $a$).
\end{remark}
\begin{remark}\label{remark:spectralspace}
    The spectrum of a tt-category is always a \emph{spectral space}: these are the topological spaces homeomorphic to Zariski spectra of commutative rings. Indeed, \Cref{theorem:classificationofttideals} tells us that $\Spc(\mathcal{K})$ corresponds, under Stone duality, to the (Hochster dual of the) coherent frame of radical tt-ideals of $\mathcal{K}$.
\end{remark}

\begin{remark}\label{remark:functorialityofspectrum}
    The Balmer spectrum is functorial with respect to tt-functors between tt-categories. Explicitly, letting $F:\mathcal{K} \longrightarrow \mathcal{L}$ be such, we have an induced continuous map
    \[\begin{tikzcd}
        \Spc(F):\Spc(\mathcal{L}) \arrow[r] & \Spc(\mathcal{K}); \ \mathcal{P} \longmapsto F^{-1}(\mathcal{P}).
    \end{tikzcd}\]
\end{remark}

The following result \cite[Proposition~3.11]{Balmer:tt-geometry} will be used throughout our computations of spectra in later sections.
\begin{prop}[Balmer]\label{prop:Verdierlocgiveshomeo}
    Suppose $\mathcal{I} \subseteq \mathcal{K}$ is a tt-ideal, and denote by $q:\mathcal{K} \longrightarrow \mathcal{K}/\mathcal{I}$ the associated Verdier localisation. Then $\Spc(q)$ induces a homeomorphism between $\Spc(\mathcal{K}/\mathcal{I})$ and its image $\{\mathcal{P} \in \Spc(\mathcal{K}) \mid \mathcal{P} \supseteq \mathcal{I}\}$.
\end{prop}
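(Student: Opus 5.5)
The plan is to reduce the statement to the universal property of the Verdier quotient. First we check that the map is well defined. For a prime $\mathcal{Q}\subseteq \mathcal{K}/\mathcal{I}$, the preimage $q^{-1}(\mathcal{Q})$ is a prime of $\mathcal{K}$ by \Cref{remark:functorialityofspectrum}. It contains $\mathcal{I}$, because $q(\mathcal{I})\simeq 0$ lies in every tt-ideal. Continuity of $\Spc(q)$ is part of functoriality, since $\Spc(q)^{-1}(\supp(a))=\supp(q(a))$.

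Next, for injectivity and surjectivity onto $\{\mathcal{P}\supseteq\mathcal{I}\}$, we set up a correspondence between tt-ideals. Send a tt-ideal $\mathcal{J}\subseteq\mathcal{K}$ containing $\mathcal{I}$ to $q(\mathcal{J})$, meaning the full subcategory of objects isomorphic in $\mathcal{K}/\mathcal{I}$ to some $q(a)$ with $a\in\mathcal{J}$. We claim that $q^{-1}(q(\mathcal{J}))=\mathcal{J}$. If $q(b)\simeq q(a)$ with $a\in\mathcal{J}$, the isomorphism is a roof $b\leftarrow c\rightarrow a$ whose two arrows have cones in $\mathcal{I}\subseteq\mathcal{J}$. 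Thickness of $\mathcal{J}$, applied via the two distinguished triangles, then gives first $c\in\mathcal{J}$ and then $b\in\mathcal{J}$.

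We then verify that $q(\mathcal{J})$ is a thick tensor ideal. Closure under tensoring holds because every object of $\mathcal{K}/\mathcal{I}$ is of the form $q(x)$. Closure under cones holds because every morphism $q(a)\to q(a')$ is a fraction, so its cone is isomorphic to $q$ of the cone of a morphism $c\to a'$ in $\mathcal{K}$ with $c\in\mathcal{J}$. Closure under direct summands is the one point needing care. If $q(b)\oplus y\simeq q(a)$, then $q(b\oplus\Sigma b)\simeq \cone(e)$ for a suitable idempotent $e$ on $q(a)$, and the cone computation shows $b\oplus\Sigma b\in\mathcal{J}$, hence $b\in\mathcal{J}$. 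Since every object of $\mathcal{K}/\mathcal{I}$ is already of the form $q(x)$, we do not need any idempotent completion here.

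With this in hand, the remaining steps are short. Primeness transfers in both directions because $q(a)\otimes q(b)=q(a\otimes b)$. Hence $\mathcal{P}\mapsto q(\mathcal{P})$ is an inverse bijection onto $\{\mathcal{P}\supseteq\mathcal{I}\}$, and this gives injectivity and surjectivity. For the homeomorphism onto the image, we note that the closed sets of $\Spc(\mathcal{K}/\mathcal{I})$ are intersections of the sets $\supp(q(a))$. Under the bijection, $\supp(q(a))$ corresponds to $\supp(a)\cap\operatorname{im}$, so the image map is closed onto its image. The main obstacle is the direct summand step, together with the bookkeeping of the calculus of fractions. Everything else follows formally from $q$ being an essentially surjective monoidal triangulated functor with kernel exactly $\mathcal{I}$.
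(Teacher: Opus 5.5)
Your argument is correct and follows essentially Balmer's original proof, which the paper cites rather than reproduces: the mutually inverse assignments $\mathcal{J}\mapsto q(\mathcal{J})$ and $\mathcal{Q}\mapsto q^{-1}(\mathcal{Q})$ between tt-ideals of $\mathcal{K}$ containing $\mathcal{I}$ and tt-ideals of $\mathcal{K}/\mathcal{I}$, their restriction to primes, and the identity $\Spc(q)(\supp(q(a)))=\supp(a)\cap\operatorname{im}\Spc(q)$ for the topology. The only difference is that you prove the standard thick-subcategory correspondence by hand rather than quoting it, using $\ker q=\mathcal{I}$ (by thickness) and the trick $q(b)\oplus\Sigma q(b)\simeq\cone(e)$ to handle direct summands.
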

\begin{remark}\label{remark:principalquotientgiveshomeoontoopen}
    A particular instance of the above result which we will be interested in is the case where $\mathcal{I}$ is the principal ideal $\ideal{a}$, for some $a \in \mathcal{K}$. In this case, $\Spc(q)$ above induces a homeomorphism onto the elementary open $U(a) \coloneqq \Spc(\mathcal{K}) \setminus \supp(a)$.
\end{remark}
In algebra, nilpotence plays a central role in the study of spectra; the tt-landscape is no different, as the following result \cite[Theorem~1.3]{Balmer:Conservative} demonstrates.
\begin{theorem}[Balmer]\label{theorem:nilfaithfulfunctors}
    Suppose $F:\mathcal{K} \longrightarrow \mathcal{L}$ is a tt-functor between tt-categories, and assume that $\mathcal{K}$ is rigid. If $F$ detects $\otimes$-nilpotence of morphisms,\footnote{Sometimes such functors are referred to as \emph{nilfaithful}. Explicitly, this means for all morphisms $f$ in $\mathcal{K}$, $F(f)=0$ only if $f^{\otimes n}=0$ for some $n \geq 1$.} then $\Spc(F)$ is surjective.
\end{theorem}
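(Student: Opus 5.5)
The plan is to construct, for a given prime $\mathcal{P} \in \Spc(\mathcal{K})$, a prime $\mathcal{Q} \in \Spc(\mathcal{L})$ with $F^{-1}(\mathcal{Q}) = \mathcal{P}$. The tool is the tt-analogue of the prime avoidance lemma, which I would prove first by a routine Zorn's lemma argument. Suppose $\mathcal{J} \subseteq \mathcal{L}$ is a tt-ideal and $\mathcal{S} \subseteq \mathcal{L}$ is a $\otimes$-multiplicative collection of objects containing $\unit$ with $\mathcal{J} \cap \mathcal{S} = \varnothing$. Then there is a prime $\mathcal{Q} \supseteq \mathcal{J}$ with $\mathcal{Q} \cap \mathcal{S} = \varnothing$: a maximal tt-ideal containing $\mathcal{J}$ and avoiding $\mathcal{S}$ is prime.

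I would apply this with $\mathcal{S} = F(\mathcal{K} \setminus \mathcal{P})$ and $\mathcal{J} = \ideal{F(\mathcal{P})}$. The set $\mathcal{S}$ is multiplicative because $\mathcal{P}$ is prime and $F$ is monoidal. The resulting $\mathcal{Q}$ satisfies $F^{-1}(\mathcal{Q}) \supseteq \mathcal{P}$ since $\mathcal{Q} \supseteq \mathcal{J}$, and $F^{-1}(\mathcal{Q}) \subseteq \mathcal{P}$ since $\mathcal{Q}$ avoids $\mathcal{S}$. So everything reduces to the following claim: if $s \notin \mathcal{P}$, then $F(s) \notin \ideal{F(\mathcal{P})}$. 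Because $\mathcal{P}$ is closed under finite sums, any membership $F(s) \in \ideal{F(\mathcal{P})}$ already holds in $\ideal{F(x)}$ for a single $x \in \mathcal{P}$. It therefore suffices to show that $F(s) \in \ideal{F(x)}$ with $x \in \mathcal{P}$ forces $s^{\otimes m} \in \mathcal{P}$ for some $m$. That contradicts primality.

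The key input, and the step I expect to be the main obstacle, is a morphism-level description of the principal ideal $\ideal{x}$ in a rigid category. Let $\eta : \unit \to x \otimes x^\vee$ be the coevaluation, and let $\xi_x : w_x \to \unit$ be its fibre, so that $w_x \to \unit \to x \otimes x^\vee$ is exact. I would prove that, for any $y$,
\[ y \in \ideal{x} \iff \xi_x^{\otimes n} \otimes y = 0 \text{ for some } n \geq 1. \]

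The direction ($\Leftarrow$) goes as follows. If $\xi_x^{\otimes n} \otimes y = 0$, then $y$ is a direct summand of $\cone(\xi_x^{\otimes n}) \otimes y$. By the octahedral axiom applied inductively, $\cone(\xi_x^{\otimes n})$ lies in $\ideal{\cone(\xi_x)} = \ideal{x \otimes x^\vee} \subseteq \ideal{x}$.

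The direction ($\Rightarrow$) goes as follows. By the triangle identities, $\eta \otimes x$ is split mono, so $\xi_x \otimes x = 0$. The class of objects $y$ killed by some tensor power of $\xi_x$ is a thick tensor ideal: it is closed under tensoring and summands, and if two objects of a triangle are killed by $\xi^{\otimes a}$ and $\xi^{\otimes b}$, the third is killed by $\xi^{\otimes(a+b)}$ by the standard diagram chase. This class contains $x$, hence contains $\ideal{x}$.

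I also need that the construction is natural, namely $F(\xi_x) \cong \xi_{F(x)}$. This holds because $F$ is a tt-functor and so preserves duals and coevaluations.

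With the key lemma in hand, the argument concludes as follows. Suppose $F(s) \in \ideal{F(x)}$. Then $\xi_{F(x)}^{\otimes n} \otimes F(s) = 0$, that is, $F(\xi_x^{\otimes n} \otimes s) = 0$. Since $F$ detects $\otimes$-nilpotence, $(\xi_x^{\otimes n} \otimes s)^{\otimes m} = 0$ for some $m$. Up to the symmetry isomorphism this morphism is $\xi_x^{\otimes nm} \otimes s^{\otimes m}$. The lemma then gives $s^{\otimes m} \in \ideal{x} \subseteq \mathcal{P}$. Since $\mathcal{P}$ is prime, $s \in \mathcal{P}$, which is a contradiction.

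The only delicate points are the octahedral bookkeeping in the lemma, namely that the killing class is closed under extensions and that $\cone(\xi^{\otimes n}) \in \ideal{\cone(\xi)}$. Everything else is formal.
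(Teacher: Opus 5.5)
Your proposal is correct and is essentially Balmer's original argument in \cite{Balmer:Conservative}, which the paper cites rather than reproves: prime avoidance applied to $\mathcal{S}=F(\mathcal{K}\setminus\mathcal{P})$ and $\ideal{F(\mathcal{P})}$, combined with the description of $\ideal{x}$ as the objects killed by some tensor power of the fibre $\xi_x$ of the coevaluation. One point is worth stating explicitly: $\mathcal{L}$ is not assumed rigid, but the direction ($\Rightarrow$) you use in $\mathcal{L}$ only needs the object $F(x)$ to be rigid, and you can even work with $F(\xi_x)$ directly, since $F(\eta\otimes x)$ is still split mono, so nothing breaks.
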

\begin{remark}\label{remark:inclusionsinducesurjections}
    A particularly useful instance of \Cref{theorem:nilfaithfulfunctors} is when $F:\mathcal{K} \hookrightarrow \mathcal{L}$ is the inclusion of a full rigid tt-subcategory of the tt-category $\mathcal{L}$: in this case the induced map on spectra (given by restriction) is surjective \cite[Cor.~1.8]{Balmer:Conservative}.
\end{remark}

We will utilise the following proposition characterising the images of the induced map on spectra of tt-functors: it is \cite[Cor.~7.8]{Balmer:ThreeRealArtinTates}.
\begin{prop}[Balmer--Gallauer]\label{prop:detectingnilpotenceonanobject}
    Let $F:\mathcal{K} \longrightarrow \mathcal{L}$ be a tt-functor between tt-categories, and assume $\mathcal{K}$ is rigid. Suppose $F$ detects $\otimes$-nilpotence of morphisms on an object $x \in \mathcal{K}$, that is, for every morphism $f:a \longrightarrow b \in \mathcal{K}$, $F(f)=0$ only if $f^{\otimes n} \otimes \id_x=0$ for some $n \geq 1$.
    Then the image of $\Spc(F)$ in $\Spc(\mathcal{K})$ contains $\supp(x)$.
\end{prop}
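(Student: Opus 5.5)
The plan is to prove the statement pointwise. Fix $\mathcal{P} \in \supp(x)$, so $x \notin \mathcal{P}$. We construct a prime $\mathcal{Q} \in \Spc(\mathcal{L})$ with $F^{-1}(\mathcal{Q}) = \mathcal{P}$.

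\emph{Reduction to a disjointness statement.} The set $S \coloneqq \mathcal{K} \setminus \mathcal{P}$ is $\otimes$-multiplicative because $\mathcal{P}$ is prime. Hence $F(S)$ is $\otimes$-multiplicative in $\mathcal{L}$ and contains $\unit$. Let $\mathcal{J} \coloneqq \ideal{F(\mathcal{P})}$ be the tt-ideal of $\mathcal{L}$ generated by $F(\mathcal{P})$. Suppose $\mathcal{J} \cap F(S) = \emptyset$. Then Balmer's prime-existence lemma (a tt-ideal disjoint from a $\otimes$-multiplicative set is contained in a prime disjoint from it) gives a prime $\mathcal{Q} \supseteq \mathcal{J}$ with $\mathcal{Q} \cap F(S) = \emptyset$. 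This $\mathcal{Q}$ satisfies $F^{-1}(\mathcal{Q}) \supseteq \mathcal{P}$ and $F^{-1}(\mathcal{Q}) \cap S = \emptyset$, so $F^{-1}(\mathcal{Q}) = \mathcal{P}$. It therefore remains to show that no $F(s)$ with $s \in S$ lies in $\mathcal{J}$.

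\emph{Nilpotence characterisation of ideal membership.} For an object $p$ of a rigid tt-category, complete the coevaluation $\eta_p \colon \unit \to p \otimes p^\vee$ to an exact triangle $c_p \xrightarrow{g_p} \unit \xrightarrow{\eta_p} p\otimes p^\vee$. We will use the standard fact that an object $y$ lies in $\ideal{p}$ if and only if $g_p^{\otimes n} \otimes \id_y = 0$ for some $n \geq 1$.
\begin{itemize}
\item For ``if'': $\cone(g_p^{\otimes n}) \in \ideal{p}$ by the octahedral axiom and induction. If $g_p^{\otimes n}\otimes \id_y=0$, then $y$ is a summand of $\cone(g_p^{\otimes n})\otimes y$.
\item For ``only if'': the class of such $y$ is a tt-ideal, and it contains $p$ because $g_p \otimes \id_p = 0$, as $\eta_p \otimes \id_p$ is split mono by the triangle identities.
\end{itemize}
Since $F$ preserves duals and coevaluations, we have $F(g_p) = g_{F(p)}$. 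Moreover, $\mathcal{J} = \bigcup_{p \in \mathcal{P}} \ideal{F(p)}$, because $\mathcal{P}$ is closed under finite sums and $\ideal{F(p)} \cup \ideal{F(p')} \subseteq \ideal{F(p\oplus p')}$. This union is already a tt-ideal. (Rigidity of $\mathcal{L}$ on the image of $F$ is automatic.)

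\emph{Conclusion.} Suppose for contradiction that $F(s) \in \mathcal{J}$ for some $s \in S$. Then $F(s) \in \ideal{F(p)}$ for some $p \in \mathcal{P}$, so $F(g_p^{\otimes n}\otimes \id_s) = 0$ for some $n$. Apply the hypothesis to the morphism $f = g_p^{\otimes n} \otimes \id_s$. It gives $m$ with
\[ f^{\otimes m}\otimes \id_x \;=\; g_p^{\otimes nm}\otimes \id_{s^{\otimes m}\otimes x} \;=\; 0 \]
(up to symmetry isomorphisms). By the characterisation, $s^{\otimes m}\otimes x \in \ideal{p} \subseteq \mathcal{P}$. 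Primality of $\mathcal{P}$ then forces $s \in \mathcal{P}$ or $x \in \mathcal{P}$, a contradiction.

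The main obstacle is the ideal-membership characterisation via nilpotence of $g_p$ in the second step. It is where rigidity enters and must be checked carefully, including compatibility of $F$ with $\eta_p$. The rest is formal.
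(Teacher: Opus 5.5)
Your proof is correct, but it is not the route the paper points to. The paper gives no argument of its own here; it imports the statement from \cite[Cor.~7.8]{Balmer:ThreeRealArtinTates}. It later says that joint detection for $(L_\tau,t)$ follows ``by the same argument as in the proof of'' that corollary, which indicates the cited proof is a reduction rather than a direct argument.

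\emph{The cited route.} One first shows that $F$, together with the Verdier quotient $q\colon\mathcal{K}\to\mathcal{K}/\langle x\rangle$, jointly detects $\otimes$-nilpotence. Suppose $F(f)=0$ and $q(f)=0$. Then $f$ factors through some $y\in\langle x\rangle$. The objects on which $f$ is $\otimes$-nilpotent form a tt-ideal containing $x$, so $f^{\otimes n}\otimes\id_y=0$ for some $n$. Hence $f^{\otimes(n+1)}=f^{\otimes n}\otimes f$ factors through $f^{\otimes n}\otimes\id_y$ and vanishes. Balmer's surjectivity theorem \cite{Balmer:Conservative}, applied to $(F,q)\colon\mathcal{K}\to\mathcal{L}\times\mathcal{K}/\langle x\rangle$, then shows that $\Spc(\mathcal{L})\sqcup\Spc(\mathcal{K}/\langle x\rangle)\to\Spc(\mathcal{K})$ is surjective. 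Since $\Spc(q)$ has image exactly $U(x)$, the image of $\Spc(F)$ must contain $\supp(x)$.

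\emph{Your route.} You instead open up the proof of Balmer's theorem itself. You use the prime-existence lemma for the multiplicative set $F(\mathcal{K}\setminus\mathcal{P})$ and the description of $\langle p\rangle$ via the fibre $g_p$ of the coevaluation. You insert $x$ at the single point where nilpotence detection is invoked, and primality of $\mathcal{P}$ disposes of it at the end.

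\emph{What each buys.} Your version is self-contained and pointwise. It needs neither Balmer's theorem as a black box nor the fact that a morphism killed by a Verdier quotient factors through the kernel. The cited route is shorter once Balmer's theorem is available. It also produces the intermediate joint-detection statement, which is exactly what the paper reuses for $(L_\tau,t)$ on Tate and Artin--Tate motives.
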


\subsection{Comparison maps}
For a tt-category $\mathcal{K}$, there is a canonical continuous \emph{comparison map} $\varrho$ from $\Spc(\mathcal{K})$ to the Zariski spectrum $\Spec(R_\mathcal{K})$, where
\[R_\mathcal{K} \coloneqq \End_\mathcal{K}(\unit)\]
is the endomorphism ring of the unit in $\mathcal{K}$. It was introduced in \cite{Balmer:Spectra}, and is characterised by the property
\[\varrho^{-1}(Z(s))=\supp(\cone(s))\]
for each $s \in R_\mathcal{K}$.

Since its conception in \emph{loc. cit.}, it was known that one can define a \emph{graded} comparison map $\varrho_u^\bullet$ with respect to an invertible object $u \in \Pic(\mathcal{K})$ of $\mathcal{K}$, taking values in the homogeneous spectrum of the graded central ring with respect to $u$:
\begin{prop}[Balmer]\label{prop:gradedcomparisonmap}
    Let $u \in \Pic(\mathcal{K})$ be an invertible object, and denote by $R_u^*$ the graded endomorphism ring of the unit $\unit$ with respect to $u$:
    \[R_u^* \coloneqq \oplus_{n \in \Z}\Hom_\mathcal{K}(\unit,u^{\otimes n}).\]
    There is an associated continuous \emph{graded comparison map}
    \[\begin{tikzcd}
    \varrho_u^\bullet:\Spc(\mathcal{K}) \arrow[r] & \Spec^h(R_u^*)
    \end{tikzcd}\]
    given by
    \[\varrho_u^\bullet: \mathcal{P} \longmapsto \ideal{f \in R_u^* \mid f \text{ homogeneous and } \cone(f) \notin \mathcal{P}}.\]
\end{prop}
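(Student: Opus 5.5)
The plan is to reduce everything to the Verdier quotient $q:\mathcal{K} \to \mathcal{K}/\mathcal{P}$. Write $\bar f$ for the image of a homogeneous $f:\unit \to u^{\otimes n}$. Then $\cone(f) \in \mathcal{P}$ if and only if $\cone(\bar f) \simeq 0$, i.e. if and only if $\bar f$ is an isomorphism in $\mathcal{K}/\mathcal{P}$. So the set
\[S_\mathcal{P} \coloneqq \{f \in R_u^* \text{ homogeneous} \mid \cone(f) \in \mathcal{P}\}\]
consists exactly of the homogeneous elements that become units in the graded ring $R^*_{\bar u}(\mathcal{K}/\mathcal{P})$. It is automatically multiplicative, since a composite of isomorphisms is an isomorphism; concretely, the octahedral axiom gives a triangle $\cone(f) \to \cone(gf) \to \cone(g)\otimes u^{\otimes m}$, where $gf$ denotes $(u^{\otimes m}\otimes g)\circ f$. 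Similarly, if $gf \in S_\mathcal{P}$ then $\bar g \bar f$ is invertible, so (using graded-commutativity up to sign, which holds because $u$ is invertible) both $\bar f$ and $\bar g$ are invertible. Hence $S_\mathcal{P}$ is saturated.

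The main step is to show that the homogeneous elements of $\varrho^\bullet_u(\mathcal{P})$ are exactly the homogeneous $f \notin S_\mathcal{P}$. Equivalently, a homogeneous combination $\sum a_i f_i$ with all $f_i \notin S_\mathcal{P}$ is again not in $S_\mathcal{P}$; by saturation it suffices to treat sums $s+t$ of homogeneous non-units of the same degree. The argument I would follow is Balmer's local-category trick. The quotient $\mathcal{K}/\mathcal{P}$ is local, since its zero ideal is prime. For any homogeneous $s$, the morphism $s \otimes \id_{\cone(s)}$ is $\otimes$-nilpotent (indeed $s^{\otimes 2}\otimes \id_{\cone(s)}=0$). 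If $\bar s,\bar t$ are non-units, then $\cone(\bar s)$ and $\cone(\bar t)$ are nonzero, so $c \coloneqq \cone(\bar s)\otimes\cone(\bar t) \neq 0$ by primality of $\langle 0\rangle$. Both $\bar s$ and $\bar t$ act $\otimes$-nilpotently on $c$, so by the binomial expansion (signs are harmless) $\bar s+\bar t$ does too. A unit cannot act nilpotently on a nonzero object, so $\bar s + \bar t$ is not a unit. I expect this to be the only genuinely non-formal point, and the care needed is in handling the graded sign conventions and the identification $u^{\otimes m}\otimes u^{\otimes n}\simeq u^{\otimes m+n}$.

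Granting this, $\varrho^\bullet_u(\mathcal{P})$ is a homogeneous ideal whose homogeneous part is the complement of the saturated multiplicative set $S_\mathcal{P}$. It is proper because $1 \in S_\mathcal{P}$, since $\cone(\id)=0$. It is prime because, for homogeneous $f,g$, $fg \notin S_\mathcal{P}$ forces $f \notin S_\mathcal{P}$ or $g\notin S_\mathcal{P}$; this is exactly the saturation shown above, and primeness of a homogeneous ideal can be tested on homogeneous elements. Finally, for continuity it suffices to check preimages of the basic opens $D(f)$ with $f$ homogeneous. By the description above,
\[(\varrho^\bullet_u)^{-1}(D(f))=\{\mathcal{P} \mid f \in S_\mathcal{P}\}=\{\mathcal{P}\mid \cone(f)\in\mathcal{P}\}=U(\cone(f)),\]
which is open.
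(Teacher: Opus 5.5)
The paper gives no proof of its own here and simply cites Balmer. Your argument is correct and is essentially Balmer's original one: pass to $\mathcal{K}/\mathcal{P}$, where $\langle 0\rangle$ is prime; use $s^{\otimes 2}\otimes \id_{\cone(s)}=0$ to see that the homogeneous non-units form an ideal; then pull this back along $R_u^*\to R_{\bar u}^*$.

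One small slip: primality of $\varrho_u^\bullet(\mathcal{P})$ follows from the contrapositive of the \emph{multiplicativity} of $S_\mathcal{P}$, not of its saturation. Saturation is what you correctly used earlier to show that the complement of $S_\mathcal{P}$ absorbs multiplication by homogeneous elements.
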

\begin{remark}\label{remark:naturalityofcomparisonmap}
    The comparison map is functorial with respect to tt-functors, that is, $\varrho$ defines a natural transformation from $\Spc(-)$ to $\Spec(R_{(-)})$. At least when $u=\Sigma \unit$, the same holds for $\varrho_{\Sigma \unit}^\bullet$ and the homogeneous spectrum.
\end{remark}
In practise one would like to be able to speak about \emph{multi}graded comparison maps with respect to a finite collection of $\otimes$-invertible objects  $u_1,\ldots, u_r \in \Pic(\mathcal{K})$. In general this is a delicate matter, but for our purposes this was resolved by Dell'Ambrogio--Stevenson \cite{dell2014even} and Dell'Ambrogio \cite[Remark~A.22]{Gallauer:Periods}:
\begin{prop}[Dell'Ambrogio--Stevenson, Dell'Ambrogio]\label{prop:multigradedcomparisonmap}
    Let $u_1,\ldots,u_r \in \Pic(\mathcal{K})$ be invertible objects; denote by $R_{u_1,\ldots,u_r}$ the multigraded ring
    \[R_{u_1,\ldots,u_r} \coloneqq \oplus_{(n_1,\ldots,n_r) \in \Z^r}\Hom_\mathcal{K}(\unit,u_1^{\otimes n_1} \otimes \dots \otimes u_r^{\otimes n_r}).\]
    Then there is a well-defined multigraded comparison map
    \[\begin{tikzcd}
        \varrho^h:\Spc(\mathcal{K}) \arrow[r] & \Spec^h(R_{u_1,\ldots,u_r})
    \end{tikzcd}\]
    given by
    \[\varrho^h:\mathcal{P} \longmapsto \ideal{f \in R_{u_1,\ldots,u_r} \mid f \text{ homogeneous and } \cone(f) \notin \mathcal{P}}\]
\end{prop}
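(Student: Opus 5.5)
The plan is to reduce everything to the single-object behaviour of homogeneous elements in the local tt-category $\mathcal{K}/\mathcal{P}$, following Balmer's original argument for $\varrho$ and $\varrho^\bullet_u$. The one place where multigrading needs extra care is the ring structure itself, and that step is addressed last below.

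\textbf{The multigraded ring.} First I fix the ring structure on $R_{u_1,\ldots,u_r}$. For each multidegree $\underline{n}$, choose an object $u^{\underline n}:=u_1^{\otimes n_1}\otimes\cdots\otimes u_r^{\otimes n_r}$, where negative powers mean duals. Choose isomorphisms $u^{\underline n}\otimes u^{\underline m}\cong u^{\underline n+\underline m}$, and define the product of $f:\unit\to u^{\underline n}$ and $g:\unit\to u^{\underline m}$ as $f\otimes g$ followed by this isomorphism.

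For this to be associative and $\varepsilon$-commutative, the choices must be coherent. Following \cite[Remark~A.22]{Gallauer:Periods}, I use the fact that the switch on $u_i\otimes u_i$ acts by a central unit $\varepsilon_i$ with $\varepsilon_i^2=1$. In our cases $u_i\in\{\unit(1),\unit[1]\}$, so $\varepsilon_i=\pm1$. This makes $R$ $\varepsilon$-graded-commutative with respect to a bilinear sign form on $\Z^r$. Consequently homogeneous ideals, homogeneous primes and $\Spec^h$ behave as in the commutative case: a product of homogeneous elements lies in a homogeneous prime only if one factor does.

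\textbf{The ideal is a homogeneous prime.} Fix $\mathcal{P}$ and let $q:\mathcal{K}\to\mathcal{K}/\mathcal{P}$ be the quotient. A homogeneous $f$ has $\cone(f)\in\mathcal{P}$ if and only if $q(f)$ is an isomorphism. Write $S$ for the set of homogeneous $f$ with $\cone(f)\notin\mathcal{P}$, so $S$ consists of the homogeneous elements that become non-invertible in the local category $\mathcal{K}/\mathcal{P}$. I will show that $S$ is exactly the set of homogeneous elements of a proper prime ideal.

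\emph{Properness.} $1\notin S$, since $\cone(1)=0$.

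\emph{Ideal property.} The octahedral axiom gives $\cone(gf)\in\thick\langle\cone(g),\cone(f)\rangle$ after twisting by invertibles, so if both cones lie in $\mathcal{P}$ then so does $\cone(gf)$. Conversely, if $gf$ is inverted then so is $f$, up to twisting. So $S$ is closed under multiplication by arbitrary homogeneous elements, and primeness (if $fg\in S$ then $f\in S$ or $g\in S$) follows.

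\emph{Closure under sums.} This is the key point. Take $f,g\in S$ of the same degree. Balmer's lemma, for rigid $\mathcal{K}$ or more generally as in \cite{Balmer:Spectra}, says that $f\otimes\id_{\cone(f)}$ is $\otimes$-nilpotent. Since $\mathcal{P}$ is prime and $\cone(f),\cone(g)\notin\mathcal{P}$, we have $c:=\cone(f)\otimes\cone(g)\notin\mathcal{P}$. Both $f^{\otimes a}$ and $g^{\otimes b}$ act as zero on $c$. Expanding $(f+g)^{a+b}$, which is legitimate by $\varepsilon$-commutativity because signs only rescale the terms, shows that $(f+g)^{a+b}$ acts as zero on $c$. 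If $q(f+g)$ were invertible, then $q(c)=0$, i.e. $c\in\mathcal{P}$, a contradiction. Hence $f+g\in S$.

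Together these show that $\varrho^h(\mathcal{P})=\langle S\rangle$ is a proper homogeneous prime whose homogeneous elements are precisely $S$.

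\textbf{Continuity.} By the previous step, for homogeneous $f$ we have $(\varrho^h)^{-1}(D(f))=\{\mathcal{P}\mid\cone(f)\in\mathcal{P}\}=U(\cone(f))$, which is open. The sets $D(f)$ form a basis of $\Spec^h$, so $\varrho^h$ is continuous.

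\textbf{Main obstacle.} I expect the hard step to be the coherence of the multiplicative structure in the first paragraph. One has to ensure that the product is independent of choices up to the sign $\varepsilon$, and that $\varepsilon$ is given by a symmetric bilinear form, so that the binomial expansion and the graded-prime formalism apply. I would handle this by invoking Dell'Ambrogio's construction of the central multigraded ring directly, rather than reproving it.
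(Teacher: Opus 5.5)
Your proposal is correct. The paper gives no argument of its own here. It attributes the whole statement to Dell'Ambrogio--Stevenson \cite{dell2014even} and Dell'Ambrogio \cite[Remark~A.22]{Gallauer:Periods}. You cite only the part the paper calls delicate: the coherent choice of isomorphisms $u^{\underline n}\otimes u^{\underline m}\cong u^{\underline n+\underline m}$ that makes $R_{u_1,\ldots,u_r}$ an $\varepsilon$-commutative $\Z^r$-graded ring. The rest you reprove by Balmer's original argument from \cite{Balmer:Spectra}:
\begin{itemize}
\item a homogeneous $f$ lies in $\varrho^h(\mathcal{P})$ iff it is not invertible in $\mathcal{K}/\mathcal{P}$;
\item sums are handled by $\otimes$-nilpotence of $f$ and $g$ on $\cone(f)\otimes\cone(g)\notin\mathcal{P}$;
\item continuity follows from $(\varrho^h)^{-1}(D(f))=U(\cone(f))$.
\end{itemize}

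This is in substance the argument behind the cited sources. The graded $2$-ring formalism of \cite{dell2014even} additionally gives a choice-free construction and naturality in tt-functors (their Theorem~1.1), which the paper relies on in \Cref{lemma:homeoofcomparisonmapdescendstosubcategories}. Your element-wise argument has its own advantage: it shows that the set $S$ of homogeneous elements of $\varrho^h(\mathcal{P})$ is independent of all choices. Coherence is needed only to know that $\langle S\rangle$ is an ideal in a genuine ring.

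Three small remarks.
\begin{itemize}
\item The statement is for arbitrary $u_i$, so restricting to $u_i\in\{\unit(1),\unit[1]\}$ with $\varepsilon_i=\pm1$ is unnecessary. In general $\varepsilon_i\in\End(\unit)^\times$ with $\varepsilon_i^2=1$, and your argument uses nothing more than that.
\item The sum step does not need $\varepsilon$-commutativity at all. By bilinearity of $\otimes$, $(f+g)^{\otimes N}$ expands into terms that agree, up to symmetry isomorphisms, with $f^{\otimes i}\otimes g^{\otimes j}\otimes(-)$.
\item For ``if $gf$ is inverted then so is $f$'', spell out that $g\otimes f=(g\otimes\mathrm{id})\circ f=(\mathrm{id}\otimes f)\circ g$. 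Hence $q(f)$ is both split mono and split epi after untwisting, and therefore an isomorphism.
\end{itemize}
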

\begin{remark}\label{remark:justifyingnotationforcomparisonmap}
    In the literature, the notation $\rho$ is usually used instead of $\varrho$ to denote the comparison maps described above. Since this clashes with our notation for the pure symbol $\rho=\{-1\} \in K^M_1(k)$ in the Milnor K-theory of a field $k$, we have chosen to adopt this alternative notation. This is also the convention adopted in \cite{Balmer:ThreeRealArtinTates}.
\end{remark}
Next, we prove a lemma which will be useful in studying spectra of categories whose (generalised) comparison map is a homeomorphism.
\begin{lemma}\label{lemma:homeoofcomparisonmapdescendstosubcategories}
    Let $\mathcal{K} \subseteq \mathcal{L}$ be a full rigid tt-subcategory of a tt-category $\mathcal{L}$. Let $R_{u_1,\ldots,u_r}$ denote the graded central ring with respect to the collection $u_1,\ldots,u_r \in \Pic(\mathcal{K}) \subseteq \Pic(\mathcal{L})$ of $\otimes$-invertible objects. Suppose the comparison map with respect to this collection is a homeomorphism for $\mathcal{L}$, that is,
    \[\begin{tikzcd}
        \varrho^h_\mathcal{L}:\Spc(\mathcal{L}) \arrow[r, "\sim"] & \Spec^h(R_{u_1,\ldots,u_r})
    \end{tikzcd}\]
    is a homeomorphism.
    Then the comparison map for $\mathcal{K}$ is also a homeomorphism:
    \[\begin{tikzcd}
        \varrho^h_\mathcal{K}:\Spc(\mathcal{K}) \arrow[r, "\sim"] & \Spec^h(R_{u_1,\ldots,u_r}).
    \end{tikzcd}\]
\end{lemma}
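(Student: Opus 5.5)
Let $\iota:\mathcal{K}\hookrightarrow\mathcal{L}$ be the inclusion and $r=\Spc(\iota):\Spc(\mathcal{L})\to\Spc(\mathcal{K})$, $\mathcal{P}\mapsto\mathcal{P}\cap\mathcal{K}$, the restriction map. The plan is to show that $r$ is a homeomorphism; the claim for $\mathcal{K}$ then follows because the comparison maps are compatible with $r$.

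\emph{Step 1: naturality.} The graded ring $R_{u_1,\ldots,u_r}$ is literally the same for $\mathcal{K}$ and $\mathcal{L}$, since $\mathcal{K}$ is full and contains every $u_1^{\otimes n_1}\otimes\dots\otimes u_r^{\otimes n_r}$. For a homogeneous $f$, $\cone(f)$ lies in $\mathcal{K}$ because $\mathcal{K}$ is triangulated. For $\mathcal{P}\in\Spc(\mathcal{L})$ we therefore have $\cone(f)\notin\mathcal{P}$ if and only if $\cone(f)\notin\mathcal{P}\cap\mathcal{K}$. This gives directly from the formula in \Cref{prop:multigradedcomparisonmap} that
\[\varrho^h_\mathcal{K}\circ r=\varrho^h_\mathcal{L}.\]

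\emph{Step 2: $r$ is bijective.} By \Cref{remark:inclusionsinducesurjections}, $r$ is surjective, since $\mathcal{K}$ is a full rigid tt-subcategory. It is injective because $\varrho^h_\mathcal{K}\circ r=\varrho^h_\mathcal{L}$ is injective. So $r$ is a continuous bijection. Then $\varrho^h_\mathcal{K}=\varrho^h_\mathcal{L}\circ r^{-1}$ is a bijection, and it is continuous by \Cref{prop:multigradedcomparisonmap}.

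\emph{Step 3: $\varrho^h_\mathcal{K}$ is closed.} This is the only place needing care. Let $Z\subseteq\Spc(\mathcal{K})$ be closed. Then
\[\varrho^h_\mathcal{K}(Z)=\varrho^h_\mathcal{L}(r^{-1}(Z)),\]
using that $r$ is surjective, so $Z=r(r^{-1}(Z))$. The set $r^{-1}(Z)$ is closed because $r$ is continuous, and $\varrho^h_\mathcal{L}$ is a homeomorphism, so its image is closed. Hence $\varrho^h_\mathcal{K}$ is a closed continuous bijection, i.e.\ a homeomorphism. As a byproduct, $r$ is a homeomorphism too.

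There is no real obstacle beyond checking that the multigraded comparison map of Dell'Ambrogio is given by the displayed formula for both categories. This requires that any sign or commutativity conventions making $R_{u_1,\ldots,u_r}$ graded-commutative agree for $\mathcal{K}$ and $\mathcal{L}$. They do, because they are determined by symmetry isomorphisms among the $u_i$, which lie in $\mathcal{K}$.
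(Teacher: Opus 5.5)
Your proof is correct and is essentially the paper's argument. The paper uses the same naturality square with the surjective restriction map $\pi$ (citing Dell'Ambrogio--Stevenson for naturality, where you check it directly from the defining formula) and observes that $\varrho^h_\mathcal{K}$ is bijective with continuous inverse $\pi\circ(\varrho^h_\mathcal{L})^{-1}$; this is exactly equivalent to your closedness computation $\varrho^h_\mathcal{K}(Z)=\varrho^h_\mathcal{L}(r^{-1}(Z))$.
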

\begin{proof}
    By naturality of the generalised comparison map \cite[Theorem~1.1]{dell2014even} we have a commutative diagram
    \[\begin{tikzcd}
        \Spc(\mathcal{L}) \arrow[d, swap, "\pi", twoheadrightarrow] \arrow[r, "\varrho^h_\mathcal{L}"] & \Spec^h(R_{u_1,\ldots,u_r}) \arrow[d, "\id"] \\
        \Spc(\mathcal{K}) \arrow[r, "\varrho^h_\mathcal{K}"] & \Spec^h(R_{u_1,\ldots,u_r})
    \end{tikzcd}\]
    where $\pi$ is the surjective restriction map, cf.~\Cref{remark:inclusionsinducesurjections}. By assumption, the top horizontal arrow is a homeomorphism. To obtain the result, one verifies that $\varrho^h_\mathcal{K}$ is bijective with continuous inverse given by
    \[\pi \circ (\varrho^h_\mathcal{L})^{-1}.\]
\end{proof}

We will utilise the following result of \cite{dell2014even}, which generalises the Hopkins--Neeman theorem to the graded case.
\begin{theorem}[Dell'Ambrogio--Stevenson]\label{theorem:gradedperfectcomplexes}
    Let $R$ be a graded commutative ring, and denote by
    \[\D^{\mathsf{perf}}(R) \coloneqq \D(\mathsf{GrMod}_R)^\omega \simeq \K^b(\mathsf{grproj}_R)\]
    the perfect complexes over the graded ring $R$. Then the graded comparison map
    \[\begin{tikzcd}
        \varrho^h:\Spc(\D^{\mathsf{perf}}(R)) \arrow[r] & \Spec^h(R)
    \end{tikzcd}\]
    is a homeomorphism.
\end{theorem}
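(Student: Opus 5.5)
The plan is to run the Hopkins--Neeman--Thomason argument internally to graded modules, and then invoke Balmer's universal property of $(\Spc,\supp)$ to conclude that $\varrho^h$ is a homeomorphism. Throughout, the tensor-invertible object is the shift $u=R(1)$ of the graded ring. With this choice $\Hom_{\D^{\mathsf{perf}}(R)}(\unit,u^{\otimes n})=R_n$, so the graded central ring of \Cref{prop:gradedcomparisonmap} is $R$ itself, and $\varrho^h$ makes sense. (If $R$ is graded-commutative with signs, I would first pass to the even or commutative part, since homogeneous spectra do not see nilpotents.) Since $\D^{\mathsf{perf}}(R)$ is rigid, \Cref{remark:rigiditymeansallidealsareradical} shows that every tt-ideal is radical.

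\textbf{Step 1 (a support datum on $\Spec^h(R)$).} For a perfect complex $X$, define $\sigma(X)\subseteq\Spec^h(R)$ to be the set of homogeneous primes $\p$ such that the homogeneous localisation $X_{(\p)}$ is nonzero. Equivalently, $\sigma(X)$ is the homogeneous support of the finitely generated-up-to-twist graded module $H_*(X)$. It is closed because $X$ is compact, and it is Thomason.

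Next I verify the axioms of a support datum. We have $\sigma(R(n))=\Spec^h(R)$ and $\sigma(0)=\emptyset$, and $\sigma$ is compatible with cones and direct sums. For the tensor axiom $\sigma(X\otimes Y)=\sigma(X)\cap\sigma(Y)$, I use the graded Künneth spectral sequence over the graded residue field $k(\p)$. This field is a graded field, so graded modules over it are free, and the tensor axiom follows.

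Balmer's universal property then yields a continuous map $f:\Spec^h(R)\to\Spc(\D^{\mathsf{perf}}(R))$, namely $f(\p)=\{X\mid \p\notin\sigma(X)\}$. Unwinding definitions shows $\varrho^h\circ f=\id$. The key case is the cone of a homogeneous $s\in R_n$: here $\sigma(\cone(s))=V(s)$, and this matches the characterising property of $\varrho^h$.

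\textbf{Step 2 (classification).} To see that $f$ is a homeomorphism, I show that $\sigma$ classifies tt-ideals. Concretely, I need that $\sigma(X)\subseteq\sigma(Y)$ implies $X\in\ideal{Y}$. By the standard reduction (Balmer, Thomason), this gives a bijection between thick ideals and Thomason subsets, and hence forces $f$ to be a homeomorphism with inverse $\varrho^h$.

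For the classification I work in the compactly generated category $\D(\mathsf{GrMod}_R)$, whose compact objects are the $R(n)[m]$ and are all tensor-invertible. I build graded Koszul objects $\cone(s_1)\otimes\cdots\otimes\cone(s_r)$ for finitely many homogeneous elements; these realise every closed set $V(I)$ with $I$ finitely generated. I then prove a local-to-global principle: every object lies in the localising ideal generated by its pieces $\Gamma_\p X$. Finally I prove minimality: $\Gamma_\p\D(\mathsf{GrMod}_R)$ is a minimal localising ideal, since it is generated by $\Gamma_\p k(\p)$ and any nonzero object there builds $k(\p)$ by tensoring with it. Together these give $\Loc$-classification, and passing to compacts via Neeman's theorem gives the thick statement.

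\textbf{Main obstacle.} I expect the minimality step to be the hardest. One must show that $k(\p)\otimes M$ is a sum of shifts and twists of $k(\p)$ for every graded $M$. This needs the fact that $k(\p)$ is a graded field, i.e.\ $R_{(\p)}/\p$ has every nonzero homogeneous element invertible, so that graded modules over it are free on homogeneous generators. It also needs care that twists $R(n)$, not only suspensions, are available as tensor-invertible objects. Once this is settled, the remaining steps are formal.
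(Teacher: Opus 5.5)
There is a genuine gap in Step~2. Your big-category route (local-to-global principle, minimality of $\Gamma_\p\D(\mathsf{GrMod}_R)$, then Neeman's theorem) only works for noetherian $R$. The statement has no such hypothesis, and the paper needs that generality. It does not prove the theorem; it quotes it from Dell'Ambrogio--Stevenson and applies it in \Cref{cor:spectrum-target-Milnor-weight} to $R_k=K^M_*(k)/p$. That ring is usually not noetherian: already $K^M_1(\Q)/2$ is infinite-dimensional, and $\Spec^h(K^M_*(k)/2)$ is a non-noetherian space whenever $\Sper(k)$ is infinite.

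Without noetherianity your minimality step is false, and for reasons unrelated to $k(\p)$ being a graded field (that part is easy). Take $R=V$ a rank-one valuation ring with value group $\Z[1/2]$, concentrated in degree $0$, so one may argue in $\D(V)$. Then $\mathfrak m=\mathfrak m^2$ and $\mathfrak m$ is flat. For $0\neq a\in\mathfrak m$ one has $\Gamma_{\mathfrak m}\D(V)=\Loc(\cone(a))=\ker(-\otimes V[a^{-1}])$. The object $X=\operatorname{Frac}(V)/\mathfrak m$ is nonzero and lies in it, yet $X\otimes^{\mathbb L}k(\p)=0$ for both primes $\p$ of $V$. Since $k(\mathfrak m)\otimes^{\mathbb L}k(\mathfrak m)\neq 0$, $\Loc^{\otimes}(X)$ is a nonzero localising ideal inside $\Gamma_{\mathfrak m}\D(V)$ that does not contain $k(\mathfrak m)$.

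So residue fields do not detect vanishing, and localising ideals are not classified by subsets of $\Spec^h(R)$ (Neeman's oddball phenomena). The underlying problems are twofold. Koszul objects localised at $\p$ need not have finite-length homology. And if $\p$ is not the radical of a finitely generated ideal, there need not be any nonzero compact object supported in $V(\p)$. The same issue shows up in Step~1: for non-coherent $R$ the homology of a perfect complex need not be finitely generated, so the Thomason property of $\sigma(X)$ needs Thomason's argument rather than finite generation.

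Your Step~1 and the formal reduction to $\sigma(X)\subseteq\sigma(Y)\Rightarrow X\in\ideal{Y}$ are fine. To repair the proof while keeping your work, run Steps~1--2 only for noetherian $R$ and deduce the general case by noetherian approximation:
\begin{enumerate}
    \item Write $R$ as the filtered union of its finitely generated graded subrings $R_\lambda$, which are noetherian.
    \item Bounded complexes of finitely generated graded free modules, chain maps and homotopies all descend to some $R_\lambda$. Hence $\D^{\mathsf{perf}}(R)$ is the idempotent completion of $\operatorname{colim}_\lambda\D^{\mathsf{perf}}(R_\lambda)$.
    \item $\Spc$ is insensitive to idempotent completion and turns filtered colimits of tt-categories into limits, and $\Spec^h$ turns filtered colimits of graded rings into limits.
    \item $\varrho^h$ is natural along the base-change functors, which send $R_\lambda(1)$ to $R(1)$ and induce $R_\lambda\subseteq R$ on graded central rings.
\end{enumerate}
Hence $\varrho^h$ for $R$ is a limit of homeomorphisms, so a homeomorphism.

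Alternatively, prove $\sigma(X)\subseteq\sigma(Y)\Rightarrow X\in\ideal{Y}$ directly on perfect complexes, as Thomason does. Let $\phi$ be the fibre of the coevaluation $\unit\to Y\otimes Y^\vee$; then $\cone(\phi^{\otimes n})\in\ideal{Y}$ for all $n$. Moreover $\phi\otimes X$ vanishes after base change to every graded residue field: either $X$ dies there, or $Y$ becomes a nonzero sum of twisted shifts of $k(\p)$ and the coevaluation splits. A Hopkins--Neeman--Thomason nilpotence theorem for such maps then gives $\phi^{\otimes n}\otimes\id_{X^{\otimes n}}=0$, and $X\in\ideal{Y}$ by radicality. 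In the non-noetherian case, however, that nilpotence theorem is itself usually obtained by noetherian approximation.
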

\subsection{Weight structures}
Weight structures, introduced by Bondarko \cite{Bondarko:WeightStructures}, permit one to filter objects in triangulated categories in a dual fashion to $t$-structures. They are one of the main tools used in this paper.
\begin{definition}\label{def:weightstructure}
    Let $\mathcal{K}$ be a triangulated category. A \emph{weight structure} on $\mathcal{K}$ is a pair of full subcategories $w=(\mathcal{K}_{\leq 0},\mathcal{K}_{\geq 0})$ of $\mathcal{K}$ satisfying the following conditions (where, for each $n \in \Z$, we denote $\mathcal{K}_{\leq n} \coloneqq \Sigma^n\mathcal{K}_{\leq 0}$; $\mathcal{K}_{\geq n} \coloneqq \Sigma^n\mathcal{K}_{\geq 0}$):
    \begin{enumerate}
        \item[$(1)$] $\mathcal{K}_{\leq 0},\mathcal{K}_{\geq 0}$ are closed under retracts;
        \item[$(2)$] $\mathcal{K}_{\leq 0} \subseteq \mathcal{K}_{\leq 1}$ and $\mathcal{K}_{\geq 1} \subseteq \mathcal{K}_{\geq 0}$;
        \item[$(3)$] $\mathcal{K}_{\leq 0} \perp \mathcal{K}_{\geq 1}$, that is, $\Hom_\mathcal{K}(X,Y)=0$ for all $X \in \mathcal{K}_{\leq 0}, Y \in \mathcal{K}_{\geq 1}$;
        \item[$(4)$] For every $X \in \mathcal{K}$, there exists a distinguished triangle
        \[\begin{tikzcd}
            A \arrow[r] & X \arrow[r] & B \arrow[r] & \Sigma A
        \end{tikzcd}\]
        where $A \in \mathcal{K}_{\leq 0}$ and $B \in \mathcal{K}_{\geq 1}$.
    \end{enumerate}
    We denote by
    \[\mathcal{K}^\heartsuit \coloneqq \mathcal{K}_{\leq 0} \cap \mathcal{K}_{\geq 0}\]
    the \emph{heart} of the weight structure: it is a full additive subcategory of $\mathcal{K}$.

    We call a weight structure \emph{bounded} if $\mathcal{K}=\bigcup_{a \leq b} (\mathcal{K}_{\leq b} \cap \mathcal{K}_{\geq a})$, that is, every object is concentrated in finitely many weight-degrees. Equivalently, a weight structure is bounded if and only if $\mathcal{K}$ is generated as a triangulated category by its heart $\mathcal{K}^\heartsuit$.
\end{definition}
\begin{remark}\label{remark:conventionforweightstructures}
    There are two conventions for weight structures, corresponding to homological versus cohomological notation. Here, we adopt the homological convention. We shall refer to the objects of $\mathcal{K}_{\geq 0}$ (resp. $\mathcal{K}_{\leq 0}$) as \emph{connective} (resp. \emph{coconnective}). Note that both subcategories $\mathcal{K}_{\geq 0}$, $\mathcal{K}_{\leq 0}$ are closed under extensions.
\end{remark}
\begin{remark}\label{remark:weightvststructures}
    As mentioned above, the definition of a weight structure is somewhat dual to that of a $t$-structure. Despite the subtle differences in the two definitions, the structures exhibit quite different behaviour. For example, it is well-known that the heart of a $t$-structure is always an \emph{abelian} category, which is generally not the case for weight structures. Another key difference is that, unlike truncation triangles for $t$-structures, the decomposition triangles of \Cref{def:weightstructure} condition $(4)$ are in general not canonical. Despite these hurdles, it turns out that for bounded weight structures on triangulated categories admitting a suitable enhancement, the weight filtrations on objects allow one to associate \emph{weight complexes} to objects in a functorial manner, well-defined up to chain homotopy equivalence. See \Cref{theorem:existenceofweightcomplexfunctor} below.
\end{remark}
\begin{definition}\label{def:tensorcompatibleweightstructure}
    A weight structure $w=(\mathcal{K}_{\leq 0},\mathcal{K}_{\geq 0})$ on a tt-category $\mathcal{K}$ is \emph{$\otimes$-compatible} if
    \begin{enumerate}
        \item[$(1)$] $\unit \in \mathcal{K}^\heartsuit$;
        \item[$(2)$] both the connective and coconnective objects are closed under $\otimes$:
        \[\mathcal{K}_{\leq 0} \otimes \mathcal{K}_{\leq 0} \subseteq \mathcal{K}_{\leq 0}\]
        and
        \[\mathcal{K}_{\geq 0} \otimes \mathcal{K}_{\geq 0} \subseteq \mathcal{K}_{\geq 0}.\]
    \end{enumerate}
    In this case, the symmetric monoidal structure of $\mathcal{K}$ restricts to the heart $\mathcal{K}^\heartsuit$, furnishing it with the structure of a $\otimes$-additive category.
\end{definition}
\begin{remark}\label{remark:boundedtensorcompatibleweightstructure}
     For a bounded weight structure, being $\otimes$-compatible is equivalent to the symmetric monoidal structure of $\mathcal{K}$ restricting to the heart $\mathcal{K}^\heartsuit$.
\end{remark}
\begin{example}\label{example:Kbweightstructure}
    Let $\mathcal{K} \coloneqq \K^b(\mathcal{A})$ denote the bounded homotopy category of chain complexes over an idempotent complete additive category $\mathcal{A}$, endowed with its obvious triangulated structure (in particular, $\Sigma=(-)[1]$). Then there is a weight structure on $\mathcal{K}$ where
    \[\mathcal{K}_{\leq 0} \coloneqq \{M_\bullet \in \mathcal{K} \mid M_\bullet \simeq N_\bullet \text{ for some $N_\bullet$ concentrated in nonpositive homological degrees}\}\]
    and
    \[\mathcal{K}_{\geq 0} \coloneqq \{M_\bullet \in \mathcal{K} \mid M_\bullet \simeq N_\bullet \text{ for some $N_\bullet$ concentrated in nonnegative homological degrees}\}.\]
    Its heart $\mathcal{K}^\heartsuit$ is equivalent to the additive category $\mathcal{A}$. When $\mathcal{A}$ is moreover a $\otimes$-additive category, $\mathcal{K}$ is a tt-category and this weight structure is $\otimes$-compatible.
\end{example}
\begin{remark}\label{remark:weightstructuresoninfinitycategories}
    A weight structure on a stable $\infty$-category $\mathcal{C}$ is a weight structure on its homotopy category $\mathrm{h}\mathcal{C}$. Caution is required, however: unlike with $t$-structures, the $\infty$-heart $\mathcal{C}_\infty^\heartsuit \coloneqq \mathcal{C}_{\leq 0} \cap \mathcal{C}_{\geq 0}$ (where $\mathcal{C}_{\geq 0}$ (resp. $\mathcal{C}_{\leq 0}$) is the full subcategory of $\mathcal{C}$ spanned by the connective (resp. coconnective) objects in $\mathrm{h}\mathcal{C}$) is in general \emph{not} discrete, i.e. the nerve of a $1$-category. We do however have $\mathrm{h}(\mathcal{C}_\infty^\heartsuit) \simeq (\mathrm{h}\mathcal{C})^\heartsuit$. In this article, whenever we speak of the ``heart'' of a weight structure, whether on a triangulated category or stable $\infty$-category, we always mean the $1$-categorical heart.
\end{remark}

One of the most useful tools associated to a ($\otimes$-compatible) bounded weight structure is the \emph{weight complex functor}. Its existence is in general only known when the tt-category possesses a model; the following result is a combination of \cite[Corollary~3.5]{Sosnilo:WeightStructures}, \cite[Corollary~4.5]{Aoki:WeightFunctorIsTensor}.

\begin{theorem}[Sosnilo, Aoki]\label{theorem:existenceofweightcomplexfunctor}
    Let $\mathcal{C}$ be a stably symmetric monoidal $\infty$-category with a $\otimes$-compatible bounded weight structure, and let
    \[\mathcal{A} \coloneqq (\mathrm{h}\mathcal{C})^\heartsuit\]
    denote its heart. Then there exists an essentially unique tt-functor
    \[\begin{tikzcd}
        t:\mathrm{h}\mathcal{C} \arrow[r] & \K^b(\mathcal{A})
    \end{tikzcd}\]
    which is the identity when restricted to the heart $\mathcal{A} \subseteq \mathrm{h}\mathcal{C}$, known as the \emph{weight complex functor}.
\end{theorem}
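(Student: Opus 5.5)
The plan is to obtain $t$ from a universal property of $\mathcal{C}$ with respect to its $\infty$-heart, rather than by building weight complexes object by object. Write $\mathcal{C}^\heartsuit_\infty \coloneqq \mathcal{C}_{\leq 0}\cap\mathcal{C}_{\geq 0}$ as in \Cref{remark:weightstructuresoninfinitycategories}. Then $\mathcal{C}^\heartsuit_\infty$ is an additive $\infty$-category, and its homotopy category is $\mathcal{A}$.

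\textbf{Step 1 (Sosnilo).} I would show that a bounded weight structure exhibits $\mathcal{C}$ as the stable envelope of its heart. For an additive $\infty$-category $\mathcal{B}$, let $\operatorname{Stab}(\mathcal{B})$ be the smallest stable subcategory of $\operatorname{Fun}^{\times}(\mathcal{B}^{op},\Sp)$ containing the representables. The claim is that the spectral Yoneda functor $\mathcal{C}\to\operatorname{Fun}^{\times}((\mathcal{C}^\heartsuit_\infty)^{op},\Sp)$ is fully faithful. Its essential image is then $\operatorname{Stab}(\mathcal{C}^\heartsuit_\infty)$, because boundedness says $\mathcal{C}$ is generated by the heart under finite limits, colimits and retracts. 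The key input is orthogonality: $\mathcal{K}_{\leq 0}\perp\mathcal{K}_{\geq 1}$ together with the shifts of it forces mapping spectra between heart objects to be connective. One then runs a d\'evissage on weight filtrations to reduce full faithfulness to the heart, where it is Yoneda.

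The upshot is a universal property. Exact functors $\mathcal{C}\to\mathcal{D}$ into an idempotent complete stable $\mathcal{D}$ correspond to additive functors $\mathcal{C}^\heartsuit_\infty\to\mathcal{D}$.

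\textbf{Step 2.} Consider the additive truncation functor $\mathcal{C}^\heartsuit_\infty\to N(\mathcal{A})$, which takes $\pi_0$ of mapping spaces. It preserves finite products, hence is additive. Compose it with the inclusion of $N(\mathcal{A})$ as the heart of the standard weight structure on $\K^b_\infty(\mathcal{A})$, the $\infty$-categorical bounded homotopy category (compare \Cref{example:Kbweightstructure}). Step 1 extends this composite uniquely to an exact functor $\mathcal{C}\to\K^b_\infty(\mathcal{A})$. Passing to homotopy categories gives $t$, which restricts to the identity on $\mathcal{A}$ by construction. Essential uniqueness also comes from Step 1, since any exact functor restricting to the identity on the heart is determined by that restriction.

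\textbf{Step 3 (Aoki).} By $\otimes$-compatibility, $\mathcal{C}^\heartsuit_\infty$ is a symmetric monoidal additive $\infty$-category, and the truncation to $N(\mathcal{A})$ is symmetric monoidal. I would put Day convolution on $\operatorname{Fun}^{\times}(-^{op},\Sp)$, so that $\operatorname{Stab}$ becomes a functor of symmetric monoidal $\infty$-categories. I would then check that the equivalence $\mathcal{C}\simeq\operatorname{Stab}(\mathcal{C}^\heartsuit_\infty)$ of Step 1 is symmetric monoidal. This holds because the tensor product on $\mathcal{C}$ is exact in each variable and agrees with Day convolution on representables. Symmetric monoidality of the extension then follows from the monoidal form of the universal property.

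I expect the main obstacle to be this monoidal upgrade. One has to verify that the universal property of $\operatorname{Stab}$ holds in $\operatorname{CAlg}(\mathsf{Cat}^{\mathrm{ex}}_\infty)$, and that the $\otimes$-structure on $\mathcal{C}$ is determined by its restriction to the heart. I would prove both via Lurie's criteria for Day convolution on presheaf categories, restricted to the compact objects.
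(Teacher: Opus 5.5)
Your proposal is correct and follows essentially the argument of the references the paper cites in place of a proof. Sosnilo identifies a stable $\infty$-category with bounded weight structure with the stable envelope of its $\infty$-heart, and obtains $t$ by extending the truncation $\mathcal{C}^\heartsuit_\infty\to N(\mathcal{A})$. Aoki upgrades this using the symmetric monoidal universal property of the envelope; applied to the symmetric monoidal inclusion $\mathcal{C}^\heartsuit_\infty\hookrightarrow\mathcal{C}$, that property gives your Step 3 comparison directly, with no hand check against Day convolution.

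One small fix: drop ``retracts'' and ``idempotent complete'' from Step 1. Boundedness already generates $\mathcal{C}$ from shifted heart objects by iterated extensions alone. Your non-thick $\operatorname{Stab}$ then has the universal property for arbitrary stable targets. Step 2 needs this, because $\K^b(\mathcal{A})$ need not be idempotent complete when $\mathcal{C}$ is not.
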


\begin{example}\label{example:weightcomplexfunctorforKb}
    Consider $\mathcal{K}=\K^b(\mathcal{A})$ as in \Cref{example:Kbweightstructure}. This category admits an enhancement, and the weight complex functor $t$ is the identity.
\end{example}

The main result we will use to construct weight structures is the following, which is found within \cite[Theorem~4.3.2]{Bondarko:WeightStructures}.

\begin{theorem}[Bondarko]\label{theorem:constructingweightstructures}
    Let $\mathcal{K}$ be an idempotent complete triangulated category. Let $S \subseteq \mathcal{K}$ be a set of objects in $\mathcal{K}$ such that the following conditions hold.
    \begin{itemize}
        \item[$(1)$] $S \perp \Sigma^nS$ for every $n \geq 1$;
        \item[$(2)$] the thick subcategory of $\mathcal{K}$ generated by $S$, $\thick(S)$, coincides with $\mathcal{K}$.
    \end{itemize}
    Then there exists a unique weight structure on $\mathcal{K}$ whose heart is given by the idempotent completion of the additive closure $\mathsf{add}(S)^\natural$ of $S$ in $\mathcal{K}$.
\end{theorem}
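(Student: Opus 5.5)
The plan is to write the weight structure down explicitly and then check the axioms of \Cref{def:weightstructure} one at a time. Since we use the homological convention, I would set
\[\mathcal{K}_{\leq 0} \coloneqq \text{retracts of finite extensions of objects } \Sigma^i s,\ s \in S,\ i \leq 0,\]
\[\mathcal{K}_{\geq 0} \coloneqq \text{retracts of finite extensions of objects } \Sigma^i s,\ s \in S,\ i \geq 0.\]

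Axioms (1) and (2) hold by construction: these classes are closed under retracts, and $\Sigma\mathcal{K}_{\leq 0} \supseteq \mathcal{K}_{\leq 0}$ and $\Sigma\mathcal{K}_{\geq 0} \subseteq \mathcal{K}_{\geq 0}$. For the orthogonality axiom (3), hypothesis (1) gives $\Hom(\Sigma^i s, \Sigma^j s')=0$ whenever $i \leq 0 < j$. The long exact $\Hom$-sequences attached to distinguished triangles then propagate this vanishing to all finite extensions in each variable. Vanishing of $\Hom$ is also clearly inherited by retracts.

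The substantive point is axiom (4), the existence of weight decompositions. Let $\mathcal{D}$ be the class of objects $X$ such that every shift $\Sigma^n X$ admits a triangle $A \to \Sigma^n X \to B \to \Sigma A$ with $A \in \mathcal{K}_{\leq 0}$ and $B \in \mathcal{K}_{\geq 1}$. Each $\Sigma^i s$ lies in $\mathcal{D}$, using the trivial decompositions $\Sigma^i s \to \Sigma^i s \to 0$ or $0 \to \Sigma^i s \to \Sigma^i s$. The class $\mathcal{D}$ is closed under shifts by definition.

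For closure under extensions, take $X_1 \to X \to X_2 \to \Sigma X_1$ together with decompositions $A_k \to X_k \to B_k$ for $k=1,2$. The composite $A_2 \to X_2 \to \Sigma X_1 \to \Sigma B_1$ vanishes by orthogonality, since $\Sigma B_1 \in \mathcal{K}_{\geq 2} \subseteq \mathcal{K}_{\geq 1}$. Hence $A_2 \to \Sigma X_1$ lifts to a map $A_2 \to \Sigma A_1$. Let $A$ be its cocone, so $A$ is an extension of $A_2$ by $A_1$. The octahedral axiom (or the $3\times 3$ lemma) then produces a compatible map $A \to X$ whose cone $B$ is an extension of $B_1$ and $B_2$. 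Both classes are extension-closed, so this is a decomposition of $X$.

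By hypothesis (2), every object is a retract of some iterated extension of the $\Sigma^i s$. It therefore remains to show that $\mathcal{D}$ is closed under retracts. I expect this to be the main obstacle, because weight decompositions are not functorial. The first approach I would try is the following. Suppose $X \oplus Y \in \mathcal{D}$, and apply the construction above to the split triangles
\[X\oplus Y \to X \oplus Y\oplus \Sigma^{-1}\ldots\]
Alternatively, suppose $X \oplus Y$ has a decomposition $A \to X \oplus Y \to B$. By orthogonality, the idempotent $e$ projecting onto $X$ extends to an endomorphism of the triangle. The extension is not unique, but any two choices differ by maps that factor through $\Hom(A,\Sigma^{-1}B)$-type terms, so a suitable power or modification of the induced endomorphism is idempotent. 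Splitting it, which is where idempotent completeness of $\mathcal{K}$ and closure of $\mathcal{K}_{\leq 0},\mathcal{K}_{\geq 1}$ under retracts enter, yields a decomposition of $X$. If this direct splitting argument becomes messy, the fallback is Bondarko's approach: first prove the characterisation $\mathcal{K}_{\leq 0} = {}^{\perp}\mathcal{K}_{\geq 1}$ on $\mathcal{D}$, and then deduce retract-closure from it.

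Finally, for the heart, I would show that $\mathcal{K}_{\leq 0} \cap \mathcal{K}_{\geq 0}$ consists of retracts of extensions of objects of $\add(S)$. Any such extension $a \to x \to b \to \Sigma a$ with $a,b$ in the heart splits, because $\Hom(b,\Sigma a)=0$ by orthogonality. The heart is therefore $\add(S)^\natural$. For uniqueness, any weight structure $w'$ with this heart is bounded, since $\thick(S)=\mathcal{K}$. Both $w$ and $w'$ then satisfy $\mathcal{K}_{\leq 0} = {}^{\perp}(\mathcal{K}_{\geq 1})$ and $\mathcal{K}_{\geq 0} = (\mathcal{K}_{\leq -1})^{\perp}$, which follows from the decomposition triangles and retract-closure. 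Together with the fact that both contain the same generating classes, this forces the two weight structures to coincide.
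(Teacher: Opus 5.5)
The paper gives no proof of this statement; it quotes Bondarko's theorem, so I am measuring your outline against the standard argument. Several parts are fine: axioms (1)--(3), the extension-closure of $\mathcal{D}$ via the $3\times 3$ lemma, and uniqueness.

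The heart step needs one more sentence. For $Y$ an iterated extension of $\Sigma^i s$ with $i\geq 0$, your construction produces $A\to Y\to B$ with $A\in\add(S)$. So if a heart object $X$ is a retract of $Y$, the composite $X\to Y\to B$ vanishes, and $X$ is a retract of $A$.

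The genuine gap is the step you flag yourself: closure of $\mathcal{D}$ under retracts. Your idempotent-lifting sketch does not work as stated.
\begin{itemize}
\item Lifts of $e$ to a decomposition triangle are determined only modulo triangle endomorphisms $(f,0,g)$. In the first component this means modulo maps through $\Hom(A,\Sigma^{-1}B)$, and orthogonality says nothing about this group.
\item These endomorphisms do not form a nil ideal. Add the decomposition $s\to 0\to\Sigma s\to\Sigma s$ of $0$ as a summand. Then $(\lambda,0,\Sigma\lambda)$ is such an endomorphism for every $\lambda\in\End(s)$, and for $\lambda=\id$ it is a non-zero idempotent.
\item So a lift can be altered until it is not idempotent even modulo nilpotents. ``A suitable power'' then has no reason to be idempotent, and you never specify a good choice.
\item The fallback via ${}^{\perp}\mathcal{K}_{\geq 1}$ likewise does not produce a triangle for $X$.
\end{itemize}

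The missing idea is to use boundedness to push the complement out of the way, which avoids idempotents entirely.
\begin{itemize}
\item Let $\langle S\rangle$ be the class of iterated extensions of the $\Sigma^i s$; you have shown $\langle S\rangle\subseteq\mathcal{D}$. Take $X\in\mathcal{K}$. Then $X\oplus Y\in\langle S\rangle$ for some $Y$, since retracts split off in any triangulated category.
\item Since $\langle S\rangle$ is triangulated, $Y\oplus\Sigma Y\cong\cone(\id_X\oplus 0)\in\langle S\rangle$. Also, if $U\oplus V$ and $V$ lie in $\langle S\rangle$, then $U\cong\cone(V\to U\oplus V)\in\langle S\rangle$.
\item Adding $\Sigma(Y\oplus\Sigma Y)$ to $X\oplus Y$ and cancelling $Y\oplus\Sigma Y$ gives $X\oplus\Sigma^2 Y\in\langle S\rangle$. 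Inductively, $X\oplus\Sigma^{2j}Y\in\langle S\rangle$ for all $j\geq 0$.
\item As $X\oplus Y$ is an extension of finitely many $\Sigma^i s$, we have $Y\in\mathcal{K}_{\geq m}$ for some $m$. So $W:=\Sigma^{2j}Y\in\mathcal{K}_{\geq 1}$ once $2j\geq 1-m$.
\item Take a decomposition $A\xrightarrow{a}X\oplus W\to B$. Since $\Hom(A,W)=0$, we get $a=(a_X,0)$. Thus $a$ is the direct sum of $a_X$ and $0\to W$, and $B\cong\cone(a_X)\oplus W$.
\item Hence $\cone(a_X)\in\mathcal{K}_{\geq 1}$ by retract-closure, and $A\xrightarrow{a_X}X\to\cone(a_X)$ is a weight decomposition of $X$.
\end{itemize}

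Contrary to your expectation, idempotent completeness of $\mathcal{K}$ plays no role in this step. It is needed only to identify the heart with the full idempotent completion $\add(S)^\natural$, rather than with the retract-closure of $\add(S)$ inside $\mathcal{K}$.
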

\begin{remark}\label{remark:weightsstructureistensor}
    Observe that if $S \subseteq \mathcal{K}$ contains the unit $\unit$ and $S \otimes S \subseteq S$, then the weight structure on $\mathcal{K}$ determined by \Cref{theorem:constructingweightstructures} is $\otimes$-compatible by \Cref{remark:boundedtensorcompatibleweightstructure}.
\end{remark}

\subsection{Motivic preliminaries}
Let $k$ be a field, denote by $e$ its exponential characteristic, and let $R$ be a commutative ring in which $e$ is invertible. We write 
\[\SH(k)\] 
for the \textit{Morel--Voevodsky stable motivic homotopy category over $k$} \cite{morelvoev1999}, and 
\[\DM(k;R)\] 
for the \textit{big tt-category of Voevodsky motives with coefficients in $R$} \cite{Voevodsky:TriangulatedMotives, Cisinski:TriangulatedMixedMotives}. We write 
\[\mathcal{SH}(k), \ \mathcal{DM}(k;R)\] 
for the corresponding stably symmetric monoidal $\infty$-categorical enhancements \cite{robalo2015k, rondigs2008modules, hoyois2017motivic}. Under the above assumptions on the coefficients, $\mathcal{DM}(k;R)$ may be identified with the $\infty$-category of modules over the motivic Eilenberg–MacLane spectrum $\mathrm{H} R$ in $\mathcal{SH}(k)$ \cite[Theorem 3.1]{cisinski2015integral}.

The tt-category $\DM(k;R)$ is rigidly-compactly generated by motives of smooth $k$-schemes, and we denote the full subcategory of compact objects by 
\[\DM_\gm(k;R).\] 
We write 
\[\unit \simeq M(\Spec(k))\] 
for the tensor unit of $\DM(k;R)$ and for integers $a$ and $b$ we write 
\[\unit(a)[b]\] 
for the $b$-fold suspension of the $a$-fold Tate twist of the unit. More generally, for an object $X$ in $\DM(k;R)$, we write 
\[X(a)[b] \coloneqq X \otimes \unit(a)[b].\] 
\subsection{Tate motives}
We define \textit{the big tt-category of Tate motives over $k$ with coefficients in
$R$} to be the localising subcategory
\[\DTM(k;R) \coloneqq \Loc(\unit(n)\mid n\in\mathbb Z) \subseteq \DM(k;R).\]
Its compact objects form the
\textit{tt-category of geometric Tate motives}
\[\DTM_\gm(k;R) \coloneqq \DTM(k;R)^\omega=\thick\big(\unit(n)\mid n\in\mathbb Z\big).\]
For the remainder of this subsection, we restrict to coefficients
$R=\F_p$, where $p$ is a prime different from the characteristic of $k$. We write
\[H^{a,b}_{\mathcal{M}}(k;\F_p) \coloneqq \Hom_{\DM(k;\F_p)}(\unit,\unit(b)[a])\]
for the \textit{motivic cohomology of $\Spec(k)$} with $\F_p$-coefficients. Composition of morphisms and the tensor product endow
\[H^{*,*}_{\mathcal{M}}(k;\F_p) \coloneqq \bigoplus_{a,b\in\mathbb Z} H^{a,b}_{\mathcal{M}}(k;\F_p)\]
with the structure of a bigraded-commutative ring. For the remainder of this section, we assume that $k$ contains a primitive $p^{th}$ root of unity. By the Bloch-Kato and Beilinson--Lichtenbaum conjectures (really, theorems by \cite{voevodsky2011motivic}), a choice of such a root determines a class
\[ \tau\in H^{0,1}_{\mathcal{M}}(k;\F_p),\] 
and yields an isomorphism of bigraded rings 
\[H^{*,*}_{\mathcal{M}}(k;\F_p) \simeq (K^M_*(k)/p)[\tau],\]
where $K^M_*(k)/p$ denotes the Milnor K-theory modulo $p$ of $k$. Here a pure symbol
\[ \alpha= \{a_1,\ldots,a_n\}\in K^M_n(k)/p\] 
corresponds to a morphism 
\[\begin{tikzcd}
    \alpha: \unit \arrow[r] & \unit(n)[n],
\end{tikzcd}\] 
so that $K^M_n(k)/p$ lies in bidegree $(n,n)$, while $\tau$ has bidegree $(0,1)$.

We next recall the computation of the homogeneous spectrum of the graded ring
$K^M_*(k)/p$. Let
\[\mathfrak{m} \coloneqq K^M_{>0}(k)/p\]
denote its irrelevant ideal. When $p=2$, let
\[\Sper(k)\] 
denote \textit{the space of orderings}, or \textit{the real spectrum, of $k$}, equipped with the
Harrison topology, and for
$\alpha\in\Sper(k)$ set
\[\mathfrak{p}_\alpha \coloneqq \langle \{a\} \in  K^M_1(k)/2 \mid a >_{\alpha} 0\rangle \subseteq K^M_*(k)/2.\]
\begin{theorem}[\cite{thornton2016homogeneous}]\label{thm:hspectrum-MilnorK}
The homogeneous spectrum of $K^M_*(k)/p$ is described as follows.
\begin{enumerate}
    \item If $p$ is odd, then
    \[\Spec^h(K^M_*(k)/p)=\{\mathfrak{m}\}.\]
    \item If $p=2$, its homogeneous primes are
    \[\mathfrak{m},\
        \mathfrak{p}_\alpha \, \text{ for } \, 
        \alpha\in\Sper(k).\]
    For distinct orderings $\alpha$ and $\beta$, there are no inclusions between $\mathfrak{p}_\alpha$ and $\mathfrak{p}_\beta$, while
    \[\mathfrak{p}_\alpha\subseteq\mathfrak{m}, \ \text{for all } \alpha \in \Sper(k).\]
    Moreover, the map 
    \[\alpha\longmapsto\mathfrak{p}_\alpha\]
identifies 
\[\Sper(k) \cong \Proj(K^M_*(k)/2)\]
as topological spaces.
\end{enumerate}
\end{theorem}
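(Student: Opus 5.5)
The strategy is to separate the homogeneous primes according to whether they contain the irrelevant ideal $\mathfrak{m}$. I will use three facts throughout. First, $K^M_*(k)/p$ is generated as an $\F_p$-algebra by degree-one symbols, with degree-zero part $\F_p$. Second, $\{a\}\{a\}=\{a\}\{-1\}$. Third, $\{a\}\{-a\}=0$. If a homogeneous prime $\mathfrak{q}$ contains $\mathfrak{m}$, then $\mathfrak{q}=\mathfrak{m}$, since $\mathfrak{q}$ is proper and its degree-zero part is therefore $0$. So it suffices to determine the relevant homogeneous primes, which are those not containing $\mathfrak{m}$ and which form $\Proj$.

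\textbf{Odd $p$.} Since $2\{-1\}=\{1\}=0$ and $2$ is invertible mod $p$, we get $\{-1\}=0$ in $K^M_1(k)/p$. Hence $\{a\}^2=\{a\}\{-1\}=0$ for every $a$. Every degree-one generator is therefore nilpotent, and because these generate $\mathfrak{m}$, the ideal $\mathfrak{m}$ is nil. It follows that $\mathfrak{m}$ lies in every prime, giving $\Spec^h=\{\mathfrak{m}\}$.

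\textbf{$p=2$, showing each $\mathfrak{p}_\alpha$ is prime.} For an ordering $\alpha$, consider the signature homomorphism $\sigma_\alpha\colon K^M_*(k)/2\to\F_2[t]$, defined by sending $\{a\}$ to $t$ if $a<_\alpha 0$ and to $0$ otherwise. This respects the Steinberg relation: if $a+b=1$, then $a$ and $b$ cannot both be negative. Its kernel contains $\mathfrak{p}_\alpha$. Modulo $\mathfrak{p}_\alpha$, every $\{a\}$ becomes either $0$ or $\rho=\{-1\}$, because $\{a\}=\{-1\}+\{-a\}$. So the quotient is generated by $\rho$, and $\sigma_\alpha$ identifies it with $\F_2[t]$. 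Hence $\mathfrak{p}_\alpha=\ker\sigma_\alpha$ is a relevant homogeneous prime.

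\textbf{$p=2$, showing these are all the relevant primes.} Let $P$ be a relevant homogeneous prime, and pick $\{a\}\notin P$. Then $\{a\}^2=\rho\{a\}\notin P$, so $\rho\notin P$. For each $a\in k^\times$ we have $\{a\}\{-a\}=0$, so at least one of $\{a\},\{-a\}$ lies in $P$. They cannot both lie in $P$, since $\{a\}+\{-a\}=\{-a^2\}=\rho$.

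Now set $S=\{a : \{a\}\in P\}$. I must check that $S$ is the positive cone of an ordering:
\begin{itemize}
\item $S$ is multiplicative, because $\{ab\}=\{a\}+\{b\}$;
\item $S$ contains all nonzero squares, because $\{c^2\}=0$;
\item $S$ excludes $-1$;
\item exactly one of $a,-a$ lies in $S$, by the previous paragraph.
\end{itemize}
The remaining and hardest step is additive closure. Take $a,b\in S$ with $a+b\neq 0$ and suppose $-(a+b)\in S$. I will apply the Steinberg relation $\{a/(a+b),\,b/(a+b)\}=0$ and expand it using bilinearity. Then I reduce modulo $P$, using that each symbol is $0$ or $\rho$ there, to obtain $\rho^2\in P$, which is a contradiction. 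The case $a+b=0$ is excluded since $-1\notin S$. This gives an ordering $\alpha$ with $\mathfrak{p}_\alpha\subseteq P$. Because $P/\mathfrak{p}_\alpha$ is a homogeneous prime of $\F_2[t]$ not containing $t$, it is zero, and so $P=\mathfrak{p}_\alpha$.

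\textbf{$p=2$, inclusions and topology.} If $\alpha\neq\beta$, some $a$ is positive for $\alpha$ and negative for $\beta$. Then $\{a\}\in\mathfrak{p}_\alpha\setminus\mathfrak{p}_\beta$, and $\{-a\}\in\mathfrak{p}_\beta\setminus\mathfrak{p}_\alpha$, so there is no inclusion in either direction. The inclusion $\mathfrak{p}_\alpha\subseteq\mathfrak{m}$ is clear.

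For the homeomorphism $\Sper(k)\cong\Proj$, I compare the two topologies directly. The Harrison subbasic open set $\{\alpha : a>_\alpha 0\}$ equals $D_+(\{-a\})$, which shows the bijection is continuous in one direction. For the other direction, take a homogeneous $f$ of degree $n$ and write it as a sum of pure symbols. Its image under $\sigma_\alpha$ is $t^n$ times the number of those symbols whose entries are all $\alpha$-negative, counted mod $2$. Hence $D_+(f)$ is a finite Boolean combination of Harrison clopen sets, and is therefore open in $\Sper(k)$.
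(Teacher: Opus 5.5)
The paper does not prove this statement; it quotes it from \cite{thornton2016homogeneous}. Your argument is a correct, self-contained proof, and I find no gap in it.

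The key steps all check out:
\begin{enumerate}
\item The sign character makes $\sigma_\alpha$ well defined. It respects the Steinberg relation because $a<_\alpha 0$ forces $1-a>_\alpha 0$.
\item The surjection $\F_2[t]\twoheadrightarrow K^M_*(k)/(2,\mathfrak{p}_\alpha)$ composed with $\sigma_\alpha$ is the identity. Hence both maps are isomorphisms and $\mathfrak{p}_\alpha=\ker\sigma_\alpha$.
\item For the converse, write $c=a+b$. The Steinberg relation $\{a/c,b/c\}=0$ becomes $0=(\{a\}+\{c\})(\{b\}+\{c\})\equiv\rho^2 \pmod P$. This is exactly what gives additive closure of $S$.
\item The case where $k$ is not formally real needs no separate treatment, since any relevant prime would produce an ordering.
\item The topology comparison is right. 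The Harrison subbasic sets correspond to the $D_+(\{-a\})$, and each $D_+(f)$ pulls back to a finite Boolean combination of Harrison clopens.
\end{enumerate}

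One small point concerns the odd case. $K^M_*(k)/p$ is only graded-commutative, so it is cleaner to argue directly from primality: $\{a\}^2=0\in P$ forces $\{a\}\in P$, hence $P\supseteq\mathfrak{m}$. This avoids having to say why an ideal generated by nilpotent homogeneous elements is nil.

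Compared with citing the result, your route is elementary. It needs only the Steinberg relation, $\{a,-a\}=0$, $\{a\}^2=\{a\}\{-1\}$ and sign characters. It also exhibits $\mathfrak{p}_\alpha$ as the kernel of a signature map with quotient $\F_2[t]$.

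That description is useful elsewhere in the paper. After identifying $K^M_*(k_\alpha)/2\cong\F_2[\rho_\alpha]$, your $\sigma_\alpha$ is precisely the base-change map $i_\alpha$ used in \Cref{prop:tate-specialisations}. So your proof also justifies the assertion made there that $i_\alpha$ pulls $(0)$ back to $\mathfrak{p}_\alpha$ and $(\rho_\alpha)$ back to $\mathfrak{m}$.
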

The homogeneous spectrum of 
\[(K^M_*(k)/p)[\tau]\]
can be readily deduced from the preceding description.
\begin{cor}\label{cor:hspectrum-MilnorKtau}
Quotienting by $\tau$ and inverting $\tau$ induce natural
homeomorphisms
\[V(\tau)\cong \Spec^h(K^M_*(k)/p), \ D(\tau)\cong \Spec^h(K^M_*(k)/p).\]
In particular, the homogeneous prime ideals in $(K^M_*(k)/p)[\tau]$ are
\[\mathfrak{q} + (\tau), \, \mathfrak{q}, \ \text{ for } \mathfrak{q} \in \Spec^h(K^M_*(k)/p).\]
Hence, if $p$ is odd or $k$ is not formally real, then
\[\Spec^h((K^M_*(k)/p)[\tau])\]
is the two-point Sierpiński space. If $p=2$ and $k$ is formally real, we obtain the following space:
\[\begin{tikzcd}[ampersand replacement=\&,sep=scriptsize]
\&\&\&\& {\mathfrak{m}+(\tau)} \&\&\& \\
	\\
	\textcolor{rgb,255:red,207;green,7;blue,11}{{\Sper(k)}} \& \textcolor{rgb,255:red,207;green,7;blue,11}{{\mathfrak{p}_{\alpha}+(\tau)}} \& \textcolor{rgb,255:red,207;green,7;blue,11}{\cdots} \& \textcolor{rgb,255:red,207;green,7;blue,11}{{\mathfrak{p}_\beta+(\tau)}} \& {\mathfrak{m}} \& {} \& {} \& {} \\
	\\
	\textcolor{rgb,255:red,214;green,153;blue,92}{{\Sper(k)}} \& \textcolor{rgb,255:red,214;green,153;blue,92}{{\mathfrak{p}_\alpha}} \& \textcolor{rgb,255:red,214;green,153;blue,92}{\cdots} \& \textcolor{rgb,255:red,214;green,153;blue,92}{{\mathfrak{p}_{\beta}}}
	\arrow[no head, from=1-5, to=3-2]
	\arrow[dashed, no head, from=1-5, to=3-3]
	\arrow[no head, from=1-5, to=3-4]
	\arrow[no head, from=1-5, to=3-5]
	\arrow[color={rgb,255:red,214;green,92;blue,92}, squiggly, from=3-1, to=3-2]
	\arrow[no head, from=3-2, to=5-2]
	\arrow[dashed, no head, from=3-3, to=5-3]
	\arrow[no head, from=3-4, to=5-4]
	\arrow[no head, from=3-5, to=5-2]
	\arrow[dashed, no head, from=3-5, to=5-3]
	\arrow[no head, from=3-5, to=5-4]
	\arrow[color={rgb,255:red,214;green,153;blue,92}, squiggly, from=5-1, to=5-2]
\end{tikzcd}\]
with the topology determined by the Harrison topology on $\Sper(k)$ and the specialisations described above. 
\end{cor}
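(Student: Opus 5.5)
The statement to prove is \Cref{cor:hspectrum-MilnorKtau}. It is pure commutative algebra on the bigraded ring $A \coloneqq (K^M_*(k)/p)[\tau]$, where $\tau$ is a polynomial variable of bidegree $(0,1)$. We write $B \coloneqq K^M_*(k)/p$, so that $A = B[\tau]$. The plan is to treat the two pieces $V(\tau)$ and $D(\tau)$ separately, and then read off specialisations and the topology using \Cref{thm:hspectrum-MilnorK}.

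\emph{The closed piece.} Homogeneous primes containing $\tau$ correspond to homogeneous primes of $A/(\tau) \cong B$. This correspondence is a homeomorphism $V(\tau) \cong \Spec^h(B)$, sending $\mathfrak{q} \mapsto \mathfrak{q} + (\tau)$; this is the standard identification of $\Spec^h$ of a quotient with a closed subset.

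\emph{The open piece.} We have $D(\tau) \cong \Spec^h(A[\tau^{-1}])$ and $A[\tau^{-1}] = B[\tau^{\pm 1}]$. Since $\tau$ is a homogeneous unit, the homogeneous ideals of $B[\tau^{\pm1}]$ are exactly of the form $I \cdot B[\tau^{\pm1}]$ with $I$ a homogeneous ideal of $B$. Concretely, a homogeneous element is $b\tau^n$ with $b$ homogeneous in $B$, and it lies in the ideal iff $b$ does. Primality is preserved in both directions, because homogeneous products reduce to products in $B$ up to units. Hence $\mathfrak{q} \mapsto \mathfrak{q}B[\tau^{\pm1}]$ is a bijection $\Spec^h(B) \to D(\tau)$, and it sends $V(I)$ to $V(I) \cap D(\tau)$, so it is a homeomorphism. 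Pulled back to $A$, the prime $\mathfrak{q}B[\tau^{\pm1}]$ is $\mathfrak{q}A$, which we write simply as $\mathfrak{q}$. One must check that $\mathfrak{q}A$ is prime and does not contain $\tau$. Both are clear, since $A/\mathfrak{q}A \cong (B/\mathfrak{q})[\tau]$ has no homogeneous zero divisors.

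\emph{Assembling the space.} Since $V(\tau) \sqcup D(\tau)$ exhausts $\Spec^h(A)$, the homogeneous primes are exactly $\mathfrak{q}$ and $\mathfrak{q} + (\tau)$ for $\mathfrak{q} \in \Spec^h(B)$. Inclusions among these primes are determined as follows.
\begin{itemize}
\item $\mathfrak{q} \subseteq \mathfrak{q}' + (\tau)$ holds iff $\mathfrak{q} \subseteq \mathfrak{q}'$: reduce modulo $\tau$ and use $A/(\tau) \cong B$.
\item $\mathfrak{q} + (\tau)$ is never contained in any $\mathfrak{q}'$, since $\tau \notin \mathfrak{q}'$.
\item Within each piece, the inclusions are those of $\Spec^h(B)$.
\end{itemize}
By \Cref{thm:hspectrum-MilnorK}, if $p$ is odd then $\Spec^h(B) = \{\mathfrak{m}\}$. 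If $k$ is not formally real then $\Sper(k) = \emptyset$, so again $\Spec^h(B) = \{\mathfrak{m}\}$. In either case we get the two points $\mathfrak{m} \subsetneq \mathfrak{m} + (\tau)$, i.e.\ the Sierpiński space.

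For $p = 2$ and $k$ formally real, the primes are $\mathfrak{p}_\alpha$, $\mathfrak{m}$, $\mathfrak{p}_\alpha + (\tau)$ and $\mathfrak{m} + (\tau)$, with the inclusions displayed in the diagram. In particular, the inclusions $\mathfrak{p}_\alpha \subseteq \mathfrak{m}$ and $\mathfrak{p}_\alpha \subseteq \mathfrak{p}_\alpha + (\tau)$ hold, and $\mathfrak{p}_\alpha \subseteq \mathfrak{p}_\beta + (\tau)$ fails for $\alpha \neq \beta$.

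\emph{Topology.} The topology on each of the two copies of $\Sper(k)$ is the Harrison topology, via $\Sper(k) \cong \Proj(B)$ and the two homeomorphisms above. The only point needing care is precisely this topological claim, namely that the identification of $D(\tau)$ respects closed sets. This is handled by the correspondence of homogeneous ideals under inverting a homogeneous unit. I expect no real obstacle; the main work is bookkeeping of specialisations.
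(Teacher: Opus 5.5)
Your argument is correct, and it is the deduction the paper intends. The paper gives no separate proof; it only says the result is readily deduced from Thornton's theorem via $A/(\tau)\cong B$ and $A[\tau^{-1}]\cong B[\tau^{\pm1}]$. Your observation that $B$ sits on the diagonal, so that bihomogeneous elements of $B[\tau^{\pm1}]$ are monomials $b\tau^n$, is exactly what makes the open piece work.

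The one step you leave out is how the two pieces glue topologically. Knowing that each copy of $\Sper(k)$ carries the Harrison topology, together with the specialisations, does not by itself pin down the topology of $\Spec^h(A)$. The phrase ``determined by'' is used later, in \Cref{cor:explicit-tate-generators-topology}, in the precise form: $Z$ is closed iff it is specialisation closed and both traces on $\Sper(k)$ are Harrison-closed. The converse direction is clear, but sufficiency needs a short argument.

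Let $Z$ be specialisation closed with Harrison-closed traces. Since $B_0=\F_p$, the point $\mathfrak{m}$ is the unique closed point of $\Spec^h(B)$. Hence the nonempty closed subsets of $\Spec^h(B)$ are exactly the sets $C\cup\{\mathfrak{m}\}$ with $C$ closed in $\Proj(B)$. It follows that $Z\cap V(\tau)$ is closed in $V(\tau)$, and therefore in $\Spec^h(A)$. Likewise $Z\cap D(\tau)=V(IA)\cap D(\tau)$ for some homogeneous ideal $I\subseteq B$ (take $I=B$ if this intersection is empty).

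Reducing modulo $\tau$ gives $V(IA)=\{\mathfrak{q}A,\ \mathfrak{q}A+(\tau) \mid \mathfrak{q}\supseteq I\}$. The only points of $V(IA)$ outside $D(\tau)$ are the $\mathfrak{q}A+(\tau)$, and each is a specialisation of $\mathfrak{q}A\in Z$. So $V(IA)\subseteq Z$, and $Z=(Z\cap V(\tau))\cup V(IA)$ is closed. With this added, your proof covers the full statement.
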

Since $H^{*,*}_{\mathcal{M}}(k;\F_p)$ is by definition the bigraded endomorphism ring of the tensor unit with respect to suspension and Tate twist, we have a bigraded comparison map
\[\begin{tikzcd}
    \varrho^h:\Spc(\DTM_\gm(k;\F_p)) \arrow[r] & \Spec^h(H^{*,*}_{\mathcal{M}}(k;\F_p)).
\end{tikzcd}\] 
The first main result of this paper is that this map is a homeomorphism for any prime $p$ different from the characteristic of $k$, thereby extending the known computations for algebraically closed fields \cite{Gallauer:TateSpectrum} and real closed fields \cite{Balmer:ThreeRealArtinTates} to the general case.
\subsection{Artin motives}
Fix a separable closure $k^{sep}$ of a field $k$, and let
\[\Gamma_k \coloneqq \operatorname{Gal}(k^{sep}/k)\]
denote the \textit{absolute Galois group of $k$}. By the Grothendieck--Galois correspondence, the category of finite étale $k$-schemes is equivalent to the category of finite continuous $\Gamma_k$-sets \cite{grothendieck2003revetements}. After linearising with respect to a field $\F$, this gives rise to the additive tensor category
\[\mathsf{perm}(\Gamma_k; \F)\] 
of finite permutation $\F[\Gamma_k]$-modules. This linearisation of finite étale $k$-schemes via permutation modules admits a motivic counterpart. We define the \textit{big tt-category of Artin motives over $k$ with
coefficients in $\F$} to be the localising subcategory
\[\DAM(k;\F) \coloneqq \Loc(M(E)\mid E/k \text{ finite separable}) \subseteq \DM(k;\F).\]
Its compact objects form the tt-category of geometric Artin motives \[\DAM_\gm(k;\F) \coloneqq \DAM(k;\F)^\omega= \thick(M(E)\mid E/k\text{ finite separable}).\] 
By \cite{Voevodsky:TriangulatedMotives} the Grothendieck--Galois correspondence extends to an equivalence of tt-categories
\[\DAM_\gm(k;\F) \simeq \K^b(\mathsf{perm}(\Gamma_k;\F)^\natural).\]
The Balmer spectrum
\[\Spc(\K^b(\mathsf{perm}(\Gamma_k;\F)^\natural))\]
was computed in \cite{balmer2025geometry, Balmer:Artin}; via the above
equivalence, this gives a complete description of the spectrum of Artin motives.
\begin{example}[\cite{Balmer:ThreeRealArtinTates,Balmer:Artin}]
The complexity of $\Spc(\DAM_\gm(k;\F))$ varies significantly with the arithmetic of the base field.
\begin{enumerate}
    \item If $k$ is algebraically closed, then $\Gamma_k$ is trivial and $\Spc(\DAM_\gm(k;\F))$ is a singleton.
    \item If $k$ is real closed, then $\Gamma_k\simeq C_2$. If the characteristic of $\F$ is not $2$, $\Spc(\DAM_\gm(k;\F))$ is a singleton, whereas in characteristic $2$ it is the following three-point space:
    \[\begin{tikzcd}[ampersand replacement=\&, sep=small]
	\bullet \&\& \bullet \\
	\& \bullet
	\arrow[no head, from=1-1, to=2-2]
	\arrow[no head, from=2-2, to=1-3]
\end{tikzcd}\]
\item If $k$ is a finite field, then $\Gamma_k\simeq \widehat{\Z}$. If $\F$ has characteristic $0$, $\Spc(\DAM_\gm(k;\F))$ is also a singleton. In positive characteristic, it is the countably infinite
    non-Noetherian space described in \cite{Balmer:Artin} as follows,
\[\begin{tikzcd}[ampersand replacement=\&, sep=small]
	\bullet \&\& \bullet \&\& \bullet \&\& \bullet \&\& {\bullet \, \infty} \\
	\& \bullet \&\& \cdots \&\& \bullet \&\& \cdots
	\arrow[no head, from=1-1, to=2-2]
	\arrow[dotted, no head, from=1-3, to=2-4]
	\arrow[no head, from=1-5, to=2-6]
	\arrow[no head, from=1-7, to=2-8]
	\arrow[no head, from=2-2, to=1-3]
	\arrow[dotted, no head, from=2-4, to=1-5]
	\arrow[no head, from=2-6, to=1-7]
\end{tikzcd}\]
where the closed subsets $Z$ are the specialisation closed subsets such that if $\infty \not \in Z$, then $Z$ is finite.  
\end{enumerate}
\end{example}
\subsection{Artin--Tate motives}
We define \textit{the big tt-category of Artin--Tate motives over $k$ with
coefficients in $R$} to be the localising subcategory
\[\DATM(k;R) \coloneqq \Loc(M(E)(n)\mid E/k \text{ finite separable},\ n\in\mathbb Z) \subseteq \DM(k;R).\]
Its compact objects form the \textit{tt-category of geometric Artin--Tate motives}
\[ \DATM_\gm(k;R) \coloneqq \DATM(k;R)^\omega = \thick(M(E)(n)\mid E/k \text{ finite separable},\ n\in\mathbb Z).\]
We shall mainly be interested in the case $R=\F_p$ where $p$ is a prime different from the characteristic of $k$. The inclusions
\[\begin{tikzcd}
    \DTM_\gm(k;\F_p) \arrow[r, hookrightarrow] & \DATM_\gm(k;\F_p) \arrow[r, hookleftarrow] & \DAM_\gm(k;\F_p)
\end{tikzcd}\]
induce surjective restriction maps on the Balmer spectra
\[\begin{tikzcd}
    \Spc(\DTM_\gm(k;\F_p)) & \arrow[l, twoheadrightarrow] \Spc(\DATM_\gm(k;\F_p)) \arrow[r, twoheadrightarrow] & \Spc(\DAM_\gm(k;\F_p)),
\end{tikzcd}\]
which record respectively the Tate and Artin components of a prime tt-ideal of $\DATM_\gm(k;\F_p)$.

The Balmer spectrum of $\DATM_\gm(k;\F_p)$ is already known in two cases.
\begin{enumerate}
    \item If $k$ is algebraically closed, then $\DATM_\gm(k;\F_p)=\DTM_\gm(k;\F_p)$, in which case the computation follows from \cite{Gallauer:TateSpectrum}.
    \item (\cite{Balmer:ThreeRealArtinTates}) If $k$ is real closed and $p$ is an odd prime, $\Spc(\DATM_\gm(k;\F_p))\cong \Spc(\DTM_\gm(k;\F_p))$ is the Sierpiński space. If $p=2$, the spectrum is described by the following six-point space:
    \[\begin{tikzcd}[ampersand replacement=\&, sep=scriptsize]
	{\langle M(\overline{k})\rangle} \&\& {\langle  \widetilde{M}(\overline{k})  \rangle} \\
	{\langle \cone(\rho) \rangle} \& {\langle M(\overline{k}), \widetilde{M}(\overline{k}) \, \rangle} \& {\langle \cone(\tau) \rangle} \\
	\& {\langle \cone(\rho), \cone(\tau)\rangle}
	\arrow[no head, from=1-1, to=2-1]
	\arrow[no head, from=1-1, to=2-2]
	\arrow[no head, from=1-3, to=2-3]
	\arrow[no head, from=2-1, to=3-2]
	\arrow[no head, from=2-2, to=1-3]
	\arrow[no head, from=2-2, to=3-2]
	\arrow[no head, from=2-3, to=3-2]
    \end{tikzcd}\]
    where $\widetilde{M}(\overline{k})$ denotes the \emph{(completely) reduced Rost motive} associated to the pure symbol $\rho=\{-1\} \in K^M_1(k)/2$.
\end{enumerate}
One of the main results of this paper is the computation of $\Spc(\DATM_\gm(k;\F_p))$ where $k$ is any field $k$ and $p$ is any prime different from the characteristic of $k$, which explicitly describes how its Artin and Tate
strata are glued together.
\subsection{\'Etale motives}
Following \cite{cisinski2016etale}, we write $\DM_\et(k;\F_p)$ for \textit{the big tt-category of étale motives over $k$ with coefficients in $\F_p$}. Étale realisation gives rise to a tt-functor
\[\begin{tikzcd}
    \Re_{\acute{e}t}:\DM(k;\F_p) \arrow[r] & \DM_\et(k;\F_p).
\end{tikzcd}\]
Unlike $\DM(k;\F_p)$, the category $\DM_\et(k;\F_p)$ is not
compactly generated in general \cite[Remark 5.4.10]{cisinski2016etale}. We therefore
consider its essentially small tt-subcategory of rigid objects
\[\DM_\et^\vee(k;\F_p) \subseteq \DM_\et(k;\F_p).\]
The étale Tate motive $\unit(1)[1]$ determines a graded comparison map with target the homogeneous spectrum of the graded endomorphism ring
\[\mathbb{E}\mathrm{nd}_{\unit(1)[1]}(\unit) \coloneqq \bigoplus_{n\in\mathbb Z} \Hom_{\DM_\et(k;\F_p)}(\unit,\unit(n)[n]),\]
which identifies with the graded ring
\[H^*_{\acute{e}t}(k,\mu_p^{\otimes *}).\]
By the Bloch--Kato conjecture, 
\[\mathbb{E}\mathrm{nd}_{\unit(1)[1]}(\unit) \simeq K^M_*(k)/p\]
so the graded comparison map associated with the $\otimes$-invertible object $\unit(1)[1]$ takes the form
\[\begin{tikzcd}
    \varrho_{\acute{e}t}^h:\Spc(\DM_\et^\vee(k;\F_p)) \arrow[r] & \Spec^h(K^M_*(k)/p).
\end{tikzcd}\]
\begin{theorem}[\cite{gallauer2021note}]\label{thm:gallauer-etale-spectrum}
The comparison map $\varrho_{\acute{e}t}^h$ is a homeomorphism.
\end{theorem}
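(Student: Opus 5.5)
The plan is to reduce to modular representation theory of finite groups, where the result is the Benson--Carlson--Rickard theorem, and then pass to the limit over finite quotients of $\Gamma_k$.

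Assume first that $k$ contains a primitive $p^{th}$ root of unity, so that $\mu_p\simeq \Z/p$ as Galois modules and the étale Tate twist $\unit(1)$ becomes isomorphic to $\unit$.

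\textbf{Step 1: identify the rigid étale motives.} By the rigidity theorem for étale motives with torsion coefficients prime to the characteristic (Suslin--Voevodsky, Cisinski--Déglise, Bachmann), there is an equivalence
\[\DM_\et(k;\F_p)\simeq \D(\Shv_\et(k;\F_p)),\]
the derived category of étale sheaves, i.e.\ of discrete $\F_p[\Gamma_k]$-modules. Under this equivalence I expect the rigid objects to be exactly the bounded complexes whose cohomology is finite-dimensional. Each such complex is defined over a finite quotient $G=\Gamma_k/N$, with $N$ open normal. Hence I would write
\[\DM_\et^\vee(k;\F_p)\simeq \operatorname{colim}_{N}\D^b(\mathsf{mod}\,\F_p[\Gamma_k/N]).\]
The transition functors are inflations, and the colimit is taken along fully faithful tt-functors on $\operatorname{Ext}$ after passing to the colimit. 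The point to verify is that $\operatorname{Ext}$ groups of discrete $\Gamma_k$-modules are colimits of finite-group $\operatorname{Ext}$ groups, which is standard continuous cohomology.

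\textbf{Step 2: finite groups and the limit.} For each finite $G$, the Benson--Carlson--Rickard theorem, in its form for $\D^b(\mathsf{mod}\,\F_pG)$, says that the graded comparison map
\[\Spc(\D^b(\mathsf{mod}\,\F_pG))\to \Spec^h(H^*(G;\F_p))\]
is a homeomorphism. Since $\Spc$ turns filtered colimits of tt-categories into cofiltered limits of spectral spaces (Gallauer's continuity result), and $\Spec^h$ likewise turns colimits of graded rings into limits, I would conclude
\[\Spc(\DM_\et^\vee(k;\F_p))\cong \lim_N \Spec^h(H^*(\Gamma_k/N;\F_p))\cong \Spec^h(H^*(\Gamma_k;\F_p)).\]
Naturality of $\varrho^h$ with respect to the inflation functors (\Cref{remark:naturalityofcomparisonmap}) identifies this homeomorphism with $\varrho^h_{\acute{e}t}$. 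Finally, Bloch--Kato gives $H^*(\Gamma_k;\F_p)\simeq K^M_*(k)/p$.

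\textbf{Step 3: removing the root of unity.} In general, let $k'=k(\zeta_p)$. Then $[k':k]$ divides $p-1$ and so is prime to $p$. Base change to $k'$ is faithful, and every object of $\DM_\et^\vee(k;\F_p)$ is a retract of its restriction–induction, so base change detects $\otimes$-nilpotence and $\Spc$ of the base-change functor is surjective by \Cref{theorem:nilfaithfulfunctors}. The induced map is moreover the quotient by $\operatorname{Gal}(k'/k)$, since it factors through descent. On the ring side, $K^M_*(k)/p\to (K^M_*(k')/p)^{\operatorname{Gal}}$ is an isomorphism by a transfer argument. I would then use that $\Spec^h$ of a ring of invariants under a finite group of order prime to $p$ is the orbit space. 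Alternatively, one can work directly with the twisted graded ring $\bigoplus_n H^n(G;\mu_p^{\otimes n})$ via a twisted version of the Benson--Carlson--Rickard theorem.

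\textbf{Main obstacle.} I expect the hardest step to be Step 1: a careful identification of the rigid objects, including that the colimit is really the idempotent-complete rigid subcategory. The continuity of $\Spc$ then does the rest.
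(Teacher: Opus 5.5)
The paper gives no proof of this statement; it is quoted from Gallauer's note. Your Steps~1--2 are the standard argument, and as far as I can tell the route of the cited source:
\begin{itemize}
\item rigidity identifies $\DM_\et^\vee(k;\F_p)$ with bounded complexes of discrete $\F_p[\Gamma_k]$-modules with finite-dimensional cohomology;
\item this category is a filtered colimit of the $\D^b(\mathsf{mod}\,\F_p[\Gamma_k/N])$;
\item Benson--Carlson--Rickard (in Balmer's $\D^b$ form), continuity of $\Spc$ and $\Spec^h$, and Bloch--Kato then finish the job.
\end{itemize}
Two points in Step~1 should be stated correctly rather than ``expected''. First, the inflation functors are not fully faithful. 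What holds is that the canonical functor from the naive colimit is fully faithful. This follows from $\operatorname{Ext}^n_{\Gamma_k}(M,M')\cong\operatorname{colim}_N\operatorname{Ext}^n_{\Gamma_k/N}(M,M')$ for finite modules, plus induction on the length of complexes. The same induction gives essential surjectivity, since each extension class in a truncation triangle descends to some finite quotient. Second, the description of rigid objects follows because forgetting the action is a symmetric monoidal functor to $\D(\F_p)$, so rigid objects have finite-dimensional total cohomology. Conversely, finite discrete modules are rigid and rigid objects form a thick subcategory closed under retracts, so idempotent completeness is automatic.

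Step~3 is where your write-up has a genuine soft spot. The claim that $\Spc$ of base change is the quotient by $\operatorname{Gal}(k'/k)$ ``since it factors through descent'' is not justified. It would need something like Balmer's coequaliser theorem for separable extensions, together with an identification of $M(\Spec k')$-modules with $\DM_\et^\vee(k';\F_p)$. The twisted Benson--Carlson--Rickard alternative is only a gesture.

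None of this is needed. Since $-1\in k$, roots of unity can only be missing for $p$ odd. Then $\{a\}^2=\{a,-1\}$, and $\{-1\}$ is killed by $2$, hence is zero mod $p$. So the positive-degree part of $K^M_*(E)/p$ is nil for every field $E$, and $\Spec^h(K^M_*(E)/p)=\{\mathfrak{m}\}$. By Step~2 applied to $k'$, $\Spc(\DM_\et^\vee(k';\F_p))$ is a point. Your faithfulness argument makes $\Spc$ of base change surjective, so $\Spc(\DM_\et^\vee(k;\F_p))$ is a point as well, and $\varrho^h_{\acute{e}t}$ is a map between one-point spaces. This is exactly the pattern the paper itself uses for Tate motives without roots of unity: faithful base change to $k(\zeta_p)$ plus nilpotence of the positive-degree classes.
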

Assume now that $k$ contains a primitive $p^{th}$ root of unity. The morphism $\tau: \unit \longrightarrow \unit(1)$ in $\DM(k;\F_p)$ becomes invertible under the étale realisation. We shall prove in \Cref{prop:tau-localisation-etale} that étale realisation induces an equivalence
\[\begin{tikzcd}
    (\DM_\gm(k;\F_p)/\langle\cone(\tau)\rangle)^\natural \arrow[r, "\sim"] & \DM_\et^\vee(k;\F_p).
\end{tikzcd}\]
Thus, rigid étale motives identify with geometric Voevodsky motives after inverting $\tau$. This equivalence cannot in general be promoted to the corresponding big categories: the localisation $\DM(k;\F_p)[\tau^{-1}]$ is always compactly generated, whereas $\DM_\et(k;\F_p)$ need not be.

By \Cref{remark:principalquotientgiveshomeoontoopen} and \Cref{thm:gallauer-etale-spectrum} we obtain
\[U(\cone(\tau)) \cong \Spc(\DM_\et^\vee(k;\F_p)) \cong \Spec^h(K^M_*(k)/p),\]
that is, the elementary open of $\Spc(\DM_\gm(k;\F_p))$ corresponding to $\cone(\tau)$ is completely determined by the homogeneous spectrum of Milnor $K$-theory modulo $p$.
\section{The spectrum of Tate motives}
Let $k$ be a field and let $p$ be a prime different from
$\operatorname{char}(k)$. The main result of this section is the
following.

\begin{theorem}\label{thm:tate-comparison}
The comparison map 
\[\begin{tikzcd}
    \varrho^h: \Spc(\DTM_\gm(k;\F_p)) \arrow[r] & \Spec^h(H^{*,*}_{\mathcal{M}}(k;\F_p))
\end{tikzcd}\]
is a homeomorphism.
\end{theorem}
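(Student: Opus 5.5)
First reduce to the case where $k$ contains a primitive $p^{th}$ root of unity $\zeta$. For this I would use the finite extension $k'=k(\zeta)$, whose degree $d=[k':k]$ divides $p-1$ and is therefore prime to $p$. Base change $\DTM_\gm(k;\F_p)\to\DTM_\gm(k';\F_p)$ is a tt-functor, and restriction provides a transfer, with composite multiplication by $d$, which is invertible. It follows that base change is faithful on Hom-groups between Tate objects, hence detects $\otimes$-nilpotence. By \Cref{theorem:nilfaithfulfunctors} the induced map on spectra is surjective. Galois descent identifies $H^{*,*}_{\mathcal{M}}(k;\F_p)$ with the $\operatorname{Gal}(k'/k)$-invariants of $H^{*,*}_{\mathcal{M}}(k';\F_p)$. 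The intention is to check directly that both spectra on the $k$ side are the quotients by the Galois action, and that this action is trivial on the relevant primes. Orderings of $k$ extend to $k'$ only when $p=2$, and in that case $k'=k$. Together with naturality of $\varrho^h$, this lets us assume $\zeta\in k$, so that $H^{*,*}_{\mathcal{M}}\simeq(K^M_*(k)/p)[\tau]$.

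Now split $\Spc(\DTM_\gm(k;\F_p))$ into the closed piece $\supp(\cone(\tau))$ and the open piece $U(\cone(\tau))$. These lie over $V(\tau)$ and $D(\tau)$ respectively, because $\varrho^{h,-1}(V(\tau))=\supp(\cone(\tau))$.

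\emph{Open piece.} By \Cref{remark:principalquotientgiveshomeoontoopen}, $U(\cone(\tau))$ is homeomorphic to $\Spc$ of the idempotent completion of $\DTM_\gm/\langle\cone\tau\rangle$. I would prove that this completion is equivalent, via étale realisation, to the rigid étale Tate motives. The argument is to compute Homs as colimits along $\tau$, namely $\operatorname{colim}_n H^{a,b+n}_{\mathcal{M}}$, and compare them with étale cohomology via Beilinson--Lichtenbaum. Étale Tate motives $\thick(\unit)$ then have graded endomorphism ring $K^M_*(k)/p$ with respect to $\unit(1)[1]$. From there I would either deduce the homeomorphism with $\Spec^h(K^M_*/p)\cong D(\tau)$ by restricting Gallauer's \Cref{thm:gallauer-etale-spectrum} via \Cref{lemma:homeoofcomparisonmapdescendstosubcategories}, or first prove the full geometric equivalence and then restrict. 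Compatibility with $\varrho^h$ follows from naturality.

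\emph{Closed piece.} I would construct the Milnor weight structure using \Cref{theorem:constructingweightstructures} with $S=\{\unit(n)[n]\}$. The orthogonality condition $\Hom(\unit(n)[n],\unit(m)[m+j])=H^{m-n+j,m-n}=0$ for $j\ge1$ is exactly Beilinson--Lichtenbaum/Bloch--Kato vanishing $H^{a,b}=0$ for $a>b$. The set $S$ is $\otimes$-closed, so the weight structure is $\otimes$-compatible. Its heart is identified with free graded $K^M_*(k)/p$-modules, since the Homs in the heart are the $H^{n,n}$. Then \Cref{theorem:existenceofweightcomplexfunctor} gives $t:\DTM_\gm\to\K^b(\mathcal{H}^{\mathsf T})$, and \Cref{theorem:gradedperfectcomplexes} identifies $\Spc$ of the target with $\Spec^h(K^M_*/p)$. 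The key step, and the one I expect to be the main obstacle, is showing that $t$ detects $\otimes$-nilpotence on $\cone(\tau)$. My first attempt would be a weight-filtration argument. A morphism $f$ killed by $t$ factors through weight-shifting maps between heart objects, and the only maps $\unit(n)[n]\to\unit(m)[m+j]$ with $j<0$ are multiples of powers of $\tau$ times Milnor classes. Tensoring with $\cone(\tau)$ kills $\tau$, so $f^{\otimes N}\otimes\cone(\tau)$ should vanish by an induction on weight length, with $\tau\otimes\cone(\tau)$ nilpotent (its square vanishes). By \Cref{prop:detectingnilpotenceonanobject}, $\im\Spc(t)\supseteq\supp(\cone\tau)$. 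Since $t(\tau)=0$, the image is also contained in $\supp(\cone\tau)$. Injectivity follows by comparing comparison maps: $\varrho^h\circ\Spc(t)$ is the homeomorphism onto $V(\tau)$ induced by $(K^M_*/p)[\tau]\to K^M_*/p$. This gives $\supp(\cone\tau)\cong V(\tau)$.

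Finally, $\varrho^h$ is a continuous bijection restricting to homeomorphisms on a closed piece and its open complement. It remains to show that it reflects specialisations and closed sets. For $p$ odd or $k$ not formally real, both spaces have two points and $\langle 0\rangle\rightsquigarrow\mathcal{P}_{\acute et}$ is forced. For $p=2$ formally real, the specialisations $\mathfrak p_\alpha\subset\mathfrak p_\alpha+(\tau)$ and $\mathfrak m\subset\mathfrak p_\alpha+(\tau)$ are realised by base change to $k_\alpha$, using the known real closed computation \cite{Balmer:ThreeRealArtinTates} and naturality. Non-specialisations are detected by $\varrho^h$ itself, by continuity. Closedness of a subset whose traces on both pieces are closed then follows from the Harrison topology description in \Cref{cor:hspectrum-MilnorKtau}, since the closed sets on each side match under $\varrho^h$.
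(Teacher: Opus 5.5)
Your outline follows the paper's proof step for step: reduce to $\zeta_p\in k$; identify $U(\cone(\tau))$ with rigid étale Tate motives via the $\tau$-colimit, Beilinson--Lichtenbaum and Gallauer's theorem; identify $\supp(\cone(\tau))$ via the Milnor weight complex functor and Dell'Ambrogio--Stevenson; and lift the remaining specialisations by base change to $\overline{k}$ and $k_\alpha$. The gap is exactly the step you flagged: you never prove that $t$ detects $\otimes$-nilpotence on $\cone(\tau)$, and the mechanism you propose does not work as stated.

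\emph{Why the sketch fails.} Saying that a morphism killed by $t$ ``factors through weight-shifting maps between heart objects'' is not a usable description of $\ker t$. What one actually has is that $f$ factors through an object of weight $\le -1$. That object is an iterated extension, not a sum of shifted heart objects. So after the first step of any induction, the maps you must kill go from $\cone(\tau)$ to $\cone(\tau)\otimes\unit(x)[y]$, and the observation that weight-decreasing maps between heart objects are $\tau$-multiples no longer applies. Whether these Hom groups vanish depends on the $\tau$-torsion in $H^{*,*}_{\mathcal{M}}(k;\F_p)$, which your sketch never uses.

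\emph{The missing argument.} By rigidity, take $f:\unit\to Y$. If $t(f)=0$, then $f$ factors through some $Y_{\le-1}$ of weight $\le -1$ (Bondarko's factorisation for morphisms killed by the weight complex functor). This $Y_{\le-1}$ lies in the extension and retract closure of the $\unit(x)[y]$ with $y-x\le-1$. Because $\tau$ is injective on $H^{*,*}_{\mathcal{M}}$, one has
\[
\Hom(\unit,\cone(\tau)(i)[j])\simeq H^{j,i+1}_{\mathcal{M}}/\tau H^{j,i}_{\mathcal{M}},
\]
and this is nonzero only for $j=i+1$. Feeding this into the triangle $\unit\to\unit(1)\to\cone(\tau)\to\unit[1]$ gives $\Hom(\cone(\tau),\cone(\tau)(x)[y])=0$ whenever $y-x\le-1$. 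Hence $\Hom(\cone(\tau),\cone(\tau)\otimes Y_{\le-1})=0$, so $f\otimes\cone(\tau)=0$ outright. No tensor powers or square-zero argument are needed.

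Two smaller points. First, the root-of-unity reduction does not need a Galois-quotient description of $\Spc(\DTM_\gm(k;\F_p))$, which is not readily available since $f_*$ does not preserve Tate motives. Once $\Spec^h(H^{*,*}_{\mathcal{M}}(k';\F_p))\to\Spec^h(H^{*,*}_{\mathcal{M}}(k;\F_p))$ is a homeomorphism, surjectivity of $\Spc(f^*)$ and the naturality square force $\Spc(f^*)$ and $\varrho^h_k$ to be homeomorphisms. Your invariants argument gives that homeomorphism; the paper instead shows $H^{*,*}_{\mathcal{M}}(k;\F_p)/\sqrt{(0)}\simeq\F_p[\beta]$. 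Second, in the last step $\mathfrak m\subseteq\mathfrak p_\alpha+(\tau)$ is false, since $\rho\in\mathfrak m$ but $\rho\notin\mathfrak p_\alpha+(\tau)$. The specialisations that must be lifted are $\mathfrak m\subseteq\mathfrak m+(\tau)$ and $\mathfrak p_\alpha\subseteq\mathfrak p_\alpha+(\tau)$.
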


We begin by assuming that $k$ contains a
primitive $p^{th}$ root of unity. In this case,
\[H^{*,*}_{\mathcal{M}}(k;\F_p) \simeq (K^M_*(k)/p)[\tau].\]
Moreover, by \Cref{cor:hspectrum-MilnorKtau}, we know that the homogeneous spectrum on the right decomposes into two copies of $\Spec^h(K^M_*(k)/p)$: the open subset $D(\tau)$ obtained by inverting $\tau$, and the closed subset $V(\tau)$ defined by quotienting by $\tau$. Our strategy
is to identify the corresponding open and closed pieces of
$\Spc(\DTM_\gm(k;\F_p))$ separately.

The open piece is described by the localisation
\[\begin{tikzcd}
    \DTM_\gm(k;\F_p) \arrow[r] & (\DTM_\gm(k;\F_p)/\langle\cone(\tau)\rangle)^\natural.
\end{tikzcd}\]
We will show that the target is equivalent to the category of rigid étale Tate motives. Its spectrum is therefore
homeomorphic to $\Spec^h(K^M_*(k)/p)$ by \Cref{thm:gallauer-etale-spectrum}, and the induced map on spectra identifies with the inclusion
\[U(\cone(\tau)) \subseteq \Spc(\DTM_\gm(k;\F_p)).\]
The closed piece is detected by the weight complex functor
\[\begin{tikzcd}
    t:\DTM_\gm(k;\F_p) \arrow[r] & \K^b(\mathcal{H}^{\mathsf{T}}),
\end{tikzcd}\]
where
\[\mathcal{H}^{\mathsf{T}} = \add(\unit(n)[n]\mid n\in \Z)^\natural.\]
We show that
\[\im(\Spc(t))=\supp(\cone(\tau))\]
and that this closed subspace is again homeomorphic to
$\Spec^h(K^M_*(k)/p)$.
Together, these two computations show that $\varrho^h$ is bijective; to prove that it is a homeomorphism, it remains to determine the specialisations connecting the open piece $U(\cone(\tau))$ with the closed piece $\supp(\cone(\tau))$. For odd $p$, both pieces are singletons and the spectrum is the two-point Sierpiński space, whilst for $p=2$, the specialisations are detected by base change to the real closures associated with the orderings of $k$. Finally, we remove the root of unity assumption by proving that both the Balmer spectrum of $\DTM_{\gm}(k;\F_p)$ as well as the homogeneous spectrum $\Spec^h(H^{*,*}_{\mathcal{M}}(k;\F_p))$ are unchanged upon base change from $k$ to $k(\zeta_p)$, where $\zeta_p$ denotes a primitive $p^{th}$ root of unity for $k$.
\subsection{The two detecting tt-functors}
From now on, unless stated otherwise, we assume that $k$ contains a primitive $p^{th}$ root of unity. We begin by considering the localisation obtained by inverting $\tau$. In the big
category of Tate motives, set
\[\begin{tikzcd}
    \mathfrak{X} \coloneqq \hocolim(\unit \arrow[r, "\tau"] & \unit(1) \arrow[r, "\tau(1)"] & \unit(2) \arrow[r, "\tau(2)"] & \cdots).
\end{tikzcd}\]
This is the tensor-idempotent associated with the smashing
localisation
\[\begin{tikzcd}
    \DTM(k;\F_p) \arrow[r] & \DTM(k;\F_p)/\Loc\ideal{\cone(\tau)} \simeq \mathfrak{X}\otimes\DTM(k;\F_p).
\end{tikzcd}\]
Passing to compact objects yields a tt-functor
\[\begin{tikzcd}
    L_\tau: \DTM_\gm(k;\F_p) \arrow[r] & (\DTM_\gm(k;\F_p)/\langle\cone(\tau)\rangle)^\natural \simeq (\mathfrak{X}\otimes\DTM(k;\F_p))^\omega.
\end{tikzcd}\]
By \Cref{remark:principalquotientgiveshomeoontoopen}, the induced map
\[\begin{tikzcd}
    \Spc(L_\tau):\Spc((\mathfrak{X} \otimes \DTM(k;\F_p))^\omega) \arrow[r] & \Spc(\DTM_\gm(k;\F_p))
\end{tikzcd}\]
is a homeomorphism onto the open subset
\[U(\cone(\tau)) \subseteq \Spc(\DTM_\gm(k; \F_p)).\]
We now construct a tt-functor detecting the complementary closed subset 
\[\supp(\cone(\tau)) \subseteq \Spc(\DTM_\gm(k; \F_p)).\]

\begin{lemma}[Milnor weight structure]\label{lem:milnor-weight-structure}
The category $\DTM_\gm(k;\F_p)$ admits a unique bounded
$\otimes$-compatible weight structure whose heart is
\[\mathcal{H}^{\mathsf{T}} \coloneqq \add(\unit(n)[n]\mid n\in \Z)^\natural.\]
\end{lemma}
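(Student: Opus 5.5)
We apply Bondarko's \Cref{theorem:constructingweightstructures} with $\mathcal{K}=\DTM_\gm(k;\F_p)$ and
\[S \coloneqq \{\unit(n)[n] \mid n \in \Z\}.\]
The category $\DTM_\gm(k;\F_p)$ is idempotent complete, being the compact objects of the compactly generated category $\DTM(k;\F_p)$. Condition $(2)$ of the theorem is immediate. Indeed, $\thick(S)$ is closed under shifts, so it contains $\unit(n)=\Sigma^{-n}(\unit(n)[n])$ for every $n$, and hence it is all of $\DTM_\gm(k;\F_p)$. Uniqueness of the weight structure is part of Bondarko's statement. The $\otimes$-compatibility follows from \Cref{remark:weightsstructureistensor}, because $\unit=\unit(0)[0]\in S$ and $\unit(a)[a]\otimes\unit(b)[b]\simeq\unit(a+b)[a+b]\in S$.

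The real content is the orthogonality condition $(1)$, namely $S\perp\Sigma^m S$ for every $m\geq 1$. Concretely, we must show
\[\Hom(\unit(a)[a],\unit(b)[b+m]) \simeq \Hom(\unit,\unit(b-a)[b-a+m]) = H^{\,n+m,\,n}_{\mathcal{M}}(k;\F_p)=0\]
for all $n=b-a\in\Z$ and $m\geq1$. We obtain this from the standard vanishing results for motivic cohomology of a field, treated in two cases.

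\emph{Case $n<0$.} Here $H^{*,n}_{\mathcal M}(k;\F_p)=0$ for negative weight. This holds by definition of motivic complexes; alternatively, it follows from the isomorphism $H^{*,*}_{\mathcal M}(k;\F_p)\simeq (K^M_*(k)/p)[\tau]$, which has nothing in negative weight.

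\emph{Case $n\geq 0$.} The group $H^{p',q}_{\mathcal M}(k;\F_p)$ vanishes whenever $p'>q$, since motivic cohomology of a field vanishes above the diagonal (dimension zero, so $p'>q+\dim$). One also sees this directly from the description $(K^M_*(k)/p)[\tau]$. A monomial $x\tau^j$ with $x\in K^M_i(k)/p$ has bidegree $(i,i+j)$ with $j\geq 0$, so every nonzero class in bidegree $(a',b')$ satisfies $a'\leq b'$. In our case $a'=n+m>n=b'$, so the group is zero.

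The only step requiring care is this orthogonality. It is here that we use the Beilinson--Lichtenbaum isomorphism recalled in the preliminaries, or equivalently the classical vanishing $H^{p',q}_{\mathcal M}(k)=0$ for $p'>q$ over a field. We must also check the Hom computation up to convention (cohomological versus homological shifts): the grading of $\Sigma^mS$ has to land above the diagonal, which it does precisely for $m\geq1$ with the homological convention of \Cref{remark:conventionforweightstructures}. Finally, the heart provided by \Cref{theorem:constructingweightstructures} is $\add(S)^\natural=\mathcal{H}^{\mathsf T}$, as claimed.
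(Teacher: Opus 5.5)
Your proposal is correct and follows the paper's proof essentially verbatim. You apply \Cref{theorem:constructingweightstructures} to $S=\{\unit(n)[n]\mid n\in\Z\}$, reduce orthogonality to $H^{n+m,n}_{\mathcal M}(k;\F_p)=0$ for $m\geq 1$ via the vanishing of motivic cohomology of a field above the diagonal, and obtain $\otimes$-compatibility from \Cref{remark:weightsstructureistensor}; your separate treatment of negative weights and the idempotent-completeness check are extra detail that the paper leaves implicit.
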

\begin{proof}
By definition, the objects
\[\unit(n)[n], \ n\in \Z,\]
thickly generate $\DTM_\gm(k;\F_p)$. So, to apply \Cref{theorem:constructingweightstructures} it suffices to verify the orthogonality condition. For any $r \geq 1$ and $i,j \in \Z$ , we have
\[\begin{aligned}
\Hom(\unit(i)[i], \unit(j)[j+r]) &\simeq \Hom(\unit, \unit(j-i)[j-i+r]) \\
&= H^{j-i+r,j-i}_{\mathcal{M}}(k;\F_p)=0,
\end{aligned}\]
since $\F_p$-motivic cohomology of a field vanishes above the diagonal. The $\otimes$-compatibility of this weight structure is immediate thanks to \Cref{remark:weightsstructureistensor}.
\end{proof}
\Cref{theorem:existenceofweightcomplexfunctor} in combination with \Cref{lem:milnor-weight-structure} yields a tt-functor
\[\begin{tikzcd}
    t:\DTM_\gm(k;\F_p) \arrow[r] & \K^b(\mathcal{H}^{\mathsf{T}}),
\end{tikzcd}\]
which we call \textit{the Milnor weight complex functor}. The following result describes the key property of this functor which we need.
\begin{prop}\label{prop:Milnor-t_-detects-nilp-tau}
Let
\[\begin{tikzcd}
    f:X \arrow[r] & Y
\end{tikzcd}\]
be a morphism in $\DTM_\gm(k;\F_p)$. If $t(f)=0$, then
\[f\otimes\cone(\tau)=0.\]
In particular, $t$ detects the $\otimes$-nilpotence of morphisms on $\cone(\tau)$. 
\end{prop}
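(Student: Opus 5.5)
The plan is to reduce the statement to the heart $\mathcal{H}^{\mathsf{T}}$. There the only morphisms that the weight complex functor does not see are those divisible by $\tau$, and tensoring with $\cone(\tau)$ should kill exactly those.

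Throughout, write $C \coloneqq \cone(\tau)$, where $\tau:\unit\to\unit(1)$.

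\emph{Step 1: Hom groups into $C$.} For every $n$ and $s$, the triangle defining $C$ gives a long exact sequence
\[H^{s,n}_{\mathcal{M}}(k;\F_p)\xrightarrow{\tau}H^{s,n+1}_{\mathcal{M}}(k;\F_p)\to\Hom(\unit,C(n)[s])\to H^{s+1,n}_{\mathcal{M}}(k;\F_p)\xrightarrow{\tau}H^{s+1,n+1}_{\mathcal{M}}(k;\F_p).\]
Multiplication by $\tau$ is injective on $(K^M_*(k)/p)[\tau]$. Hence $\Hom(\unit,C(n)[s])$ is the cokernel of $\tau$, which is $K^M_s(k)/p$ when $s=n+1$ and zero otherwise. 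In weight language, $\Hom(\unit(i)[i],C(j)[j+r])$ vanishes for all $i,j$ unless $r=1$ in the conventions of \Cref{lem:milnor-weight-structure}. So, after tensoring with $C$, maps out of a heart object only see a single shift, namely the "diagonal" part. The $\tau$-multiples, which live strictly below the diagonal and are the maps invisible to $t$, disappear.

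\emph{Step 2: morphisms between weight-pure objects.} Let $f:\unit(i)[i]\to\unit(j)[j+r]$. If $r=0$, then $f$ lies in the heart, and $t$ is the identity there, so $t(f)=0$ forces $f=0$. If $r\neq 0$, then $f$ lies in $H^{j-i+r,j-i}_{\mathcal{M}}(k;\F_p)$. By the vanishing above the diagonal this group is zero for $r>0$. For $r<0$ it is $\tau^{-r}\cdot K^M_{j-i+r}(k)/p$, so $f$ factors through a power of $\tau$. I would then check that $\tau\otimes\id_C=0$. This amounts to the vanishing of the composite $C\to C(1)$ obtained from $\tau$. By Step 1 and duality ($C^\vee\simeq C(-1)[-1]$), this composite is an element of a Hom group between $C$'s, and I expect that group to vanish in the relevant bidegree. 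If it does not vanish, I would fall back to showing that $\tau^2\otimes\id_C=0$, which is enough for the nilpotence clause but not for the sharp statement.

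\emph{Step 3: induction on weight length.} For general $X$ and $Y$ with $t(f)=0$, I would induct on the total weight amplitude of $X$ and $Y$, using weight decompositions $w_{\le m}X\to X\to w_{\ge m+1}X$. I would use Bondarko's description of the kernel of the weight complex functor: $t(f)=0$ if and only if $f$ can be written, after choosing weight Postnikov towers, as a sum of maps each of which factors through some pair of pieces of different weights, together with heart maps that are zero. Tensoring with $C$, Step 1 kills every component connecting pieces whose weight difference is not the single allowed value, and Step 2 kills the remaining ones.

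I expect Step 3 to be the main obstacle, because weight decompositions are non-canonical and one has to control the extension data. My first attempt would be the following. Show that $C\otimes-$ restricted to $\DTM_\gm(k;\F_p)$ factors through $t$ up to the ambiguity that Step 1 rules out. Concretely, on the subcategory $\thick(C(i)[i])$ the Hom groups from Step 1 make the "$C$-weight structure" behave like $\K^b$ of its heart. One would then deduce that $X\mapsto X\otimes C$ is isomorphic to a functor of $t(X)$, and in particular sends maps $f$ with $t(f)=0$ to zero.
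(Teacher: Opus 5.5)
Your Steps 1 and 2 are fine. The vanishing you only ``expect'' in Step 2 does hold. Apply $\Hom(-,C(x)[y])$ to $\unit\to\unit(1)\to C\to\unit[1]$ and use your Step 1 twice: the outer terms are $\Hom(\unit,C(x)[y-1])$ and $\Hom(\unit,C(x-1)[y])$, which are nonzero only for $y=x+2$ and $y=x$ respectively. Hence $\Hom(C,C(x)[y])=0$ whenever $y-x\le -1$. In particular $\Hom(C,C(1))=0$, so $\tau\otimes\id_C=0$ and the fallback to $\tau^2$ is unnecessary.

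The genuine gap is Step 3. The ``description'' of $\ker(t)$ you invoke is not a valid statement: you propose writing $f$ as a sum of components between pieces of weight Postnikov towers. In a triangulated category, a morphism between objects built from non-split extensions is not a matrix of morphisms between the pieces. Also, weight truncations are not functorial, so $t(f)=0$ does not pass to induced maps between truncations. Your fallback, that $X\mapsto X\otimes C$ is ``a functor of $t(X)$'', is essentially a restatement of the proposition. As written, the argument does not get from weight-pure objects to arbitrary $X,Y$.

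The missing idea is to avoid decomposing morphisms altogether and work with vanishing of Hom groups, which \emph{is} stable under extensions.
\begin{enumerate}
\item By rigidity you may take $X=\unit$: the adjoint $\unit\to X^\vee\otimes Y$ is killed by $t$ exactly when $f$ is, and likewise after tensoring with $C$.
\item Since $\unit$ lies in the heart, Bondarko's results on killing weights (which the paper uses) say that $t(f)=0$ forces a factorisation $\unit\xrightarrow{g}Y_{\le -1}\to Y$ with $Y_{\le -1}\in\DTM_\gm(k;\F_p)_{\le -1}$.
\item Then $g\otimes\id_C$ lies in $\Hom(C,C\otimes Y_{\le -1})$. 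The category $\DTM_\gm(k;\F_p)_{\le -1}$ is the extension- and retract-closure of the $\unit(x)[y]$ with $y-x\le -1$. So long exact sequences reduce the vanishing of this group to the vanishing of $\Hom(C,C(x)[y])$ for $y-x\le -1$ shown above.
\end{enumerate}
This gives $f\otimes\id_C=0$ directly, with no induction on weight amplitude.
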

\begin{proof}
Since $\DTM_\gm(k;\F_p)$ is rigid, it suffices to treat the case $X=\unit$. Since $t(f)=0$, $f$ factors as 
\[\begin{tikzcd}[ampersand replacement=\&]
	\unit \&\& Y \\
	\& {Y_{\leq -1}}
	\arrow["f", from=1-1, to=1-3]
	\arrow["g"', from=1-1, to=2-2]
	\arrow[from=2-2, to=1-3]
\end{tikzcd}\]
by \cite[Proposition 2.1.1, Proposition 2.3.1 (1)]{Bondarko:KillingWeights} for some $Y_{\leq-1} \in \DTM_\gm(k;\F_p)_{\leq-1}$. We verify that $g \otimes \cone(\tau)=0$. Since $\DTM_\gm(k;\F_p)_{\leq-1}$ is the extension and retract closure of the objects 
\[\unit(x)[y], \ y-x\leq-1,\]
it is enough to prove that 
\[\Hom(\cone(\tau),\cone(\tau)(x)[y])=0\]
whenever $y-x\leq-1$, since the group $\Hom(\cone(\tau), \cone(\tau) \otimes Y_{\leq -1})$ in which $g$ resides is an iterated extension of these ones. Applying $\Hom(-,\cone(\tau)(x)[y])$ to the triangle
\[\begin{tikzcd}
    \unit \arrow[r, "\tau"] & \unit(1) \arrow[r] & \cone(\tau) \arrow[r] & \unit[1],
\end{tikzcd}\]
we see it is enough to verify
\[\Hom(\unit,\cone(\tau)(x-1)[y])=0, \ \Hom(\unit,\cone(\tau)(x)[y-1])=0\]
for such $x,y$. Multiplication by $\tau$ is injective on $H^{*,*}_{\mathcal{M}}(k;\F_p)$, hence the induced long exact sequence associated to the defining triangle of
$\cone(\tau)(i)[j]$ gives
\[\begin{aligned}
\Hom(\unit,\cone(\tau)(i)[j])
& \simeq H^{j,i+1}_{\mathcal{M}}(k;\F_p) \big / \tau H^{j,i}_{\mathcal{M}}(k;\F_p) \\
& \simeq
\begin{cases}
K^M_j(k)/p & \text{if } j=i+1 \geq 0;\\
0 & \text{otherwise.}
\end{cases}
\end{aligned}\]
For $(i,j)=(x-1,y)$, the nonvanishing condition is precisely when $y=x$, while for
$(i,j)=(x,y-1)$ it is $y=x+2$. Both violate $y-x\leq-1$, whence we conclude that
\[g\otimes\cone(\tau)=0,\]
proving the claim.
\end{proof}
\begin{remark}\label{rem:supp-cone-tau}
Since $t(\tau)=0$, we find that $t(\cone(\tau))$ does not belong to any prime tt-ideal of $\Spc(\K^b(\mathcal{H}^{\mathsf{T}}))$. In particular,
\[\im(\Spc(t)) \subseteq \supp(\cone(\tau)).\]
The reverse inclusion follows from the previous result combined with \Cref{prop:detectingnilpotenceonanobject}.
\end{remark}
\begin{cor}\label{cor:joint-detection-nilp-tate}
The pair of tt-functors $(L_\tau, t)$ jointly detects tensor-nilpotence of morphisms.
\end{cor}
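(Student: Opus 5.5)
Suppose $f\colon X\to Y$ is a morphism in $\DTM_\gm(k;\F_p)$ with $L_\tau(f)=0$ and $t(f)=0$. We want some tensor power $f^{\otimes n}$ to vanish. Since the category is rigid, we may pass to the adjoint and assume $X=\unit$. The plan is to combine the two vanishing hypotheses through the triangle defining $\cone(\tau)$, using the standard ``nilpotence on complementary pieces'' argument.

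The first step handles the localisation. $L_\tau$ is (the compact part of) the smashing localisation with idempotent $\mathfrak{X}$, and $\mathfrak{X}$ is the filtered colimit of the maps $\tau^{m}$. Hence $L_\tau(f)=0$ means that $f$ becomes zero after composing with some power $\tau^m$. To justify this, I use that $\unit$ is compact, so
\[\Hom(\unit,Y\otimes\mathfrak{X})=\operatorname{colim}_m\Hom(\unit,Y(m)).\]
The conclusion is that $(\tau^{m}\otimes Y)\circ f=0$ for some $m\ge 1$. Equivalently, $f\otimes \tau^m=0$, viewed as a morphism $\unit\to Y(m)$.

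The second step uses \Cref{prop:Milnor-t_-detects-nilp-tau}: since $t(f)=0$, we get $f\otimes\cone(\tau)=0$. Now tensor $f$ with the triangle
\[\unit\xrightarrow{\tau}\unit(1)\to\cone(\tau)\to\unit[1].\]
The vanishing $f\otimes\cone(\tau)=0$ then shows that $f\otimes\id_{\unit(1)}$ factors through $\id_Y\otimes\tau\colon Y\to Y(1)$. In fact it is cleaner to argue with ideals of maps. Consider the tensor ideal $\mathcal{J}$ of morphisms $g$ with $g\otimes\cone(\tau)=0$. A general lemma (Balmer's) says that any morphism annihilated by $\otimes\cone(\tau)$ is ``$\otimes$-divisible by $\tau$''. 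More precisely, if $g\otimes\cone(\tau)=0$ then $g\otimes g'$ factors through $\tau$ for suitable $g'$. Concretely, I expect to show that $f^{\otimes 2}$ factors as $\tau\otimes h$ for some $h$, and by iterating, that $f^{\otimes 2N}$ factors through $\tau^{N}$ tensored with a further map.

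The third step combines the two. Having $f^{\otimes N'}=(\tau^{m}\otimes\text{stuff})\circ\cdots$ together with $f\otimes\tau^m=0$ gives $f^{\otimes (N'+1)}=0$. Alternatively, and perhaps more robustly, the whole argument can be run at the level of spectra. Since both $L_\tau$ and $t$ are tt-functors, it suffices to show that the union of the images of $\Spc(L_\tau)$ and $\Spc(t)$ is all of $\Spc(\DTM_\gm)$; this follows from \Cref{rem:supp-cone-tau}, which gives $\im\Spc(t)=\supp(\cone(\tau))$, together with $\im\Spc(L_\tau)=U(\cone(\tau))$. One then invokes Balmer's theorem that a family of tt-functors out of a rigid category is jointly nil-faithful when their images jointly cover the spectrum, provided the targets are suitably nice. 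The main obstacle is making the divisibility and iteration argument of the second step precise, or else justifying the converse-type result that joint surjectivity implies joint nil-detection. My first attempt would be the direct morphism argument via the triangle $\unit\to\unit(1)\to\cone(\tau)$.
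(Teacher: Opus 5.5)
Your Step~1 is correct: $\unit$ is compact and $\mathfrak{X}\otimes Y$ is the telescope of $Y\xrightarrow{\tau}Y(1)\xrightarrow{\tau}\cdots$. So $L_\tau(f)=0$ exactly when $f\otimes\tau^m=(\id_Y\otimes\tau^m)\circ f=0$ for some $m$.

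The gap is the passage from $f\otimes\cone(\tau)=0$ to a factorisation of a tensor power of $f$ through $\tau^m$. You only announce this step (``I expect to show'', an unspecified lemma of Balmer, squaring and iterating), and you flag it yourself as the main obstacle. It needs no squaring or iteration. Write $\partial\colon\cone(\tau)[-1]\to\unit$ for the connecting map. Naturality gives $f\circ\partial=(\id_Y\otimes\partial)\circ(f\otimes\id_{\cone(\tau)[-1]})=0$, so $f=g\circ\tau$ for some $g\colon\unit(1)\to Y$. Hence $f^{\otimes m}=g^{\otimes m}\circ\tau^m$, and
\[
f^{\otimes(m+1)}=f\otimes(g^{\otimes m}\circ\tau^m)=(\id_Y\otimes g^{\otimes m})\circ(f\otimes\tau^m)=0 .
\]
The target-side factorisation $f(1)=(\id_Y\otimes\tau)\circ h$ that you did derive also works: take its $m$-th tensor power and tensor once more with $f$. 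With this inserted, your Step~3 goes through and the proof is complete, with explicit exponent $m+1$.

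Discard the spectral alternative. Balmer's theorem goes from nil-detection to surjectivity on spectra, not conversely, and no such converse is available for Balmer spectra in general. Knowing that $U(\cone(\tau))$ and $\supp(\cone(\tau))$ cover the spectrum therefore gives you nothing about nilpotence. Note also that the equality $\im\Spc(t)=\supp(\cone(\tau))$ in \Cref{rem:supp-cone-tau} is itself deduced from nil-detection via \Cref{prop:detectingnilpotenceonanobject}, i.e.\ in the valid direction.

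For comparison, the paper deduces the corollary from \Cref{prop:Milnor-t_-detects-nilp-tau} together with the argument of Balmer--Gallauer's Corollary~7.8, which is the abstract form of your argument. Since $L_\tau(f)=0$ in the Verdier quotient (idempotent completion is fully faithful), $f$ factors through some $z\in\ideal{\cone(\tau)}$. The objects on which $f$ is $\otimes$-nilpotent form a thick tensor ideal containing $\cone(\tau)$, so $f$ is $\otimes$-nilpotent on $z$, and then on $\unit$. That route works for any object $x$ and the quotient by $\ideal{x}$. Yours uses the explicit telescope $\mathfrak{X}$ and the fact that $\tau$ is a map from $\unit$ to an invertible object; in exchange it is more hands-on and gives a concrete nilpotence exponent.
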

\begin{proof}
This immediately follows from \Cref{prop:Milnor-t_-detects-nilp-tau} and the same argument as in the proof of \cite[Corollary 7.8]{Balmer:ThreeRealArtinTates}.
\end{proof}

\subsection{The open étale piece}
We now identify the localisation at $\tau$ with rigid étale motives. The result holds for all geometric motives, although we shall only use its restriction to Tate motives. To the authors' knowledge, this was previously known only under the assumption that $k$ has finite $p$-cohomological dimension (cf. \cite{hasemeyer2005motives} and \cite[Theorem C.1]{Gallauer:TateSpectrum}).

\begin{prop}\label{prop:tau-localisation-etale}
Étale realisation induces an equivalence of tt-categories
\[\begin{tikzcd}
    (\mathfrak{X} \otimes \DM(k;\F_p))^\omega \simeq (\DM_\gm(k;\F_p)/\langle\cone(\tau)\rangle)^\natural \arrow[r, "\sim"] & \DM_\et^\vee(k;\F_p).
\end{tikzcd}\]
\end{prop}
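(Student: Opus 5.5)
We would prove the equivalence by comparing the big categories and then passing to compact (resp. rigid) objects. Étale realisation is a colimit-preserving tt-functor $\Re_{\acute{e}t}:\DM(k;\F_p)\to\DM_\et(k;\F_p)$, and it inverts $\tau$: under realisation $\tau$ becomes the isomorphism $\F_p\simeq\mu_p$ determined by the chosen root of unity. It therefore inverts $\mathfrak{X}\otimes-$, and so factors through a tt-functor
\[\Phi:\mathfrak{X}\otimes\DM(k;\F_p)\longrightarrow \DM_\et(k;\F_p).\]
The first identification $(\mathfrak{X}\otimes\DM(k;\F_p))^\omega\simeq(\DM_\gm(k;\F_p)/\ideal{\cone(\tau)})^\natural$ is the standard Neeman statement for the smashing localisation generated by the compact object $\cone(\tau)$, exactly as in the Tate case above. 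Since $\Phi$ is monoidal, it sends compact objects, which are rigid, to rigid objects, so it restricts to a functor into $\DM_\et^\vee(k;\F_p)$.

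\emph{Full faithfulness.} It suffices to check that $\Phi$ is fully faithful on the compact generators $\mathfrak{X}\otimes M(X)(a)[b]$ with $X$ smooth. By adjunction, this amounts to comparing
\[\Hom(M(X),\mathfrak{X}(a)[b])=\operatorname{colim}_n H^{b,a+n}_{\mathcal{M}}(X;\F_p)\]
with $H^b_{\acute{e}t}(X,\mu_p^{\otimes a})$. Here compactness of $M(X)$ lets us commute $\Hom$ with the hocolim. By Beilinson--Lichtenbaum (Voevodsky), $H^{b,q}_{\mathcal{M}}(X;\F_p)\to H^b_{\acute{e}t}(X,\mu_p^{\otimes q})$ is an isomorphism for $b\le q$. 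For fixed $b$ and $n\gg0$ the transition maps are therefore isomorphisms compatible with $\tau$-multiplication, which corresponds to the twist isomorphism in étale cohomology. So the colimit equals étale cohomology with no finiteness hypothesis on cohomological dimension. This is exactly where we avoid the hypothesis of \cite{hasemeyer2005motives}: we never need the étale side to be compactly generated, only the comparison on the Hom groups out of compact objects. On the étale side we also use that $\Hom_{\DM_\et}(\Re M(X),\unit(a)[b])=H^b_{\acute{e}t}(X,\mu_p^{\otimes a})$, which follows from \cite{cisinski2016etale}. Full faithfulness on generators then extends to thick closures, and to idempotent completions since the target is idempotent complete.

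\emph{Essential surjectivity onto rigid objects.} This is the step we expect to be the main obstacle. Our plan is to use rigidity-detection on a cover. By \cite{cisinski2016etale}, the realisations $\Re M(X)(a)$ generate $\DM_\et(k;\F_p)$ under colimits, but $\DM_\et$ need not be compactly generated, so we cannot simply invoke compact generation. Instead we would argue via Galois descent. A rigid étale motive becomes, after pullback to some finite separable extension $E/k$ of $p$-cohomological dimension small in a suitable sense (or to the separable closure, where rigid objects are finite complexes of twists of $\unit$, i.e. locally constant constructible with finite stalks), an object in the thick subcategory generated by Tate twists. Since $[E:k]$ may be divisible by $p$, we would not rely on a transfer retraction. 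Instead we would use that rigid objects of $\DM_\et(k;\F_p)$ are constructible étale sheaves, i.e. lisse $\F_p$-sheaves on $\Spec k$, equivalently bounded complexes of finite continuous $\Gamma_k$-modules with perfect underlying $\F_p$-complex. Any such complex lies in the thick subcategory generated by permutation modules $\F_p[\Gamma_k/\Gamma_E]\otimes\mu_p^{\otimes a}$: resolve a finite $\Gamma_k$-module through a finite quotient $G$ by permutation modules, then truncate. The truncation is the delicate point, since $\F_p[G]$ has infinite global dimension. We would first try to handle it by noting that rigidity forces the object to be perfect over $\F_p[G]$ only after adding twists, and by using the full-faithfulness computation above to show that the image is closed under the needed cones and retracts. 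These permutation modules are exactly the images $\Phi(\mathfrak{X}\otimes M(E)(a))$, which completes the argument.
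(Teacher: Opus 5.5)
Your full-faithfulness step is the paper's argument. Both compute $\Hom(M(X),\mathfrak{X}(a)[b])\simeq\operatorname{colim}_n H^{b,a+n}_{\mathcal{M}}(X;\F_p)$ and identify it with étale cohomology via Beilinson--Lichtenbaum, which is what removes the finite cohomological dimension hypothesis. The paper then gets essential surjectivity from the assertion that $\DM_\et^\vee(k;\F_p)$ is thickly generated by the $M_{\acute{e}t}(X)(i)[j]$. You rightly single this out as the nontrivial point, but your argument for it has a genuine gap.

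Resolving a finite $\F_p[G]$-module by permutation modules and truncating fails when $p$ divides $|G|$. The brutal truncation of a free resolution leaves a shifted syzygy, which lies in the thick closure exactly when the original module does, and $\thick(\F_p[G])$ contains only perfect complexes. The remedies you sketch do not close this:
\begin{enumerate}
\item Tate twists are trivial once $\zeta_p\in k$. In any case, tensoring with a one-dimensional character never makes a module perfect; the trivial module over $\F_p[C_p]$ stays non-perfect.
\item Closure of the essential image under cones and retracts is automatic for a fully faithful exact functor out of an idempotent-complete category, so it cannot produce the missing objects.
\item Finite separable extensions do not lower $\operatorname{cd}_p$ in general, and even if they did, that would not make the module unipotent.
\end{enumerate}

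The missing idea is the transfer you set aside, applied to a prime-to-$p$ extension that makes the module unipotent rather than trivial. The stalk of a rigid object is perfect over $\F_p$, so the truncation triangles reduce you to a single finite discrete $\Gamma_k$-module $M$. Set up as follows:
\begin{enumerate}
\item Let $\Gamma_k$ act on $M$ through a finite quotient $G$, and let $P\subseteq G$ be a Sylow $p$-subgroup.
\item Let $E/k$ be the finite separable extension cut out by the preimage of $P$, so that $[E:k]=[G:P]$ is prime to $p$, and let $f:\Spec(E)\to\Spec(k)$.
\end{enumerate}
The composite $M\to f_*f^*M\to M$ of unit and trace is multiplication by $[E:k]$, so $M$ is a retract of $f_*f^*M$. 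The group $\Gamma_E$ acts on $f^*M$ through the $p$-group $P$, and a nonzero $\F_p[P]$-module has nonzero invariants. Hence $f^*M$ is an iterated extension of copies of $\unit_E$. Applying the exact functor $f_*$ gives $f_*f^*M\in\thick(f_*\unit_E)=\thick(M_{\acute{e}t}(E))$.

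Thus $\DM_\et^\vee(k;\F_p)$ is thickly generated by the realisations of the Artin motives $M(E)$, and these lie in the image of your functor. With this step inserted your proof is complete. It also supplies the justification for the generation claim that the paper's proof uses without comment.
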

\begin{proof}
Since $\tau$ becomes an isomorphism under étale realisation, we get an induced tt-functor
\[\begin{tikzcd}
    \overline{\Re}_\et:(\mathfrak{X}\otimes \DM(k;\F_p))^\omega \arrow[r] & \DM_\et^\vee(k;\F_p). 
\end{tikzcd}\]
Both categories are thickly generated by the images of objects 
\[M(X)(i)[j],\] 
where $X$ is a smooth $k$-scheme and $i,j \in \Z$. By the 5-lemma, it is therefore enough to verify full faithfulness on these generators. Moreover, by rigidity, we are reduced to compare morphisms groups from $M(X)$ to $\unit(x)[y]$ in both categories. In the $\tau$-local category,
\[\begin{aligned} 
    \Hom(\mathfrak{X}\otimes M(X),\mathfrak{X}(x)[y]) \simeq & \Hom_{\DM(k;\F_p)}(M(X),\mathfrak{X}(x)[y]) \\ 
    \simeq & \operatorname*{colim}_{n} \Hom_{\DM(k;\F_p)}(M(X),\unit(x+n)[y]),
\end{aligned}\]
where the first equivalence follows by virtue of being a localisation of $\DM(k;\F_p)$; the second equivalence follows by standard results on homotopy colimits \cite[Lemma~2.8]{Neeman:BrownRep}. The transition maps in the colimit are given by multiplication by $\tau$. By the Beilinson--Lichtenbaum conjecture, the colimit stabilises to étale cohomology: 
\[\operatorname*{colim}_{n} H^{y,x+n}_{\mathcal{M}}(X;\F_p) \simeq H^{y}_{\acute{e}t}(X;\F_p). \]
On the étale side,
\[\Hom_{\DM_\et(k;\F_p)} \left(M_{\acute{e}t}(X),\unit_{\acute{e}t}(x)[y]\right) \simeq H^{y}_{\acute{e}t}(X;\F_p).\] 
The functor $\overline{\Re}_\et$ identifies these two descriptions: it is therefore fully faithful on generators, and hence an equivalence.
\end{proof}
\begin{remark}
In general, this equivalence does not extend to the corresponding big categories: the localisation $\mathfrak{X} \otimes \DM(k;\F_p)$ is compactly generated, whereas $\DMet(k;\F_p)$ need not be (cf. \cite[Remark 5.4.10]{cisinski2016etale}).
\end{remark}
Let
\[\DTM_\et^\vee(k;\F_p) \coloneqq \thick(\unit_{\acute{e}t}(n) \mid n \in \Z) \subseteq \DM_\et^\vee(k;\F_p)\]
be the category of \emph{rigid étale Tate motives}. Restricting \Cref{prop:tau-localisation-etale} to Tate motives yields an equivalence
\[\begin{tikzcd}
    (\mathfrak{X}\otimes\DTM(k;\F_p))^\omega \arrow[r, "\sim"] & \DTM_\et^\vee(k;\F_p).
\end{tikzcd}\]
We can now compute the open piece $U(\cone(\tau))$ of the Tate spectrum.
\begin{cor}\label{cor:tau-open-tates}
There are natural homeomorphisms
\[U(\cone(\tau)) \cong \Spc(\DTM_\et^\vee(k;\F_p)) \cong \Spec^h(K^M_*(k)/p).\]
\end{cor}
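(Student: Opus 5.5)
The first homeomorphism comes from the localisation at $\tau$ already set up above. By \Cref{remark:principalquotientgiveshomeoontoopen}, $\Spc(L_\tau)$ is a homeomorphism from $\Spc((\mathfrak{X}\otimes\DTM(k;\F_p))^\omega)$ onto the open $U(\cone(\tau))$. Since $\Spc$ is unchanged by idempotent completion, this is the spectrum of $(\DTM_\gm(k;\F_p)/\ideal{\cone(\tau)})^\natural$. The restriction of \Cref{prop:tau-localisation-etale} to Tate motives identifies this category with $\DTM_\et^\vee(k;\F_p)$, giving $U(\cone(\tau))\cong\Spc(\DTM_\et^\vee(k;\F_p))$.

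For the second homeomorphism I will show that the graded comparison map for $\DTM_\et^\vee(k;\F_p)$ with respect to $\unit_{\acute{e}t}(1)[1]$ is a homeomorphism. The plan is to apply \Cref{lemma:homeoofcomparisonmapdescendstosubcategories} to the full rigid tt-subcategory
\[\DTM_\et^\vee(k;\F_p)\subseteq \DM_\et^\vee(k;\F_p),\]
with the single invertible object $u=\unit_{\acute{e}t}(1)[1]$. This object lies in $\DTM_\et^\vee(k;\F_p)$, and $\DTM_\et^\vee(k;\F_p)$ is rigid because it is thickly generated by invertible objects and rigid objects are closed under cones and retracts. The graded ring $\bigoplus_n\Hom(\unit,\unit(n)[n])$ is the same whether computed in the subcategory or in the ambient category, because the subcategory is full. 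By \Cref{thm:gallauer-etale-spectrum} the ambient comparison map is a homeomorphism onto $\Spec^h(K^M_*(k)/p)$, and the lemma then gives the same for $\DTM_\et^\vee(k;\F_p)$.

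The only subtle point is the grading. The lemma is phrased for the multigraded ring $R_{u_1,\dots,u_r}$, and here I use the singly graded ring for $u=\unit(1)[1]$ on both sides, so the hypothesis matches exactly. For naturality, the composite of the two homeomorphisms should agree with the restriction of $\varrho^h$ to $U(\cone(\tau))$ once we pass through $D(\tau)\cong\Spec^h(K^M_*(k)/p)$. To see this, note that after inverting $\tau$ the bigraded ring $(K^M_*(k)/p)[\tau,\tau^{-1}]$ is determined by its $\unit(1)[1]$-graded part. I will therefore use naturality of the comparison maps (\Cref{remark:naturalityofcomparisonmap} and \cite[Theorem~1.1]{dell2014even}) along the tt-functor $\DTM_\gm\to\DTM_\et^\vee$ to match the images. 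This compatibility is the step I expect to need the most care, but it is bookkeeping about homogeneous localisation rather than a real obstacle.
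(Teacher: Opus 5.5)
Your proposal is correct and is essentially the paper's proof. The first homeomorphism comes from \Cref{prop:tau-localisation-etale} restricted to Tate motives (together with \Cref{remark:principalquotientgiveshomeoontoopen}), and the second from \Cref{thm:gallauer-etale-spectrum} descended along the full rigid inclusion $\DTM_\et^\vee(k;\F_p)\subseteq\DM_\et^\vee(k;\F_p)$ via \Cref{lemma:homeoofcomparisonmapdescendstosubcategories}. The compatibility with $\varrho^h$ and $D(\tau)$ that you flag is not needed for this statement; the paper handles it afterwards through the naturality diagram in \Cref{cor:bijectivity-tate-comparison-map}.
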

\begin{proof}
The first homeomorphism follows from \Cref{prop:tau-localisation-etale}. The graded comparison map
for rigid étale motives is a homeomorphism by \Cref{thm:gallauer-etale-spectrum}; its restriction to the full tt-subcategory of rigid étale Tate motives is therefore again a homeomorphism by
\Cref{lemma:homeoofcomparisonmapdescendstosubcategories}.
\end{proof}

\begin{remark}\label{rem:open-tau-DM}
Note that \Cref{prop:tau-localisation-etale} and \Cref{thm:gallauer-etale-spectrum} also identify the corresponding open subset of $\Spc(\DM_\gm(k;\F_p))$:
\[U(\cone(\tau)) \cong \Spc(\DM_\et^\vee(k;\F_p)) \cong \Spec^h(K^M_*(k)/p).\]
\end{remark}

\subsection{The Milnor closed piece}
In order to describe the closed subset 
\[\supp(\cone(\tau)) \subseteq \Spc(\DTM_\gm(k;\F_p))\]
we begin by studying the target $\K^b(\mathcal{H}^{\mathsf{T}})$ of the Milnor weight complex functor. For ease of notation, we set
\[R_k \coloneqq K^M_*(k)/p,\]
and for $n\in \Z$, we write $R_k\langle n\rangle$ for the graded shift defined by
\[R_k\langle n\rangle^d \coloneqq R_k^{n+d}.\]
This is a free graded $R_k$-module. We write 
\[\mathsf{free^{gr}}(R_k) \coloneqq \add(R_k\langle n\rangle\mid n\in \Z) \subseteq \mathsf{mod^{gr}}(R_k)\]
for the category of finitely generated graded free $R_k$-modules,  and degree-zero graded $R_k$-module homomorphisms. In particular,
\[\Hom_{\mathsf{mod^{gr}}(R_k)}(R_k\langle i\rangle,R_k\langle j\rangle) \simeq R_k^{j-i}.\]
We will identify $\K^b(\mathcal{H}^{\mathsf{T}})$ with the category of perfect
complexes of graded $R_k$-modules $\D^{\mathsf{perf}}(R_k)$. The result of
Dell'Ambrogio--Stevenson recalled in \Cref{theorem:gradedperfectcomplexes} will then yield a natural homeomorphism
\[\Spc(\K^b(\mathcal{H}^{\mathsf{T}})) \cong \Spec^h(R_k).\]
We begin by recording that there is no distinction between finitely generated graded free and projective $R_k$-modules so that, in particular, $\mathsf{free^{gr}}(R_k)$ is idempotent-complete. 

\begin{lemma}\label{lem:graded-projectives-Milnor-free}
Every finitely generated graded projective $R_k$-module is graded
free.
\end{lemma}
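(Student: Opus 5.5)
The plan is the standard graded Nakayama argument, using that $R_k = K^M_*(k)/p$ is non-negatively graded with $R_k^0 = \F_p$ a field. Let $P$ be a finitely generated graded projective $R_k$-module and let $\mathfrak{m} = R_k^{>0}$ be the irrelevant ideal, so that $R_k/\mathfrak{m} \simeq \F_p$.

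\emph{Step 1: choose candidate generators.} The quotient $P/\mathfrak{m}P$ is a finite-dimensional graded $\F_p$-vector space. Choose homogeneous elements $x_1,\dots,x_r \in P$, of degrees $d_1,\dots,d_r$, whose images form a homogeneous basis of $P/\mathfrak{m}P$. They define a degree-zero map
\[\phi:F \coloneqq \bigoplus_i R_k\langle -d_i\rangle \longrightarrow P\]
(with the shift sign chosen to match the convention $R_k\langle n\rangle^d = R_k^{n+d}$). By construction, $\phi$ becomes an isomorphism after applying $-\otimes_{R_k}\F_p$.

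\emph{Step 2: surjectivity.} Let $C = \operatorname{coker}(\phi)$. It is finitely generated and graded, and $C/\mathfrak{m}C=0$. Since $P$ is finitely generated, its degrees are bounded below. If $C \neq 0$, take a nonzero homogeneous element of minimal degree; it cannot lie in $\mathfrak{m}C$, because $\mathfrak{m}$ only raises degrees. Hence $C=0$, which is graded Nakayama.

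\emph{Step 3: injectivity.} Since $P$ is graded projective, $\phi$ splits, so $F \simeq P \oplus K$ with $K=\ker\phi$ a graded direct summand of $F$. In particular $K$ is finitely generated. Tensoring the split sequence with $\F_p$ gives $F/\mathfrak{m}F \simeq P/\mathfrak{m}P \oplus K/\mathfrak{m}K$. The first two terms have the same finite dimension, so $K/\mathfrak{m}K=0$, and Nakayama as in Step 2 gives $K=0$. Thus $P\simeq F$ is graded free.

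The only point needing care is the positivity of the grading together with $R_k^0=\F_p$; Milnor $K$-theory satisfies both. There is no real obstacle here, and the main thing to watch is keeping the shift convention consistent when writing $F$. As a consequence, $\mathsf{free^{gr}}(R_k)$ is idempotent complete, since summands of finitely generated free modules are finitely generated projective.
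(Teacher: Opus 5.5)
Your proposal is correct and follows essentially the same route as the paper. The paper likewise lifts a homogeneous basis of $P/\mathfrak{m}P$ to a map from a graded free module, uses graded Nakayama for surjectivity, splits the map using projectivity, and kills the complementary summand by a second application of Nakayama; your minimal-degree proof of graded Nakayama just spells out a step the paper quotes.
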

\begin{proof}
This is essentially an application of \cite[Lemma 3.8]{burke2015derived} (their proof goes through verbatim): we spell out the details of the argument for the reader's convenience. Let $P$ be a non-zero finitely generated graded projective $R_k$-module, and write $\mathfrak{m}$ for the irrelevant ideal of $R_k$. Since $R_k^0=\F_p$, we have
\[\overline{P} \coloneqq P/\mathfrak{m}P\]
is a finite-dimensional graded $\F_p$-vector space. Recall that the graded Nakayama's lemma for positively graded rings tells us that if $M$ is a finitely generated graded module and $\mathfrak{m}M=M$, then $M=0$. In particular, we may choose a homogeneous basis
\[\overline{x}_1,\dots,\overline{x}_r\]
of the graded $\F_p$-vector space $P/\mathfrak{m}P$, with
$\deg(\overline{x}_i)=d_i$, and choose homogeneous lifts $x_i\in P$. These lifts determine a graded $R_k$-module homomorphism
\[\begin{tikzcd}
    f:\bigoplus_{i=1}^r R_k\langle-d_i\rangle \arrow[r] & P
\end{tikzcd}\]
which becomes an isomorphism modulo $\mathfrak{m}$. By graded Nakayama, $f$ is surjective. Since $P$ is projective, there is a splitting
\[ \bigoplus_{i=1}^r R_k\langle-d_i\rangle \simeq P\oplus Q.
\]
Reducing modulo $\mathfrak{m}$ gives $Q/\mathfrak{m}Q=0$, so another
application of graded Nakayama yields $Q=0$. Thus $P$ is graded free.
\end{proof}
We can now identify the heart of the Milnor weight structure, cf.~\cite[Lemma~2.2, Lemma~2.3]{DellAmbrogio:DerivedCategoryofGradedCommutativeRing}.
\begin{prop}\label{prop:Milnor-heart-graded-mod}
The assignment
\[\unit(n)[n] \longmapsto R_k \langle n \rangle\]
extends to an equivalence of additive symmetric monoidal categories
\[\begin{tikzcd}
    F:\add(\unit(n)[n] \mid n \in \Z) \arrow[r, "\sim"] & \mathsf{free^{gr}}(R_k),
\end{tikzcd}\]
where a morphism
\[\begin{tikzcd}
    f:\unit(i)[i] \arrow[r] & \unit(j)[j]
\end{tikzcd}\]
corresponding to a homogeneous element $\alpha \in R_k^{j-i}$ is sent to the graded $R_k$-module homomorphism
\[\begin{tikzcd}
    F(f):R_k \langle i \rangle \arrow[r] & R_k \langle j \rangle
\end{tikzcd}\]
given by multiplication by $\alpha$. In particular, $\add(\unit(n)[n] \mid n \in \Z)$ is idempotent complete, and hence it coincides with the heart of the Milnor weight structure $\mathcal{H}^{\mathsf{T}}$.
\end{prop}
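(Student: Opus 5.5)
The functor $F$ is defined first on the full subcategory spanned by the objects $\unit(n)[n]$, then extended additively. On Hom-groups we use the computation already made in the proof of \Cref{lem:milnor-weight-structure}:
\[\Hom(\unit(i)[i],\unit(j)[j]) \simeq \Hom(\unit,\unit(j-i)[j-i]) = H^{j-i,j-i}_{\mathcal{M}}(k;\F_p).\]
By the isomorphism $H^{*,*}_{\mathcal{M}}(k;\F_p)\simeq R_k[\tau]$ recalled in the preliminaries, this diagonal group is exactly $R_k^{j-i}=K^M_{j-i}(k)/p$, with $R_k^{m}=0$ for $m<0$. The powers of $\tau$ lie off the diagonal, so they do not contribute.

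On the other side, a degree-zero graded map $R_k\langle i\rangle\to R_k\langle j\rangle$ is determined by the image of the generator. That generator sits in degree $-i$, so its image lies in $R_k\langle j\rangle^{-i}=R_k^{j-i}$, and the map is multiplication by that element. Hence $F$ is bijective on Hom-groups between generators, and additive extension makes it fully faithful on all of $\add(\unit(n)[n]\mid n\in\Z)$. It is essentially surjective onto $\mathsf{free^{gr}}(R_k)$ by definition of the latter.

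Functoriality and the monoidal structure both come down to checking that products match. Composition of $\alpha:\unit(i)[i]\to\unit(j)[j]$ with $\beta(j)[j]$ in the motivic category corresponds to the product $\beta\alpha$ in $H^{*,*}_{\mathcal{M}}$, since the ring structure is defined via composition and twisting. For the tensor structure, take $F(\unit(a)[a]\otimes\unit(b)[b])=R_k\langle a+b\rangle\cong R_k\langle a\rangle\otimes_{R_k}R_k\langle b\rangle$. The tensor product of morphisms $\alpha\otimes\beta$ goes to $\alpha\beta$ (up to the identification), because the ring structure on $H^{*,*}$ coming from $\otimes$ agrees with the one coming from composition.

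The main obstacle is sign bookkeeping. The symmetry isomorphism on $\unit(a)[a]\otimes\unit(b)[b]$ is a sign $\epsilon^{ab}$ involving $(-1)^{ab}$ from suspension and possibly a twist sign. In $\mathsf{free^{gr}}(R_k)$ we must use the Koszul sign rule, which is compatible with the graded-commutativity of $K^M_*(k)/p$. I would match these using the standard fact that the swap on $\unit(1)[1]^{\otimes 2}$ acts by $-1$ (equivalently, $\epsilon=-\langle -1\rangle$, which is $-1$ in $\DM$ with these coefficients), following \cite[Lemma~2.2, Lemma~2.3]{DellAmbrogio:DerivedCategoryofGradedCommutativeRing}. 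For $p=2$ all signs are trivial anyway.

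Finally, by \Cref{lem:graded-projectives-Milnor-free}, $\mathsf{free^{gr}}(R_k)$ equals the category of finitely generated graded projective modules. That category is idempotent complete, being closed under direct summands in the abelian category $\mathsf{mod^{gr}}(R_k)$. Transporting this along $F$ shows $\add(\unit(n)[n]\mid n\in\Z)$ is idempotent complete, so it equals its idempotent completion $\mathcal{H}^{\mathsf{T}}$.
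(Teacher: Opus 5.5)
Your proposal is correct and follows essentially the same route as the paper: you identify both Hom-groups with $R_k^{j-i}$, match composition with multiplication, extend additively, and deduce idempotent completeness from \Cref{lem:graded-projectives-Milnor-free}. Your sign check, that the swap on $\unit(1)[1]^{\otimes 2}$ is $\epsilon=-\langle -1\rangle=-1$ in $\mathsf{DM}$ and so matches the Koszul rule, just makes explicit the symmetric monoidality that the paper calls clear.
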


\begin{proof}
The latter statement follows from the former by \Cref{lem:graded-projectives-Milnor-free}. For $i,j \in \Z$, we may identify
\[\Hom_{\DTM_\gm(k;\F_p)}(\unit(i)[i],\unit(j)[j]) \simeq R_k^{j-i}.\]
On the other hand,
\[\Hom_{\mathsf{mod^{gr}}(R_k)}(R_k\langle i\rangle, R_k\langle j\rangle) \simeq R_k^{j-i}, \]
where an element $\alpha\in R_k^{j-i}$ corresponds to the graded $R_k$-module homomorphism given by multiplication by $\alpha$. Since composition corresponds to multiplication in $R_k$, these identifications extend to a fully faithful additive functor $F$, which is essentially surjective by definition. Finally, it is clear that $F$ is symmetric monoidal.
\end{proof}
The preceding results yield equivalences of tt-categories
\[\K^b(\mathcal{H}^{\mathsf{T}}) \simeq \K^b(\mathsf{free^{gr}}(R_k))=\K^b(\mathsf{proj^{gr}}(R_k))= \D^{\mathsf{perf}}(R_k).\]
Recall from \Cref{theorem:gradedperfectcomplexes} that the graded comparison map with respect to the tensor invertible object $R_k \langle 1 \rangle \in \D^{\mathsf{perf}}(R_k)$
\[\begin{tikzcd}
    \varrho^h: \Spc(\D^{\mathrm{perf}}(R_k)) \arrow[r] & \Spec^h(R_k)
\end{tikzcd}\]
is a homeomorphism. Transporting this map across the above equivalence gives the following

\begin{cor}\label{cor:spectrum-target-Milnor-weight}
The graded comparison map with respect to the $\otimes$-invertible object
\[\left[ \unit(1)[1] \right] \coloneqq t(\unit(1)[1])\in\K^b(\mathcal{H}^{\mathsf{T}})\]
is a homeomorphism:
\[\begin{tikzcd}
    \varrho^h_{\mathcal{H}^{\mathsf{T}}}:\Spc(\K^b(\mathcal{H}^{\mathsf{T}})) \arrow[r, "\sim"] & \Spec^h(K^M_*(k)/p).
\end{tikzcd}\]
\end{cor}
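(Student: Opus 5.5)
The plan is to transport \Cref{theorem:gradedperfectcomplexes} along the chain of tt-equivalences
\[\K^b(\mathcal{H}^{\mathsf{T}}) \simeq \K^b(\mathsf{free^{gr}}(R_k))=\K^b(\mathsf{proj^{gr}}(R_k))= \D^{\mathsf{perf}}(R_k)\]
established above, and then to check that the comparison maps on both sides match. First, I would use \Cref{prop:Milnor-heart-graded-mod} to get a symmetric monoidal equivalence $F:\mathcal{H}^{\mathsf{T}}\simeq\mathsf{free^{gr}}(R_k)$. Applying $\K^b(-)$ to an additive symmetric monoidal equivalence gives an equivalence of tt-categories. By \Cref{lem:graded-projectives-Milnor-free}, $\mathsf{free^{gr}}(R_k)=\mathsf{proj^{gr}}(R_k)$, so the target is $\K^b(\mathsf{proj^{gr}}(R_k))\simeq\D^{\mathsf{perf}}(R_k)$. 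Call the resulting tt-equivalence $\Phi$. Since $t$ is the identity on the heart, $t(\unit(1)[1])$ is the complex $\unit(1)[1]$ concentrated in degree zero. Hence $\Phi([\unit(1)[1]])=R_k\langle 1\rangle$, again in degree zero.

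Second, I would compare the graded rings. Since $\Phi$ is a tt-equivalence, it induces an isomorphism of graded rings
\[\bigoplus_n\Hom_{\K^b(\mathcal{H}^{\mathsf{T}})}(\unit,[\unit(1)[1]]^{\otimes n})\;\cong\;\bigoplus_n\Hom_{\D^{\mathsf{perf}}(R_k)}(R_k,R_k\langle n\rangle).\]
Morphisms between objects concentrated in degree zero in $\K^b$ are just morphisms in the heart. So the left-hand side is $\bigoplus_n\Hom(\unit,\unit(n)[n])=\bigoplus_n R_k^n=K^M_*(k)/p$. The right-hand side is $\bigoplus_n R_k^n=R_k$ with its usual multiplication. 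The explicit description of $F$ as "multiplication by $\alpha$" shows that this isomorphism is the identity on $R_k$.

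Third, I would apply naturality of the graded comparison map under tt-equivalences preserving the chosen invertible object (\cite[Theorem~1.1]{dell2014even}). This gives a commutative square: $\Spc(\Phi)$ is a homeomorphism, the identity sits on $\Spec^h(R_k)$, and the bottom map is the homeomorphism of \Cref{theorem:gradedperfectcomplexes}. It follows that $\varrho^h_{\mathcal{H}^{\mathsf{T}}}$ is a homeomorphism. Concretely, $\cone(f)\notin\Spc(\Phi)(\mathcal{P})$ if and only if $\cone(\Phi f)\notin\mathcal{P}$, and $\Phi f$ is again multiplication by $f$.

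The step I expect to need the most care is the identification in the second step. One must check that the symmetry and sign conventions are compatible. The graded ring from $\unit(1)[1]$ involves a suspension, while $R_k\langle 1\rangle$ does not. This could in principle introduce Koszul signs that make the two graded-commutative structures differ, for example by $(-1)^{n}$ twists. Such sign twists do not change the set of homogeneous ideals, so $\Spec^h$ is unaffected either way. I would verify this by noting that in the heart $\unit(1)[1]$ is an honest object of degree zero, and $F$ is symmetric monoidal by \Cref{prop:Milnor-heart-graded-mod}.
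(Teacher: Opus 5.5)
Your proposal is correct and follows the paper's argument. The paper also transports the Dell'Ambrogio--Stevenson homeomorphism for $R_k\langle 1\rangle$ (\Cref{theorem:gradedperfectcomplexes}) across the tt-equivalence $\K^b(\mathcal{H}^{\mathsf{T}})\simeq\K^b(\mathsf{free^{gr}}(R_k))=\K^b(\mathsf{proj^{gr}}(R_k))=\D^{\mathsf{perf}}(R_k)$ obtained from \Cref{prop:Milnor-heart-graded-mod} and \Cref{lem:graded-projectives-Milnor-free}; your identification of the graded rings, the naturality square, and the remark that sign twists leave $\Spec^h$ unchanged just make explicit what the paper calls ``transporting this map across the above equivalence''.
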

We conclude by identifying the closed subset
\[\supp(\cone(\tau)) \subseteq \Spc(\DTM_\gm(k;\F_p))\]
with the spectrum we have just computed.
\begin{prop}\label{prop:description-supp-cone-tau}
The Milnor weight complex functor induces a homeomorphism onto its image
\[\begin{tikzcd}
    \Spc(t):\Spc(\K^b(\mathcal{H}^{\mathsf{T}})) \arrow[r, "\sim"] & \supp(\cone(\tau)) \subseteq \Spc(\DTM_\gm(k;\F_p)).
\end{tikzcd}\]
In particular, there is a homeomorphism
\[\supp(\cone(\tau)) \cong \Spec^h(K^M_*(k)/p).\]
\end{prop}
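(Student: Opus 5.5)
The plan is to deduce everything from the naturality of the bigraded comparison map, combined with \Cref{cor:spectrum-target-Milnor-weight}. By \Cref{rem:supp-cone-tau} we already know that $\im(\Spc(t))=\supp(\cone(\tau))$. So it remains to show that $\Spc(t)$ is injective and a homeomorphism onto its image. For this it suffices to show that $\Spc(t)$ followed by a suitable continuous map is a topological embedding: if $g\circ f$ is an embedding with $f$ and $g$ continuous, then $f$ is an embedding.

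First, I equip $\K^b(\mathcal{H}^{\mathsf{T}})$ with the two $\otimes$-invertible objects $t(\unit(1))$ and $t(\unit[1])=\unit[1]$, and compute its bigraded central ring. Since $t(\unit(a)[b])\simeq t(\unit(a)[a])[b-a]$, and $t(\unit(a)[a])$ lies in the heart, we get
\[\Hom_{\K^b(\mathcal{H}^{\mathsf{T}})}(\unit, t(\unit(a)[b]))\simeq \Hom_{\K^b(\mathcal{H}^\mathsf{T})}(R_k, R_k\langle a\rangle[b-a]).\]
This is $R_k^a$ when $b=a$ and $0$ otherwise, using \Cref{prop:Milnor-heart-graded-mod}. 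So the bigraded ring is $R_k$ placed on the diagonal. The induced ring map $(K^M_*(k)/p)[\tau]\to R_k$ is exactly the quotient by $\tau$, because $t(\tau)=0$ (as $\tau$ has weight $1$ and lands off the diagonal) while pure symbols $\unit\to\unit(n)[n]$ go to multiplication by themselves. Moreover, a homogeneous prime of a $\Z^2$-graded ring concentrated on the diagonal $\{(a,a)\}$ is the same as a homogeneous prime of the $\Z$-graded ring $R_k$. Under this identification the bigraded comparison map of $\K^b(\mathcal{H}^{\mathsf{T}})$ coincides with the graded comparison map $\varrho^h_{\mathcal{H}^{\mathsf{T}}}$ for $t(\unit(1)[1])$: both send $\mathcal{P}$ to the ideal generated by homogeneous $f$ with $\cone(f)\notin\mathcal{P}$, and the homogeneous elements are the same. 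Hence it is a homeomorphism by \Cref{cor:spectrum-target-Milnor-weight}.

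Next, naturality of the multigraded comparison map \cite[Theorem~1.1]{dell2014even} gives a commutative square
\[\begin{tikzcd}
\Spc(\K^b(\mathcal{H}^{\mathsf{T}})) \arrow[r, "\varrho^h_{\mathcal{H}^{\mathsf{T}}}", "\sim"'] \arrow[d, "\Spc(t)"'] & \Spec^h(K^M_*(k)/p) \arrow[d, "\iota"] \\
\Spc(\DTM_\gm(k;\F_p)) \arrow[r, "\varrho^h"] & \Spec^h((K^M_*(k)/p)[\tau]).
\end{tikzcd}\]
Here $\iota\colon\mathfrak{q}\mapsto\mathfrak{q}+(\tau)$ is induced by the quotient map. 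By \Cref{cor:hspectrum-MilnorKtau}, $\iota$ is a homeomorphism onto the closed subset $V(\tau)$. The top-right composite is therefore a closed embedding, so $\varrho^h\circ\Spc(t)$ is an embedding, and hence $\Spc(t)$ is injective and a homeomorphism onto its image $\supp(\cone(\tau))$. As a by-product, $\varrho^h$ restricts to a homeomorphism $\supp(\cone(\tau))\cong V(\tau)$. The final assertion $\supp(\cone(\tau))\cong\Spec^h(K^M_*(k)/p)$ then follows by composing with $\varrho^h_{\mathcal{H}^{\mathsf{T}}}$.

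The main obstacle is the naturality step. One has to check that the multigraded comparison map is functorial for $t$ with respect to the chosen invertible objects, including the sign conventions of the graded-commutative structure. One also has to confirm that $t$ really induces the quotient map on central rings, in particular that the pure symbols are sent to the expected heart morphisms. If the general naturality statement is awkward to invoke, I would instead verify the square pointwise. For $\mathcal{P}\in\Spc(\K^b(\mathcal{H}^\mathsf{T}))$ and homogeneous $f$, the ideal $\varrho^h(t^{-1}\mathcal{P})$ contains $f$ iff $\cone(t(f))\notin\mathcal{P}$. The generators $\tau$ lie in every $\varrho^h(t^{-1}\mathcal{P})$, since $\cone(t(\tau))\simeq t(\unit(1))\oplus\unit[1]$ is invertible-plus-invertible and hence not in any prime. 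This directly matches the description $\mathfrak{q}+(\tau)$.
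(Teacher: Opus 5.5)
Your proposal is correct and is essentially the paper's argument. In both, a naturality square for comparison maps, whose top arrow is the homeomorphism $\varrho^h_{\mathcal{H}^{\mathsf{T}}}$ of \Cref{cor:spectrum-target-Milnor-weight} and whose right arrow is injective, forces $\Spc(t)$ to be an embedding with image $\supp(\cone(\tau))$ by \Cref{rem:supp-cone-tau}.

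The only difference is the choice of comparison map. The paper uses the singly graded one on $\DTM_\gm(k;\F_p)$ with respect to $\unit(1)[1]$. Its right vertical map is then a homeomorphism $\Spec^h(R_k)\cong\Spec^h(R_k)$, induced by the ring isomorphism that $t$ gives on the heart. This avoids computing the bigraded ring of $\K^b(\mathcal{H}^{\mathsf{T}})$ and the kernel $(\tau)$. Your bigraded version, with the closed embedding onto $V(\tau)$ on the right, needs that small extra computation. In exchange, it shows directly that $\varrho^h$ restricts to a homeomorphism $\supp(\cone(\tau))\cong V(\tau)$, which the paper only extracts later in \Cref{cor:bijectivity-tate-comparison-map}.
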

\begin{proof}
The latter statement follows from the former by \Cref{cor:spectrum-target-Milnor-weight}. For the first statement, recall that 
\[\im(\Spc(t))=\supp(\cone(\tau))\]
by \Cref{rem:supp-cone-tau}. Moreover, by the naturality of the comparison map, the following diagram commutes:
\[\begin{tikzcd}
    \Spc(\K^b(\mathcal{H}^{\mathsf{T}})) \arrow[r, "\varrho^h_{\mathcal{H}^{\mathsf{T}}}"] \arrow[d, swap, "\Spc(t)"] & \Spec^h(R_k) \arrow[d, "\phi"] \\
    \Spc(\DTM_\gm(k;\F_p)) \arrow[r, "\varrho^h_{\unit(1)[1]}"] & \Spec^h(R_k)
\end{tikzcd}\]
Here, $\phi$ is the homeomorphism induced by the identification of graded endomorphism rings with respect to $\unit(1)[1]$ and $t(\unit(1)[1])$ via $t$.
Since the upper horizontal map is a homeomorphism, $\Spc(t)$ is
injective, and its inverse on its image is given by
\[\begin{tikzcd}
    (\varrho_{\mathcal{H}^{\mathsf{T}}}^h)^{-1} \circ \phi^{-1} \circ \varrho_{\unit(1)[1]}^h \vert_{\supp(\cone(\tau))}: \supp(\cone(\tau)) \arrow[r] & \Spc(\K^b(\mathcal{H}^{\mathsf{T}})).
\end{tikzcd}\]
\end{proof}

\subsection{The topology}
We are now in a position to describe the spectrum of Tate motives as a set. 
\begin{cor}\label{cor:bijectivity-tate-comparison-map}
The comparison map
\[\begin{tikzcd}
    \varrho^h: \Spc(\DTM_\gm(k;\F_p)) \arrow[r] & \Spec^h((K^M_*(k)/p)[\tau])
\end{tikzcd}\]
is a bijection.
\end{cor}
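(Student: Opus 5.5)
The plan is to split both sides along $\tau$ and check that $\varrho^h$ matches the pieces. Write $\Spc(\DTM_\gm(k;\F_p)) = U(\cone(\tau)) \sqcup \supp(\cone(\tau))$ and $\Spec^h((K^M_*(k)/p)[\tau]) = D(\tau) \sqcup V(\tau)$. The first step is to show that $\varrho^h$ respects these decompositions. For a prime $\mathcal{P}$, $\tau \in \varrho^h(\mathcal{P})$ if and only if $\cone(\tau) \notin \mathcal{P}$, by the defining formula in \Cref{prop:multigradedcomparisonmap}. Equivalently, $(\varrho^h)^{-1}(V(\tau)) = \supp(\cone(\tau))$. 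So $\varrho^h$ sends $U(\cone(\tau))$ into $D(\tau)$ and $\supp(\cone(\tau))$ into $V(\tau)$. It therefore suffices to prove bijectivity on each piece separately.

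For the closed piece, I would use the commutative square from the proof of \Cref{prop:description-supp-cone-tau}, now refined to the bigraded setting. Naturality of the bigraded comparison map along the Milnor weight complex functor $t$ gives a square with $\Spc(t)$ on the left and, on the right, the map $\Spec^h(R_k) \to \Spec^h(R_k[\tau])$ induced by the ring map $R_k[\tau] \to R_k$ with $\tau \mapsto 0$. Here $t(\tau)=0$, and $t(\unit(1))$ is identified up to the suspension bookkeeping with $t(\unit(1)[1])$ desuspended, so the bigraded central ring of $\K^b(\mathcal{H}^{\mathsf{T}})$ sees $R_k$. The right-hand map is exactly the homeomorphism $\Spec^h(R_k) \cong V(\tau)$ of \Cref{cor:hspectrum-MilnorKtau}. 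The bigraded comparison map of $\K^b(\mathcal{H}^{\mathsf{T}})$ is a homeomorphism onto $\Spec^h(R_k)$: it factors through the singly graded one of \Cref{cor:spectrum-target-Milnor-weight}, since the $\unit[1]$-direction of its central ring is just $\Z$-periodic bookkeeping. Combining this with \Cref{prop:description-supp-cone-tau}, which says $\Spc(t)$ is a bijection onto $\supp(\cone(\tau))$, shows that $\varrho^h$ restricts to a bijection $\supp(\cone(\tau)) \to V(\tau)$.

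For the open piece, I would argue in the same way with $L_\tau$ followed by the equivalence with $\DTM^\vee_\et(k;\F_p)$. The bigraded central ring of the $\tau$-local category is $R_k[\tau^{\pm1}]$, and $\Spec^h(R_k[\tau^{\pm 1}]) \cong D(\tau) \cong \Spec^h(R_k)$ via the $\unit(1)[1]$-grading. Naturality again gives a commutative square in which $\Spc(L_\tau)$ is a homeomorphism onto $U(\cone(\tau))$, the right-hand map is the inclusion $D(\tau) \hookrightarrow \Spec^h$, and the top map identifies with Gallauer's étale comparison map $\varrho^h_{\et}$. The top map is a homeomorphism by \Cref{cor:tau-open-tates}. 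Hence $\varrho^h$ restricts to a bijection $U(\cone(\tau)) \to D(\tau)$. Together with the closed piece, this proves that $\varrho^h$ is a bijection.

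The step I expect to be the main obstacle is justifying the passage between the bigraded comparison maps and the singly graded ones used in \Cref{cor:spectrum-target-Milnor-weight} and \Cref{thm:gallauer-etale-spectrum}. Concretely, I need to check that homogeneous primes of $R_k[\tau]$ containing $\tau$, respectively avoiding $\tau$, correspond to homogeneous primes of $R_k$ compatibly with the maps induced by $t$ and $L_\tau$. The cleanest route I would try first is to check this on generators $\cone(f)$ for homogeneous $f = \alpha\tau^m$. In $\K^b(\mathcal{H}^{\mathsf{T}})$ this cone is zero or $\cone(\alpha)$ up to $\otimes$-invertible twist. In the $\tau$-local category it is $\cone(\alpha)$ up to invertible twist, since $\tau$ becomes invertible. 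This verification shows that the images of the two squares are precisely the primes $\mathfrak q+(\tau)$ and $\mathfrak q$ respectively.
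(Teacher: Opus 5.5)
Your proposal is correct and is essentially the paper's proof: both use the naturality squares along $L_\tau$ and the Milnor weight complex functor $t$, whose outer comparison maps are homeomorphisms onto $D(\tau)$ and $V(\tau)$. Your explicit check that $(\varrho^h)^{-1}(V(\tau))=\supp(\cone(\tau))$ just spells out what the paper reads off from that diagram. One slip in your last paragraph: for $m>0$ it is the morphism $t(\alpha\tau^m)$ that vanishes, not its cone; the cone is $t(\unit(n+m)[n])\oplus\unit[1]$, which lies in no prime, and this is precisely why $\tau$ lies in every prime coming from the closed piece.
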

\begin{proof}
The naturality of the comparison maps gives a commutative diagram
\[\begin{tikzcd}[ampersand replacement=\&,sep=scriptsize]
	{\Spc((\mathfrak{X} \otimes \DTM(k;\F_p))^\omega)} \&\& {\Spc(\DTM_\gm(k;\F_p))} \&\& {\Spc(\K^b(\mathcal{H}^{\mathsf{T}}))} \\
	\\
	{\Spec^h(R_k) \cong D(\tau)} \&\& {\Spec^h(R_k[\tau])} \&\& { V(\tau) \cong \Spec^h(R_k) }
	\arrow["{\Spc(L_\tau)}", hook, from=1-1, to=1-3]
	\arrow["{\varrho^h_{\tau}}"', from=1-1, to=3-1]
	\arrow["{\varrho^h}"', from=1-3, to=3-3]
	\arrow["{\Spc(t)}"', hook', from=1-5, to=1-3]
	\arrow["{\varrho^h_{\mathcal{H}^{\mathsf{T}}}}", from=1-5, to=3-5]
	\arrow[hook, from=3-1, to=3-3]
	\arrow[hook', from=3-5, to=3-3]
\end{tikzcd}\]
whose outer vertical maps are homeomorphisms by \Cref{cor:tau-open-tates} and \Cref{prop:description-supp-cone-tau}. The upper horizontal maps are injective and have disjoint jointly surjective images. Similarly, the two lower maps identify their sources with the disjoint, jointly covering subsets $D(\tau)$ and $V(\tau)$ of $\Spec^h(R_k[\tau])$. From this it is immediate that the middle map is a bijection.
\end{proof}
To determine the topology, it remains to show that the comparison map is a homeomorphism. We shall define
\[\mathcal{P}_0 \coloneqq (\varrho^h)^{-1}(\mathfrak{m}+ \langle\tau \rangle), \ \mathcal{P}_{\acute{e}t} \coloneqq (\varrho^h)^{-1}(\mathfrak{m}),\]
where $\mathfrak{m}=K^M_{>0}(k)/p$. When $p=2$, for every ordering $\alpha \in \Sper(k)$, let $\mathfrak{p}_\alpha \in \Spec^h(R_k)$ be the corresponding homogeneous prime and set
\[\mathcal{P}^{\alpha} \coloneqq (\varrho^h)^{-1}(\mathfrak{p}_\alpha+ \langle\tau \rangle), \ \mathcal{P}^{\alpha}_{\acute{e}t} \coloneqq (\varrho^h)^{-1}(\mathfrak{p}_\alpha).\]
Since $\varrho^h$ is a bijective spectral map, it suffices to prove that it lifts specialisations \cite[Proposition 15]{hochster1969prime}. All specialisations contained entirely in $D(\tau)$ or $V(\tau)$ already lift since the restrictions of $\varrho^h$ to the open and closed pieces are homeomorphisms. In particular, we have that 
\[\mathcal{P}^{\alpha} \rightsquigarrow \mathcal{P}_0, \ \mathcal{P}^{\alpha}_{\acute{e}t} \rightsquigarrow \mathcal{P}_{\acute{e}t}, \ \text{for every } \alpha \in \Sper(k).\]
It therefore remains only to lift the specialisations between $U(\cone(\tau))$ and $\supp(\cone(\tau))$. These will be obtained by base change to algebraically closed and real closed fields. 

\begin{remark}\label{rem:real-and-algclosed-tate-description}
The Balmer spectra of Tate motives over algebraically closed fields
\cite{Gallauer:TateSpectrum} and real closed fields
\cite{Balmer:ThreeRealArtinTates} were originally computed using
the representation-theoretic models for these motivic categories. The preceding arguments can be used to independently recover these computations via our ``motivic'' methods. Indeed, if $k$ is algebraically closed, then 
\[R_k[\tau]=\F_p[\tau],\]
and one may compute directly the generators of the tt-primes
\[\mathcal{P}_0=\ideal{0}, \ \mathcal{P}_{\acute{e}t}=\langle \cone(\tau) \rangle.\]
These are the only tt-primes: indeed, when $p=2$, there are no additional primes indexed by orderings since $\Sper(k)=\emptyset$. The proper inclusion 
\[\ideal{0} \subseteq \langle \cone(\tau) \rangle \]
therefore gives the two-point Sierpiński spectrum. Whenever $k$ is real closed and $p$ is odd, base changing to the algebraic closure $\overline{k}$ of $k$ is faithful (\Cref{prop:adoining-root-unity-faithful}) so one reduces to the above computation to find the spectrum in this case is also a Sierpiński space.

If $k$ is real closed and $p=2$, then $k$ admits a unique ordering $\alpha$ and 
\[R_k[\tau]=\F_2[\rho,\tau].\]
One may describe the generators of the tt-primes
\[\mathcal{P}_0=\ideal{0}, \ \mathcal{P}_{\acute{e}t}=\ideal{\cone(\tau)}, \ \mathcal{P}^{\alpha}=\ideal{\cone(\rho)}, \ \mathcal{P}^{\alpha}_{\acute{e}t}=\ideal{\cone(\rho), \cone(\tau)}.\]
From this description, the proper inclusions
\[\ideal{0} \subseteq \ideal{\cone(\tau)},\ideal{\cone(\rho)} \subseteq \ideal{\cone(\rho), \cone(\tau)}\]
determine the remaining specialisations between $U(\cone(\tau))$ and $\supp(\cone(\tau))$.
\[\begin{tikzcd}[ampersand replacement=\&, sep=small]
	\& \ideal{0} \& \\
	{\langle \cone(\rho) \rangle} \&\& {\langle \cone(\tau) \rangle} \\
	\& {\langle \cone(\rho), \cone(\tau)\rangle}
	\arrow[no head, from=1-2, to=2-1]
	\arrow[no head, from=1-2, to=2-3]
	\arrow[no head, from=2-1, to=3-2]
	\arrow[no head, from=2-3, to=3-2]
\end{tikzcd}\]
This recovers the computation of Balmer--Gallauer entirely within the
motivic category.
\end{remark}
\begin{prop}\label{prop:tate-specialisations}
The specialisations between the tt-primes in $U(\cone(\tau))$ and $\supp(\cone(\tau))$ are generated by the following:
\begin{itemize}
    \item $\mathcal{P}_{\acute{e}t} \rightsquigarrow \ideal{0}$,
    \item When $p=2$, $\mathcal{P}^{\alpha}_{\acute{e}t} \rightsquigarrow \mathcal{P}^{\alpha}$ for every $\alpha\in\Sper(k)$.
\end{itemize}
\end{prop}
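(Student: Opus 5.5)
The plan is to produce the two families of specialisations by pulling back known inclusions of primes along base change to an algebraic closure and to the real closures. The statement that they \emph{generate} all cross specialisations will then follow from the bijectivity of $\varrho^h$ (\Cref{cor:bijectivity-tate-comparison-map}) and its continuity.

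\textbf{The generation part.} Since $\varrho^h$ is continuous, it preserves specialisations. A specialisation between a point of $U(\cone(\tau))$ and a point of $\supp(\cone(\tau))$ therefore maps to an inclusion between a prime in $D(\tau)$ and a prime in $V(\tau)$. By \Cref{cor:hspectrum-MilnorKtau} the only such inclusions are
\[\mathfrak{m}\subseteq\mathfrak{m}+(\tau),\qquad \mathfrak{p}_\alpha\subseteq\mathfrak{p}_\alpha+(\tau),\qquad \mathfrak{p}_\alpha\subseteq\mathfrak{m}+(\tau).\]
Under the bijection $\varrho^h$, the third inclusion is the composite of the second with the specialisation $\mathcal{P}^\alpha\rightsquigarrow\mathcal{P}_0$, which already lives inside the closed piece. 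So it suffices to lift the first two inclusions. Concretely, in terms of inclusions of tt-ideals, I must show $\mathcal{P}_0\subseteq\mathcal{P}_{\acute{e}t}$ and, when $p=2$, $\mathcal{P}^\alpha\subseteq\mathcal{P}^\alpha_{\acute{e}t}$. Along the way I will show $\mathcal{P}_0=\ideal{0}$.

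\textbf{First specialisation.} Let $\beta:\DTM_\gm(k;\F_p)\to\DTM_\gm(\overline{k};\F_p)$ be base change. It is a tt-functor, so $\Spc(\beta)$ sends primes to primes and preserves inclusions. By \Cref{rem:real-and-algclosed-tate-description}, $\DTM_\gm(\overline{k};\F_p)$ has exactly two primes, $\ideal{0}\subseteq\ideal{\cone(\tau)}$. Naturality of the bigraded comparison map (\cite[Theorem~1.1]{dell2014even}) gives
\[\varrho^h(\beta^{-1}\mathcal{Q})=\phi^{-1}(\varrho^h(\mathcal{Q})),\]
where $\phi:R_k[\tau]\to\F_p[\tau]$ kills $\mathfrak{m}$ (because $K^M_{>0}(\overline{k})/p=0$) and sends $\tau\mapsto\tau$. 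Computing, $\varrho^h(\ideal{0})=(\tau)$ and $\varrho^h(\ideal{\cone(\tau)})=0$. Their preimages under $\phi$ are $\mathfrak{m}+(\tau)$ and $\mathfrak{m}$. By injectivity of $\varrho^h$ we get $\beta^{-1}\ideal{0}=\mathcal{P}_0$ and $\beta^{-1}\ideal{\cone(\tau)}=\mathcal{P}_{\acute{e}t}$, and hence $\mathcal{P}_0\subseteq\mathcal{P}_{\acute{e}t}$.

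To see that $\mathcal{P}_0=\ideal{0}$, I would use that $\ideal{0}$ is contained in every prime. So $\ideal{0}$ lies in the closure of $\mathcal{P}_0$, which is the prime whose image $\mathfrak{m}+(\tau)$ is the unique maximal point. Minimal primes exist by Zorn's lemma, and a minimal prime must map to the maximal ideal $\mathfrak{m}+(\tau)$. Alternatively, $\beta$ is faithful enough that $\beta^{-1}\ideal{0}$ is the zero ideal up to the radical.

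\textbf{Second specialisation ($p=2$).} Fix $\alpha\in\Sper(k)$ and base change along $\beta_\alpha$ to the real closure $k_\alpha$. By \Cref{rem:real-and-algclosed-tate-description} there is an inclusion
\[\ideal{\cone(\rho)}\subseteq\ideal{\cone(\rho),\cone(\tau)}\]
in $\Spc(\DTM_\gm(k_\alpha;\F_2))$. Their comparison images in $\F_2[\rho,\tau]$ are $(\tau)$ and $0$. The ring map $\phi_\alpha:R_k[\tau]\to\F_2[\rho,\tau]$ sends a symbol $\{a\}$ to $0$ if $a>_\alpha0$ and to $\rho$ if $a<_\alpha0$. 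I will need to check that $\ker(\phi_\alpha)\cap R_k=\mathfrak{p}_\alpha$, and more precisely that $\phi_\alpha^{-1}(0)=\mathfrak{p}_\alpha$ and $\phi_\alpha^{-1}((\tau))=\mathfrak{p}_\alpha+(\tau)$. This should follow from Thornton's description in \Cref{thm:hspectrum-MilnorK}, where $\mathfrak{p}_\alpha$ is exactly the kernel of the signature map $K^M_*(k)/2\to K^M_*(k_\alpha)/2=\F_2[\rho]$. Naturality and injectivity of $\varrho^h$ then give
\[\beta_\alpha^{-1}\ideal{\cone(\rho)}=\mathcal{P}^\alpha,\qquad \beta_\alpha^{-1}\ideal{\cone(\rho),\cone(\tau)}=\mathcal{P}^\alpha_{\acute{e}t},\]
and therefore $\mathcal{P}^\alpha\subseteq\mathcal{P}^\alpha_{\acute{e}t}$.

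\textbf{Main obstacle.} The delicate points are twofold. First, naturality of the \emph{bigraded} comparison map under base change: this needs the compatibility of $\beta$ with the invertibles $\unit(1)$ and $\unit[1]$ and with the classes $\tau$ and $\rho$. Second, the identification of $\ker(\phi_\alpha)$ in low degrees with $\mathfrak{p}_\alpha$. I would establish the latter using the explicit generators of $\mathfrak{p}_\alpha$ (positive elements) together with the fact that $K^M_*(k_\alpha)/2\cong\F_2[\rho]$.
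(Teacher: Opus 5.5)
Your proposal is correct and is essentially the paper's proof. You pull back $\ideal{0}\subseteq\ideal{\cone(\tau)}$ along base change to $\overline{k}$, and $\ideal{\cone(\rho)}\subseteq\ideal{\cone(\rho),\cone(\tau)}$ along base change to each $k_\alpha$. You then identify the pulled-back primes via naturality and injectivity of $\varrho^h$. Finally you exclude further cross specialisations because $\varrho^h$ is continuous and the only inclusions of a prime of $D(\tau)$ into a prime of $V(\tau)$ are the three you list.

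The one soft spot is your identification $\mathcal{P}_0=\ideal{0}$. The \emph{faithful enough} alternative fails, since base change to $\overline{k}$ kills every symbol in $K^M_{>0}(k)/p$. Nor do continuity and bijectivity of $\varrho^h$ alone force a minimal prime to map to $\mathfrak{m}+(\tau)$. Instead, use the homeomorphisms on the two pieces (the $\tau$-open and the Milnor closed piece) together with $\mathcal{P}_{\acute{e}t}\rightsquigarrow\mathcal{P}_0$ to see that every prime contains $\mathcal{P}_0$. Then $\mathcal{P}_0=\bigcap_{\mathcal{P}}\mathcal{P}=\ideal{0}$ by rigidity.
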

\begin{proof}
Let $\overline{k}$ be an algebraic closure of $k$. Since
\[R_{\overline{k}}[\tau]\simeq\F_p[\tau],\]
the base-change homomorphism
\[\begin{tikzcd}
    R_k[\tau] \arrow[r] & \F_p[\tau]
\end{tikzcd}\]
induces the following map on homogeneous spectra:
\[(0) \longmapsto \mathfrak{m}; \ (\tau) \longmapsto \mathfrak{m}+(\tau).\]
 By naturality of the comparison map, the map induced by
base-change
\[\begin{tikzcd}
    \phi_{\overline{k}}:\DTM_\gm(k;\F_p) \arrow[r] & \DTM_\gm(\overline{k};\F_p).
\end{tikzcd}\]
therefore sends the two points of $\Spc(\DTM_\gm(\overline{k};\F_p))$
\[\ideal{0}, \ \ideal{\cone(\tau)}\]
to $\ideal{0}$ and $\mathcal{P}_{\acute{e}t}$, respectively. Hence, the specialisation
\[\ideal{\cone(\tau)} \rightsquigarrow \ideal{0},\]
pulls back to
\[\mathcal{P}_{\acute{e}t} \rightsquigarrow \ideal{0}.\]
Suppose now that $p=2$. Let $\alpha \in \Sper(k)$ and $k_\alpha$ be the corresponding real closure.  Writing $\rho_\alpha \coloneqq \{-1\} \in K^M_1(k_\alpha)/2$, we have 
\[R_{k_\alpha} \simeq \F_2[\rho_\alpha].\]
The base-change homomorphism
\[\begin{tikzcd}
    i_\alpha: R_k \arrow[r] & \F_2[\rho_\alpha]
\end{tikzcd}\]
is such that
\[\{a\} \longmapsto 
\begin{cases}
0 & \text{if } a>_\alpha 0; \\
\rho_\alpha & \text{if } a<_\alpha 0
\end{cases}\]
for each degree one pure symbol $\{a\}$. Consequently, the induced map on homogeneous spectra sends
\[(\rho_\alpha) \longmapsto \mathfrak{m}; \ (0) \longmapsto \mathfrak{p}_\alpha.\]
After adjoining $\tau$, the four primes in $\Spec^h(\F_2[\rho_\alpha, \tau])$ get mapped as follows:
\[\begin{aligned}
(\rho_\alpha,\tau) 
&\longmapsto \mathfrak{m} + (\tau);&
(\rho_\alpha)
&\longmapsto \mathfrak{m};\\
(\tau) &\longmapsto \mathfrak{p}_\alpha+ (\tau);&
(0) &\longmapsto \mathfrak{p}_\alpha.
\end{aligned}\]
By naturality of the comparison map, the map induced by base-change 
\[\begin{tikzcd}
    \phi_\alpha:\DTM_\gm(k;\F_2) \arrow[r] & \DTM_\gm(k_\alpha;\F_2)
\end{tikzcd}\]
therefore identifies the pull backs of the four points in $\Spc(\DTM_\gm(k_\alpha;\F_2))$ as follows:
\[\begin{aligned}
\ideal{0} &\longmapsto \ideal{0}; \\
\langle\cone(\tau)\rangle
&\longmapsto \mathcal{P}_{\acute{e}t}; \\
\langle\cone(\rho_\alpha)\rangle
&\longmapsto\mathcal{P}^{\alpha}; \\
\langle\cone(\rho_\alpha),\cone(\tau)\rangle
&\longmapsto\mathcal{P}^{\alpha}_{\acute{e}t}.
\end{aligned}\]
In particular, the specialisation
\[\langle\cone(\rho_\alpha),\cone(\tau)\rangle
\rightsquigarrow \langle\cone(\rho_\alpha)\rangle\]
pulls back to 
\[\mathcal{P}^{\alpha}_{\acute{e}t} \rightsquigarrow \mathcal{P}^{\alpha}.\]
Finally, we note that there are no other specialisations, since any others produce the corresponding ones in $\Spec^h((K^M_*(k)/p)[\tau])$, and we have already accounted for all specialisations in the latter.
\end{proof}
\begin{theorem}\label{thm:spectrum-tate-root-unity}
Let $k$ be a field containing a primitive $p^{th}$ root of unity, then the comparison map
\[\begin{tikzcd}
    \varrho^h:\Spc(\DTM_\gm(k;\F_p)) \arrow[r] & \Spec^h((K^M_*(k)/p)[\tau])
\end{tikzcd}\]
is a homeomorphism.
\end{theorem}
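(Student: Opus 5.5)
The plan is to assemble the pieces already established rather than prove anything new. By \Cref{cor:bijectivity-tate-comparison-map}, $\varrho^h$ is a continuous bijection. Its source is a spectral space (\Cref{remark:spectralspace}), and its target is the homogeneous spectrum of a graded ring, which is also spectral. We will also use that $\varrho^h$ is a spectral map, meaning that preimages of quasi-compact opens are quasi-compact. This holds because the preimage of a basic open $D(f)$, for $f$ homogeneous, is the quasi-compact open $U(\cone(f))$. By Hochster's criterion \cite[Proposition 15]{hochster1969prime}, a bijective spectral map between spectral spaces is a homeomorphism exactly when it lifts specialisations. So the task reduces to the following: whenever $\varrho^h(\mathcal{P})$ specialises to $\varrho^h(\mathcal{Q})$ in $\Spec^h((K^M_*(k)/p)[\tau])$, we must show $\mathcal{P} \rightsquigarrow \mathcal{Q}$ in $\Spc(\DTM_\gm(k;\F_p))$.

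We would split the specialisation pairs according to where the two points lie in the decompositions $D(\tau) \sqcup V(\tau)$ and $U(\cone(\tau)) \sqcup \supp(\cone(\tau))$. These decompositions correspond under $\varrho^h$, since $(\varrho^h)^{-1}(V(\tau)) = \supp(\cone(\tau))$.

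\textbf{Both points in the same piece.} Here we use that $\varrho^h$ restricts to homeomorphisms $U(\cone(\tau)) \cong D(\tau)$ and $\supp(\cone(\tau)) \cong V(\tau)$. This follows from \Cref{cor:tau-open-tates} and \Cref{prop:description-supp-cone-tau}, together with the commutative diagram in the proof of \Cref{cor:bijectivity-tate-comparison-map}. Specialisations in a subspace are specialisations in the ambient space, so these lift.

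\textbf{Points in different pieces.} A closed point of $V(\tau)$ cannot specialise into the open set $D(\tau)$, so the only mixed relations go from a prime $\mathfrak{q} \in D(\tau)$ to a prime $\mathfrak{q}' + (\tau)$ with $\mathfrak{q} \subseteq \mathfrak{q}'$. Using \Cref{cor:hspectrum-MilnorKtau} and \Cref{thm:hspectrum-MilnorK}, we list these relations explicitly:
\begin{itemize}
\item for $p$ odd, or $k$ not formally real: the single relation $\mathfrak{m} \rightsquigarrow \mathfrak{m}+(\tau)$;
\item for $p=2$: the relations $\mathfrak{p}_\alpha \rightsquigarrow \mathfrak{p}_\alpha+(\tau)$, $\mathfrak{p}_\alpha \rightsquigarrow \mathfrak{m}+(\tau)$ and $\mathfrak{m} \rightsquigarrow \mathfrak{m}+(\tau)$.
\end{itemize}
\Cref{prop:tate-specialisations} lifts $\mathcal{P}_{\acute{e}t} \rightsquigarrow \mathcal{P}_0 = \ideal{0}$ and $\mathcal{P}^\alpha_{\acute{e}t} \rightsquigarrow \mathcal{P}^\alpha$. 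The remaining relation, $\mathcal{P}^\alpha_{\acute{e}t} \rightsquigarrow \mathcal{P}_0$, follows from transitivity via either $\mathcal{P}^\alpha$ or $\mathcal{P}_{\acute{e}t}$. We should be careful with the orientation convention here: in $\Spc$, specialisation reverses inclusion, so $\mathcal{P} \rightsquigarrow \mathcal{Q}$ means $\mathcal{Q} \subseteq \mathcal{P}$. We will check that the relations of \Cref{prop:tate-specialisations} match the order in the target. In particular, the point $\ideal{0}$ should correspond to $\mathfrak{m}+(\tau)$, which is maximal in the target and minimal in the source; this is consistent because $\varrho^h$ is inclusion-reversing.

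The main obstacle is essentially already behind us, in \Cref{prop:tate-specialisations}, which relies on base change to $\overline{k}$ and to the real closures $k_\alpha$. The one point still to verify is that Hochster's criterion applies. This requires confirming that $\varrho^h$ is spectral, which reduces to the fact that the $U(\cone(f))$ are quasi-compact. Once that is checked, the theorem follows.
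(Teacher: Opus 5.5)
Your proposal is correct and follows the paper's own proof essentially step for step. Both arguments get bijectivity from \Cref{cor:bijectivity-tate-comparison-map} and reduce to lifting specialisations via Hochster's criterion; both lift the within-piece specialisations through the homeomorphisms on $U(\cone(\tau))$ and $\supp(\cone(\tau))$, and the cross-piece ones through \Cref{prop:tate-specialisations}. Your explicit check that $\varrho^h$ is spectral and your transitivity argument for $\mathfrak{p}_\alpha \rightsquigarrow \mathfrak{m}+(\tau)$ only spell out details the paper leaves implicit.
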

\begin{proof}
By \Cref{cor:bijectivity-tate-comparison-map}, we know that the comparison map $\varrho^h$ is bijective, so it suffices to prove that it lifts specialisations. By \Cref{cor:tau-open-tates} and \Cref{prop:description-supp-cone-tau}, we know that the restrictions of $\varrho^h$ to $U(\cone(\tau))$ and
$\supp(\cone(\tau))$ are homeomorphisms, so all specialisations lying
entirely within $D(\tau)$ or $V(\tau)$ lift. By \Cref{cor:hspectrum-MilnorKtau}, the only remaining specialisations are
\[\mathfrak{m} \rightsquigarrow \mathfrak{m}+ \langle \tau \rangle \]
and, when $p=2$,
\[\mathfrak{p}_\alpha
\rightsquigarrow
\mathfrak{p}_\alpha+ \langle \tau\rangle,
\ \text{for all } \alpha\in\Sper(k).\]
These lift by \Cref{prop:tate-specialisations}, proving the result.
\end{proof}
We may explicitly describe the generators of the prime tt-ideals and the topology. We first recall the following general tt-geometric result.
\begin{lemma}\label{lem:prime-generated-by-cones}
Let $\mathcal{K}$ be an essentially small rigid tt-category, let $R$ be the multi-graded central ring associated with a family of tensor-invertible objects, and suppose that the comparison map
\[\begin{tikzcd}
    \varrho^h:\Spc(\mathcal{K}) \arrow[r] & \Spec^h(R)
\end{tikzcd}\]
is a homeomorphism. Then for every $\mathcal{P} \in\Spc(\mathcal{K})$ we have 
\[\mathcal{P} =\langle \cone(f) \mid f\in R \text{ homogeneous and } f\notin\varrho^h(\mathcal{P})\rangle.\]
\end{lemma}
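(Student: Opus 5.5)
Write $\mathcal{J} \coloneqq \langle \cone(f) \mid f \in R \text{ homogeneous}, f \notin \varrho^h(\mathcal{P})\rangle$. The plan is to prove the two inclusions $\mathcal{J} \subseteq \mathcal{P}$ and $\mathcal{P} \subseteq \mathcal{J}$ separately. The first is immediate from the definition of $\varrho^h$ in \Cref{prop:multigradedcomparisonmap}. If $f$ is homogeneous and $\cone(f) \notin \mathcal{P}$, then $f \in \varrho^h(\mathcal{P})$. Hence every homogeneous $f \notin \varrho^h(\mathcal{P})$ has $\cone(f) \in \mathcal{P}$. Since $\mathcal{P}$ is a tt-ideal, $\mathcal{J} \subseteq \mathcal{P}$.

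For the reverse inclusion we use the classification of \Cref{theorem:classificationofttideals}. By \Cref{remark:rigiditymeansallidealsareradical} every tt-ideal of $\mathcal{K}$ is radical, so it suffices to show $\supp(\mathcal{P}) \subseteq \supp(\mathcal{J})$. The support of a prime is $\supp(\mathcal{P}) = \{\mathcal{Q} \mid \mathcal{Q} \not\supseteq \mathcal{P}\}$: indeed $\mathcal{Q} \in \supp(a)$ for some $a \in \mathcal{P}$ exactly when $\mathcal{P} \not\subseteq \mathcal{Q}$. Likewise $\supp(\mathcal{J}) = \{\mathcal{Q} \mid \mathcal{Q} \not\supseteq \mathcal{J}\}$. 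So we must show: if $\mathcal{Q} \supseteq \mathcal{J}$, then $\mathcal{Q} \supseteq \mathcal{P}$.

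The key step is to translate $\mathcal{Q} \supseteq \mathcal{J}$ into ring-theoretic terms and use the homeomorphism. Suppose $\mathcal{Q} \supseteq \mathcal{J}$, and let $f$ be homogeneous with $f \in \varrho^h(\mathcal{Q})$. The homogeneous elements of $\varrho^h(\mathcal{Q})$ are exactly those $f$ with $\cone(f) \notin \mathcal{Q}$; this is the standard fact, used in Dell'Ambrogio's construction, that the ideal generated by such $f$ has no further homogeneous elements. Then $\cone(f) \notin \mathcal{J}$, so $f \in \varrho^h(\mathcal{P})$. This gives $\varrho^h(\mathcal{Q}) \subseteq \varrho^h(\mathcal{P})$, since homogeneous primes are determined by their homogeneous elements.

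In $\Spec^h(R)$ this inclusion means $\varrho^h(\mathcal{P}) \in \overline{\{\varrho^h(\mathcal{Q})\}}$. Because $\varrho^h$ is a homeomorphism, it reflects specialisations, so $\mathcal{P} \in \overline{\{\mathcal{Q}\}}$. In the Balmer spectrum, $\overline{\{\mathcal{Q}\}} = \{\mathcal{P}' \mid \mathcal{P}' \subseteq \mathcal{Q}\}$, hence $\mathcal{P} \subseteq \mathcal{Q}$, as required.

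The main obstacle is this last step. One must keep the inclusion-reversing conventions straight: larger tt-primes correspond to smaller homogeneous primes. One must also justify that membership of a homogeneous $f$ in $\varrho^h(\mathcal{Q})$ is equivalent to $\cone(f) \notin \mathcal{Q}$. That equivalence rests on the usual tensor-product argument: $\cone(fg)$ lies in the thick ideal generated by $\cone(f)$ and $\cone(g)$, and $\cone(f+g)$ for $f,g$ of equal degree lies in $\langle \cone(f) \otimes \cone(g) \rangle$ up to radical. This argument is part of the construction in \Cref{prop:multigradedcomparisonmap}, so we would cite it rather than reprove it.
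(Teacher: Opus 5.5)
Your proof is correct and is essentially the paper's argument: both show $\mathcal{J}\subseteq\mathcal{P}$ directly, show that $\mathcal{J}\subseteq\mathcal{Q}$ forces $\varrho^h(\mathcal{Q})\subseteq\varrho^h(\mathcal{P})$ and hence $\mathcal{P}\subseteq\mathcal{Q}$ via the homeomorphism, and conclude by comparing supports using Balmer's classification.

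One small correction to your closing remark: the inclusion relevant to additive closure is $\cone(f)\otimes\cone(g)\in\langle\cone(f+g)\rangle$, together with $\cone(f)\in\langle\cone(fg)\rangle$ for absorption. The inclusion you wrote goes the other way and is false (take $f=1$, $g=-1$, where $\cone(0)\neq 0$ but $\cone(1)\otimes\cone(-1)=0$). This does not affect your proof, since you only cite the standard fact, as the paper also does implicitly.
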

\begin{proof}
Let
\[\mathcal{I} \coloneqq \langle \cone(f) \mid f\in R \text{ homogeneous and } f\notin\varrho^h(\mathcal{P})\rangle.\]
By definition of the comparison map, $\mathcal{I} \subseteq \mathcal{P}$. Moreover, for any $\mathcal{Q} \in\Spc(\mathcal{K})$, we have $\mathcal{I} \subseteq \mathcal{Q}$ if and only if for every homogeneous element $f \in R$, if $f \not \in \varrho^h(\mathcal{P})$, then $f \not \in  \varrho^h(\mathcal{Q})$. Equivalently, $\mathcal{I} \subseteq \mathcal{Q}$ if and only if $\varrho^h(\mathcal{Q})\subseteq\varrho^h(\mathcal{P})$. Since $\varrho^h$ is a homeomorphism, it preserves specialisations, which means that $\mathcal{Q}$ specialises to $\mathcal{P}$, therefore $\mathcal{P} \subseteq \mathcal{Q}$. This shows that $\mathcal{I}$ and $\mathcal{P}$ have the same supports, so they are the same (prime) tt-ideal by \Cref{theorem:classificationofttideals}.
\end{proof}
For each $\alpha \in \Sper(k)$, set
\[S^\alpha \coloneqq \{\cone(\delta) \mid \delta\in K^M_1(k)/2,\  \delta \notin\mathfrak{p}_\alpha\}, \ S^\alpha_\tau= S^\alpha \cup \{\cone(\tau)\}.\]

\begin{cor}\label{cor:explicit-tate-generators-topology}
If $p$ is an odd prime or $k$ is not formally real, $\Spc(\DTM_\gm(k;\F_p))$ is the Sierpiński space:
\[\begin{tikzcd}[ampersand replacement=\&,sep=scriptsize]
	{\ideal{0}} \\
	\\
	{\mathcal{P}_{\acute{e}t}}
	\arrow[no head, from=1-1, to=3-1]
\end{tikzcd}\]
where $\mathcal{P}_{\acute{e}t}=\ideal{\cone(\tau)}$.

If $p=2$ and $k$ is formally real, then $\Spc(\DTM_\gm(k;\F_2))$ is described as follows:
\[\begin{tikzcd}[ampersand replacement=\&,sep=scriptsize]
	\&\&\& {\ideal{0}} \\
	\\
	{\mathcal{P}^\alpha} \& \dots \& {\mathcal{P}^\beta} \& {\mathcal{P}_{\acute{e}t}} \\
	\\
	{\mathcal{P}^\alpha_{\acute{e}t}} \& \dots \& {\mathcal{P}^\beta_{\acute{e}t}}
	\arrow[no head, from=1-4, to=3-1]
	\arrow[dashed, no head, from=1-4, to=3-2]
	\arrow[no head, from=1-4, to=3-3]
	\arrow[no head, from=1-4, to=3-4]
	\arrow[no head, from=3-1, to=5-1]
	\arrow[dashed, no head, from=3-2, to=5-2]
	\arrow[no head, from=3-3, to=5-3]
	\arrow[no head, from=3-4, to=5-1]
	\arrow[dashed, no head, from=3-4, to=5-2]
	\arrow[no head, from=3-4, to=5-3]
\end{tikzcd}\]
where a subset $Z$ is closed if and only if it is specialisation closed and 
\[W_1(Z)=\{\alpha \in \Sper(k) \mid \mathcal{P}^\alpha \in Z\}, \ W_2(Z)=\{\alpha \in \Sper(k) \mid \mathcal{P}^\alpha_{\acute{e}t} \in Z\}\]
are both closed subsets of $\Sper(k)$. Moreover, the generators of the tt-primes are described as follows
\[\mathcal{P}_{\acute{e}t}= \ideal{\cone(\tau)}; \, \mathcal{P}^\alpha=\ideal{S^\alpha}; \, \mathcal{P}^\alpha_{\acute{e}t}=\ideal{S^\alpha_\tau}\]
\end{cor}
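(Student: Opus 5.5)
We transport everything across the homeomorphism of \Cref{thm:spectrum-tate-root-unity}. First, the underlying sets and specialisation relations. If $p$ is odd or $k$ is not formally real, then $\Sper(k)=\emptyset$ (for $p$ odd this is part (1) of \Cref{thm:hspectrum-MilnorK}; for $p=2$ it holds because $k$ is not formally real). Hence $\Spec^h(R_k)=\{\mathfrak{m}\}$. By \Cref{cor:hspectrum-MilnorKtau}, $\Spec^h(R_k[\tau])$ is then the two-point space $\{\mathfrak{m}\subsetneq \mathfrak{m}+(\tau)\}$. Pulling back, $\mathcal{P}_0$ is the generic point and $\mathcal{P}_{\acute{e}t}$ the closed one. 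In general $\mathcal{P}_0=\ideal{0}$, because $\ideal{0}$ is prime: the $\varrho^h$-preimage of the maximal point $\mathfrak{m}+(\tau)$ must be the unique minimal prime, which is $\ideal{0}$ since $\DTM_\gm(k;\F_p)$ is local. (Alternatively, $\ideal{0}$ is sent by base change from $\overline{k}$ to $\ideal{0}$, as shown in \Cref{prop:tate-specialisations}.) For $p=2$ and $k$ formally real, the picture is exactly the diagram of \Cref{cor:hspectrum-MilnorKtau} with inclusions reversed. This uses \Cref{prop:tate-specialisations} together with the specialisations already recorded, namely $\mathcal{P}^\alpha\rightsquigarrow\mathcal{P}_0$ and $\mathcal{P}^\alpha_{\acute{e}t}\rightsquigarrow\mathcal{P}_{\acute{e}t}$.

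Second, the generators. We apply \Cref{lem:prime-generated-by-cones}: each prime is generated by the cones of homogeneous $f\notin\varrho^h(\mathcal{P})$, and the task is to prune this generating set. For $\mathcal{P}_{\acute{e}t}$, with $\varrho^h=\mathfrak{m}$, the homogeneous elements outside $\mathfrak{m}$ are the elements $\lambda\tau^n$ with $\lambda\in\F_p^\times$ and $n\geq 0$. Their cones lie in $\ideal{\cone(\tau)}$, since $\cone(\tau^n)$ is an iterated extension of twists of $\cone(\tau)$ by the octahedral axiom, and a unit scalar changes nothing. For $\mathcal{P}^\alpha$, with $\varrho^h=\mathfrak{p}_\alpha+(\tau)$, a homogeneous element outside this ideal is, modulo the ideal, a nonzero scalar times a monomial in degree-one symbols $\delta\notin\mathfrak{p}_\alpha$. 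To make this precise, use that $R_k/\mathfrak{p}_\alpha\cong\F_2[\rho]$ via $i_\alpha$, and that every $\delta\in K_1^M(k)/2$ with $\delta\notin\mathfrak{p}_\alpha$ maps to $\rho_\alpha$. Then write $f=\delta^n+g$ with $g\in\mathfrak{p}_\alpha+(\tau)$, for a fixed $\delta=\{a\}$ with $a<_\alpha0$. Because $g$ lies in a prime that is the radical-type image, the standard argument gives $\cone(f)\in\ideal{\cone(\delta^n),\cone(g)}$ only when $g$ behaves well, so instead I argue via supports. The ideal $\ideal{S^\alpha}$ has support equal to the union of the closed sets $\varrho^{-1}(V(\delta))$. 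Its Thomason complement in $\Spc$ is the set of primes $\mathcal{Q}$ with $\varrho^h(\mathcal{Q})\not\ni\delta$ for all such $\delta$, which by the explicit list of primes is exactly $\{\mathcal{Q}\supseteq\mathcal{P}^\alpha\}$, namely $\mathcal{P}^\alpha,\mathcal{P}_0$. So both ideals have the same support and coincide by \Cref{theorem:classificationofttideals}. The same support computation, adding $\cone(\tau)$, gives $\mathcal{P}^\alpha_{\acute{e}t}=\ideal{S^\alpha_\tau}$. I expect this support bookkeeping to be the main point needing care: one must check that the only primes containing no $\delta\notin\mathfrak{p}_\alpha$ in their image are those lying over $\mathfrak{p}_\alpha$ and $\mathfrak{m}$-type primes appropriately, which uses that $\mathfrak{p}_\alpha\neq\mathfrak{p}_\beta$ differ in degree one (Harrison separation).

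Third, the topology. Since $\varrho^h$ is a homeomorphism, it suffices to describe the closed subsets of $\Spec^h(R_k[\tau])$. By \Cref{cor:hspectrum-MilnorKtau}, $V(\tau)$ and $D(\tau)$ are each homeomorphic to $\Spec^h(R_k)=\Proj(R_k)\sqcup\{\mathfrak{m}\}$, where $\Proj(R_k)\cong\Sper(k)$ carries the Harrison topology and $\mathfrak{m}$ is the unique closed point. A subset $Z$ of the spectral space is closed if and only if its intersections with the closed piece $V(\tau)$ and the open piece $D(\tau)$ are closed there and $Z$ is specialisation closed; equivalently, if $Z\cap D(\tau)$ is closed in $D(\tau)$ and its closure lies in $Z$, and the closure of a subset of $D(\tau)$ adds only specialisations. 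This is standard for spectral spaces: closed means pro-constructible plus specialisation closed, and constructibility can be checked piecewise. Within each piece $\Spec^h(R_k)$, a specialisation-closed set is closed exactly when its trace on $\Proj(R_k)\cong\Sper(k)$ is Harrison-closed, because the extra point $\mathfrak{m}$ is closed and lies in the closure of every nonempty set. Translating through the labels $\mathcal{P}^\alpha,\mathcal{P}^\alpha_{\acute{e}t}$ gives the conditions on $W_1(Z)$ and $W_2(Z)$.
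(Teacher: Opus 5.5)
Your route is the paper's. You transport everything along the homeomorphism $\varrho^h$, apply \Cref{lem:prime-generated-by-cones}, and recognise a tt-ideal by the set of primes containing it; your support formulation via \Cref{theorem:classificationofttideals} is equivalent to the paper's ``intersection of all primes containing it''. Your computation of $\mathcal{P}_{\acute{e}t}$ is a fine alternative to the paper's appeal to the $\tau$-localisation: the homogeneous elements outside $\mathfrak{m}$ are the $\lambda\tau^n$, and their cones lie in $\ideal{\cone(\tau)}$ by the octahedral axiom. Your pro-constructibility argument for the topology is correct and more explicit than the paper's one-line transport. It does use that $D(\tau)$ is quasi-compact, so that patch-closed in $D(\tau)$ implies patch-closed in the whole space. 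However, you have the orientation of $\Spc$ backwards, and this corrupts exactly the step you flagged as delicate.

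In $\Spc(\mathcal{K})$ the closure of $\{\mathcal{P}\}$ is $\{\mathcal{Q}\mid\mathcal{Q}\subseteq\mathcal{P}\}$, so minimal primes are closed points, and $\varrho^h$ is an inclusion-reversing homeomorphism. Thus $\mathcal{P}_0=(\varrho^h)^{-1}(\mathfrak{m}+(\tau))=\ideal{0}$ is the closed point and $\mathcal{P}_{\acute{e}t}$ is the generic point, as in the picture you are proving. Your sentence ``$\mathcal{P}_0$ is the generic point and $\mathcal{P}_{\acute{e}t}$ the closed one'' asserts the opposite. It also contradicts your own next sentence, which identifies $\mathcal{P}_0$ with the unique minimal prime.

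The same slip recurs when you identify the primes containing $S^\alpha$ (equivalently, containing $\mathcal{P}^\alpha$) as $\{\mathcal{P}^\alpha,\mathcal{P}_0\}$. In fact $\ideal{0}$ contains neither $S^\alpha$ nor $\mathcal{P}^\alpha$, since its image $\mathfrak{m}+(\tau)$ contains all of $K^M_1(k)/2$. The correct bookkeeping goes prime by prime:
\begin{itemize}
\item $\mathfrak{m}$ and $\mathfrak{m}+(\tau)$ contain every $\delta$, so they are excluded.
\item For $\beta\neq\alpha$, pick $a$ with $a<_\alpha 0<_\beta a$. Then $\{a\}\in\mathfrak{p}_\beta$, while $\{a\}\notin\mathfrak{p}_\alpha$ because $i_\alpha(\{a\})=\rho_\alpha$. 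So $\mathfrak{p}_\beta$ and $\mathfrak{p}_\beta+(\tau)$ are excluded.
\item The bidegree $(1,1)$ part of $\mathfrak{p}_\alpha+(\tau)$ is just $(\mathfrak{p}_\alpha)_1$, so no $\delta\notin\mathfrak{p}_\alpha$ lies in $\mathfrak{p}_\alpha$ or $\mathfrak{p}_\alpha+(\tau)$.
\end{itemize}
Hence the primes containing $S^\alpha$ are exactly $\mathcal{P}^\alpha$ and $\mathcal{P}^\alpha_{\acute{e}t}$. These are exactly the primes containing $\mathcal{P}^\alpha$, because $\mathcal{P}^\alpha_{\acute{e}t}\rightsquigarrow\mathcal{P}^\alpha$ means $\mathcal{P}^\alpha\subseteq\mathcal{P}^\alpha_{\acute{e}t}$. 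Adding $\cone(\tau)$ leaves only $\mathcal{P}^\alpha_{\acute{e}t}$, since $\tau\in\mathfrak{p}_\alpha+(\tau)$ but $\tau\notin\mathfrak{p}_\alpha$.

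With this correction, and with the abandoned sentence about writing $f=\delta^n+g$ deleted, your argument coincides with the paper's. One minor point: for odd $p$, part (1) of \Cref{thm:hspectrum-MilnorK} gives $\Spec^h(R_k)=\{\mathfrak{m}\}$, which is what you actually use. It does not itself state $\Sper(k)=\emptyset$.
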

\begin{proof}
The description of the topology follows from \Cref{cor:hspectrum-MilnorKtau} and \Cref{thm:spectrum-tate-root-unity} by transporting the topology of $\Spec^h(R_k[\tau])$ along the homeomorphism $\varrho^h$. It remains to identify the generators of the corresponding tt-primes. First, it is clear that the unique closed point is $\ideal{0}$, since $\DTM_\gm(k;\F_p)$ is rigid. Moreover, by \Cref{prop:tau-localisation-etale} and \Cref{cor:tau-open-tates}, it is also clear that $\mathcal{P}_{\acute{e}t}=\langle \cone(\tau) \rangle$. Finally, by \Cref{lem:prime-generated-by-cones}, we have that 
\[\langle S^\alpha \rangle \subseteq \mathcal{P}^\alpha=\langle \cone(f) \mid f \in R_k[\tau] \text{ homogeneous and } f \notin \mathfrak{p}_\alpha +\langle \tau \rangle \rangle,  \]
\[\ideal{S^\alpha_\tau} \subseteq \mathcal{P}^\alpha_{\acute{e}t}=\langle \cone(f) \mid f \in R_k[\tau] \text{ homogeneous and } f \notin \mathfrak{p}_\alpha \rangle,\]
so we need to verify the reverse inclusions. Since $\DTM_\gm(k;\F_p)$ is rigid, $\langle S^\alpha_\tau \rangle$ is equal to the intersection of all the tt-primes containing it. It is clear that $\mathcal{P}^\alpha_{\acute{e}t}$ is the only prime containing it, and this proves the equality. Similarly, the only primes containing  $\ideal{S^\alpha}$ are $\mathcal{P}^\alpha$ and $\mathcal{P}^\alpha_{\acute{e}t}$. Thus, taking the intersection proves the remaining equality.
\end{proof}

\subsection{Removing the root of unity assumption}
We now remove the assumption that $k$ contains a primitive $p^{th}$ root
of unity. Since $\operatorname{char}(k) \neq p$, we only need to consider the case where $p$ is an odd prime. Let
\[k' \coloneqq k(\zeta_p)\]
be the finite separable extension obtained by adjoining a primitive $p^{th}$ root of unity $\zeta_p$, and let 
\[\begin{tikzcd}
    f:\Spec(k') \arrow[r] & \Spec(k)
\end{tikzcd}\]
be the associated finite étale morphism.
\begin{prop}\label{prop:adoining-root-unity-faithful}
The base-change tt-functor
\[\begin{tikzcd}
    f^*:\DM_\gm(k;\F_p) \arrow[r] & \DM_\gm(k';\F_p)
\end{tikzcd}\]
is faithful.
\end{prop}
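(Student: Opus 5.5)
The plan is the standard transfer argument for finite étale base change. Set $d \coloneqq [k':k]$; since $k' = k(\zeta_p)$, $d$ divides $p-1$ and so is invertible in $\F_p$. For the finite étale morphism $f$ we have the adjunction $f^* \dashv f_*$ on $\DM(k;\F_p)$, with $f_*$ preserving geometric motives. Moreover $f_* = f_\sharp$: the right adjoint of $f^*$ coincides with the left adjoint, and $f_*\unit_{k'} \simeq M(k')$. The projection formula gives $f_*f^*X \simeq X \otimes M(k')$. Write $\eta: \id \to f_*f^*$ for the unit and $\epsilon': f_\sharp f^* = f_*f^* \to \id$ for the counit of $f_\sharp \dashv f^*$. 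Up to tensoring with $X$, these are the maps $\unit \to M(k')$ and $M(k') \to \unit$ given by the transpose of the structure morphism and the structure morphism.

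The key step is to show that the composite $\epsilon' \circ \eta: \unit \to M(k') \to \unit$ is multiplication by the degree $d$ in $\End(\unit) = \F_p$. This is the standard degree formula for finite correspondences, or the fact that ``$\mathrm{tr}\circ\mathrm{res} = \deg$'' for transfers in $\DM$. Concretely, the composite corresponds to the finite correspondence $\Spec k \to \Spec k$ obtained by composing the transpose of the graph of $f$ with the graph of $f$. That composite is the cycle $[\Spec k' \to \Spec k]$ pushed forward, which equals $d\cdot[\Spec k]$. This identification is the only step requiring care; I would cite Voevodsky's construction of transfers (or Mazza--Voevodsky--Weibel) for the equality $f_*[k'] = d$ as a finite correspondence.

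Faithfulness then follows. Suppose $g: X \to Y$ is a morphism in $\DM_\gm(k;\F_p)$ with $f^*g = 0$. Naturality of $\eta$ and $\epsilon'$ gives
\[
d\cdot g = \epsilon'_Y \circ f_*f^*(g) \circ \eta_X = 0.
\]
Since $d$ is invertible in $\F_p$, we conclude $g = 0$.

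The main obstacle is purely bookkeeping: checking that $f_*$ and $f_\sharp$ agree for finite étale $f$ in a way compatible with the chosen unit and counit, so that the degree computation applies. An alternative is to bypass this entirely. One can work with the two maps $\unit \to M(k') \to \unit$ directly, as morphisms of geometric motives of smooth $k$-schemes given by finite correspondences. Then $g$ is a retract of $g \otimes M(k') \simeq f_\sharp f^*(g)$, which vanishes because $f^*g = 0$.
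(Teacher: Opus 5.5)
Your proposal is correct and is essentially the paper's argument: both combine the projection formula $f_*f^*X\simeq M(\Spec(k'))\otimes X$ with the invertibility of $d=[k':k]$ in $\F_p$ to show that tensoring with $M(\Spec(k'))$, and hence $f^*$, is faithful. The only difference is in packaging. The paper phrases the invertible scalar as the Euler characteristic of $M(\Spec(k'))$, while you use the composite $\unit\to M(\Spec(k'))\to\unit$ of the transfer and the structure map; for a finite étale $k$-scheme these are the same degree computation.
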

\begin{proof}
The argument is analogous to the proof of \cite[Proposition 4.4]{gallauer2021note}. Since $[k':k]=d$ must divide $p-1$, it is invertible in $\F_p$. Moreover, the extension $k'/k$ is finite separable, thus we have the following adjunction.
\[\begin{tikzcd}[ampersand replacement=\&, sep=small]
	{\DM_\gm(k;\F_p)} \\
	\dashv \\
	{\DM_\gm(k';\F_p)}
	\arrow["{f^*}"', shift right=3, from=1-1, to=3-1]
	\arrow["{f_*}"', shift right=3, from=3-1, to=1-1]
\end{tikzcd}\]
The projection formula gives
\[f_*f^*(X)\simeq M(\Spec(k')) \otimes X.\]
Since the Euler characteristic of $M(\Spec(k'))$ is the degree of the extension $k'/k$, which is invertible in $\F_p$, we can conclude that $M(\Spec(k')) \otimes-$ is faithful, whence $f^*$ is too.
\end{proof}
With this result, we can describe the homogeneous spectrum of $H^{*,*}_{\mathcal{M}}(k;\F_p)$. 
\begin{prop}\label{prop:hSpec-motcohom-no-root-unity}
Base change 
\[\begin{tikzcd}
    f^*: H^{*,*}_{\mathcal{M}}(k;\F_p) \arrow[r] & H^{*,*}_{\mathcal{M}}(k';\F_p)\simeq(K_*^M(k')/p)[\tau]
\end{tikzcd}\]
induces a homeomorphism
\[\begin{tikzcd}
    \phi:\Spec^h((K_*^M(k')/p)[\tau]) \arrow[r, "\sim"] & \Spec^h(H^{*,*}_{\mathcal{M}}(k;\F_p)).
\end{tikzcd}\]
In particular, $\Spec^h(H^{*,*}_{\mathcal{M}}(k;\F_p))$ is the Sierpiński space.
\end{prop}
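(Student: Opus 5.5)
The plan is to exploit the Galois action of $G \coloneqq \operatorname{Gal}(k'/k)$, a group of order $d$ dividing $p-1$ and hence invertible in $\F_p$. The proof then reduces to a transfer argument in commutative algebra together with the computation of \Cref{cor:hspectrum-MilnorKtau}.

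\emph{Injectivity and identification of the invariants.} Since $d$ is invertible, the projection formula argument of \Cref{prop:adoining-root-unity-faithful} applied to the unit shows that $f^*$ is injective on $H^{*,*}_{\mathcal{M}}(k;\F_p)$: indeed $f_*f^* = d\cdot\id$ on motivic cohomology of $k$. Writing $A \coloneqq H^{*,*}_{\mathcal{M}}(k;\F_p)$ and $B\coloneqq H^{*,*}_{\mathcal{M}}(k';\F_p)$, the standard Galois descent for motivic cohomology with coefficients of order prime to $d$ identifies $A$ with the invariants $B^G$. The map $\frac{1}{d}f^*f_*$ is the averaging projector onto $B^G$, and it is $A$-linear. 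Hence $A\to B$ is a graded ring extension with $B$ integral over $A$: each homogeneous $b$ is a root of $\prod_{g\in G}(x-gb)$, whose coefficients lie in $A$.

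\emph{Homeomorphism of spectra.} For a finite group $G$ acting on a graded-commutative ring $B$ with $|G|$ invertible, the map $\Spec^h(B)\to\Spec^h(B^G)$ is surjective and closed by lying-over and going-up. Its fibres are exactly the $G$-orbits, and it induces a homeomorphism $\Spec^h(B)/G \cong \Spec^h(B^G)$; this is the graded analogue of the classical statement, and the proofs carry over to homogeneous primes. Since $p$ is odd, \Cref{cor:hspectrum-MilnorKtau} shows $\Spec^h(B)$ is the Sierpiński space $\{(\mathfrak{m}'),\ \mathfrak{m}'+(\tau)\}$. Its two points are distinguished by specialisation, so any homeomorphism fixes each of them, and every $G$-orbit is a singleton. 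Therefore $\phi$ is a bijection, and since it is continuous and closed, it is a homeomorphism. It follows that $\Spec^h(A)$ is the Sierpiński space.

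\emph{Main obstacle.} The hardest step is graded-commutativity: for odd $p$, $B$ is only graded-commutative, not commutative, so the characteristic-polynomial argument needs care with signs. I would address this by noting that odd-degree elements square to zero and are therefore nilpotent. Passing to $B/\sqrt{0}$ and $A/\sqrt{0}$ does not change the homogeneous spectra, and the reduced rings are commutative, so the argument applies there. Alternatively, one can bypass integrality and check the two needed primes directly. The ideal $A\cap \mathfrak{m}' = A_{>0}$ in the Milnor directions (using $f_*$), and $A\cap(\mathfrak{m}'+(\tau))$, are distinct primes, and every homogeneous prime of $A$ contains all nilpotent elements. This leaves only the element $\tau^{d}$ (or a $G$-invariant power of $\tau$) lying in $A$ to control, which gives exactly the two primes $\sqrt{(A_{>0})}$ and $\sqrt{(A_{>0},\tau^d)}$.
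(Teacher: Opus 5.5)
Your argument is correct, but it follows a different route from the paper's. The paper never identifies $H^{*,*}_{\mathcal{M}}(k;\F_p)$ with Galois invariants. Instead it computes the reduced ring directly, in three steps:
\begin{enumerate}
\item Classes of positive cohomological degree are nilpotent: over $k'$ they land in $(K^M_{>0}(k')/p)[\tau]$, and $f^*$ is injective.
\item The weight-only part is computed via Beilinson--Lichtenbaum and the cyclotomic character as $H^{0,*}_{\mathcal{M}}(k;\F_p)\simeq \F_p[\beta]$, with $f^*(\beta)=\tau^d$.
\item The map $\F_p[\beta]\to\F_p[\tau]$, $\beta\mapsto\tau^d$, is then checked by hand to be a homeomorphism on $\Spec^h$.
\end{enumerate}

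You replace this explicit computation with three ingredients: Galois descent ($f^*f_*=\sum_{g\in G}g^*$, so $A=B^G$), the orbit theorem for finite group actions, and the observation that $G$ must fix both points of the Sierpi\'nski space $\Spec^h(B)$. This is more conceptual, needs no étale-cohomology input, and works verbatim for any finite Galois extension of degree prime to $p$.

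It does rely on two routine points you only gesture at:
\begin{enumerate}
\item The graded lying-over step. Given a homogeneous prime $\mathfrak q\subseteq A$ and any prime $\mathfrak P\subseteq B$ over it, the ideal generated by the homogeneous elements of $\mathfrak P$ is again prime, because $\Z^2$ is torsion-free, and it still lies over $\mathfrak q$. This is exactly what gives surjectivity and closedness of $\phi$ on homogeneous spectra.
\item The identification $(B/\sqrt{0})^G=A/\sqrt{0}$, if you pass to reduced quotients. This uses exactness of $G$-invariants, since $d$ is invertible. Alternatively, note that homogeneous elements of even cohomological degree are central, so the characteristic-polynomial argument works directly.
\end{enumerate}

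What the paper's route buys is the explicit generator $\beta$ and the description $H^{*,*}_{\mathcal{M}}(k;\F_p)/\sqrt{0}\simeq\F_p[\beta]$. Both are used later, e.g.\ $\cone(\beta)\in\mathcal{P}_{\acute{e}t}$ and the periodicity $\ideal{\unit(d)}$. In your framework, $\beta$ is just the unique preimage in $A=B^G$ of the $G$-invariant element $\tau^d$.
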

\begin{proof}
Motivic cohomology with $\F_p$-coefficients of a field is concentrated in bidegrees $(a)[b]$ satisfying $0 \leq b \leq a$. For a homogeneous element $\alpha \in H^{b,a}_{\mathcal{M}}(k;\F_p)$ with $0 < b \leq a$, we have
\[f^*(\alpha)\in (K_b^M(k')/p)\tau^{a-b}.\]
Since every element of positive degree in $K_*^M(k')/p$ is nilpotent, $f^*(\alpha)$ is so. The faithfulness of $f^*$ from \Cref{prop:adoining-root-unity-faithful} then implies that
$\alpha$ is nilpotent. It remains to study what happens when $b=0$ and $a>0$. Let $k^s$ be a separable closure of $k$ containing $k'$, and choose a primitive $p^{th}$ root of unity $\zeta_p\in k'$. For a field $E$, we shall denote by $\Gamma_E$ its absolute Galois group. By the Beilinson--Lichtenbaum conjecture we obtain isomorphisms
\[H^{0,a}_{\mathcal{M}}(k;\F_p) \simeq H^0_{\acute{e}t}(k,\mu_p^{\otimes a}) \simeq (\mu_p^{\otimes a}(k^s))^{\Gamma_k},\]
where the action of $\Gamma_k$ on $\mu_p(k^s)$ is given by the mod $p$ cyclotomic character
\[\begin{tikzcd}
    \chi_p:\Gamma_k \arrow[r] & \F_p^\times.
\end{tikzcd}\]
From this it follows that
\[H^{0,a}_{\mathcal{M}}(k;\F_p) \simeq
\begin{cases}
\F_p,& d \mid a,\\
0,& d \nmid a.
\end{cases}\]
The element $\zeta_p^{\otimes d} \in (\mu_p^{\otimes d}(k^s))^{\Gamma_k}$ therefore determines a class
\[\beta\in H^{0,d}_{\mathcal{M}}(k;\F_p)\]
which generates $H^{0,d}_{\mathcal{M}}(k;\F_p)$ as an abelian group, such that $f^*(\beta)=\tau^d$. For every $r\geq0$,
\[f^*(\beta^r)=\tau^{dr}\neq0,\]
so $\beta^r\neq0$. Since $H^{0,dr}_{\mathcal{M}}(k;\F_p)$ is
one-dimensional, it is generated by $\beta^r$. Therefore
\[H^{0,*}_{\mathcal{M}}(k;\F_p)
\simeq \F_p[\beta],
\ |\beta|=(0,d). \]
Let $\sqrt{(0)}$ denote the nilradical of the bigraded ring $H^{*,*}_{\mathcal{M}}(k;\F_p)$. The preceding work gives us
\[H^{*,*}_{\mathcal{M}}(k;\F_p)/\sqrt{(0)}
\simeq \F_p[\beta]. \]
Similarly,
\[(K_*^M(k')/p)[\tau]/\sqrt{(0)} \simeq \F_p[\tau].\]
Under these identifications, base change descends to
\[\begin{tikzcd}
    f^*/\sqrt{(0)}:\F_p[\beta] \arrow[r] & \F_p[\tau]; \ \beta \longmapsto \tau^d,
\end{tikzcd}\]
which clearly induces a homeomorphism on homogeneous spectra. Since quotienting by the nilradical does not affect homogeneous spectra, the result is now immediate.
\end{proof}
\begin{cor}\label{cor:tate-spectrum-no-root-unity}
Base change and the naturality of the comparison map give rise to the following commutative diagram,
\[\begin{tikzcd}[ampersand replacement=\&]
	{\Spc(\DTM_\gm(k';\F_p))} \&\& {\Spc(\DTM_\gm(k;\F_p))} \\
	{\Spec^h((K_*^M(k')/p)[\tau])} \&\& {\Spec^h(H^{*,*}_{\mathcal{M}}(k;\F_p))}
	\arrow["{\Spc(f^*)}", from=1-1, to=1-3]
	\arrow["{\varrho^h_{k'}}"', from=1-1, to=2-1]
	\arrow["{\varrho^h_k}", from=1-3, to=2-3]
	\arrow["\phi", from=2-1, to=2-3]
\end{tikzcd}\]
where every arrow is a homeomorphism.
\end{cor}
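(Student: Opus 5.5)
The diagram commutes by naturality of the bigraded comparison map with respect to the tt-functor $f^*\colon \DTM_\gm(k;\F_p)\to\DTM_\gm(k';\F_p)$. Note that $f^*$ sends $\unit(a)[b]$ to $\unit(a)[b]$, so it induces exactly the ring map $f^*\colon H^{*,*}_{\mathcal{M}}(k;\F_p)\to H^{*,*}_{\mathcal{M}}(k';\F_p)$ of \Cref{prop:hSpec-motcohom-no-root-unity}. The left vertical arrow $\varrho^h_{k'}$ is a homeomorphism by \Cref{thm:spectrum-tate-root-unity}, since $k'$ contains a primitive $p^{th}$ root of unity. The bottom arrow $\phi$ is a homeomorphism by \Cref{prop:hSpec-motcohom-no-root-unity}. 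It therefore suffices to show that $\Spc(f^*)$ is a homeomorphism. Then $\varrho^h_k=\phi\circ\varrho^h_{k'}\circ\Spc(f^*)^{-1}$ is a homeomorphism as well.

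First, $\Spc(f^*)$ is surjective. By \Cref{prop:adoining-root-unity-faithful}, $f^*$ is faithful on $\DM_\gm(k;\F_p)$, hence also on the full subcategory $\DTM_\gm(k;\F_p)$. Since $\DTM_\gm(k;\F_p)$ is rigid and a faithful functor trivially detects $\otimes$-nilpotence, \Cref{theorem:nilfaithfulfunctors} gives surjectivity.

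Second, $\Spc(f^*)$ is injective. By \Cref{cor:explicit-tate-generators-topology} applied to $k'$, the source $\Spc(\DTM_\gm(k';\F_p))$ consists of exactly the two points $\ideal{0}$ and $\ideal{\cone(\tau)}$, and these are distinct. Their images under $\varrho^h_{k'}$ are distinct, and $\phi$ is injective, so by commutativity their images under $\varrho^h_k\circ\Spc(f^*)$ are distinct. Hence their images under $\Spc(f^*)$ are distinct. So $\Spc(f^*)$ is a continuous bijection.

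Finally, $\Spc(f^*)$ is a homeomorphism. The source is a Sierpiński space with specialisation $\ideal{\cone(\tau)}\rightsquigarrow\ideal{0}$. Continuity preserves this specialisation, so the target is a two-point spectral space with a nontrivial specialisation, i.e.\ again a Sierpiński space. A continuous bijection between Sierpiński spaces that preserves the specialisation is a homeomorphism. Alternatively, one can invoke \cite[Proposition 15]{hochster1969prime}: a bijective spectral map that lifts specialisations is a homeomorphism, and here there is only the one specialisation to lift. Then $\varrho^h_k$ is a homeomorphism.

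The only step needing care is injectivity. It is handled by the commutative square together with injectivity of $\phi$, rather than by any analysis of prime ideals over $k$, so I do not expect a real obstacle.
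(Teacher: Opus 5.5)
Your proof is correct. It differs from the paper's only in how injectivity of $\Spc(f^*)$ is obtained.

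The paper argues by hand. Faithfulness gives $\Spc(f^*)(\ideal{0})=\ideal{0}$. On the other hand, $\Spc(f^*)(\ideal{\cone(\tau)})$ contains $\cone(\beta)\neq 0$, because $f^*(\beta)=\tau^d$ and $\cone(\tau^d)\in\ideal{\cone(\tau)}$. So the two images differ.

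You instead read injectivity off the square: $\varrho^h_k\circ\Spc(f^*)=\phi\circ\varrho^h_{k'}$ is injective, hence so is $\Spc(f^*)$. This is purely formal, and you can push it further than you did. The same identity shows that $(\varrho^h_{k'})^{-1}\circ\phi^{-1}\circ\varrho^h_k$ is a continuous left inverse of $\Spc(f^*)$. So surjectivity alone already makes $\Spc(f^*)$ a homeomorphism, and neither the two-point description of the source nor your Sierpi\'nski/Hochster step is needed.

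The paper's hands-on version has its own payoff. It records concretely that the non-closed point over $k$ contains $\cone(\beta)$. That fact is used repeatedly afterwards: in identifying this prime with the \'etale kernel, in the open $U(\cone(\beta))$ of the Artin--Tate section, and in the integral computation.

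The underlying input $f^*(\beta)=\tau^d$ is the same in both arguments. In yours it is absorbed into the proof that $\phi$ is a homeomorphism.
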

\begin{proof}
We already know that $\varrho^h_{k'}$ and $\phi$ are homeomorphisms by \Cref{thm:spectrum-tate-root-unity} and \Cref{prop:hSpec-motcohom-no-root-unity}, respectively. So, it suffices to show that $\Phi \coloneqq \Spc(f^*)$ is a homeomorphism. By \Cref{prop:adoining-root-unity-faithful}, we know that $f^*$ is faithful, hence $\Phi$ is surjective. By \Cref{cor:explicit-tate-generators-topology}, we know that $\Spc(\DTM_\gm(k';\F_p))$ is the two-point Sierpiński space with points
\[\ideal{0} \subseteq \langle\cone(\tau)\rangle.\]
Their images under $\Phi$ are 
\[\Phi(\ideal{0})=\ideal{0}, \, \Phi(\langle \cone(\tau) \rangle )=:\mathcal{Q}.\]
The first equality follows from the faithfulness of
$f^*$. It remains to show that $\mathcal{Q} \neq 0$. This follows from the fact that $\cone(\beta) \in \mathcal{Q}$ since $f^*(\beta)=\tau^{\otimes d}$, and $\cone(\tau^{\otimes d}) \in \langle \cone(\tau) \rangle$. 
\end{proof}
\begin{remark}
Note that the tt-prime $\mathcal{Q}$ is also identified with the kernel of étale realisation. Indeed, when $p$ is an odd prime $\DTM_\et^\vee(k;\F_p)$ is a local tt-category, so the kernel in question is prime, and it is non-trivial since it contains $\cone(\beta)$.
\end{remark}
We have now proved the main result of this section.
\begin{proof}[Proof of \Cref{thm:tate-comparison}]
Follows immediately from \Cref{thm:spectrum-tate-root-unity} and \Cref{cor:tate-spectrum-no-root-unity}.
\end{proof}

\subsection{Integral coefficients}
Let $e$ be the exponential characteristic of $k$. The connectivity of $\DM_\gm(k; \Z[1/e])$ implies that the comparison map
\[\begin{tikzcd}
    \varrho:\Spc(\DM_\gm(k; \Z[1/e])) \arrow[r, twoheadrightarrow] & \Spec(\Z[1/e])
\end{tikzcd}\]
is surjective, and hence it provides a stratification of the spectrum $\DM_\gm(k; \Z[1/e])$ into its rational and finite-coefficient fibres by \cite[Corollary A.4, Lemma 5.4]{Gallauer:TateSpectrum} and \cite[Theorem 1.7]{Balmer:Conservative}. The rational fibre is given by
\[\varrho^{-1}((0)) \cong \Spc(\DM_\gm(k;\Q)),\]
and the stratum over a prime $p \neq e$ is
\[\varrho^{-1}((p))
=\supp(\cone(p)).\]
Moreover, if we denote by
\[\begin{tikzcd}
    cc_p^*:\DM_\gm(k;\Z[1/e]) \arrow[r] & \DM_\gm(k;\F_p)
\end{tikzcd}\]
the change of coefficients tt-functor, then
\[\supp(\cone(p)) = \im(\Spc(cc_p^*)).\]
In \cite[Theorem 1]{gallauer2021note}, Gallauer showed that for $\Q$-coefficients, the existence of the abelian category of mixed motives over $k$ would imply that $\Spc(\DM_{\gm}(k;\Q))=\{\ideal{0}\}$. In this section, we shall restrict to the tt-category of integral geometric Tate motives
\[\DTM_\gm(k;\Z[1/e])\]
and work under the following weaker hypothesis.
\begin{hypothesis}\label{hyp:rational-tate-spectrum}
$\Spc(\DTM_\gm(k;\Q))=\{\ideal{0}\}$.
\end{hypothesis}
This hypothesis is known to hold whenever $k$
satisfies the Peter--Gallauer vanishing condition
\[H^{m,n}_{\mathcal{M}}(k; \Q)=0, \ \text{for } m\leq 0 <n \text{ or }
n\geq m\geq 2.
\]
Indeed, under these assumptions $\Spc(\DTM_\gm(k;\Q))$ is homeomorphic to $\Spec(\Q)$; see \cite[Proposition 6.9]{Gallauer:TateSpectrum}. The Peter--Gallauer vanishing condition is known to hold for algebraic extensions of $\Q$, algebraic extensions of finite fields, and algebraic extensions of global fields of positive characteristic; see \cite[Remark 6.8]{Gallauer:TateSpectrum}. 

Let
\[\DTM_\et^\vee(k;\Z[1/e])\]
denote the tt-category of rigid étale Tate motives. We first record the integral analogue of the finite-coefficient étale computation.
\begin{prop}
\label{prop:integral-etale-comparison}
Under \Cref{hyp:rational-tate-spectrum}, the graded
comparison map induces a homeomorphism
\[\begin{tikzcd}
    \varrho_{\et}^h:\Spc(\DTM_\et^\vee(k;\Z[1/e])) \arrow[r, "\simeq"] & \Spec^h(K^M_*(k)[1/e]).
\end{tikzcd}\]
\end{prop}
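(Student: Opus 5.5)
The plan is to stratify both sides over $\Spec(\Z[1/e])$, identify the fibres using the finite-coefficient results and \Cref{hyp:rational-tate-spectrum}, and then glue. First I will check that $\varrho^h_\et$ is compatible with the ordinary comparison maps to $\Spec(\Z[1/e])$. On the source this is $\varrho$, and on the target it is induced by $\Z[1/e]\to K^M_*(k)[1/e]$ in degree $0$. The graded endomorphism ring of the unit with respect to $\unit(1)[1]$ in $\DTM_\et^\vee(k;\Z[1/e])$ is $\bigoplus_n H^n_{\acute{e}t}(k,\Z[1/e](n))$. Via Beilinson--Lichtenbaum and Nesterenko--Suslin--Totaro, I will identify this with $K^M_*(k)[1/e]$, at least modulo nilpotents, which suffices for homogeneous spectra. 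As in the discussion preceding \Cref{hyp:rational-tate-spectrum}, the stratification of \cite{Gallauer:TateSpectrum} and \cite{Balmer:Conservative} then decomposes $\Spc(\DTM_\et^\vee(k;\Z[1/e]))$ into a rational fibre and, for each prime $p\neq e$, the fibre $\supp(\cone(p))=\im(\Spc(cc_p^*))$. The homogeneous spectrum decomposes in the same way into $\Spec^h(K^M_*(k)\otimes\Q)$ and $\Spec^h(K^M_*(k)/p)$.

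For the fibres over $p$, change of coefficients lands in $\DTM_\et^\vee(k;\F_p)$. Its spectrum is identified with $\Spec^h(K^M_*(k)/p)$ by \Cref{thm:gallauer-etale-spectrum} together with \Cref{lemma:homeoofcomparisonmapdescendstosubcategories}, as in \Cref{cor:tau-open-tates}. Naturality of the comparison map then shows that $\varrho^h_\et$ restricts to a homeomorphism $\supp(\cone(p))\cong V(p)$. For the rational fibre, I will use that rational étale motives coincide with rational Voevodsky motives, so that rationalisation identifies this fibre with $\Spc(\DTM_\gm(k;\Q))=\{\ideal{0}\}$ by the hypothesis. On the algebraic side I must show that $\Spec^h(K^M_*(k)\otimes\Q)$ is a single point, namely the irrelevant ideal. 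The argument runs as follows.
\begin{enumerate}
    \item A homogeneous $f\in K^M_n(k)\otimes\Q$ with $n>0$ is a map $\unit\to\unit(n)[n]$ that is not invertible, since $K^M_*$ has no negative degrees.
    \item Hence $\supp(\cone(f))\neq\emptyset$, and since the spectrum is a singleton, $\supp(\cone(f))$ is everything.
    \item By Balmer's nilpotence criterion for elements of the graded central ring, $f$ is then $\otimes$-nilpotent.
\end{enumerate}
So $K^M_{>0}(k)\otimes\Q$ is nil, and the only homogeneous prime is the irrelevant ideal. This also shows that $\varrho^h_\et$ maps the rational point to it.

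It follows that $\varrho^h_\et$ is a continuous spectral bijection. By \cite[Proposition 15]{hochster1969prime}, it suffices to lift specialisations. Specialisations within a fibre lift by the fibrewise homeomorphisms. The remaining ones on the algebraic side are between the rational prime $K^M_{>0}\otimes\Q$-type ideal and the primes $\mathfrak{q}+(p)$ lying over $(p)$. Here $\mathfrak{q}+(p)$ is maximal only for $\mathfrak{q}=\mathfrak{m}$, and the $\mathfrak{p}_\alpha$-type primes occur only at $p=2$.

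To lift these, I will show that every prime $\mathcal{P}$ in the $p$-fibre is contained in the rational prime $\mathcal{Q}=\ker(\DTM_\et^\vee(k;\Z[1/e])\to\DTM_\et^\vee(k;\Q))$, and that $\mathcal{Q}$ specialises accordingly. By rigidity the rational point is the generic-most locus, and $\cone(p)\in\mathcal{Q}$, so $\mathcal{Q}\in U(\cone(p))$. I will then check which $p$-primes lie in $\overline{\{\mathcal{Q}\}}$ using generators, which are cones of $p$ and of Milnor symbols, via \Cref{lem:prime-generated-by-cones} applied fibrewise. Matching these against the algebraic inclusion $K^M_{>0}\otimes\Q\subseteq\mathfrak{q}+(p)$, with the positive-degree symbols in the rational prime being nilpotent, gives the required specialisations.

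I expect the main obstacle to be this cross-fibre specialisation step, together with pinning down the graded ring identification with $K^M_*(k)[1/e]$ integrally, up to nilpotents. If the direct generator argument is awkward, I will instead base change to $\overline{k}$ and to the real closures, as in \Cref{prop:tate-specialisations}, where both spectra are explicit.
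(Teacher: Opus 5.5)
Your fibrewise analysis is sound. The identification of the graded ring, the homeomorphisms $\supp(\cone(p))\cong V(p)$ obtained from naturality, and the nilpotence argument showing that both the rational fibre and $\Spec^h(K^M_*(k)\otimes\Q)$ are single points all work, so $\varrho^h_\et$ is a spectral bijection. The paper itself does none of this: it imports the statement from Gallauer's note on \'etale motives, observing only that $\DM(k;\Q)\simeq\DM_\et(k;\Q)$ restricts to Tate motives.

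The gap is the cross-fibre step, which is the only non-formal input. Less has to be lifted than you say. The rational prime is $\mathfrak{q}_0=K^M_{>0}(k)[1/e]$, and $\mathfrak{q}_0\subseteq\mathfrak{q}+(p)$ holds only for $\mathfrak{q}=\mathfrak{m}$: at $p=2$ the symbol $\{-1\}\in\mathfrak{q}_0$ reduces to $\rho\notin\mathfrak{p}_\alpha$. So your intended claim that every prime of the $p$-fibre is contained in $\mathcal{Q}$ is false for formally real $k$, and the real closures play no role. The one thing to prove is $\mathcal{Q}\rightsquigarrow\mathcal{P}_p$, i.e.\ $\mathcal{P}_p\subseteq\mathcal{Q}$, for the prime $\mathcal{P}_p$ over $(p)+\mathfrak{q}_0$. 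Since $\DTM_\et^\vee(k;\F_p)$ is local (its spectrum has the unique closed point $\mathfrak{m}$), $\mathcal{P}_p=(cc_p^*)^{-1}(\ideal{0})$. Concretely, you must show that every rigid \'etale Tate motive $X$ with $X/p=0$ satisfies $X\otimes\Q=0$.

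Neither of your tools gives this. \Cref{lem:prime-generated-by-cones} in the integral category presupposes the homeomorphism you are proving: it would yield $\mathcal{P}_p=\ideal{\cone(n)\mid p\nmid n}$, which is exactly the claim. Applied in $\DTM_\et^\vee(k;\F_p)$, it only describes $\mathcal{P}_p$ as the preimage of $\ideal{0}$ and says nothing about which integral objects that preimage contains. Base change to $\overline{k}$ just reproduces the same cross-fibre question over $\overline{k}$. It would also need \Cref{hyp:rational-tate-spectrum} for $\overline{k}$, which is not given.

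The missing idea is connectedness. Let $\mathcal{K}_{(p)}$ be the central localisation of $\mathcal{K}=\DTM_\et^\vee(k;\Z[1/e])$ at $\Z[1/e]\setminus(p)$.
\begin{enumerate}
\item By your fibre computations, $\Spc(\mathcal{K}_{(p)})\cong\varrho^{-1}(\Spec(\Z_{(p)}))=\{\mathcal{Q}\}\sqcup\supp(\cone(p))$, with both pieces non-empty.
\item $\End(\unit)=\Z_{(p)}$ has no non-trivial idempotents. By Balmer's correspondence between splittings of the spectrum into two disjoint closed pieces and idempotents of the unit (after idempotent completion), this space is connected.
\item Hence $\mathcal{Q}$ is not closed in it. So $\mathcal{Q}$ specialises to some point of the closed subset $\supp(\cone(p))\cong\Spec^h(K^M_*(k)/p)$, and therefore to that subset's unique closed point $\mathcal{P}_p$.
\end{enumerate}
Equivalently: if $X/p=0$ but $X\otimes\Q\neq0$, the hypothesis gives $\unit\in\ideal{X,\cone(p)}$ in $\mathcal{K}_{(p)}$. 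But $\supp(X)\cap\supp(\cone(p))=\supp(X\otimes\cone(p))=\emptyset$, so this would split $\Spc(\mathcal{K}_{(p)})$. With this step supplied, your appeal to Hochster's criterion completes the proof.
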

\begin{proof}
This is basically the content of \cite[Theorem 4.3]{gallauer2021note}. One just notes that the equivalence
\[\begin{tikzcd}
    \DM(k;\Q) \arrow[r, "\sim"] & \DM_\et(k;\Q)
\end{tikzcd}\]
clearly restricts to Tate motives, and that in \emph{loc. cit.} all one needs is to assume is that the $\Q$-coefficients spectrum is a singleton, i.e. the analogue of \Cref{hyp:rational-tate-spectrum}.
\end{proof}
We now turn to integral geometric Tate motives. Let
\[\begin{tikzcd}
    \varrho:\Spc(\DTM_\gm(k;\Z[1/e])) \arrow[r, twoheadrightarrow] & \Spec(\Z[1/e])
\end{tikzcd}\]
denote the comparison map, and for every prime $p \neq e$, denote by
\[\begin{tikzcd}
    cc_p^*:\DTM_\gm(k;\Z[1/e]) \arrow[r] & \DTM_\gm(k;\F_p)
\end{tikzcd}\]
the change of coefficients. Finally, denote
\[\begin{tikzcd}
    \mathcal{Q} \coloneqq \ker(\DTM_\gm(k;\Z[1/e]) \arrow[r] & \left((\Z[1/e]\setminus\{0\})^{-1}\DTM_\gm(k;\Z[1/e])\right)^\natural \simeq \DTM_\gm(k;\Q)).
\end{tikzcd}\]

\begin{theorem}\label{thm:integral-tate-spectrum}
Under \Cref{hyp:rational-tate-spectrum}, the following holds.
\begin{enumerate}
\item The fibre over the generic point is
\[\varrho^{-1}((0))=\{\mathcal{Q}\}.\]
\item For every prime $p \neq e$, change of coefficients induces a
homeomorphism onto its image
\[\begin{tikzcd}
    \Spc(cc_p^*):\Spc(\DTM_\gm(k;\F_p)) \arrow[r, "\simeq"] & \supp(\cone(p)) \subseteq \Spc(\DTM_\gm(k;\Z[1/e])).
\end{tikzcd}\]
In particular, there is a decomposition of sets
\[\Spc(\DTM_\gm(k;\Z[1/e]))=\{\mathcal{Q}\}\sqcup\bigsqcup_{p \neq e}\supp(\cone(p)).\]

\item The closure of $\mathcal{Q}$ is
\[\overline{\{\mathcal{Q}\}}=\{\mathcal{Q}\} \cup \bigsqcup_{p \neq e}\{\mathcal{P}_{p,0},\mathcal{P}_{p,1}\},\]
where
\[\mathcal{P}_{p,0} \coloneqq (cc_p^*)^{-1}(\mathcal{P}_0), \, \mathcal{P}_{p,1} \coloneqq (cc_p^*)^{-1}(\mathcal{P}_{\acute{e}t})\]

\item A subset $Z \subseteq \Spc(\DTM_\gm(k;\Z[1/e]))$ is closed if and only if
it is specialisation-closed and satisfies one of the following
conditions:
\begin{enumerate}
\item If $\mathcal{Q} \in Z$, then
\[Z=\overline{\{\mathcal{Q}\}} \cup W,\]
where $W=\emptyset$ if $e=2$, otherwise it is a closed subset of $ \supp(\cone(2))$. 
\item If $\mathcal{Q} \notin Z$, then
\[Z=Z_{p_1} \cup \cdots \cup Z_{p_n},\]
where $p_1,\dots,p_n \neq e$ are distinct primes and 
\[Z_{p_j} \subseteq \supp(\cone(p_j)), \, j=1,\dots,n\]
are closed.
\end{enumerate}
\end{enumerate}
\end{theorem}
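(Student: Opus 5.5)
Parts (1) and (2) come from the stratification over $\Spec(\Z[1/e])$ recalled before Hypothesis~\ref{hyp:rational-tate-spectrum}, now applied to $\DTM_\gm(k;\Z[1/e])$. The comparison map $\varrho$ is surjective. The fibre over $(0)$ is homeomorphic to $\Spc$ of the central localisation $\left((\Z[1/e]\setminus\{0\})^{-1}\DTM_\gm(k;\Z[1/e])\right)^\natural\simeq\DTM_\gm(k;\Q)$. By Hypothesis~\ref{hyp:rational-tate-spectrum} this is a single point, namely the kernel $\mathcal{Q}$, which gives (1).

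For a prime $p\neq e$, the fibre over $(p)$ is $\supp(\cone(p))=\im(\Spc(cc_p^*))$. I would argue this exactly as in the full motivic case: $cc_p^*$ is base change along $\Z[1/e]\to\F_p$, and $\cone(p)\otimes -$ identifies with $cc_{p*}cc_p^*$. Hence $cc_p^*$ detects $\otimes$-nilpotence on $\cone(p)$, and Proposition~\ref{prop:detectingnilpotenceonanobject} applies. To show $\Spc(cc_p^*)$ is a homeomorphism onto its image, I would imitate Proposition~\ref{prop:description-supp-cone-tau} using naturality of the comparison maps. Under $cc_p^*$, the bigraded ring of integral Tate motives maps onto $H^{*,*}_{\mathcal{M}}(k;\F_p)$ modulo nilpotents in the relevant range. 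Theorem~\ref{thm:tate-comparison} says $\varrho^h$ is a homeomorphism for $\F_p$-coefficients, so $\Spc(cc_p^*)$ is injective and its inverse on the image is continuous. An alternative route is Gallauer's general result that $\Spc(cc_p^*)$ is a homeomorphism onto $\supp(\cone(p))$, which follows from \cite[Lemma 5.4]{Gallauer:TateSpectrum} and \cite[Theorem 1.7]{Balmer:Conservative}.

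For (3), the points specialising from $\mathcal{Q}$ are found by comparing with the étale side. The open $U(\cone(\tau))$ (in the integral sense, i.e.\ after inverting the Bott-type elements prime by prime) is identified via Proposition~\ref{prop:integral-etale-comparison} with $\Spec^h(K^M_*(k)[1/e])$. The specialisations there from the generic point $(0)$ go to $\mathfrak{m}+(p)$ and, when $k$ is formally real, to the orderings at $p=2$. Concretely, $\mathcal{Q}\rightsquigarrow\mathcal{P}_{p,1}$ is checked via étale realisation, and $\mathcal{P}_{p,1}\rightsquigarrow\mathcal{P}_{p,0}$ holds already inside $\supp(\cone(p))$. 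Conversely, $\mathcal{Q}$ does \emph{not} specialise to $\mathcal{P}_{2,*,\alpha}$: $\cone(\rho^n)\in\mathcal{Q}$ because $\rho$ is $2$-torsion, and $\rho$ is non-nilpotent and invertible at the $\alpha$-points, so these points contain no such cone. Likewise, specialisations out of $\mathcal{Q}$ only reach points containing $\mathcal{Q}$. I would identify these as exactly the $\mathcal{P}_{p,0},\mathcal{P}_{p,1}$, using the generators from Corollary~\ref{cor:explicit-tate-generators-topology} pulled back along $cc_p^*$.

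I expect the main obstacle to be (4), the topology, which requires understanding quasi-compactness. Closed sets of the spectral space are intersections of supports $\supp(x)$. For any $x$, either $x\in\mathcal{Q}$, so $x$ is torsion and killed by some $N$, forcing $\supp(x)\subseteq\bigcup_{p\mid N}\supp(\cone(p))$, which is a finite union as in (b). Or $x\notin\mathcal{Q}$, and then $\supp(x)$ contains $\overline{\{\mathcal{Q}\}}$. In the second case one must show that the extra part sits in the $2$-stratum: for odd $p$ the stratum is exactly $\{\mathcal{P}_{p,0},\mathcal{P}_{p,1}\}\subseteq\overline{\{\mathcal{Q}\}}$, giving (a) with $W\subseteq\supp(\cone(2))$. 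Conversely, every set of the stated form is closed. In case (b), pull back closed sets of $\Spc(\DTM_\gm(k;\F_p))$ along $cc_p^*$, which remain closed because $\supp(\cone(p))$ is closed; in case (a), take a union with $\overline{\{\mathcal{Q}\}}$. The delicate point is closedness of infinite specialisation-closed sets and properly handling the Harrison topology at $p=2$ inside case (a).
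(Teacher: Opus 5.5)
Part (1) is fine, but the injectivity in (2) has a genuine gap. Naturality only gives $\varrho^h_{\Z}\circ\Spc(cc_p^*)=\phi\circ\varrho^h_{\F_p}$. This separates points only if $\phi\colon\Spec^h(H^{*,*}_{\mathcal{M}}(k;\F_p))\to\Spec^h(H^{*,*}_{\mathcal{M}}(k;\Z[1/e]))$ is injective. Your claim that integral classes hit $H^{*,*}_{\mathcal{M}}(k;\F_p)$ up to nilpotents fails in the $\tau$-direction. Already $H^{0,1}_{\mathcal{M}}(k;\Z[1/e])=0$, and in general no power of $\tau$ lifts. Take $k=\overline{\Q}$, which satisfies the hypothesis. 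Then $H^{0,n}_{\mathcal{M}}(k;\Z)=0$ for all $n\geq 1$: it vanishes rationally by the Peter--Gallauer condition, and it has no $\ell$-torsion because $H^{-1,n}_{\mathcal{M}}(k;\Z/\ell)=0$. So the image of integral cohomology in $H^{*,*}_{\mathcal{M}}(k;\F_p)=\F_p[\tau]$ is just $\F_p$ in bidegree $(0,0)$, and the preimages of $(0)$ and $(\tau)$ coincide. Thus $\mathcal{P}_{p,0}$ and $\mathcal{P}_{p,1}$ (likewise $\mathcal{P}_{2,1,\alpha}$ and $\mathcal{P}_{2,2,\alpha}$) have the same image under the integral comparison map. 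Only the diagonal part $K^M_*(k)[1/e]\to K^M_*(k)/p$ is surjective, and it separates orderings but not the $\tau$-direction.

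Your fallback does not fill this gap: \cite[Theorem~1.7]{Balmer:Conservative} and \cite[Lemma~5.4]{Gallauer:TateSpectrum} give the image $\supp(\cone(p))$ and the fibres of $\varrho$, not injectivity of $\Spc(cc_p^*)$. The missing idea is to separate these points by explicit torsion objects. The object $X=cc_{p*}(\cone(\beta))$ lies in $\mathcal{P}_{p,1}$, since \'etale realisation commutes with $cc_p^*$ and $cc_{p*}$ and kills $\cone(\beta)$. It does not lie in $\mathcal{P}_{p,0}$, since $cc_p^*X=0$ would force $\End(X)\simeq\Hom(cc_p^*X,\cone(\beta))=0$, contradicting conservativity of $cc_{p*}$. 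At $p=2$, the object $cc_{2*}(\cone(\tau))$, with $cc_2^*cc_{2*}(\cone(\tau))\simeq\cone(\tau)\oplus\cone(\tau)[1]$, separates $\mathcal{P}_{2,0}$ from $\mathcal{P}_{2,1}$ and $\mathcal{P}_{2,1,\alpha}$ from $\mathcal{P}_{2,2,\alpha}$. The ordering points are separated because the generators in $S^\alpha$ are cones of integral symbols.

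Your argument for (3) runs specialisation backwards. In the Balmer spectrum $\overline{\{\mathcal{Q}\}}=\{\mathcal{P}\mid\mathcal{P}\subseteq\mathcal{Q}\}$, so specialisations out of $\mathcal{Q}$ reach primes \emph{contained in} $\mathcal{Q}$. The only prime containing $\mathcal{Q}$ is $\mathcal{Q}$ itself, as $\Spc(\DTM_\gm(k;\Q))$ is a point. Also, $\cone(\rho^n)\in\mathcal{Q}$ is false: a $2$-torsion morphism becomes zero, not invertible, after rationalisation, so $\cone(\rho)\notin\mathcal{Q}$. Moreover, ``$\rho$ invertible at the $\alpha$-points'' means precisely $\cone(\rho)\in\mathcal{P}_{2,i,\alpha}$, which contradicts your next clause. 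The correct argument uses these facts the other way round: $\cone(\rho)\in\mathcal{P}_{2,1,\alpha}\subseteq\mathcal{P}_{2,2,\alpha}$ (as $\rho\notin\mathfrak{p}_\alpha$) while $\cone(\rho)\notin\mathcal{Q}$, so $\mathcal{P}_{2,i,\alpha}\notin\overline{\{\mathcal{Q}\}}$.

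Similarly, in $\Spec^h(K^M_*(k)[1/e])$ the prime $K^M_{>0}(k)[1/e]$ does not specialise to the ordering primes, which contain $2$ but not $\rho$. What you need is that $\Spc(\Re_{\acute{e}t})$ is a section of the diagonal comparison map, hence a homeomorphism onto its image. It carries the inclusion $K^M_{>0}(k)[1/e]\subseteq K^M_{>0}(k)[1/e]+(p)$ to $\mathcal{Q}\rightsquigarrow\mathcal{P}_{p,1}$.

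Once (2) and (3) are repaired, your torsion-exponent argument for (4)(b) is correct: an object of $\mathcal{Q}$ is killed by some integer $N$, so its support lies in $\bigcup_{p\mid N}\supp(\cone(p))$. This is more direct than the paper's route through mod-$p$ motivic cohomology for infinitely many $p$. The converse in (4) is not delicate, since such sets are finite unions of closed sets.
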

\begin{proof}
Statement (1) follows immediately from \cite[Corollary A.4, Lemma 5.4]{Gallauer:TateSpectrum} and \Cref{hyp:rational-tate-spectrum}. For Statement (2), \cite[Theorem 1.7]{Balmer:Conservative} gives
\[\im(\Spc(cc_p^*))=\supp(\cone(p)).\]
It suffices to show that $\Spc(cc_p^*)$ is injective and lifts specialisations. We first treat the case where $p$ is odd. We know that
\[\Spc(\DTM_\gm(k;\F_p)) = \{\ideal{0},\mathcal{P}_{\acute{e}t}\}; \, \ideal{0} \subsetneq \mathcal{P}_{\acute{e}t},\]
where $\mathcal{P}_{\acute{e}t}$ is the kernel of étale realization. Set
\[\mathcal{P}_{p,0} \coloneqq (cc_p^*)^{-1}(\ideal{0}), \ \mathcal{P}_{p,1} \coloneqq (cc_p^*)^{-1}(\mathcal{P}_{\acute{e}t})\]
The inclusion
\[
\mathcal{P}_{p,0}\subseteq \mathcal{P}_{p,1}
\]
follows from $\ideal{0} \subseteq \mathcal{P}_{\acute{e}t}$. To prove that it
is strict, consider $\cone(\beta)\in\mathcal{P}_{\acute{e}t}$, where $\beta=\tau$ when $k$ contains a primitive $p^{th}$ root of unity, and otherwise $\beta: \unit \longrightarrow \unit(d)$ is the class constructed in \Cref{prop:hSpec-motcohom-no-root-unity}. Set
\[X \coloneqq cc_{p*}(\cone(\beta)),\]
where $cc_{p_*}$ is the right adjoint of $cc_p^*$. We claim that
\[\Re_{\acute{e}t,p}(cc_p^*(X))=0,\]
so that in particular $X \in \mathcal{P}_{p,1}$. Indeed, étale realisation is compatible with extension and restriction of coefficients, that is, the following diagram commutes:
\[\begin{tikzcd}[ampersand replacement=\&]
	{\DTM_\gm(k;\F_p)} \&\& {\DTM_\gm(k;\Z[1/e])} \&\& {\DTM_\gm(k;\F_p)} \\
	{\DTM_\et(k;\F_p)^\vee} \&\& {\DTM_\et(k;\Z[1/e])^\vee} \&\& {\DTM_\et(k;\F_p)^\vee}
	\arrow["{cc_{p*}}", from=1-1, to=1-3]
	\arrow["{\Re_{\acute{e}t,p}}"', from=1-1, to=2-1]
	\arrow["{cc^*_{p}}", from=1-3, to=1-5]
	\arrow["{\Re_{\acute{e}t,\Z[1/e]}}"', from=1-3, to=2-3]
	\arrow["{\Re_{\acute{e}t,p}}", from=1-5, to=2-5]
	\arrow["{cc_{\et, p*}}", from=2-1, to=2-3]
	\arrow["{cc_{\acute{et},p}^*}", from=2-3, to=2-5]
\end{tikzcd}\]
Using this, 
\[\Re_{\acute{e}t,p}(cc_p^*(X)) \simeq cc_{\acute{e}t,p}^*\left(cc_{\acute{e}t, p*} \left(\Re_{\acute{e}t,p}\left(\cone(\beta \right)\right)\right) =0,\]
since $\Re_{\acute{e}t,p}(\cone(\beta))=0$.
Finally, observe that $X \not \in \mathcal{P}_{p,0}$ because $cc_p^*(X) \neq 0$: otherwise, by adjunction,
\[\Hom(X,X) \simeq \Hom(cc_p^*X, \cone(\beta)) =0,\]
implying $X=0$, which is not the case since $cc_{p*}$ is a conservative right adjoint (cf. \cite[Section 3]{Gallauer:TateSpectrum}) and $\cone(\beta) \neq 0$.  We conclude that
\[\mathcal{P}_{p,0} \subsetneq \mathcal{P}_{p,1},\]
which proves (2) in the case where $p$ is an odd prime. When $p=2$, set
\[\begin{aligned}
\mathcal{P}_{2,0} & \coloneqq (cc_2^*)^{-1}(\ideal{0}), &
\mathcal{P}_{2,1} & \coloneqq (cc_2^*)^{-1}(\mathcal{P}_{\acute{e}t}); \\
\mathcal{P}_{2,1,\alpha} & \coloneqq (cc_2^*)^{-1}(\mathcal{P}^\alpha), &
\mathcal{P}_{2,2,\alpha} & \coloneqq (cc_2^*)^{-1}(\mathcal{P}^\alpha_{\acute{e}t});
\end{aligned}\]
where $\alpha \in \Sper(k)$. First, note that by \cite[Lemma B.6]{Gallauer:TateSpectrum},
\[cc_2^*cc_{2*}(\cone(\tau)) \simeq \cone(\tau)\oplus\cone(\tau)[1].\]
In particular, the object
$cc_{2*}(\cone(\tau))$ belongs to
$\mathcal{P}_{2,1}$ but not to $\mathcal{P}_{2,0}$. Hence
\[\mathcal{P}_{2,0}\subsetneq\mathcal{P}_{2,1}.\]
The same object belongs to $\mathcal{P}_{2,2,\alpha}$ but not to
$\mathcal{P}_{2,1,\alpha}$, because
\[\cone(\tau) \in \mathcal{P}^\alpha_{\acute{e}t} \ \text{but} \ \cone(\tau) \notin \mathcal{P}^\alpha,\]
thus
\[\mathcal{P}_{2,1, \alpha} \subsetneq \mathcal{P}_{2,2, \alpha}. \]
Next, recall from \Cref{cor:explicit-tate-generators-topology} that
$\mathcal{P}^{\alpha}$ is generated by 
\[S^\alpha \coloneqq \{\cone(\delta) \mid \delta\in K^M_1(k)/2,\  \delta \notin\mathfrak{p}_\alpha\}.\]
Since these generators all come from $\DTM_{\gm}(k;\Z[1/e])$, being cones of morphisms corresponding to homogeneous elements in $K^M_*(k)[1/e]$, we immediately get that 
\[\mathcal{P}_{2,0} \subsetneq \mathcal{P}_{2,1,\alpha}.\]
Similarly, $\mathcal{P}^{\alpha}_{\acute{e}t}$ is generated by $S^\alpha \cup \{\cone(\tau)\}$, from which the proper inclusion
\[\mathcal{P}_{2,1} \subsetneq \mathcal{P}_{2,2,\alpha}\]
is also clear. By inspection, there are no other specialisations among these points, which concludes the proof of Statement (2). For Statement (3), consider the graded comparison map with respect to the invertible object $\unit(1)[1]$,
\[\begin{tikzcd}
    \varrho^h_{diag}:\Spc(\DTM_{\gm}(k; \Z[1/e])) \arrow[r] & \Spec^h(K^M_*(k)[1/e]).
\end{tikzcd}\]
Arguing as in \cite[Lemma 8.5]{Gallauer:TateSpectrum} combined with \Cref{prop:integral-etale-comparison}, we have that étale realisation
\[\begin{tikzcd}
    \Re_{\acute{e}t}:\DTM_\gm(k; \Z[1/e]) \arrow[r] & \DTM_\et(k; \Z[1/e])^\vee
\end{tikzcd}\]
induces, on spectra, a section of $\varrho^h_{diag}$:
\[\begin{tikzcd}
    \Spec^h(K^M_*(k)[1/e]) \cong \Spc(\DTM_\et^\vee(k;\Z[1/e])) \arrow[r, "\Spc(\Re_{\acute{e}t})"] & \Spc(\DTM_{\gm}(k;\Z[1/e])) \arrow[r, "\varrho^h_{diag}"] & \Spec^h(K^M_*(k)[1/e]).
\end{tikzcd}\]
In particular, $\Spc(\Re_{\acute{e}t})$ is a homeomorphism onto its image. By \cite{thornton2016homogeneous}, we have a description of  $\Spec^h(K^M_*(k)[1/e])$, which by inspection gives us that 
\[\mathcal{Q} \rightsquigarrow \mathcal{P}_{p,1}\]
for all primes $p \neq e$, and  
\[\mathcal{Q} \not \rightsquigarrow \mathcal{P}_{2,i,\alpha}\]
for all $\alpha \in \Sper(k)$ and $i=1,2$. From this, Statement (3) is immediate.

Finally, for Statement (4), if $Z$ is a closed subset, then it must be specialisation closed. First, if it contains $\mathcal{Q}$, then it must contain its closure. In particular, if $e=2$, then 
\[\overline{\{\mathcal{Q}\}}=\Spc(\DTM_{\gm}(k; \Z[1/e]))=Z.\] 
 Otherwise, if $e \neq 2$, then
\[\Spc(\DTM_{\gm}(k; \Z[1/e]))=\overline{\{\mathcal{Q}\}} \cup \supp(\cone(2)),\]
from which it follows that if $\mathcal{Q} \in Z$, then
\[Z=(\overline{\{\mathcal{Q}\}} \cap Z) \cup (\supp(\cone(2))\cap Z)=\overline{\{\mathcal{Q}\}} \cup W,\]
where $W=\supp(\cone(2))\cap Z$ is a closed subset of $\supp(\cone(2))$. Conversely, any such subset must be closed since it is a finite union of closed subsets. 

Suppose now that $\mathcal{Q} \not \in Z$, then
\[Z \subseteq \bigcup_{p \neq e} \supp(\cone(p)).\]
We claim that $Z \cap \supp(\cone(p))= \emptyset$ for all but finitely many primes $p$. Since a basis of closed subsets is given by $\supp(M)$ where $M$ ranges over objects in $\DTM_{\gm}(k; \Z[1/e])$, it suffices to prove the claim for $Z=\supp(M)$. Observe that if
\[\supp(M) \cap \supp(\cone(p))\neq  \emptyset\]
for infinitely many prime $p$, then $\supp(M)$ must contain infinitely many $\mathcal{P}_{0,p}$. Arguing as in the proof of \cite[Proposition 9.2]{Gallauer:TateSpectrum}, this implies that the mod $p$ motivic cohomology of $M$ is non-trivial for infinitely many primes, so by Proposition 5.8 in \textit{loc.cit.}, $M$ has non-zero rational motivic cohomology. This forces $\mathcal{Q} \in \supp(M)$ which contradicts our assumption. 

From this claim, we conclude that 
\[Z=\bigcup_{p \neq e} (\supp(\cone(p)) \cap Z)= Z_{p_1}\cup\cdots\cup Z_{p_n}\]
is a finite union of closed subsets 
\[Z_{p_j}\subseteq\supp(\cone(p_j)), \ j=1,\dots, n,\]
where $p_1,\dots,p_n \neq e$ are distinct primes. Conversely, any such subset is closed since it is a finite union of closed subsets. 
\end{proof}
As an immediate corollary, we can draw the picture of these spaces.
\begin{cor}\label{cor:explicit-integral-spectrum}
    Let $k$ be a field satisfying \Cref{hyp:rational-tate-spectrum}, and let $e$ be its exponential characteristic. If $k$ is not formally real, $\Spc(\DTM_{\gm}(k; \Z[1/e]))$ is described as follows:
    \[\begin{tikzcd}[ampersand replacement=\&,sep=scriptsize]
	   {\mathcal{P}_{p_1,0}} \& {\mathcal{P}_{p_2,0}} \& \dots \& {\mathcal{P}_{p_l,0}} \& \dots \\
	   \\
	   {\mathcal{P}_{p_1,1}} \& {\mathcal{P}_{p_2,1}} \& \dots \& {\mathcal{P}_{p_l,1}} \& \dots \\
	   \\
	   \\
	   \&\&\& {\mathcal{Q}}
	   \arrow[no head, from=1-1, to=3-1]
	   \arrow[no head, from=1-2, to=3-2]
	   \arrow[dashed, no head, from=1-3, to=3-3]
	   \arrow[no head, from=1-4, to=3-4]
	   \arrow[dashed, no head, from=1-5, to=3-5]
	   \arrow[no head, from=3-1, to=6-4]
	   \arrow[no head, from=3-2, to=6-4]
	   \arrow[dashed, no head, from=3-3, to=6-4]
	   \arrow[no head, from=3-4, to=6-4]
	   \arrow[dashed, no head, from=3-5, to=6-4]
    \end{tikzcd}\]
    where $p_i$ range over all prime numbers different from $e$. Moreover, a proper subset $Z$ is closed if and only if it is finite and specialisation closed.
    
    If $k$ is formally real, $\Spc(\DTM_\gm(k;\Z))$ is described as follows:
    \[\begin{tikzcd}[ampersand replacement=\&,sep=scriptsize]
	   \&\&\& {\mathcal{P}_{2,0}} \& {\mathcal{P}_{3,0}} \& \dots \& {\mathcal{P}_{q,0}} \& \dots \\
	   \\
	   {\mathcal{P}_{2,1,\alpha}} \& \dots \& {\mathcal{P}_{2,1, \beta}} \& {\mathcal{P}_{2,1}} \& {\mathcal{P}_{3,1}} \& \dots \& {\mathcal{P}_{q,1}} \& \dots \\
	   \\
	   {\mathcal{P}_{2,2,\alpha}} \& \dots \& {\mathcal{P}_{2,2,\beta}} \\
	   \&\&\&\&\&\& {\mathcal{Q}}
	   \arrow[no head, from=1-4, to=3-1]
	   \arrow[dashed, no head, from=1-4, to=3-2]
	   \arrow[no head, from=1-4, to=3-3]
	   \arrow[no head, from=1-4, to=3-4]
	   \arrow[no head, from=1-5, to=3-5]
	   \arrow[dashed, no head, from=1-6, to=3-6]
	   \arrow[no head, from=1-7, to=3-7]
	   \arrow[dashed, no head, from=1-8, to=3-8]
	   \arrow[no head, from=3-1, to=5-1]
	   \arrow[dashed, no head, from=3-2, to=5-2]
	   \arrow[no head, from=3-3, to=5-3]
	   \arrow[no head, from=3-4, to=5-1]
	   \arrow[dashed, no head, from=3-4, to=5-2]
	   \arrow[no head, from=3-4, to=5-3]
	   \arrow[no head, from=3-4, to=6-7]
	   \arrow[no head, from=3-5, to=6-7]
	   \arrow[dashed, no head, from=3-6, to=6-7]
	   \arrow[no head, from=3-7, to=6-7]
	   \arrow[dashed, no head, from=3-8, to=6-7]
    \end{tikzcd}\]
    where $q$ ranges over all prime numbers, and with topology described in \Cref{thm:integral-tate-spectrum} (4).
\end{cor}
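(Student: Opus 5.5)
The statement follows by assembling \Cref{thm:integral-tate-spectrum} with the explicit finite-coefficient computation of \Cref{cor:explicit-tate-generators-topology}. No new input is needed; the work is bookkeeping of points, specialisations and closed subsets.

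\textbf{Points and specialisations.} By \Cref{thm:integral-tate-spectrum}~(2), the set decomposes as $\{\mathcal{Q}\}\sqcup\bigsqcup_{p\neq e}\supp(\cone(p))$. Each stratum is homeomorphic, via $\Spc(cc_p^*)$, to $\Spc(\DTM_\gm(k;\F_p))$.

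For $p$ odd, or $p=2$ with $k$ not formally real, \Cref{cor:explicit-tate-generators-topology} says this stratum is the Sierpiński space. Its points are $\mathcal{P}_{p,0}\subsetneq\mathcal{P}_{p,1}$, with $\mathcal{P}_{p,0}$ the closed point. This gives the vertical segments in the first picture.

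When $k$ is formally real, $\operatorname{char}(k)=0$, so $e=1$ and $\Z[1/e]=\Z$. In that case the stratum at $p=2$ is the space of \Cref{cor:explicit-tate-generators-topology}, pulled back along $\Spc(cc_2^*)$. This produces the points $\mathcal{P}_{2,0},\mathcal{P}_{2,1},\mathcal{P}_{2,1,\alpha},\mathcal{P}_{2,2,\alpha}$ and exactly the inclusions recorded in the proof of \Cref{thm:integral-tate-spectrum}~(2).

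The remaining specialisations are those out of the generic point $\mathcal{Q}$. These are given by \Cref{thm:integral-tate-spectrum}~(3): $\mathcal{Q}\rightsquigarrow\mathcal{P}_{p,1}$ and $\mathcal{Q}\rightsquigarrow\mathcal{P}_{p,0}$ for all $p\neq e$, but $\mathcal{Q}$ does not specialise to any $\mathcal{P}_{2,i,\alpha}$. This yields the bottom row of each picture.

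\textbf{Topology, not formally real.} I then check the claim that a proper subset is closed if and only if it is finite and specialisation closed.

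\emph{Closed proper sets are finite and specialisation closed.} Let $Z$ be closed and proper. If $\mathcal{Q}\in Z$, then by \Cref{thm:integral-tate-spectrum}~(4)(a) we have $Z\supseteq\overline{\{\mathcal{Q}\}}$. Since $k$ is not formally real, the stratum at $2$ has no ordering points, so part~(3) gives $\overline{\{\mathcal{Q}\}}$ equal to the whole space. This contradicts properness, so $\mathcal{Q}\notin Z$. Part~(4)(b) then writes $Z$ as a finite union of closed subsets of two-point strata. Hence $Z$ is finite, and it is specialisation closed because it is closed.

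\emph{Finite specialisation-closed sets are closed.} Let $Z$ be finite and specialisation closed. Then $Z$ cannot contain $\mathcal{Q}$, since otherwise it would contain the infinite set $\overline{\{\mathcal{Q}\}}$. So $Z$ meets only finitely many strata. Each intersection $Z\cap\supp(\cone(p))$ is specialisation closed in a Sierpiński space, hence closed there. It is then closed in the whole space, because $\supp(\cone(p))$ is closed. Therefore $Z$ is closed by~(4)(b).

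\textbf{Topology, formally real.} The topology is exactly the description in \Cref{thm:integral-tate-spectrum}~(4), using $\supp(\cone(2))\cong\Spc(\DTM_\gm(k;\F_2))$ from part~(2).

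\textbf{Main obstacle.} The only point requiring care is the specialisation relation between $\mathcal{Q}$ and the ordering points. I would take it directly from part~(3), which relies on Thornton's description of $\Spec^h(K^M_*(k))$ via the étale section. In particular I would not try to re-derive it; I would just read off from part~(3) which of the pulled-back points lie in $\overline{\{\mathcal{Q}\}}$.
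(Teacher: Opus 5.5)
Your proposal is correct and follows the paper, which treats this corollary as an immediate read-off of \Cref{thm:integral-tate-spectrum} together with the finite-coefficient Tate spectra. Your explicit check that $\overline{\{\mathcal{Q}\}}$ is the whole space when $k$ is not formally real, so that the proper closed subsets are exactly the finite specialisation-closed ones, is the unpacking the paper leaves implicit. One small citation fix: for odd $p$ when $k$ lacks a primitive $p^{th}$ root of unity, the Sierpiński description of the stratum comes from \Cref{cor:tate-spectrum-no-root-unity}, because \Cref{cor:explicit-tate-generators-topology} is stated under the root-of-unity assumption.
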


\section{The spectrum of Artin--Tate motives}
In this section we compute the spectrum of Artin--Tate motives
\[\DATM_\gm(k;\F_p)\]
over arbitrary fields $k$ whose characteristic differs from $p$.

Before we state the main theorem, let us declare some notation. For a field extension $E/k$, we denote by
\[a_E \in \Spc(\DATM_\gm(k;\F_p)\]
the restriction of the isotropic point in $\Spc(\DM_\gm(k;\F_p))$ to the spectrum of Artin--Tate motives.

For an ordered field $(k,\alpha)$, we denote by $k_\alpha$ its real closure.

\begin{theorem}\label{theorem:ArtinTatespectrum}
    The spectrum $\Spc(\DATM_{gm}(k;\F_p))$ consists of a closed subspace
    \[V \cong \Spc(\DAM_\gm(k;\F_p))\]
    whose open complement is
    \[U \coloneqq V^c \cong \Spc(\DTM_\gm(k;\F_p)) \setminus \{\ideal{0}\}.\]
    The specialisation relations from the open piece to the closed piece are as follows:
    \begin{itemize}
        \item $\mathcal{P}_{\acute{e}t} \rightsquigarrow a_{\overline{k}}$;
        \item if $k$ is formally real and $p=2$, then for every $\alpha \in \Sper(k)$ we have
        \begin{enumerate}
            \item $\mathcal{P}^\alpha \rightsquigarrow a_{k_\alpha}$;
            \item $\mathcal{P}^\alpha_{\acute{e}t} \rightsquigarrow \mathcal{Q} \in V$ iff $\mathcal{Q} \in \overline{\{b_\alpha\}}$, where $b_\alpha$ is the point in $\Spc(\DAM_\gm(k;\F_2))$ which is the image of the generic point of $\Spc(\DAM_\gm(k_\alpha;\F_2))$ under base change;
        \end{enumerate}
    \end{itemize}
    The topology is the finest one for which the above conditions hold: a subset $Z \subseteq \Spc(\DATM_\gm(k;\F_p))$ is closed if and only if
    \begin{enumerate}
        \item $Z \cap V$ is closed in $V \cong \Spc(\DAM_\gm(k;\F_p))$;
        \item $Z \cap U$ is closed in $U \cong \Spc(\DTM_\gm(k;\F_p)) \setminus \{\ideal{0}\}$;
        \item $Z$ is specialisation closed.
    \end{enumerate}
\end{theorem}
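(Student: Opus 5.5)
We first assume $k$ contains a primitive $p^{th}$ root of unity, and at the end remove this assumption by base change along $k'=k(\zeta_p)$, as in \Cref{cor:tate-spectrum-no-root-unity}.

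\textbf{Closed piece $V$.} We extend the Milnor weight structure of \Cref{lem:milnor-weight-structure} to $\DATM_\gm(k;\F_p)$ using \Cref{theorem:constructingweightstructures} with $S=\{M(E)(n)[n]\}$. Orthogonality reduces to the vanishing of $H^{a,b}_{\mathcal M}(L;\F_p)$ above the diagonal for finite separable $L/k$, via $M(E)\otimes M(F)\simeq \bigoplus M(L_i)$ and self-duality of $M(E)$. This gives a weight complex functor $t:\DATM_\gm(k;\F_p)\to \K^b(\mathcal H^{\mathsf{AT}})$. We then build a grading functor $\operatorname{gr}$ to $\K^b(\AM_\gm(k;\F_p))\simeq\DAM_\gm(k;\F_p)$ by killing positive Milnor degree, i.e.\ by the quotient by the ideal of morphisms $M(E)(i)[i]\to M(F)(j)[j]$ with $j>i$, and set $\Psi=\operatorname{gr}\circ t$. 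Since $\Psi$ restricts to the identity on Artin motives, $\Spc(\Psi)$ is a section of the surjective restriction map, hence a homeomorphism onto its image. We will show that $\Psi$ detects $\otimes$-nilpotence on $\cone(\tau)\otimes\cone(\rho)$, with $\rho=0$ for $p$ odd; this requires a Hom-vanishing computation as in \Cref{prop:Milnor-t_-detects-nilp-tau}, now with motives $M(E)$ and the quotient by $\rho$ to control the positive-degree Milnor part. By \Cref{prop:detectingnilpotenceonanobject}, and since $\Psi(\tau)=\Psi(\rho)=0$, this gives $V:=\supp(\cone(\tau)\otimes\cone(\rho))=\im\Spc(\Psi)\cong\Spc(\DAM_\gm)$. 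We expect this nilpotence detection to be the main obstacle, since Milnor $K$-theory of all finite extensions intervenes. We would first prove detection on $\cone(\tau)$ by $t$, and then show that $\operatorname{gr}$ detects nilpotence on $\cone(\rho)$ in $\K^b(\mathcal H^{\mathsf{AT}})$. The key input for the latter is that positive-degree symbols become nilpotent modulo $\rho$ (Thornton).

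\textbf{Open piece $U$.} We have $U=U(\cone\tau)\cup U(\cone\rho)$. By \Cref{prop:tau-localisation-etale}, $U(\cone\tau)$ is the spectrum of rigid étale Artin--Tate motives. Gallauer's theorem (\Cref{thm:gallauer-etale-spectrum}) identifies this with $\Spec^h(K^M_*(k)/p)$, and by \Cref{lemma:homeoofcomparisonmapdescendstosubcategories} the restriction to Tate motives is a homeomorphism onto $U(\cone\tau)\subseteq\Spc(\DTM_\gm)$. For $U(\cone\rho)$ ($p=2$), we use Bachmann's real étale theorem to show that $(\DTM_\gm[\rho^{-1}])^\natural\to(\DM_\gm[\rho^{-1}])^\natural$ is an equivalence, so this open is also identified with the Tate one. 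Gluing these two opens compatibly along the restriction map gives $U\cong\Spc(\DTM_\gm)\setminus\{\ideal 0\}$; here we note that $\ideal0$ is the unique point lying in $\supp(\cone\tau)\cap\supp(\cone\rho)$ in the Tate case.

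\textbf{Specialisations and topology.} Specialisations within $U$ or within $V$ are already determined. Cross specialisations $U\ni\mathcal P\rightsquigarrow\mathcal Q\in V$ are detected by base change to $\overline k$ and to each $k_\alpha$. By Remark~\ref{rem:real-and-algclosed-tate-description} and the Balmer--Gallauer six-point space, pulling back $\ideal{\cone\tau}\rightsquigarrow\ideal{M(\overline k)\ldots}$ yields $\mathcal P_{\acute et}\rightsquigarrow a_{\overline k}$, and similarly $\mathcal P^\alpha\rightsquigarrow a_{k_\alpha}$ and $\mathcal P^\alpha_{\acute et}\rightsquigarrow b_\alpha$, closing up under specialisation in $V$. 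For the converse, note that if $\mathcal P\rightsquigarrow\mathcal Q$ then $\mathcal P\subseteq\mathcal Q$. Restricting to Artin motives, the étale-type points have Artin component that is determined by restriction to $\DAM$ (they factor through $\overline k$ or $k_\alpha$ after inverting $\tau$ or $\rho$), so every $\mathcal Q$ must contain that image; this gives exactly the listed specialisations. Finally, since $V$ is closed and $U$ open, and using spectrality (quasi-compactness of supports), a specialisation-closed $Z$ with closed traces on $U$ and $V$ is closed. We verify this by writing $Z\cap V$ as $\supp$ of Artin objects and checking constructibility through Hochster's criterion.
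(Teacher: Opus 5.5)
Your converse for the cross-specialisations has the inclusion backwards. In $\Spc$, $\mathcal{P}\rightsquigarrow\mathcal{Q}$ means $\mathcal{Q}\in\overline{\{\mathcal{P}\}}$, i.e.\ $\mathcal{Q}\subseteq\mathcal{P}$; this is why $\mathcal{P}_{\acute{e}t}\rightsquigarrow\ideal{0}$ in the Tate spectrum. With your direction, ``every $\mathcal{Q}$ must contain that image'' lets $\mathcal{P}_{\acute{e}t}$ specialise to every generalisation of $a_{\overline{k}}$ in the Artin part. That is already false for $k$ real closed and $p=2$: $\ideal{M(\overline{k}),\widetilde{M}(\overline{k})}$ contains $a_{\overline{k}}=\ideal{\widetilde{M}(\overline{k})}$, but it is not a specialisation of $\ideal{\cone(\tau)}$ in the six-point space.

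The correct argument runs as follows. $\Spc(\iota)$ is continuous and is the identity on $V$, since it inverts $\Spc(\Psi)$. So $\mathcal{P}\rightsquigarrow\mathcal{Q}\in V$ forces $\mathcal{Q}\in\overline{\{\Spc(\iota)(\mathcal{P})\}}$ in $\Spc(\DAM_\gm(k;\F_p))$. The base-change squares then give $\Spc(\iota)(\mathcal{P}_{\acute{e}t})=a_{\overline{k}}$, $\Spc(\iota)(\mathcal{P}^\alpha)=a_{k_\alpha}$ and $\Spc(\iota)(\mathcal{P}^\alpha_{\acute{e}t})=b_\alpha$. Even then, to conclude that $a_{\overline{k}}$ and $a_{k_\alpha}$ are the \emph{only} targets, you need an input missing from your sketch. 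Namely, these restricted isotropic points must be closed in $\Spc(\DAM_\gm(k;\F_p))$ (Gallauer, Prop.~10.8 of the periodicity paper); without it you only get their closures.

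Second, the root-of-unity reduction cannot be done as in the Tate corollary, where $\Spc(f^*)$ was a homeomorphism of Sierpi\'nski spaces. On the closed pieces, base change to $k'$ is the map $\Spc(\DAM_\gm(k';\F_p))\to\Spc(\DAM_\gm(k;\F_p))$. This is surjective but has no reason to be injective, since the Artin spectrum over $k$ sees all of $\Gamma_k$, not just its open subgroup $\Gamma_{k'}$. So $V\cong\Spc(\DAM_\gm(k;\F_p))$ cannot be transported from $k'$, and you have to run the closed-piece argument over $k$ itself:
\begin{itemize}
\item replace $\tau$ by the class $\beta\in H^{0,d}_{\mathcal{M}}(k;\F_p)$ with $f^*\beta=\tau^d$;
\item use that $f^*$ is faithful and weight-exact for the Milnor weight structure to see that $t$ detects $\otimes$-nilpotence on $\cone(\beta)$;
\item deduce $V=\supp(\cone(\beta))=\im\Spc(\Psi)$ (for $p$ odd, $\operatorname{gr}$ detects nilpotence outright);
\item use $f^*$ only to see that $U(\cone(\beta))$ is the single image of the \'etale point.
\end{itemize}

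Relatedly, for $\operatorname{gr}$, nilpotence of positive-degree symbols modulo $\rho$ inside each $K^M_*(E)/2$ is not enough. The $N$-th tensor power of $a\colon\unit\to M(E)(n)[n]$ lives in $K^M_*(E^{\otimes_k N})/p$. Its $L_\lambda$-components are products of $N$ possibly distinct conjugates $\sigma^*(\alpha)$. You need a pigeonhole over the at most $[E:k]$ embeddings $E\to L_\lambda$ to force a factor $\gamma^{2R}=\rho_L^{nR}\gamma^R$, and then $\rho_L=\rho\circ\epsilon_L$ to make it vanish on $\cone(\rho)$.

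Your topology step, by contrast, is sound and genuinely different from the paper's. Since $U=U(\cone(\tau)\otimes\cone(\rho))$ is quasi-compact, $Z\cap U$ is pro-constructible, so $Z$ is pro-constructible and specialisation closed, hence closed. The paper instead writes $Z=f^{-1}(Z\cap V)\cap g^{-1}(\overline{Z\cap U})$ using the two restriction maps. That route additionally exhibits the embedding into $\Spc(\DTM_\gm(k;\F_p))\times\Spc(\DAM_\gm(k;\F_p))$.
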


\begin{remark}
    When $p$ is odd or $k$ is not formally real, the spectrum $\Spc(\DATM_\gm(k;\F_p))$ consists of a (closed) homeomorphic copy of $\Spc(\DAM_\gm(k;\F_p))$ together with the open point $\mathcal{P}_{\acute{e}t}$ corresponding to the kernel of étale realisation restricted to Artin--Tate motives:
    \begin{figure}[H]
    \begin{tikzcd}[ampersand replacement=\&,sep=scriptsize,execute at end picture={\draw[dashed,thick,blue!40] (-0.6,0.75) ellipse (3.5cm and 1cm);}]
	   \&\& {\textcolor{blue!80}{\Spc(\DAM_\gm(k;\F_p))}} \& {a_{\overline{k}}} \& \\
	   \\
	   \&\&\& {\mathcal{P}_{\acute{e}t}} \&
	   \arrow[no head, from=1-4, to=3-4]
    \end{tikzcd}
    \end{figure}
    The topology is such that a subset $Z \subseteq \Spc(\DATM_\gm(k;\F_p))$ is closed if and only if it is specialisation closed and its intersection with $\Spc(\DAM_\gm(k;\F_p))$ is closed.
    
    When $p=2$ and $k$ is formally real, the spectrum again consists of a (closed) homeomorphic copy of $\Spc(\DAM_\gm(k;\F_2))$ together with the open $\Spc(\DTM_\gm(k;\F_2)) \setminus \{\ideal{0}\} \cong \Proj(K^M_*(k)/2[\tau])$, with specialisations depicted like so:
    \begin{figure}[H]
    \begin{tikzcd}[ampersand replacement=\&, execute at end picture={\draw[dashed,thick,blue!40] (0,1.3) ellipse (5.3cm and 1.4cm); \node[blue!80] at (0,3.2) {$\Spc(\DAM_\gm(k;\F_2))$};}]
	   \&\&\& {} \&\& \\
	   {a_{k_\alpha}} \& \cdots \&\& {a_{k_{\beta}}} \&\& {a_{\overline{k}}} \\
	   \& {b_\alpha} \& \cdots \&\& {b_{\beta}} \\
	   \\
	   {\mathcal{P}^\alpha} \&\& \cdots \& {\mathcal{P}^\beta} \&\& {\mathcal{P}_{\acute{e}t}} \\
	   \& {\mathcal{P}^\alpha_{\acute{e}t}} \&\& \cdots \& {\mathcal{P}^\beta_{\acute{e}t}}
	   \arrow[no head, no head, from=2-1, to=3-2]
	   \arrow[no head, from=2-1, to=5-1]
	   \arrow[no head, no head, from=2-4, to=3-5]
	   \arrow[no head, from=2-4, to=5-4]
	   \arrow[no head, from=2-6, to=5-6]
	   \arrow[no head, no head, from=3-2, to=2-6]
	   \arrow[no head, from=3-2, to=6-2]
	   \arrow[no head, no head, from=3-5, to=2-6]
	   \arrow[no head, from=3-5, to=6-5]
	   \arrow[no head, from=5-1, to=6-2]
	   \arrow[no head, from=5-4, to=6-5]
	   \arrow[no head, from=6-2, to=5-6]
	   \arrow[no head, from=6-5, to=5-6]
    \end{tikzcd}
    \end{figure}
    The topology is the one spelt out in \Cref{theorem:ArtinTatespectrum}.
\end{remark}

Let us spend a few moments reviewing the strategy of the proof. As in the previous section, we begin by assuming that $k$ contains a primitive $p^{th}$ root of unity; we explain how to remove this assumption at the end of the section. We define a Milnor weight structure on $\DATM_\gm(k;\F_p)$ whose heart $\mathcal{H}^\mathsf{AT}$ is given by the idempotent completion of the additive closure of the objects
\[\{M(E)(n)[n] \mid E/k \text{ a finite separable extension, } n \in \Z\}.\]

As in \Cref{prop:Milnor-t_-detects-nilp-tau}, we shall find that the corresponding weight complex functor $t$ detects $\otimes$-nilpotence on $\cone(\tau)$. This partitions the spectrum into two pieces, the open piece $U(\cone(\tau))$, which we know thanks to \Cref{rem:open-tau-DM}, and its closed complement $\supp(\cone(\tau))=\im(\Spc(t))$. We remark that, when $k$ does not contain a primitive $p^{th}$ root of unity, $\tau$ is replaced throughout by $\beta$ from \Cref{prop:hSpec-motcohom-no-root-unity}.

It remains to compute the support of $\cone(\tau)$ (or $\cone(\beta)$) in $\Spc(\DATM_\gm(k;\F_p))$: in the case $p$ is odd or $k$ is not formally real, this closed piece is homeomorphic to the space
\[\Spc(\DAM_\gm(k;\F_p))\]
computed by Balmer--Gallauer \cite{Balmer:Artin}. The difficulty arises when $p=2$ and $k$ is formally real. In this case, we partition $\supp(\cone(\tau))$ into two further pieces:
\[\supp(\cone(\tau)) \cap \supp(\cone(\rho)) = \supp(\cone(\tau) \otimes \cone(\rho))\]
and
\[\supp(\cone(\tau)) \cap U(\cone(\rho)).\]

The first intersection consists of a homeomorphic copy of $\Spc(\DAM_\gm(k;\F_p))$. To identify this closed subspace, we construct a tt-functor
\[\begin{tikzcd}
    \K^b(\mathcal{H}^\mathsf{AT}) \arrow[r] & \K^b(\AM_\gm(k;\F_p)) \simeq \DAM_\gm(k;\F_p)
\end{tikzcd}\]
which effectively deletes all the information coming from positive degrees in the Milnor $K$-theory of the field extensions of $k$. Composing this functor with the weight complex functor defines a retraction of the inclusion
\[\begin{tikzcd}
    \DAM_\gm(k;\F_p) \arrow[r, hookrightarrow] & \DATM_\gm(k;\F_p),
\end{tikzcd}\]
and we show that it detects $\otimes$-nilpotence on $\cone(\rho) \otimes \cone(\tau)$.

To compute the remaining region
\[\supp(\cone(\tau)) \cap U(\cone(\rho)),\]
we utilise Bachmann's main result of \cite{Bachmann:Real} (see also \cite[Theorem~4.2]{Bachmann:Remarks}):

\begin{theorem}[Bachmann]
    There is an equivalence of stable $\infty$-categories
    \[\mathcal{SH}(k)[\rho^{-1}] \simeq \Shv(\Sper(k);\Sp),\]
    where $\Shv(\Sper(k);\Sp)$ is the $\infty$-category of spectral sheaves on $\Sper(k)$.
\end{theorem}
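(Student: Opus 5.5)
The plan is to factor the equivalence through the real étale topology. The chain is
\[\mathcal{SH}(k)[\rho^{-1}] \simeq \mathcal{SH}_{r\acute{e}t}(k) \simeq \Shv(\Sper(k);\Sp),\]
where the middle term is the $\mathbb{A}^1$-invariant, $\mathbb{P}^1$-stable spectral sheaves on $\mathsf{Sm}_k$ for the real étale topology. In this topology a family is a cover if it induces a surjection on real spectra.

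The first observation is formal. Since $\rho\colon \mathbb{S} \to \mathbb{G}_m$ (up to the usual shift) becomes an equivalence, $\mathbb{G}_m$ is identified with the sphere $\mathbb{S}^0$, and so $\mathbb{P}^1 \simeq S^1 \wedge \mathbb{G}_m$ is identified with $S^1$. Consequently $\mathbb{P}^1$-stabilisation reduces to ordinary $S^1$-stabilisation. The $\rho$-inverted category is then a localisation of $\mathbb{A}^1$-invariant Nisnevich sheaves of spectra on $\mathsf{Sm}_k$ in which $\mathbb{G}_m$ has been contracted.

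The key step is to show that every $\rho$-local object satisfies real étale descent, and conversely that real étale sheafification inverts $\rho$. For the converse direction I would use that $\rho$ becomes invertible after passing to real points, because $\mathbb{G}_m(\mathbb{R}) \simeq S^0$. For descent, I would reduce via the standard comparison of Nisnevich and real étale covers (Scheiderer) to finite étale covers $\Spec L \to \Spec K$. Here the essential input is: if $L$ has no orderings, then $-1$ is a sum of squares in $L$, and hence $\rho$ is nilpotent in $\pi_{0,*}$ of $\mathcal{SH}(L)$, via the Milnor--Witt relation $\rho^{n}\cdot(\text{level relation})=0$. This gives $\mathcal{SH}(L)[\rho^{-1}]=0$. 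I would combine this vanishing with transfers along finite étale maps and a Čech/Nisnevich-descent argument to deduce that $\rho$-local objects see only the real points, that is, they satisfy real étale descent. \textbf{I expect this descent step to be the main obstacle}, particularly controlling the higher terms of the Čech nerve and the passage from fields to smooth schemes. My first attempt would proceed by induction on the transcendence degree, using the nilpotence of $\rho$ over non-real fields stalkwise.

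Finally, I would identify real étale sheaves on $\mathsf{Sm}_k$ with sheaves on real spectra. The real étale $\infty$-topos of $X$ is $\Shv(\Sper(X))$ by Scheiderer and Coste--Roy. Semialgebraic homotopy invariance gives $\Sper(X \times \mathbb{A}^1) \to \Sper(X)$ with contractible fibres, so $\mathbb{A}^1$-invariance becomes automatic. From this I would show that pullback along $\Sper(X)\to\Sper(k)$ induces an equivalence between $\Shv(\Sper(k);\Sp)$ and the $\mathbb{A}^1$-invariant real étale spectral sheaves on $\mathsf{Sm}_k$. Full faithfulness would use proper/smooth base change and contractibility of the fibres. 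Essential surjectivity would follow because $\mathbb{A}^1$-invariant sheaves are determined by their values on the points $\Spec K$, which are governed by $\Sper(K)\to\Sper(k)$. Combining this with the formal identification $\mathbb{P}^1\simeq S^1$ yields the stated equivalence.
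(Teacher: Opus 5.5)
The paper does not prove this statement: it quotes it from Bachmann \cite{Bachmann:Real} (see also \cite[Theorem~4.2]{Bachmann:Remarks}) and uses it as a black box. Your outline has the right architecture. You first identify $\rho$-inversion with real \'etale localisation, then identify $\mathbb{A}^1$-invariant real \'etale spectra with $\Shv(\Sper(k);\Sp)$. The gap is in the step you flag yourself: it is essentially the whole content of the theorem, and the ingredients you list for it do not suffice.

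Nilpotence of $\rho$ over fields without orderings only shows that $\rho$-local objects vanish on schemes with empty real spectrum. Real \'etale descent also requires descent along covers that are surjective on orderings but admit no Nisnevich refinement. The basic example is $\{\Spec L(\sqrt{a}),\Spec L(\sqrt{-a})\}\to\Spec L$ for a unit $a$ that changes sign on $\Sper(L)$. Descent along it forces sections over $L$ to split according to the clopen decomposition $\Sper(L)=\{a>0\}\sqcup\{a<0\}$.

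Nisnevich descent and vanishing on non-real fields cannot by themselves produce such a splitting. Consider the presheaf on finite separable extensions of $L$ with $F(M)=A$ for $M$ formally real, $F(M)=0$ otherwise, and identity restriction maps. It has both properties. Yet descent for this cover would force $F(L)\cong A\times A$, because the cross term $L(\sqrt a,\sqrt{-a})$ contains $\sqrt{-1}$ and so is not real.

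Transfers do not close the gap either. For finite \'etale $L'/L$, the composite $\mathrm{tr}\circ\mathrm{res}$ is multiplication by the Euler characteristic, i.e.\ the trace form. For the two members of the cover above, these classes sum to $2+h$. Since $\rho h=0$, this equals $2$ in $\pi_{0,0}(\unit_L[\rho^{-1}])$; similarly, for $\Spec\Q(\sqrt2)\to\Spec\Q$ the trace form has signature $2$. So transfer arguments can at best give descent after inverting $2$. The theorem, however, is integral, and its $2$-primary content is exactly what must be explained (over $\R$ it recovers all of classical stable homotopy, not just its odd part).

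Your suggestions of ``induction on transcendence degree'' and ``stalkwise nilpotence'' supply no mechanism either. Descent along such covers is a global condition on $\Sper(L)$, not a stalkwise one.

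What is missing is genuine input tying $\rho$-local sections to the real spectrum. One option is the identification $\pi_{0,0}(\unit_K[\rho^{-1}])\cong\operatorname{colim}_n I^n(K)\cong C(\Sper(K),\Z)$. This combines Morel's computation of $\pi_{-n,-n}$ with deep results on signatures of forms in $I^n$ (as in work of Jacobson), and it provides the idempotents that split along clopens. Another option is Scheiderer's comparison between \'etale cohomology with the class of $-1$ inverted and real \'etale cohomology.

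Your final step is also thinner than stated. $\mathbb{A}^1$-invariance is automatic only for sheaves pulled back from the small real \'etale site. Essential surjectivity is a real \'etale rigidity statement about the real closed Henselian stalks of smooth $k$-schemes, and does not follow from knowing values on fields $\Spec K$.
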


Here
\[\rho \in \pi_{-1,-1}(\unit) \simeq K^{MW}_1(k)\]
corresponds to the pure symbol $\{-1\}$ in the (integral) Milnor--Witt ring of $k$: in particular, it becomes the usual $\rho \in H_\mathcal{M}^{1,1}(k;\Z)$ upon taking motives (up to a $(1)[1]$-twist).

We prove that the stable $\infty$-category $\Shv(\Sper(k);\Sp)$ is compactly generated by the unit $\unit$, and hence so is $\mathcal{SH}(k)[\rho^{-1}]$ by the above. This suffices to conclude the equivalence
\[\DTM(k;\F_p)[\rho^{-1}] \simeq \DM(k;\F_p)[\rho^{-1}].\]
We can then identify the open corresponding to $\cone(\rho)$, since we have computed it for $\DTM_\gm(k;\F_p)$.

From this, it will follow that in $\Spc(\DATM_\gm(k;\F_p))$, as well as in $\Spc(\DM_\gm(k;\F_p))$,
\[\supp(\cone(\tau)) \cap U(\cone(\rho)) \cong \Sper(k).\]

Now that we have identified the spectrum as a set, it remains to determine the specialisation relations amongst the points and the topology of the space. The specialisations between the open and closed pieces of \Cref{theorem:ArtinTatespectrum} are detected by base changing to fields over which the spectrum of Artin--Tate motives is already known: namely, the algebraic closure $\overline{k}$ of $k$ and, in the formally real case and when $p=2$, the real closures $k_\alpha$ associated to orderings $\alpha \in \Sper(k)$ of $k$. The induced maps on spectra produce the required specialisations, whilst the retraction
\[\begin{tikzcd}
    \Spc(\DATM_\gm(k;\F_p)) \arrow[r, twoheadrightarrow] & \Spc(\DAM_\gm(k;\F_p))
\end{tikzcd}\]
can be used to show that no other specialisations can occur between the open $U$ and $V$ described in \Cref{theorem:ArtinTatespectrum}.

Finally, the topology is determined by the known topologies of $U$ and $V$: a subset is closed precisely when it is closed under specialisations and its intersection with $U$ and $V$ are closed in these respective subspaces. Alternatively, there is another succinct description of the topology in terms of the inclusions
\[\begin{tikzcd}
    \DTM_\gm(k;\F_p) \arrow[r, hookrightarrow] & \DATM_\gm(k;\F_p) \arrow[r, hookleftarrow] & \DAM_\gm(k;\F_p)
\end{tikzcd}\]
and the induced maps on spectra
\[\begin{tikzcd}
    \Spc(\DTM_\gm(k;\F_p)) & \arrow[l, twoheadrightarrow] \Spc(\DATM_\gm(k;\F_p)) \arrow[r, twoheadrightarrow] & \Spc(\DAM_\gm(k;\F_p)):
\end{tikzcd}\]
these induce a continuous map
\[\begin{tikzcd}
    \Spc(\DATM_\gm(k;\F_p)) \arrow[r] & \Spc(\DTM_\gm(k;\F_p)) \times \Spc(\DAM_\gm(k;\F_p)),
\end{tikzcd}\]
which we find is a homeomorphism onto its image. That is, this map is injective and the topology on $\Spc(\DATM_\gm(k;\F_p))$ agrees with the subspace topology inherited from the product topology on
\[\Spc(\DTM_\gm(k;\F_p)) \times \Spc(\DAM_\gm(k;\F_p)).\]

\subsection{The detecting tt-functors}
\begin{lemma}\label{lemma:MilnorweightstructureforDATM}
    The category $\DATM_\gm(k;\F_p)$ admits a unique bounded $\otimes$-compatible weight structure whose heart is
    \[\mathcal{H}^\mathsf{AT} \coloneqq \add(M(E)(n)[n] \mid E/k \text{ finite separable, } n \in \Z)^\natural.\]
\end{lemma}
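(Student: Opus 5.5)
We follow the proof of \Cref{lem:milnor-weight-structure} and apply \Cref{theorem:constructingweightstructures} to the set
\[S \coloneqq \{M(E)(n)[n] \mid E/k \text{ finite separable, } n \in \Z\}\]
inside $\DATM_\gm(k;\F_p)$. This category is idempotent complete, being the compact part of $\DATM(k;\F_p)$.

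\emph{Generation.} $S$ thickly generates $\DATM_\gm(k;\F_p)$, because $\thick(S)$ is closed under suspension and so contains every $M(E)(n)$.

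\emph{Orthogonality.} The real content is the condition $S \perp \Sigma^r S$ for $r \geq 1$. For finite separable $E/k$ and $F/k$ we need
\[\Hom(M(E)(i)[i], M(F)(j)[j+r]) = 0 \qquad (r\ge 1).\]
Motives of finite étale schemes are self-dual, $M(E)^\vee \simeq M(E)$. Moreover
\[M(E)\otimes M(F) \simeq M(E\otimes_k F) \simeq \bigoplus_\ell M(L_\ell),\]
where the $L_\ell$ are the finite separable field factors of $E\otimes_k F$. Hence the group above is a finite direct sum of groups
\[\Hom(\unit, M(L)(j-i)[j-i+r]).\]
Writing $L$ as a finite separable extension $g:\Spec(L)\to\Spec(k)$, we have $M(L) \simeq g_*\unit_L$, and the adjunction $g^*\dashv g_*$ (equivalently, duality for $M(L)$) identifies this group with
\[H^{m+r,m}_{\mathcal{M}}(L;\F_p), \qquad m=j-i.\]
This vanishes because mod $p$ motivic cohomology of a field vanishes above the diagonal. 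This argument is exactly as in the Tate case, but now applied over $L$.

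\emph{$\otimes$-compatibility.} Both $\unit = M(k) \in S$ and $S\otimes S$ are contained in $\add(S)$, by the decomposition of $M(E)\otimes M(F)$ above. So the symmetric monoidal structure restricts to the heart $\add(S)^\natural$, and \Cref{remark:weightsstructureistensor} together with \Cref{remark:boundedtensorcompatibleweightstructure} gives $\otimes$-compatibility. One must apply these remarks to $\add(S)$ rather than to $S$ itself, which is harmless since the condition only concerns the heart. Boundedness holds because the heart generates, and uniqueness comes from \Cref{theorem:constructingweightstructures}.

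\emph{Main obstacle.} The only step needing care is the identification of the Hom group with motivic cohomology of $L$. This requires the self-duality of Artin motives and the decomposition of $M(E)\otimes M(F)$ into motives of fields. Both are standard: for instance via $\DAM_\gm(k;\F_p)\simeq \K^b(\mathsf{perm}(\Gamma_k;\F_p)^\natural)$, or via the projection formula for finite étale pushforward. Once that identification is made, the vanishing above the diagonal finishes the argument.
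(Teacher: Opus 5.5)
Your proposal is correct and follows the paper's proof. Both apply Bondarko's criterion to the objects $M(E)(n)[n]$, reduce orthogonality via self-duality and the splitting of $E\otimes_k F$ into finite separable field factors $L$ to the vanishing of $H^{m+r,m}_{\mathcal{M}}(L;\F_p)$ for $r\geq 1$, and obtain $\otimes$-compatibility because the heart is closed under $\otimes$; the only cosmetic difference is that the paper dualises $M(F)$ into the source and reads off the motivic cohomology of $\Spec(L)$ directly, whereas you dualise $M(E)$ into the target and use the adjunction $g^*\dashv g_*$.
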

\begin{proof}
    The proof is almost identical to \Cref{lem:milnor-weight-structure}, so we shall be brief.

    We use \Cref{theorem:constructingweightstructures}: the generation condition is immediate, so we must verify the orthogonality condition. To this end, let $E/k,F/k$ be two finite separable extensions, let $i,j \in \Z$, and let $r>0$. We compute
    \[\Hom(M(E)(i)[i],M(F)(j)[j+r]) \simeq \Hom(M(E) \otimes M(F),\unit(j-i)[j-i+r]) \simeq \bigoplus_\alpha H_\mathcal{M}^{j-i+r,j-i}(L_\alpha;\F_p)\]
    where we have decomposed $E \otimes_k F$ into a product of finite field extensions $L_\alpha/k$ (which is possible due to separability of $E$ and $F$ over $k$). Since $\F_p$-motivic cohomology of zero dimensional varieties vanishes above the diagonal, the groups above are zero for $r>0$. We can now apply \Cref{theorem:constructingweightstructures} to get the existence of a unique bounded weight structure with heart $\mathcal{H}^\mathsf{AT}$. Since $\mathcal{H}^\mathsf{AT}$ is clearly a tensor-subcategory of $\DATM_\gm(k;\F_p)$, the weight structure is $\otimes$-compatible by \Cref{remark:boundedtensorcompatibleweightstructure}.
\end{proof}

By \Cref{lemma:MilnorweightstructureforDATM} and \Cref{theorem:existenceofweightcomplexfunctor} we get the existence of the \emph{Milnor weight complex functor}
\[\begin{tikzcd}
    t:\DATM_\gm(k;\F_p) \arrow[r] & \K^b(\mathcal{H}^\mathsf{AT})
\end{tikzcd}\]
on the category of Artin--Tate motives.

\begin{prop}\label{prop:detectingnilpotenceoncone(tau)forDATM}
    Suppose $k$ has a primitive $p^{th}$ root of unity. Let $f:X \longrightarrow Y$ be a morphism in $\DATM_\gm(k;\F_p)$. If $t(f)=0$, then
    \[f \otimes \cone(\tau)=0.\]
    In particular, the Milnor weight complex functor detects $\otimes$-nilpotence on $\cone(\tau)$.
\end{prop}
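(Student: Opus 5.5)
We mimic the proof of \Cref{prop:Milnor-t_-detects-nilp-tau}. Since $\DATM_\gm(k;\F_p)$ is rigid, it suffices to treat morphisms $f:\unit \to Y$: a general $f:X\to Y$ corresponds to its adjoint $\unit \to X^\vee\otimes Y$, and $t$ is a tt-functor. If $t(f)=0$, then \cite[Proposition 2.1.1, Proposition 2.3.1 (1)]{Bondarko:KillingWeights} lets us factor $f$ as
\[\unit \xrightarrow{g} Y_{\leq -1}\longrightarrow Y\]
with $Y_{\leq -1}\in \DATM_\gm(k;\F_p)_{\leq -1}$. It is then enough to show $g\otimes\cone(\tau)=0$, and for this it suffices that
\[\Hom(\cone(\tau),\cone(\tau)\otimes Z)=0 \quad\text{for all } Z\in\DATM_\gm(k;\F_p)_{\leq -1}.\]

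The subcategory $\DATM_\gm(k;\F_p)_{\leq -1}$ is the extension and retract closure of the objects $M(E)(x)[y]$ with $E/k$ finite separable and $y-x\leq -1$. We therefore reduce to the vanishing of
\[\Hom(\cone(\tau),\cone(\tau)\otimes M(E)(x)[y]).\]
Two further reductions apply. First, $M(E)$ is self-dual, and $\Hom(M(E)\otimes A,B)\simeq \Hom_{E}(A_E,B_E)$, where $(-)_E$ is base change, which sends $\tau$ to $\tau_E$ since $\zeta_p\in k\subseteq E$. Second, the triangle $\unit\to\unit(1)\to\cone(\tau)$ reduces us, as in the Tate case, to showing
\[\Hom_{\DM(E;\F_p)}(\unit,\cone(\tau_E)(i)[j])=0\]
for $(i,j)=(x-1,y)$ and $(i,j)=(x,y-1)$ with $y-x\leq -1$. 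This reduction also uses the projection formula $M(E)\otimes \cone(\tau)\simeq f_*\cone(\tau_E)$, together with the fact that $f_*$ is both a left and a right adjoint of $f^*$ for finite étale $f$.

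Over $E$ we still have $H^{*,*}_{\mathcal M}(E;\F_p)\simeq (K^M_*(E)/p)[\tau]$, so multiplication by $\tau$ is injective. The computation in \Cref{prop:Milnor-t_-detects-nilp-tau} then gives
\[\Hom(\unit,\cone(\tau_E)(i)[j])\simeq K^M_j(E)/p \ \text{ if } j=i+1\geq 0,\]
and zero otherwise. The nonvanishing conditions become $y=x$ and $y=x+2$ respectively, and both contradict $y-x\leq-1$. Hence $g\otimes\cone(\tau)=0$, so $f\otimes\cone(\tau)=0$. Detection of $\otimes$-nilpotence on $\cone(\tau)$ follows, with exponent $n=1$.

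The main point needing care is the reduction from the Artin--Tate generators $M(E)(x)[y]$ to motivic cohomology of the field $E$. This requires $\Hom(M(E)\otimes A,B)\simeq \Hom_E(A_E,B_E)$ to be compatible with $\tau$, which holds because $\tau$ is defined over $k$ via $\zeta_p$. The alternative route is to decompose $M(E)\otimes M(F)$ into motives of fields $L_\alpha$, as in \Cref{lemma:MilnorweightstructureforDATM}. The remaining steps are formal.
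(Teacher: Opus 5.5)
Your proof is correct and takes essentially the same route as the paper. The paper just says the Tate argument of \Cref{prop:Milnor-t_-detects-nilp-tau} goes through because each $H^{*,*}_{\mathcal{M}}(E;\F_p)\simeq (K^M_*(E)/p)[\tau]$ is concentrated on and below the diagonal with $\tau$ acting injectively; your reduction from the generators $M(E)(x)[y]$ to motivic cohomology of $E$, via the projection formula and $f^*\dashv f_*$, spells out the step the paper leaves implicit.
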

\begin{proof}
    This again is very similar to \Cref{prop:Milnor-t_-detects-nilp-tau}: the mod-$p$ motivic cohomology of any finite separable field extension of $k$ is concentrated on and below the diagonal, and multiplication by $\tau$ is injective on the motivic cohomology algebra
    \[H_\mathcal{M}^{*,*}(E;\F_p) \simeq (K^M_*(E)/p)[\tau]\]
    where, as always, $\tau$ is in bidegree $(0,1)$.
\end{proof}

\begin{cor}\label{cor:jointdetectingfunctorsDATM}
    Suppose $k$ has a primitive $p^{th}$ root of unity. The functors
    \[\begin{cases}
        t:\DATM_\gm(k;\F_p) \longrightarrow \K^b(\mathcal{H}^\mathsf{AT}); \\
        L_\tau:\DATM_\gm(k;\F_p) \longrightarrow (\DATM_\gm(k;\F_p)/\ideal{\cone(\tau)})^\natural \simeq \DATM_\et^\vee(k;\F_p)
    \end{cases}\]
    jointly detect $\otimes$-nilpotence of morphisms. Moreover, the image of $\Spc(t)$ is precisely $\supp(\cone(\tau))$.
\end{cor}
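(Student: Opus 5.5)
The plan follows the Tate case (\Cref{cor:joint-detection-nilp-tate} and \Cref{rem:supp-cone-tau}) and reduces everything to \Cref{prop:detectingnilpotenceoncone(tau)forDATM} together with the general facts from the preliminaries.

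\emph{Identifying the target of $L_\tau$.} This is the localisation at $\cone(\tau)$, followed by idempotent completion, of the rigid tt-category $\DATM_\gm(k;\F_p)$. Its identification with rigid étale Artin--Tate motives is obtained by restricting \Cref{prop:tau-localisation-etale} to the thick subcategory generated by the $M(E)(n)$. Concretely, $\overline{\Re}_\et$ is fully faithful on all of $(\DM_\gm(k;\F_p)/\ideal{\cone(\tau)})^\natural$. It therefore restricts to an equivalence between the thick idempotent-complete subcategory generated by the images of the $M(E)(n)$ and the thick subcategory $\DATM_\et^\vee(k;\F_p)$ generated by their étale realisations. Nothing new is needed here.

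\emph{Joint detection.} Let $f:X\to Y$ in $\DATM_\gm(k;\F_p)$ satisfy $t(f)=0$ and $L_\tau(f)=0$. By rigidity we may assume $X=\unit$. Since $L_\tau(f)=0$ and the kernel of $L_\tau$ is $\ideal{\cone(\tau)}$, the standard Verdier-quotient argument (as in \cite[Corollary~7.8]{Balmer:ThreeRealArtinTates}) shows that $f$ factors through an object of $\ideal{\cone(\tau)}$. Using the Balmer--Gallauer formulation, $f\otimes \id_{\cone(\tau)^{\otimes m}}$-type statements imply that some power $f^{\otimes n}$ factors through $\cone(\tau)\otimes Z$ for some $Z$.

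On the other hand, $t(f)=0$ gives $f\otimes\cone(\tau)=0$ by \Cref{prop:detectingnilpotenceoncone(tau)forDATM}. The cleanest route is the abstract statement \cite[Prop.~7.?]{Balmer:ThreeRealArtinTates}: if $F_1$ detects $\otimes$-nilpotence on $x$, and $F_2$ is the localisation away from $\ideal{x}$, then $(F_1,F_2)$ jointly detect $\otimes$-nilpotence. I will simply invoke the same argument as in the proof of \Cref{cor:joint-detection-nilp-tate}, which was stated for exactly this formal situation and uses no Tate-specific input.

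\emph{Image of $\Spc(t)$.} For the inclusion $\im(\Spc(t))\subseteq\supp(\cone(\tau))$: $t(\tau)=0$, since $\tau:\unit\to\unit(1)$ is a map from weight $0$ to weight $-1$ in the Milnor weight structure (because $\unit(1)=\unit(1)[1][-1]$), so it vanishes on weight complexes. Hence $t(\cone(\tau))\simeq \unit(1)\oplus\unit[1]$ contains the unit up to shift, as a summand, so it lies in no prime. Thus every prime of the form $t^{-1}(\mathcal{P})$ excludes $\cone(\tau)$. The reverse inclusion follows from \Cref{prop:detectingnilpotenceoncone(tau)forDATM} combined with \Cref{prop:detectingnilpotenceonanobject}.

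The only point requiring care is checking that $t(\tau)=0$ with the homological weight conventions. I expect no real obstacle there, since it is identical to the Tate case.
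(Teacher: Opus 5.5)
Your proposal is correct and follows essentially the same route as the paper. For joint detection, both combine \Cref{prop:detectingnilpotenceoncone(tau)forDATM} with the argument of \cite[Corollary~7.8]{Balmer:ThreeRealArtinTates}; for the image of $\Spc(t)$, both use $t(\tau)=0$ for one inclusion and \Cref{prop:detectingnilpotenceonanobject} for the other.

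Your sentence claiming that ``some power $f^{\otimes n}$ factors through $\cone(\tau)\otimes Z$'' is not the right intermediate step and should be replaced by the actual mechanism:
\begin{itemize}
\item $f$ factors as $\unit\xrightarrow{g} z\xrightarrow{h} Y$ with $z\in\ideal{\cone(\tau)}$.
\item The objects $w$ with $f^{\otimes n}\otimes\id_w=0$ for some $n$ form a thick $\otimes$-ideal. It contains $\cone(\tau)$, hence also $z$.
\item Therefore $f^{\otimes(n+1)}=(\id\otimes h)\circ(f^{\otimes n}\otimes\id_z)\circ(\id\otimes g)=0$.
\end{itemize}
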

\begin{proof}
    The first statement is a consequence of \Cref{prop:detectingnilpotenceoncone(tau)forDATM} and the proof of \cite[Corollary~7.8]{Balmer:ThreeRealArtinTates}. The statement about the image of $\Spc(t)$ is \Cref{prop:detectingnilpotenceoncone(tau)forDATM} plus \Cref{prop:detectingnilpotenceonanobject}, combined with the observation that $\im(\Spc(t))$ is contained in $\supp(\cone(\tau))$, since $t(\tau)=0$.
\end{proof}

\begin{remark}\label{remark:open(cone(tau))inDATM}
    Combining \Cref{prop:tau-localisation-etale}, \Cref{thm:gallauer-etale-spectrum}, \Cref{lemma:homeoofcomparisonmapdescendstosubcategories}, and \Cref{remark:principalquotientgiveshomeoontoopen}, we see that $L_\tau$ induces a homeomorphism
    \[U(\cone(\tau)) \cong \Spec^h(K^M_*(k)/p).\]
    So, it remains to compute $\supp(\cone(\tau)) \subseteq \Spc(\DATM_\gm(k;\F_p))$, which by \Cref{cor:jointdetectingfunctorsDATM} means we must understand both $\Spc(\K^b(\mathcal{H}^\mathsf{AT}))$ and the map $\Spc(t)$.
\end{remark}

Unfortunately, unlike with $\DTM_\gm(k;\F_p)$, we see no clear algebraic description of the heart $\mathcal{H}^\mathsf{AT}$ of the Milnor weight structure on Artin--Tate motives. We therefore introduce another tt-functor which further refines the information detected by the Milnor weight complex functor.

Let
\[\AM_\gm(k;\F_p) \subseteq \DATM_\gm(k;\F_p)\]
denote the idempotent completion of the additive subcategory of $\DATM_\gm(k;\F_p)$ generated by the motives $M(E)$ for finite separable extensions $E/k$, that is,
\[\AM_\gm(k;\F_p) \coloneqq \add(M(E) \in \DATM_\gm(k;\F_p) \mid E/k \text{ finite separable})^\natural.\]

\begin{lemma}
    There is a well-defined tt-functor
    \[\operatorname{gr}:\K^b(\mathcal{H}^\mathsf{AT}) \longrightarrow \K^b(\AM_\gm(k;\F_p))\]
    which is defined by sending the $M(E)(n)[n]$ (in degree $0$) to $M(E)$ (in degree $0$) where $E/k$ is any finite separable extension and $n \in \Z$ is any integer. On morphisms it is characterised by
    \[(f:M(E)(i)[i] \longrightarrow M(F)(j)[j]) \longmapsto \begin{cases}
        f, \ i=j; \\
        0, i \neq j
    \end{cases}\]
    where $E,F/k$ are finite separable extensions and $i,j \in \Z$ are integers. We call it the \emph{grading functor}.
\end{lemma}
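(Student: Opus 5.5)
The plan is to build $\operatorname{gr}$ first as a symmetric monoidal additive functor between the hearts,
\[\operatorname{gr}^\heartsuit:\mathcal{H}^\mathsf{AT} \longrightarrow \AM_\gm(k;\F_p),\]
and then apply $\K^b(-)$. Applying $\K^b$ to an additive symmetric monoidal functor automatically gives a tt-functor, acting degreewise on complexes and compatibly with the tensor product of complexes. Since $\mathcal{H}^\mathsf{AT}$ is an idempotent completion, it suffices to define the functor on the additive closure $\add(M(E)(n)[n])$ and extend it to the idempotent completion via the universal property, using that the target is idempotent complete.

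\textbf{Step 1: morphism groups.} By duality and the decomposition $E\otimes_k F \simeq \prod_\alpha L_\alpha$ used in \Cref{lemma:MilnorweightstructureforDATM},
\[\Hom(M(E)(i)[i],M(F)(j)[j]) \simeq \bigoplus_\alpha H^{j-i,j-i}_\mathcal{M}(L_\alpha;\F_p) \simeq \bigoplus_\alpha K^M_{j-i}(L_\alpha)/p.\]
This vanishes for $j<i$. For $i=j$ it equals $\bigoplus_\alpha \F_p = \Hom(M(E),M(F))$, with the identification given by untwisting $f\mapsto f(-i)[-i]$. Thus the hom-groups of $\add(M(E)(n)[n])$ are graded by $j-i\geq 0$, and $\operatorname{gr}$ keeps the degree-$0$ part and kills everything of positive degree. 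In other words, $\operatorname{gr}$ is the quotient of this graded category by its ideal of positive-degree morphisms, after identifying the degree-$0$ piece with $\AM_\gm$ through untwisting.

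\textbf{Step 2: functoriality.} Composition is additive in degree: a composite $M(E)(i)[i]\to M(F)(j)[j]\to M(G)(l)[l]$ has degree $(j-i)+(l-j)$. It therefore suffices to show that the positive-degree morphisms form a two-sided ideal, which is immediate since all degrees are $\geq 0$. For degree-$0$ maps, untwisting commutes with composition, so $\operatorname{gr}$ preserves composition and identities.

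\textbf{Step 3: monoidality, the main obstacle.} We need that the tensor product of morphisms of degrees $d,d'$ has degree $d+d'$, and that on degree $0$ it agrees with the tensor product in $\AM_\gm$ after untwisting. This is where care is needed with the symmetric monoidal structure. The structure isomorphisms
\[M(E)(i)[i]\otimes M(F)(j)[j]\simeq M(E\otimes F)(i+j)[i+j]\]
and the symmetry involve the swap of $\unit(1)[1]$-factors. That swap is $(-1)^{ij}$ coming from the suspension, times the swap on $\unit(1)^{\otimes 2}$, which is trivial for Tate twists in $\DM$. Using these structure isomorphisms, I will check that the symmetry isomorphisms go to the symmetry in $\AM_\gm$. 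If needed, I will adjust $\operatorname{gr}$ on objects by a Koszul sign convention, viewing the target as the trivial-grading symmetric monoidal category and noting that the sign is $(-1)^{ij}$ on $\unit(1)[1]$-factors, which is consistent since $\unit(1)[1]$ is even in the motivic sense. Since the unit $\unit$ lies in degree $0$, $\operatorname{gr}$ is unital.

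With $\operatorname{gr}^\heartsuit$ established as a symmetric monoidal additive functor, $\operatorname{gr}=\K^b(\operatorname{gr}^\heartsuit)$ is the required tt-functor. Its description on morphisms is exactly the one stated, since a morphism between heart objects concentrated in degree $0$ is sent by $\K^b(\operatorname{gr}^\heartsuit)$ to $\operatorname{gr}^\heartsuit$ of that morphism.
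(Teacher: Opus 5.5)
Your Steps 1 and 2 follow the paper's argument in a slightly more conceptual form. The paper also defines $\operatorname{gr}$ on the heart and then applies $\K^b(-)$. Its case analysis of a composite $M(E)(i)[i]\to M(F)(j)[j]\to M(L)(l)[l]$ (reduce to $i\le j\le l$, after which one factor and the composite both have positive degree) is exactly your observation that positive-degree morphisms form an ideal.

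The problem is Step 3. The sign $(-1)^{ij}$ you found is a real obstruction, not a convention that can be adjusted. By your own computation, $u\coloneqq\unit(1)[1]$ has self-swap $\sigma_{u,u}=-\id$: the swap is trivial on $\unit(1)^{\otimes 2}$, and the suspensions contribute $-1$. So $u$ is odd (its categorical dimension is $-1$), not even.

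Suppose $\operatorname{gr}$ carried any symmetric monoidal structure $\mu$. Then $\operatorname{gr}(\sigma_{u,u})\circ\mu=\mu\circ\sigma_{\operatorname{gr}(u),\operatorname{gr}(u)}$.
\begin{itemize}
\item The left-hand side is $-\mu$, since $\operatorname{gr}$ is additive.
\item The right-hand side is $\mu$, since $\operatorname{gr}(u)=\unit$ and $\sigma_{\unit,\unit}=\id$.
\end{itemize}
This forces $2=0$ in $\F_p$. So for odd $p$ the check you promise in Step 3 fails. No Koszul-type re-choice of structure isomorphisms can repair it while the target $\K^b(\AM_\gm(k;\F_p))$ keeps its usual symmetry.

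What your argument does prove is that $\operatorname{gr}$ is strong monoidal with the stated description on objects and morphisms. The factor $(-1)^{ij}$ only enters the compatibility with the symmetry, and the tensor product of complexes does not use the symmetry. Hence $\K^b(\operatorname{gr})$ is an exact strong monoidal functor.

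As far as the later arguments go, that is all that is needed. Preimages of primes under exact monoidal functors are prime, so $\Spc(\operatorname{gr})$ is defined. Neither the nilpotence-detection arguments nor the retraction argument $\Psi\circ\iota\simeq\id$ uses compatibility with the symmetry. For $p=2$ the sign vanishes, so there $\operatorname{gr}$ is genuinely symmetric.

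The paper's own proof checks only compatibility with tensor products of objects and morphisms (``it is easy to check the functor respects tensor-products of morphisms'') and never examines the symmetry. In effect it establishes the same monoidal statement, and you should phrase Step 3 that way rather than claim symmetry. If you want a genuinely symmetric functor for odd $p$, you have to remember the weight. For example, send $M(E)(n)[n]$ to $M(E)[n]$, so that the sign $(-1)^{ij}$ is reproduced by the suspensions in the target.
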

\begin{proof}
    We first verify that the functor $\operatorname{gr}$ is well-defined: we will then establish its monoidality (it is additive by construction). We note that since $\operatorname{gr}$ is defined on the hearts $\mathcal{H}^\mathsf{AT} \longrightarrow \AM_\gm(k;\F_p)$, and since the functor $\K^b(-):\mathsf{Add}_\otimes \longrightarrow \mathsf{ttCat}$ takes additive symmetric monoidal functors to additive symmetric monoidal functors, it is enough to verify everything on the heart $\mathcal{H}^\mathsf{AT}$.

    To verify it is well-defined all we need to do is verify that $\operatorname{gr}(g \circ f)$ agrees with $\operatorname{gr}(g) \circ \operatorname{gr}(f)$ for any given pair of morphisms $f:M(E)(i)[i] \longrightarrow M(F)(j)[j],g:M(F)(j)[j] \longrightarrow M(L)(l)[l]$ (clearly the functor sends identity morphisms to identity morphisms).

    So, letting $f,g$ be as above, we see there are two cases to consider. The first case is where $i=j=l$: in this instance the functor $\operatorname{gr}$ does not have any effect, so we are fine. The second case is where not all the integers $i,j,l$ agree. In this scenario, we can assume $i \leq j \leq l$ for otherwise either $f$ or $g$ is zero to begin with from the description of morphisms in $\mathcal{H}^\mathsf{AT}$. So, either $i<j$ or $j<l$, that is, $\operatorname{gr}(f)=0$ or $\operatorname{gr}(g)=0$. In either case, the composition is such that $i<l$, so $\operatorname{gr}(g \circ f)=0$ also. This shows the functor is well-defined.

    To show it is monoidal, we note that
    \[\operatorname{gr}(M(E)(i)[i] \otimes M(F)(j)[j]) = M(E) \otimes M(F) = \operatorname{gr}(M(E)(i)[i]) \otimes \operatorname{gr}(M(F)(j)[j])\]
    as can be seen by decomposing $M(E) \otimes M(F)$ into a direct sum of motives of the form $M(L_\alpha)$ as in the proof of \Cref{lemma:MilnorweightstructureforDATM}. It is easy to check the functor respects tensor-products of morphisms too, so we conclude.
\end{proof}

Before stating the next result, we record a technical lemma concerning the $\otimes$-nilpotence of morphisms in $\DM_\gm(k;\F_p)$ arising from mod-$p$ Milnor K-theory.

\begin{lemma}\label{lemma:nilpotentpuresymbolsaretensornilpotent}
    Let $E/k$ be a finite separable extension of $k$ and let
    \[\begin{tikzcd}
        a:\unit \arrow[r] & M(E)(n)[n]
    \end{tikzcd}\]
    be a morphism corresponding to a pure symbol $\alpha \in K^M_n(E)/p$ with $n \geq 1$. Then $a$ is $\otimes$-nilpotent on $\cone(\rho)$. In particular, if $p$ is an odd prime or $k$ is not formally real, then $a$ is $\otimes$-nilpotent.
\end{lemma}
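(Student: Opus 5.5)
We first reduce to the case $E=k$. By the adjunction for the finite étale morphism $g:\Spec(E)\to\Spec(k)$, a morphism $a:\unit\to M(E)(n)[n]\simeq g_*\unit_E(n)[n]$ in $\DM_\gm(k;\F_p)$ corresponds to $\alpha:\unit_E\to\unit_E(n)[n]$ in $\DM_\gm(E;\F_p)$, and $a=g_*(\alpha)\circ\eta$ where $\eta:\unit\to g_*\unit_E=M(E)$ is the unit. By the projection formula, $a^{\otimes m}\otimes\id_{\cone(\rho)}$ factors through $g_*$ applied to $\alpha^{\otimes m}\otimes \id_{g^*(\cone(\rho)^{\otimes ?})}$, composed with maps that are harmless. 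More precisely, $a^{\otimes m}$ is built from $g_*(\alpha)^{\otimes m}$, and $g_*(x)\otimes g_*(y)\simeq g_*(x\otimes g^*g_*y)$, and $g^*g_*$ is a finite direct sum of conjugate base changes. So it suffices to show that for every finite separable $L/k$ and every pure symbol $\alpha\in K^M_n(L)/p$ with $n\ge 1$, the morphism $\alpha$ is $\otimes$-nilpotent on $\cone(\rho_L)$ in $\DM_\gm(L;\F_p)$. Here $g^*\rho=\rho_L$. Conjugates of pure symbols are again pure symbols, so the reduction is closed under this operation.

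Over $L$, I would write $\alpha=\{a_1\}\cdots\{a_n\}$ as a product. It then suffices to treat a single degree-one symbol $\{a\}:\unit\to\unit(1)[1]$, because $\otimes$-nilpotence of a factor on $\cone(\rho)$ gives $\otimes$-nilpotence of the product. The key algebraic input is the relation $\{a\}\{a\}=\{a\}\{-1\}=\{a\}\rho$ in $K^M_2/p$, which comes from $\{a,-a\}=0$. For $p$ odd, $\rho=0$ mod $p$ since $2\rho=\{1\}=0$, and this relation gives $\{a\}^2=0$. Hence $\{a\}^{\otimes 2}=\pm\{a\}^2=0$ as a morphism, using that $\otimes$-powers of maps $\unit\to u$ agree with products up to sign. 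For $p=2$, iterating the relation gives $\{a\}^{\otimes m}=\{a\}\rho^{m-1}$. Since $\rho\otimes\id_{\cone(\rho)}$ is nilpotent (it squares to zero, by the standard fact that $s\otimes\id_{\cone(s)}$ has square zero), we get $\{a\}^{\otimes 2}\otimes\id_{\cone(\rho)}=\{a\}\otimes\rho\otimes\id_{\cone(\rho)}$, and raising further powers kills it. I would make this precise by rewriting $\{a\}^{\otimes m}\otimes\id_{\cone(\rho)}$ as $\{a\}\otimes\rho^{\otimes(m-1)}\otimes\id_{\cone(\rho)}$, then using $(\rho\otimes\id_{\cone(\rho)})^{2}=0$, so that $m=3$ suffices.

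The final claim follows easily. If $p$ is odd the above already shows nilpotence outright. If $k$ is not formally real, then $-1$ is a sum of squares, so $\rho$ is nilpotent in $K^M_*(k)/2$: indeed $\rho^{N}=0$ by Pfister, since a level $2^s$ gives $\{-1\}^{s+1}=0$. Then $\cone(\rho)$ generates the whole category, i.e.\ $\unit\in\ideal{\cone(\rho)}$, so nilpotence on $\cone(\rho)$ implies nilpotence on $\unit$ via Balmer's standard argument that nilpotence on $x$ transfers to nilpotence on every object of $\ideal{x}$.

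The main obstacle I expect is the reduction from $M(E)$-valued maps to the base field $L$. We need to control how $a^{\otimes m}$ interacts with the unit $\eta$ and with the decomposition of $M(E)^{\otimes m}$ into motives of composite fields, carefully enough that nilpotence over each $L$ transfers back. An alternative I would try first is to check directly that $a^{\otimes m}$ is the pushforward along $\Spec(E^{\otimes m})\to\Spec(k)$ of the external product of the conjugate symbols. On each field factor, the relation above then applies to a product of $m$ degree-$n$ symbols, which lie in the ideal generated by $\rho^{m-1}$-multiples.
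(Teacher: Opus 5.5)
Your computation over a single field is fine and is essentially the paper's. It uses the identity $\gamma^2=\rho^{\deg\gamma}\gamma$ for pure symbols (from $\{a,a\}=\{a,-1\}$), the vanishing of $\rho$ modulo odd $p$, and the fact that powers of $\rho$ die on $\cone(\rho)$. When $k$ is not formally real, nilpotence of $\rho$ gives $\unit\in\ideal{\cone(\rho)}$.

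The gap is the reduction from $k$ to the fields $L$. The morphism $a^{\otimes m}\colon \unit\to M(E^{\otimes_k m})(mn)[mn]\simeq\bigoplus_\lambda M(L_\lambda)(mn)[mn]$ does not factor through $g_*(\alpha^{\otimes m})$, so the first sentence of your reduction is false as stated. Its component on the factor $L_\lambda$ of $E^{\otimes_k m}$ is the product $\sigma_{\lambda,1}^*(\alpha)\cdots\sigma_{\lambda,m}^*(\alpha)$, where $\sigma_{\lambda,i}\colon E\to L_\lambda$ is the $k$-embedding induced by the $i$-th tensor factor. These embeddings all agree only on the diagonal factor $E^{\otimes_k m}\to E$. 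Knowing that each individual conjugate $\sigma^*(\alpha)$ is $\otimes$-nilpotent on $\cone(\rho_L)$ says nothing yet about such mixed products. Your alternative claim, that a product of $m$ degree-$n$ symbols lies in the ideal generated by $\rho^{m-1}$, is false when the symbols differ.

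Concretely, take $k=\mathbb{C}(s,t)$, $E=k(\sqrt{s})$, $p=2$, $n=1$ and $\alpha=\{t+\sqrt{s}\}$. Here $\rho=0$ and $\alpha^2=\rho\alpha=0$, so $g_*(\alpha^{\otimes 2})=0$. But on the non-diagonal factor of $E\otimes_k E\simeq E\times E$, the component of $a^{\otimes 2}$ is $\{t+\sqrt{s}\}\{t-\sqrt{s}\}$. This is nonzero in $K^M_2(E)/2$: write $E=\mathbb{C}(v,w)$ with $v=t+\sqrt{s}$, $w=t-\sqrt{s}$, and take the tame symbol at $v=0$. Hence $a^{\otimes 2}\otimes\id_{\cone(\rho)}\neq 0$, contradicting both your factorisation claim and the $\rho^{m-1}$ claim.

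The missing idea is the pigeonhole step in the paper's proof. There are at most $d=[E:k]$ distinct $k$-algebra maps $E\to L_\lambda$. So once $m$ is large relative to $d$ (the paper takes $N>2Rd$), some $\sigma$ occurs at least $r$ times among the $\sigma_{\lambda,i}$. The component is then, up to sign, divisible by $\beta^r$ for the single symbol $\beta=\sigma^*(\alpha)$, and only now does $\beta^r=\rho_L^{n(r-1)}\beta$ apply.

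With this step inserted, your argument goes through. Your single-field bound gives $\beta^{\otimes 3}\otimes\id_{\cone(\rho_L)}=0$ in both the odd and even cases, so $m\geq 2d+1$ suffices. The vanishing on each factor transfers back to $\DM(k;\F_p)$ through the adjunction and projection formula for $\Spec(L_\lambda)\to\Spec(k)$; the paper does this more directly by writing $\rho_L=\rho\circ\epsilon_L$.
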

\begin{proof}
    For $N \geq 1$, consider the morphism
    \[\begin{tikzcd}
        a^{\otimes N}: \unit \arrow[r] & M(E)^{\otimes N}(Nn)[Nn] \cong M(E^{\otimes_k N})(Nn)[Nn].
    \end{tikzcd}\]
    We will show that for $N$ sufficiently large,
    \[a^{\otimes N} \otimes \cone(\rho)=0.\]
    First, for each $i=1,\dots,N$ we have the morphism
    \[\begin{tikzcd}
        \iota_i: E \arrow[r] & E^{\otimes_k N}; \quad x \longmapsto 1 \otimes \dots \otimes 1 \otimes \underbrace{x}_{i^{th}\text{-position}} \otimes  1 \otimes \dots \otimes 1.
    \end{tikzcd}\]
    Note that, under the identification of diagonal morphisms in $\DM_\gm(k;\F_p)$ and Milnor K-theory, $a^{\otimes N}$ corresponds to the element
    \[\alpha^{\otimes N} = \iota_1^*(\alpha) \cdot \cdots \cdot \iota_N^*(\alpha) \in  K_*^M(E^{\otimes_k N})/p,\]
    where $\iota_i^*: K^M_*(E)/p \longrightarrow K^M_*(E^{\otimes_k N})/p$ is the homomorphism induced by $\iota_i$. Since $E$ is a finite separable extension of $k$, $E^{\otimes_k N}$ decomposes into a finite product of fields $L_\lambda$ which are also finite separable over $k$:
    \[E^{\otimes_k N} \simeq \prod_{\lambda} L_\lambda.\]
    Denote by $\pi_\lambda: E^{\otimes_k N} \longrightarrow L_\lambda $ the projection onto the corresponding component. Then $\alpha^{\otimes N}$ decomposes into a sum of pure symbols in $K^M_{nN}(L_\lambda)/p$, so it suffices to show that each component is zero for $N$ sufficiently large. Fix some $\lambda$, and for $i=1,\dots,N$, consider the $k$-algebra homomorphism 
    \[\sigma_{\lambda, i}=\pi_\lambda \circ \iota_{i}.\]
    Notice the component over $L \coloneqq L_\lambda$ of $\alpha^{\otimes N}$ is given by 
    \begin{equation}\label{eqn:decompositionofpuresymbol}
        \pi_\lambda^*(\alpha^{\otimes N})=\pi_\lambda^*(\iota_1^*(\alpha) \cdot \dots \cdot \iota_N^*(\alpha))= \sigma^*_{\lambda, 1}(\alpha) \dots \cdot \sigma^*_{\lambda, N}(\alpha).
    \end{equation}
    Letting $d \coloneqq [E:k]$ denote the degree of the extension, we observe that, as a consequence of the theorem of the primitive element, there are at most $d$ distinct $k$-algebra homomorphisms $E \longrightarrow L$. Hence, given an arbitrary $R \geq 1$, we may have chosen $N>2Rd$ to begin with, so that one of the maps $\sigma \coloneqq \sigma_{\lambda,i}$ is forced to occur at least $2R$ times. In particular, writing $\beta \coloneqq \sigma^*(\alpha)$, we have $\beta^{2R}$ divides the product from the right-hand side of (\ref{eqn:decompositionofpuresymbol}).
    But now
    \[\beta^{2R}=\rho_L^{nR} \cdot \beta^R\]
    by virtue of the general identity
    \[\gamma^2= \rho^{deg(\gamma)}_L \cdot \gamma\]
    for a pure symbol $\gamma \in K^M_*(L)/p$.

    To conclude, observe that the class $\rho_L^{nR}$ is represented by the composition
    \[\begin{tikzcd}
        \rho_L^{nR}:M(L) \arrow[r, "\Delta"] & M(L)^{\otimes nR} \arrow[r, "\rho_L^{\otimes nR}"] & \unit(nR)[nR],
    \end{tikzcd}\]
    where $\Delta$ is induced by the diagonal $\Spec(L) \longrightarrow \Spec(L)^{\times nR}$. Since $\rho_L$ is the image of $\rho$ under the base change functor from $k$ to $L$, we have
    \[\rho_L=\rho \circ \epsilon_L\]
    where $\epsilon_L:M(L) \longrightarrow \unit$ is induced by the structure morphism of the $k$-variety $\Spec(L)$.
    From here, it is clear that when tensored by $\cone(\rho)$, the $L$-component of $a^{\otimes N}$ becomes zero since $\rho \otimes \cone(\rho)=0$. Repeating the argument for all $\lambda$ allows us to conclude
    \[a^{\otimes N} \otimes \cone(\rho)=0,\]
    as claimed.

    For the final statement, suppose $p$ is odd or $k$ is not formally real. Then $\rho^m=0$ for some $m \geq 1$, so that $\rho_L^m=0$ for all finite separable extensions $L$ of $k$. Choosing $R$ such that $nR \geq m$, and then $N$ sufficiently large as before, we find that each $L_\lambda$-component of $a^{\otimes N}$ is zero, so
    \[a^{\otimes N}=0\]
    and we are done.
\end{proof}

\begin{prop}\label{prop:grdetectsnilpotenceoncone(rho)}
    The grading functor
    \[\begin{tikzcd}
        \operatorname{gr}:\K^b(\mathcal{H}^\mathsf{AT}) \arrow[r] & \K^b(\AM_\gm(k;\F_p)) 
    \end{tikzcd}\]
    detects $\otimes$-nilpotence of morphisms on $\cone(\rho)$. In particular, if $p$ is odd or $k$ is not formally real, $\operatorname{gr}$ detects $\otimes$-nilpotence of morphisms.
\end{prop}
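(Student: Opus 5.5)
The plan is to realise $\operatorname{gr}$ on hearts as the quotient of $\mathcal{H}^\mathsf{AT}$ by the $\otimes$-ideal of "positive Milnor degree" morphisms. I will then show that this ideal consists of morphisms that are $\otimes$-nilpotent on $\cone(\rho)$, and finally pass from the heart to bounded complexes by a standard homotopy-lifting argument. Throughout, $\cone(\rho)$ means $t(\cone(\rho))$, the two-term complex on the heart morphism $\rho\colon\unit\to\unit(1)[1]$.

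\textbf{Step 1: the kernel ideal.} Let $\mathcal{I}\subseteq\mathcal{H}^\mathsf{AT}$ consist of the morphisms all of whose components $M(E)(i)[i]\to M(F)(j)[j]$ have $i<j$. By the description of morphisms in the heart used in the proof of the grading lemma, such components vanish for $i>j$, and $\operatorname{gr}$ kills exactly $\mathcal{I}$. Moreover, on the degree-$0$ part $\operatorname{gr}$ is the identity, so $\operatorname{gr}$ is full on hearts. Since twists add under $\otimes$ and composition, $\mathcal{I}$ is a $\otimes$-ideal.

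By rigidity (the heart is stable under duals, as $M(E)^\vee\simeq M(E)$) and the decomposition $E\otimes_kF\simeq\prod L_\alpha$, every component of a morphism in $\mathcal{I}$ is a finite sum of morphisms obtained by tensoring and composing with arbitrary heart morphisms a class $a\colon\unit\to M(L)(n)[n]$ with $n\geq1$. By additivity we may take $a$ to be a pure symbol in $K^M_n(L)/p$. \Cref{lemma:nilpotentpuresymbolsaretensornilpotent} then says $a^{\otimes N}\otimes\cone(\rho)=0$ in $\DATM_\gm(k;\F_p)$, and applying the tt-functor $t$ (the identity on the heart) gives the same vanishing in $\K^b(\mathcal{H}^\mathsf{AT})$.

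\textbf{Step 2: uniform nilpotence of the ideal.} I need the following statement: for any finite set of morphisms $g_1,\dots,g_m\in\mathcal{I}$, every $\otimes$-product of at least $N$ of them becomes zero after tensoring with $\id_{\cone(\rho)}$, for $N\gg0$. Write each $g_r$ as a finite sum of "monomials" of the form (heart morphism)$\circ$(pure symbol $a_s$ tensored with identities). A $\otimes$-product of many monomials is then, up to composing and tensoring with further morphisms, a tensor product of many of the finitely many symbols $a_s$. By pigeonhole, some $a_s$ occurs at least $N_s$ times, where $N_s$ is its nilpotence exponent on $\cone(\rho)$ from Step 1, so the product dies on $\cone(\rho)$. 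Rearranging the tensor factors only introduces symmetry isomorphisms, so this is harmless. I expect this to be the main point requiring care: one must check that the interchange of composition and tensor (and the symmetry isomorphisms) really lets the repeated symbols be collected into a genuine tensor power $a_s^{\otimes N_s}$.

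\textbf{Step 3: from the heart to $\K^b$.} By rigidity it suffices to treat $f\colon\unit\to X$ in $\K^b(\mathcal{H}^\mathsf{AT})$ with $\operatorname{gr}(f)=0$. Then $\operatorname{gr}(f)$ is null-homotopic via some homotopy $h'$. By fullness of $\operatorname{gr}$ on hearts, I lift $h'$ to a homotopy $h$ in $\mathcal{H}^\mathsf{AT}$ and replace $f$ by the homotopic map $f-(dh+hd)$, whose components now all lie in $\mathcal{I}$. Since $X$ is bounded, $f$ has finitely many components $g_1,\dots,g_m$. The components of $f^{\otimes N}$ are sums of $\otimes$-products of $N$ of these, and the components of $f^{\otimes N}\otimes\id_{\cone(\rho)}$ are such products tensored with the components of $\id_{\cone(\rho)}$.

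Here one must be slightly careful, because vanishing "on $\cone(\rho)$" holds in the homotopy category rather than componentwise. To handle this, I would instead apply Step 2 via the tt-functor $t$: realise each product as $t$ of the corresponding morphism in $\DATM_\gm(k;\F_p)$ tensored with $\cone(\rho)$, which is zero. Alternatively, I would filter $X$ by its brutal truncations and argue by induction on the length of $X$, as in \cite[Corollary~7.8]{Balmer:ThreeRealArtinTates}, increasing $N$ at each step. Either way, $f^{\otimes N}\otimes\cone(\rho)=0$ for $N\gg0$.

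\textbf{Final statement.} If $p$ is odd or $k$ is not formally real, the last part of \Cref{lemma:nilpotentpuresymbolsaretensornilpotent} gives genuine $\otimes$-nilpotence of the symbols, without tensoring with $\cone(\rho)$. The same three steps then show that $\operatorname{gr}$ detects $\otimes$-nilpotence outright.
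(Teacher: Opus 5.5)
\textbf{Verdict: gap in Step~3.} Your route is the paper's: reduce to $f\colon\unit\to X$, use fullness of $\operatorname{gr}$ on hearts to homotope $f_0$ into $X_0^+$, then invoke \Cref{lemma:nilpotentpuresymbolsaretensornilpotent}. Your pigeonhole Step~2 for sums of symbols fills in a detail the paper leaves implicit.

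The gap is the one you flag at the end of Step~3, and neither proposed fix closes it. The paper's written proof also passes directly from ``$f_0^{\otimes N}\otimes\cone(\rho)=0$'' to the conclusion, so it does not supply this step either.
\begin{itemize}
\item \emph{Fix (a)} only re-proves what Step~2 already gives. That is, it shows $f_0^{\otimes N}\otimes\id_{\cone(\rho)}=0$ with $X_0^{\otimes N}$ placed in degree $0$. But $X_0^{\otimes N}$ in degree $0$ is not a subcomplex of $X^{\otimes N}$, so the chain map $f^{\otimes N}$ does not factor through it, and componentwise null-homotopies need not assemble.
\item \emph{This failure really happens.} Take $k=\Q$, $p=2$, and $\Phi=\rho\colon\unit\to[\unit(1)[1]\xrightarrow{\{2\}}\unit(2)[2]]$, in degrees $0,-1$; it is a chain map since $\{2,-1\}=0$. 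Then $\operatorname{gr}(\Phi)=0$ and $\rho\otimes\id_{\cone(\rho)}=0$. Yet a null-homotopy of $\Phi\otimes\id_{\cone(\rho)}$ would be a scalar $c\in\F_2$ with $c\rho=\rho$ and $c\{2\}=0$, which is impossible.
\item \emph{Fix (b)} is not an argument. Lifting through the fibre of a truncation map destroys the hypothesis $\operatorname{gr}(f)=0$. Also, \cite[Corollary~7.8]{Balmer:ThreeRealArtinTates} is the statement about images on spectra (\Cref{prop:detectingnilpotenceonanobject}), not an induction on length.
\end{itemize}

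\textbf{The missing idea: square to get chain-level divisibility by $\rho$.} Put $Y=X^{\otimes N}$ and $\phi=f_0^{\otimes N}\colon\unit\to Y_0$, so $d\phi=0$.

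First, get $\rho$-divisibility in the heart. Place $Y_0$ in degree $0$ and write $\cone(\rho)=[\unit\xrightarrow{\rho}\unit(1)[1]]$. A null-homotopy of $\phi\otimes\id_{\cone(\rho)}$ is then a single heart morphism $g\colon\unit(1)[1]\to Y_0$, and its degree-one condition reads $\phi=g\circ\rho$. (The proof of the lemma also gives this divisibility directly.)

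Possibly $dg\neq0$; this is exactly the obstruction in the example, where $g=\id$ and $dg=\{2\}$. Nevertheless $g\otimes\phi\colon\unit(1)[1]\to(Y\otimes Y)_0$ is a cycle. Indeed, $d(g\otimes\phi)=dg\otimes(g\circ\rho)=\pm(dg\circ\rho)\otimes g=\pm\,d\phi\otimes g=0$. Moving $\rho$ onto the first factor uses that the symmetry of $\unit(1)[1]^{\otimes 2}$ is $\pm1$.

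Hence $f^{\otimes 2N}=(g\otimes\phi)\circ\rho$ as morphisms in $\K^b(\mathcal{H}^\mathsf{AT})$, and so $f^{\otimes 4N}=(g\otimes\phi)^{\otimes 2}\circ\rho^{\otimes 2}$. Since $\rho^{\otimes 2}\otimes\id_{\cone(\rho)}=0$, this gives $f^{\otimes 4N}\otimes\id_{\cone(\rho)}=0$.

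Your final paragraph (the case $p$ odd or $k$ not formally real) needs no such care. There $f_0^{\otimes N}=0$ already in the heart, so the chain map $f^{\otimes N}$ vanishes on the nose.
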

\begin{proof}
    Let $f$ be a morphism in $\K^b(\mathcal{H}^\mathsf{AT})$ such that $\operatorname{gr}(f)=0$. Rigidity permits us to assume the domain of $f$ is the unit $\unit$, say
    \[\begin{tikzcd}
        f:\unit \arrow[r] & X
    \end{tikzcd}\]
    for some $X \in \K^b(\mathcal{H}^\mathsf{AT})$. Moreover, we may assume that in each degree $X$ is a (possibly empty) finite direct sum of objects of the form $M(E)(i)[i]$ for $E/k$ a finite separable extension and $i \in \Z$, since any object of $\K^b(\mathcal{H}^\mathsf{AT})$ is a summand of such an object, and if a functor detects $\otimes$-nilpotence of a morphism (on some object) then it does so on all its retractions.

    Now choose some chain homotopy equivalence class of $X$. In degree $0$, $f:\unit \longrightarrow X$ is given by a morphism
    \[\begin{tikzcd}
        f_0:\unit \arrow[r] & X_0,
    \end{tikzcd}\]
    in $\mathcal{H}^\mathsf{AT}$, where $X_0$ is the degree $0$ component of $X$. By definition, we may decompose $X_0$ as
    \[X_0 \simeq X_0^- \oplus X_0^0 \oplus X_0^+\]
    where $X_0^-/X_0^0/X_0^+$ is a (finite) direct sum of objects of the form $M(E)(i)[i]$, where $E/k$ is a finite separable extension of $k$ and $i \in \Z$ is negative/zero/positive, respectively.
    
    Decomposing $f_0$ into the sum of its components induced by the above decomposition, we see that $\operatorname{gr}(f)=0$ precisely when the component of $\operatorname{gr}(f_0)$ corresponding to $\unit \longrightarrow X_0^0$ admits a lift along the differential of $\operatorname{gr}(X)$. Indeed, the component of $f_0$ corresponding to $X_0^-$ is already zero by assumption since
    \[\Hom(\unit,M(E)(i)[i])=0\]
    whenever $i<0$, whilst the component corresponding to $X_0^+$ vanishes under $\operatorname{gr}$ by construction. Moreover, $\operatorname{gr}$ does not change the component of $f$ corresponding to $X_0^0$. So, the homotopic triviality of $\operatorname{gr}(f)$ in $\K^b(\AM_\gm(k;\F_p))$ is to say $f_0=\operatorname{gr}(f_0)$ admits a lift along $\operatorname{gr}(d_1):\operatorname{gr}(X_1) \longrightarrow \operatorname{gr}(X_0)$ as follows:
    \[\begin{tikzcd}
        \unit \arrow[r, "\operatorname{gr}(f_0)"] \arrow[rd, swap, "g"] & \operatorname{gr}(X_0) \\
        & \operatorname{gr}(X_1) \arrow[u, swap, "\operatorname{gr}(d_1)"]
    \end{tikzcd}\]
    By construction, $\operatorname{gr}$ kills any morphisms between different diagonal shifts of objects of the heart, so $\operatorname{gr}(d_1)$ is given by a block diagonal matrix corresponding to the respective negative/zero/positive decompositions of $X_1,X_0$:
    \[\operatorname{gr}(d_1) \in \Hom(\operatorname{gr}(X_1^-),\operatorname{gr}(X_0^-)) \oplus \Hom(\operatorname{gr}(X_1^0),\operatorname{gr}(X_0^0)) \oplus \Hom(\operatorname{gr}(X_1^+),\operatorname{gr}(X_0^+))\]
    Writing $g$ in terms of its components $g=(g_-,g_0,g_+)$ corresponding to the block matrix decomposition in $\Hom(\unit,\operatorname{gr}(X_1))$, we see that without loss of generality we may assume $g_-$ is zero: indeed, $\operatorname{gr}(f_0)$ is zero on $X_0^-$.

    Write
    \[\begin{tikzcd}
        d_1^0:X_1^0 \arrow[r] & X_0^0
    \end{tikzcd}\]
    for the component of $d_1$ corresponding to the components $X_1^0,X_0^0$. Observe this is the same as the component $\operatorname{gr}(d_1)\vert_0:\operatorname{X_1^0} \longrightarrow \operatorname{X_0^0}$. We now see that the morphism $f_0-d_1^0 \circ g_0$ is homotopic to $f$, and its component corresponding to $\unit \longrightarrow X_0^0$ is zero by construction. So, replacing $f$ with this new representative, we may assume that
    \[\begin{tikzcd}
        f_0:\unit \arrow[r] & X_0^+.
    \end{tikzcd}\]

    By \Cref{lemma:nilpotentpuresymbolsaretensornilpotent}, $f_0$ is such that $f_0^{\otimes N} \otimes \cone(\rho)=0$ for $N \gg 0$. This completes the proof of the first statement.

    The second statement follows from the first, noting that under the hypotheses $p$ is odd or $k$ is not formally real, we have $\rho$ is $\otimes$-nilpotent by \Cref{lemma:nilpotentpuresymbolsaretensornilpotent}, so
    \[\unit \in \ideal{\cone(\rho)}.\]
    Since the collection of objects for which $f$ is $\otimes$-nilpotent on form a tt-ideal \cite[Proposition~2.10]{Balmer:Spectra}, we conclude $f$ is already $\otimes$-nilpotent in this case.
\end{proof}

\begin{cor}\label{cor:detectingnilpotenceontauandrho}
    Suppose $k$ has a primitive $p^{th}$ root of unity. The composite functor
    \[\begin{tikzcd}
        \DATM_\gm(k;\F_p) \arrow[r, "t"] & \K^b(\mathcal{H}^\mathsf{AT}) \arrow[r, "\operatorname{gr}"] & \K^b(\AM_\gm(k;\F_p))
    \end{tikzcd}\]
    detects $\otimes$-nilpotence of morphisms on $\cone(\tau) \otimes \cone(\rho)$.
\end{cor}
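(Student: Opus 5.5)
To prove the statement, I would compose the two partial detection results: \Cref{prop:detectingnilpotenceoncone(tau)forDATM} for $t$ and \Cref{prop:grdetectsnilpotenceoncone(rho)} for $\operatorname{gr}$. By rigidity, I would reduce to a morphism $f:\unit \longrightarrow Y$ in $\DATM_\gm(k;\F_p)$ with $\operatorname{gr}(t(f))=0$. The goal is then to find $N \geq 1$ with $f^{\otimes N}\otimes \id_{\cone(\tau)\otimes\cone(\rho)}=0$.

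The first step applies \Cref{prop:grdetectsnilpotenceoncone(rho)} to $t(f)$. This gives some $N$ with
\[t(f)^{\otimes N}\otimes \id_{t(\cone(\rho))}=0\]
in $\K^b(\mathcal{H}^\mathsf{AT})$. Here I use that $t$ is a tt-functor, so $t(\cone(\rho))\simeq\cone(t(\rho))$; the object on which $\operatorname{gr}$ detects nilpotence is the image of $\cone(\rho)$ under $t$. The left-hand side equals $t(f^{\otimes N}\otimes\id_{\cone(\rho)})$, again by monoidality of $t$.

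The second step applies \Cref{prop:detectingnilpotenceoncone(tau)forDATM} to the morphism $g\coloneqq f^{\otimes N}\otimes \id_{\cone(\rho)}$ in $\DATM_\gm(k;\F_p)$. Since $t(g)=0$, that proposition gives $g\otimes\id_{\cone(\tau)}=0$ directly, with no further tensor power needed. Hence $f^{\otimes N}\otimes \id_{\cone(\rho)\otimes\cone(\tau)}=0$, which is the claim.

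The main point to check is the first step. Detection of nilpotence ``on $\cone(\rho)$'' in $\K^b(\mathcal{H}^\mathsf{AT})$ must mean on the object $t(\cone(\rho))$, where $\rho$ is viewed as a morphism $\unit\longrightarrow\unit(1)[1]$ in the heart. Examining the proof of \Cref{prop:grdetectsnilpotenceoncone(rho)}, this is exactly what is shown: the conclusion uses $\rho\otimes\cone(\rho)=0$ via \Cref{lemma:nilpotentpuresymbolsaretensornilpotent}, and that holds equally for the image of $\rho$ under $t$. The remaining checks, namely reducing to domain $\unit$ and that nilpotence on $a$ and then on $b$ gives nilpotence on $a\otimes b$, are routine.
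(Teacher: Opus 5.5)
Your proposal is correct and is the paper's argument: the paper's proof just says to combine \Cref{prop:detectingnilpotenceoncone(tau)forDATM} with \Cref{prop:grdetectsnilpotenceoncone(rho)}. You have spelled out that combination correctly, in the right order: first use $\operatorname{gr}$ to get $t(f^{\otimes N}\otimes\id_{\cone(\rho)})=0$ via $t(\cone(\rho))\simeq\cone(t(\rho))$, then apply the $\tau$-statement to the morphism $f^{\otimes N}\otimes\id_{\cone(\rho)}$.
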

\begin{proof}
    Combine \Cref{prop:detectingnilpotenceoncone(tau)forDATM} with \Cref{prop:grdetectsnilpotenceoncone(rho)}.
\end{proof}

\begin{cor}\label{cor:supp(cone(rho)cone(tau))}
    Suppose $k$ has a primitive $p^{th}$ root of unity. The induced map on spectra of the tt-functor
    \[\begin{tikzcd}
        \Psi \coloneqq \operatorname{gr} \circ t:\DATM_\gm(k;\F_p) \arrow[r] & \K^b(\AM_\gm(k;\F_p))
    \end{tikzcd}\]
    induces a homeomorphism onto its image $\supp(\cone(\rho) \otimes \cone(\tau))$:
    \[\begin{tikzcd}
        \Spc(\DAM_\gm(k;\F_p)) \arrow[r, "\simeq"] & \im(\Spc(\Psi))=\supp(\cone(\tau) \otimes \cone(\rho)) \subseteq \Spc(\DATM_\gm(k;\F_p)).
    \end{tikzcd}\]
\end{cor}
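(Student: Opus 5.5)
The plan has three parts: identify the image of $\Spc(\Psi)$, show that $\Spc(\Psi)$ is injective, and show that it is a homeomorphism onto that image. The key observation is that $\Psi$ is a retraction of the inclusion $\iota:\DAM_\gm(k;\F_p)\hookrightarrow\DATM_\gm(k;\F_p)$, once we use $\DAM_\gm(k;\F_p)\simeq\K^b(\AM_\gm(k;\F_p))$. Indeed, $\iota$ lands in weight zero, since $M(E)=M(E)(0)[0]$ lies in the heart $\mathcal{H}^\mathsf{AT}$. So $t\circ\iota$ is the evident functor $\K^b(\AM_\gm(k;\F_p))\to\K^b(\mathcal{H}^\mathsf{AT})$ induced by the inclusion of hearts. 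Here one uses that the weight complex functor is the identity on the heart, together with the fact that a complex of Artin motives is built from degree-zero heart objects. The grading functor $\operatorname{gr}$ is the identity on morphisms between twist-zero objects. Hence $\Psi\circ\iota\simeq\id$ as tt-functors, at least after checking this on the generators $M(E)$ and on morphisms between them. To make the identification of $t\circ\iota$ rigorous I would invoke the essential uniqueness in \Cref{theorem:existenceofweightcomplexfunctor}, applied to the Chow-type weight structure on Artin motives whose heart is $\AM_\gm(k;\F_p)$.

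For the image, I use \Cref{cor:detectingnilpotenceontauandrho} together with \Cref{prop:detectingnilpotenceonanobject}, which give $\supp(\cone(\tau)\otimes\cone(\rho))\subseteq\im(\Spc(\Psi))$. For the reverse inclusion I note that $t(\tau)=0$ by \Cref{rem:supp-cone-tau}. Moreover $\Psi(\rho)=0$: $\rho$ is a morphism $\unit\to\unit(1)[1]$ between heart objects of different twists, so $\operatorname{gr}$ kills it. Therefore $\Psi(\cone(\tau)\otimes\cone(\rho))$ has $\unit\oplus\unit[1]$-type summands and lies in no prime. Every point of the image thus lies in $\supp(\cone(\tau))\cap\supp(\cone(\rho))=\supp(\cone(\tau)\otimes\cone(\rho))$.

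For injectivity and the homeomorphism, functoriality gives $\Spc(\iota)\circ\Spc(\Psi)=\Spc(\Psi\circ\iota)=\id$ on $\Spc(\DAM_\gm(k;\F_p))$. So $\Spc(\Psi)$ is injective, and it is a continuous bijection onto its image with continuous inverse $\Spc(\iota)$ restricted to the image. Consequently it is a homeomorphism onto $\supp(\cone(\tau)\otimes\cone(\rho))$, which is closed.

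I expect the main obstacle to be the retraction statement $\Psi\circ\iota\cong\id$ as tt-functors, rather than merely an agreement on objects. The delicate point is that the weight complex functor restricted to $\DAM_\gm$ must be the canonical identification $\DAM_\gm\simeq\K^b(\AM_\gm)$. My first attempt would be to observe that $\DAM_\gm$ carries the restricted Milnor weight structure with heart $\AM_\gm$. The weight complex functor is compatible with weight-exact inclusions by its construction in \cite{Sosnilo:WeightStructures}, and on $\DAM_\gm$ it is the equivalence of \cite{Voevodsky:TriangulatedMotives}. Only the induced map on spectra is really needed, and for that an isomorphism of functors up to natural equivalence suffices.
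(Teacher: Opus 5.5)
Your proposal is correct and follows the paper's proof. You identify the image exactly as the paper does, using $\Psi(\tau)=0=\Psi(\rho)$ together with \Cref{cor:detectingnilpotenceontauandrho} and \Cref{prop:detectingnilpotenceonanobject}, and you obtain the homeomorphism from the fact that $\Psi$ is a retraction of $\iota$, so that $\Spc(\iota)$ is a continuous inverse; the paper justifies $\Psi\circ\iota\simeq\id$ just as you suggest, by noting that it is weight exact and restricts to the identity on the heart.
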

\begin{proof}
    Since $\Psi(\rho)=0=\Psi(\tau)$, \Cref{cor:detectingnilpotenceontauandrho} and \Cref{prop:detectingnilpotenceonanobject} imply that
    \[\im(\Spc(\Psi))=\supp(\cone(\rho) \otimes \cone(\tau)).\]
    To prove it is a homeomorphism onto its image, we observe that $\Psi$ defines a retraction of the inclusion of Artin motives into Artin--Tate motives:
    \[\begin{tikzcd}
        \DATM_\gm(k;\F_p) \arrow[r, "t"] & \K^b(\mathcal{H}^\mathsf{AT}) \arrow[r, "\operatorname{gr}"] & \K^b(\AM_\gm(k;\F_p)) \simeq \DAM_\gm(k;\F_p) \\
        \DAM_\gm(k;\F_p) \arrow[u, hookrightarrow, "\iota"] \arrow[urr, swap, "\simeq"]
    \end{tikzcd}\]
    Indeed, the functor $\Psi \circ \iota$ is weight exact and its restriction to the heart is the identity functor. Passing to spectra, it follows $\Spc(\iota)$ is the continuous inverse of $\Spc(\Psi)$.
\end{proof}
\begin{remark}
    A similar argument to the proof of \Cref{cor:supp(cone(rho)cone(tau))} was used in \cite[§10]{Balmer:ThreeRealArtinTates} to locate the homeomorphic copy of $\Spc(\DAM_\gm(k;\F_2))$ inside $\Spc(\DATM_\gm(k;\F_2))$ where $k$ is a real closed field (although their proof does not use this fact). There, they define a functor
    \[\operatorname{gr}^{w_{Chow}}:\DATM_\gm(k;\F_2) \rightarrow \K^b(\AM_\gm(k;\F_2))\]
    which is a composition of the Chow weight complex functor following by summing over all pure twists, and this functor also defines a retraction of the inclusion $\DAM_\gm(k;\F_2)$ into $\DATM_\gm(k;\F_2)$.
\end{remark}
\begin{remark}
    Note that, if $p$ is odd and $k$ has a primitive $p^{th}$ root of unity, this already computes the spectrum as a set. Indeed, since $\rho$ is $\otimes$-nilpotent in these cases we have $\ideal{\cone(\rho)}=\DATM_\gm(k;\F_p)$, hence \Cref{cor:supp(cone(rho)cone(tau))} simplifies to
    \[\supp(\cone(\tau)) \cong \Spc(\DAM_\gm(k;\F_p)).\]
    Moreover, the complementary open
    \[U(\cone(\tau))\]
    of $\cone(\tau)$ is a singleton under these hypotheses (\Cref{remark:open(cone(tau))inDATM}), so as a set the spectrum
    \[\Spc(\DATM_\gm(k;\F_p))\]
    consists of a (closed) copy of $\Spc(\DAM_\gm(k;\F_p))$ together with the (open) étale point.
\end{remark}

\subsection{The open of $\cone(\rho)$}
In this subsection we describe the open
\[U(\cone(\rho)) \subseteq \Spc(\DATM_\gm(k;\F_p)).\]
Such a computation will be a consequence of the following main result, paired with our knowledge of the spectrum of Tate motives from \Cref{thm:tate-comparison}.

\begin{theorem}\label{theorem:compactgenerationofDM/cone(rho)}
    The triangulated category $\DM(k;\F_p)[\rho^{-1}]$ is compactly generated by the unit $\unit$. In particular,
    \[(\DM_\gm(k;\F_p)[\rho^{-1}])^\natural \simeq (\DTM_\gm(k;\F_p)[\rho^{-1}])^\natural.\]
\end{theorem}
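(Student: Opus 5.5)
The plan is to transport compact generation from the real étale side via Bachmann's theorem, and then pass to $\mathrm{H}\F_p$-modules. First, I would show that $\Shv(\Sper(k);\Sp)$ is compactly generated by its unit. The space $\Sper(k)$ with the Harrison topology is a profinite (Stone) space, so it has a basis of clopen subsets $W$, and the sheaf $\unit_W$ extended by zero is a retract of $\unit$. On a profinite space, sheaves of spectra are hypercomplete, and global sections $\Gamma(W;-)$ commute with filtered colimits because clopens are quasi-compact and the space has finite covering dimension (dimension zero). Hence each $\unit_W=j_!\unit$ is compact. The family $\{\unit_W\}$ detects objects, since a sheaf with vanishing sections on a basis of quasi-compact opens vanishes. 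Because each $\unit_W$ is a retract of $\unit$, the unit alone generates. Via Bachmann's equivalence $\mathcal{SH}(k)[\rho^{-1}]\simeq\Shv(\Sper(k);\Sp)$, which is symmetric monoidal and sends the unit to the unit, the category $\mathcal{SH}(k)[\rho^{-1}]$ is then compactly generated by $\unit$.

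Next, I would pass to motives. Using $\mathcal{DM}(k;\F_p)\simeq\mathrm{Mod}_{\mathrm{H}\F_p}(\mathcal{SH}(k))$, and since inverting $\rho$ is a smashing localisation, I get $\mathcal{DM}(k;\F_p)[\rho^{-1}]\simeq\mathrm{Mod}_{\mathrm{H}\F_p[\rho^{-1}]}(\mathcal{SH}(k)[\rho^{-1}])$. The free--forgetful adjunction has a conservative right adjoint that preserves colimits. Therefore the free module on a compact generator, namely $\mathrm{H}\F_p[\rho^{-1}]$, which is the unit of $\DM(k;\F_p)[\rho^{-1}]$, is a compact generator. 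I expect the first step to be the main obstacle, in particular checking that the sheaf-theoretic compactness holds without hypercompleteness issues. If needed, I would fall back on Lurie's result that $\Shv(X;\Sp)$ for $X$ a Stone space (equivalently, sheaves on the compact Hausdorff space of dimension $0$) is compactly generated by $\unit_W$ for clopen $W$.

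Finally, I would deduce the "in particular". Thomason--Neeman localisation identifies the compacts of $\DM(k;\F_p)[\rho^{-1}]$ with $(\DM_\gm(k;\F_p)[\rho^{-1}])^\natural$, and likewise for $\DTM$. Since the unit generates, the compacts equal $\thick(\unit)^\natural$, which lies inside the image of $(\DTM_\gm(k;\F_p)[\rho^{-1}])^\natural$. The inclusion functor is also fully faithful: both categories are localisations by the same element, so Hom groups are colimits along $\rho$ of the same Hom groups. Hence it is an equivalence. In fact, the localising subcategory $\DTM(k;\F_p)[\rho^{-1}]$ already contains the unit, so the two big categories coincide.
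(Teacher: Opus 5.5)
Your proposal is correct and follows essentially the paper's route. You prove that $\Shv(\Sper(k);\Sp)$ is compactly generated by $\unit$ using the clopen retracts $\unit_W$, transport this through Bachmann's equivalence, and then descend to $\DM(k;\F_p)[\rho^{-1}]$ along a compact-preserving left adjoint with conservative right adjoint. You realise that adjunction concretely as free--forgetful for $\mathrm{H}\F_p[\rho^{-1}]$-modules, where the paper uses a general lemma and a diagram of adjunctions. The only minor difference is that you check compactness of $\unit$ in the sheaf category directly, whereas the paper deduces it from the evident compactness of the unit in $\SH(k)[\rho^{-1}]$ via Bachmann's equivalence.
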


Observe that, when $p$ is odd or $k$ is not formally real, the above statement is obvious since both categories are trivial in these cases (since $\rho$ is $\otimes$-nilpotent in these cases). However, the result is non-trivial when $p=2$ and $k$ is formally real. To prove it, we establish the following result regarding compact generation for spectral sheaves on Boolean spaces.

\begin{prop}\label{prop:spectralsheavescompactgeneration}
    Let $X$ be a Boolean space. Then the stably symmetric monoidal $\infty$-category $\Shv(X;\Sp)$ of spectral sheaves on $X$ is compactly generated by the unit $\unit$.
\end{prop}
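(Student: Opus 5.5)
I would argue via the basis of clopen subsets of the Boolean space $X$ (compact, Hausdorff, totally disconnected). For each clopen $U \subseteq X$ let $j_U:U \hookrightarrow X$ be the inclusion, and set $\unit_U \coloneqq (j_U)_!\unit_U = (j_U)_*\unit_U$; the two agree since $U$ is clopen. Then
\[\unit \simeq \unit_U \oplus \unit_{X\setminus U},\]
so each $\unit_U$ is a retract of $\unit$. The plan has three steps, in this order.

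\emph{Step 1: the unit is compact.} We have $\Map(\unit,F)=\Gamma(X;F)$, so it suffices that global sections commute with filtered colimits (equivalently, with arbitrary direct sums, by stability). Since $X$ is compact Hausdorff and totally disconnected, its covering dimension is $0$. I would invoke Lurie's result (HTT 7.2.3.6 and 7.2.1.12) that for a finite-dimensional, in fact $0$-dimensional, compact Hausdorff space the $\infty$-topos $\Shv(X)$ is hypercomplete of homotopy dimension $\le 0$. Then $\Gamma(X;-)$ on spectral sheaves commutes with filtered colimits.

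A more hands-on route to the same conclusion uses that every open cover of a clopen subset refines to a finite cover by pairwise disjoint clopens. Hence a sheaf on $X$ is the same as a functor $\mathrm{Clopen}(X)^{op}\to\Sp$ sending finite disjoint unions to products, that is, finite direct sums. This is a sheaf condition for a finitary Grothendieck topology on the Boolean algebra of clopens. Filtered colimits of presheaves computed objectwise therefore remain sheaves, because finite products commute with filtered colimits in $\Sp$. So sheafification is unnecessary for filtered colimits, and $\Gamma$ commutes with them. I expect this identification, and the justification that no hypercompletion or sheafification issues arise, to be the main obstacle. I would do it carefully via the comparison $\Shv(X)\simeq\Shv(\mathrm{Clopen}(X))$, which holds because the clopens form a basis closed under finite intersections, and then use that a Stone space's sheaf theory is determined by this finitary site.

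\emph{Step 2: generation.} Suppose $F$ satisfies $\Map(\Sigma^n\unit,F)=0$ for all $n$. Since each $\unit_U$ is a retract of $\unit$, this gives
\[\pi_n\Gamma(U;F)=\pi_n\Map(\unit_U,F)=0 \quad \text{for all clopen } U \text{ and all } n.\]
Every open subset of $X$ is a union of clopens, and by the identification in Step 1 a sheaf is determined by its values on clopens. Hence $F\simeq 0$.

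\emph{Conclusion.} Step 1 shows the unit is compact and Step 2 shows it detects zero objects. A presentable stable $\infty$-category with such an object is compactly generated by it, so $\Shv(X;\Sp)$ is compactly generated by $\unit$. I would also note that $\Sper(k)$ with the Harrison topology is Boolean; this is what is needed afterwards, together with Bachmann's theorem, to deduce \Cref{theorem:compactgenerationofDM/cone(rho)}.
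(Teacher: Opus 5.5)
Your Step 2 is the paper's argument: the clopen splitting $\unit\simeq\unit_U\oplus\unit_{X\setminus U}$ makes each $\unit_U$ a retract of $\unit$. Hence $F(U)\simeq 0$ for every clopen $U$, and since clopens form a basis, the sheaf condition gives $F\simeq 0$.

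Where you differ is the compactness of $\unit$. The paper only remarks that this ``can be checked directly''. Instead it deduces compactness from Bachmann's equivalence $\mathcal{SH}(k)[\rho^{-1}]\simeq\Shv(\Sper(k);\Sp)$ together with Craven's theorem that every Boolean space is $\Sper(k)$ for some field $k$, using that the unit is compact in the smashing localisation $\SH(k)[\rho^{-1}]$. Your finitary-site argument supplies exactly that direct check. Once $\Shv(X;\Sp)$ is identified with presheaves on $\mathrm{Clopen}(X)$ sending finite disjoint unions to direct sums, filtered colimits are computed objectwise, so $\Gamma(X;-)$ commutes with them.

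Your route buys a self-contained, purely topological proof, with no motivic input and no detour through fields realising $X$. The same model also justifies the ``a sheaf is determined by its values on clopens'' step without hypercompleteness worries. The paper's route is shorter given its citations. In its application only $X=\Sper(k)$ is needed, where Bachmann's theorem is already in play.

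One caution: your first route, via Lurie, is incomplete as stated. Homotopy dimension $\le 0$ plus hypercompleteness does not by itself make global sections commute with filtered colimits; an infinite discrete space is a counterexample. You would also need the compactness of $X$, so that $H^0(X;-)$ commutes with filtered colimits, together with the collapse of the descent spectral sequence. The hands-on clopen-site argument has no such issue, so lead with that one.
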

\begin{proof}
    We show, for any $E \in \Shv(X;\Sp)$,
    \begin{equation}\label{eqn:trivialglobalhomotopygroups}
        \Map_{\Shv(X;\Sp)}(\unit,\Sigma^nE) \simeq *, \text{ for every } n \in \Z
    \end{equation}
    if and only if $E$ is the trivial object.
    
    Let $V \subseteq X$ be a clopen subset, and write $U \coloneqq X \setminus V$. The clopen decomposition
    \[X = V \sqcup U\]
    induces a decomposition of $\infty$-categories
    \[\Shv(X;\Sp) \simeq \Shv(V;\Sp) \times \Shv(U;\Sp),\]
    which gives a decomposition of the $\otimes$-unit $\unit \in \Shv(X;\Sp)$
    \[\unit \simeq \unit_V \oplus \unit_U\]
    where $\unit_V$ (resp. $\unit_U$) is the image under the left adjoint to the canonical projection
    \[\begin{tikzcd}
        p_V:\Shv(X;\Sp) \simeq \Shv(V;\Sp) \times \Shv(U;\Sp) \arrow[r, twoheadrightarrow] & \Shv(V;\Sp)
    \end{tikzcd}\]
    onto the $\infty$-category of spectral sheaves on $V$ (resp. the canonical projection
    \[\begin{tikzcd}
        p_U:\Shv(X;\Sp) \simeq \Shv(V;\Sp) \times \Shv(U;\Sp) \arrow[r, twoheadrightarrow] & \Shv(U;\Sp)
    \end{tikzcd}\]
    onto the $\infty$-category of spectral sheaves on $U$).

    Now suppose $E \in \Shv(X;\Sp)$ satisfies (\ref{eqn:trivialglobalhomotopygroups}). Since $\unit_V$ is a retract of $\unit$, we have
    \[\Map_{\Shv(X;\Sp)}(\unit_V,\Sigma^nE) \simeq *\]
    for every $n \in \Z$, which in turn gives
    \[\Map_{\Shv(V;\Sp)}(\unit,(\Sigma^nE)\vert_V) \simeq *\]
    for all $n$ by the above characterisation of $\unit_V$.

    We have an equivalence of anima
    \[\Map_{\Shv(V;\Sp)}(\unit,(\Sigma^nE)\vert_V) \simeq \Omega^\infty\Sigma^nE(V),\]
    so the above tells us $\Omega^\infty\Sigma^nE(V)$ is contractible for all $n \in \Z$. Consequently, all the (stable) homotopy groups
    \[\pi_n(E(V)) \simeq 0\]
    are trivial for each $n \in \Z$. Since $\Sp$ is compactly generated by the $\otimes$-unit, we deduce triviality of $E(V)$. Since the clopen subsets of $X$ form a basis for the topology of $X$, we deduce $E \simeq 0$ by the sheaf condition.

    It remains to observe that $\unit \in \Shv(X;\Sp)$ is indeed compact. One can check this directly, but it can also be seen to follow by Bachmann's equivalence
    \[\mathcal{SH}(k)[\rho^{-1}] \simeq \Shv(\Sper(k);\Sp)\]
    (\cite{Bachmann:Real}, \cite{Bachmann:Remarks}) together with \cite{craven1975boolean}, which permits us to write any Boolean $X$ as $\Sper(k)$ for some $k$: since the $\otimes$-unit is clearly compact in $\SH(k)[\rho^{-1}]$, we are done.
\end{proof}

\begin{cor}\label{cor:compactgenerationofSH/cone(rho)}
    The triangulated category $\SH(k)[\rho^{-1}]$ is compactly generated by the unit $\unit$.
\end{cor}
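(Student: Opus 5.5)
The plan is to transport \Cref{prop:spectralsheavescompactgeneration} across Bachmann's equivalence
\[\mathcal{SH}(k)[\rho^{-1}] \simeq \Shv(\Sper(k);\Sp).\]
The first step is to check that the space of orderings $\Sper(k)$, with its Harrison topology, is a Boolean space. It is compact, Hausdorff and totally disconnected, with the Harrison sets $\{\alpha \mid a >_\alpha 0\}$ forming a clopen basis. \Cref{prop:spectralsheavescompactgeneration} then shows that the unit is a compact generator of $\Shv(\Sper(k);\Sp)$. If $k$ is not formally real, $\Sper(k)=\emptyset$ and both sides are zero, so the claim holds trivially.

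The second step is to make sure the equivalence is symmetric monoidal, or at least sends the unit to the unit. Bachmann's equivalence is symmetric monoidal (see \cite{Bachmann:Real,Bachmann:Remarks}), so $\unit$ corresponds to the constant sheaf $\unit$. Passing to homotopy categories then shows that $\unit$ is a compact generator of $\SH(k)[\rho^{-1}]$. Concretely, for $E \in \SH(k)[\rho^{-1}]$, the vanishing of $\Hom(\unit,\Sigma^n E)$ for all $n$ forces $E \simeq 0$, and $\Hom(\unit,-)$ commutes with coproducts.

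The one point needing care is what $\SH(k)[\rho^{-1}]$ means. It should be the smashing localisation $\unit[\rho^{-1}] \otimes \SH(k)$, where $\unit[\rho^{-1}]=\hocolim(\unit \xrightarrow{\rho} \mathbb{G}_m \xrightarrow{\rho} \mathbb{G}_m^{\wedge 2} \to \cdots)$. This is the category on which Bachmann's theorem is stated. In it, the image of $\unit$ is compact, since the localisation functor preserves compacts because its right adjoint (the inclusion) preserves coproducts. Compactness of the unit therefore also follows directly, without passing through sheaves.

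I expect the main subtlety to be the identification of $\rho$ in \Cref{prop:spectralsheavescompactgeneration} with Bachmann's $\rho \in \pi_{-1,-1}(\unit)$. This is already recorded in the paper, so the corollary reduces to a short transport argument.
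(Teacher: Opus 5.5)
Your proposal is correct and is essentially the paper's proof. The paper likewise uses that $\Sper(k)$ is Boolean to apply \Cref{prop:spectralsheavescompactgeneration}, and transports the result across Bachmann's equivalence $\mathcal{SH}(k)[\rho^{-1}] \simeq \Shv(\Sper(k);\Sp)$; your extra checks (the unit goes to the unit, and $\unit$ is compact via the smashing localisation) only make explicit what the paper leaves implicit.

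One small slip: \Cref{prop:spectralsheavescompactgeneration} makes no reference to $\rho$, so this corollary needs no identification of $\rho$. That matching only becomes relevant later, when passing from $\SH(k)[\rho^{-1}]$ to $\DM(k;\F_p)[\rho^{-1}]$ in \Cref{theorem:compactgenerationofDM/cone(rho)}.
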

\begin{proof}
    By \cite{Bachmann:Real} and \cite{Bachmann:Remarks}, there is an equivalence of $\infty$-categories
    \[\mathcal{SH}(k)[\rho^{-1}] \simeq \Shv(\Sper(k);\Sp)\]
    between $\rho$-local motivic spectra and spectral sheaves on the real spectrum $\Sper(k)$ of $k$. Since $\Sper(k)$ is a Boolean space, \Cref{prop:spectralsheavescompactgeneration} tells us the homotopy category $\SH(k)[\rho^{-1}]$ is compactly generated by the unit as a triangulated category.
\end{proof}
Before we can get to the proof of \Cref{theorem:compactgenerationofDM/cone(rho)}, we record the following lemma.
\begin{lemma}\label{lemma:descendingcompactgeneration}
    Let $F:\mathcal{C} \longrightarrow \mathcal{D}$ be a triangulated functor between triangulated categories which preserves compact objects and admits a conservative right adjoint $G:\mathcal{D} \longrightarrow \mathcal{C}$. Suppose $\mathcal{C}$ is compactly generated by a set of objects $S \subseteq \mathcal{C}$. Then $\mathcal{D}$ is compactly generated by the set $F(S) \subseteq \mathcal{D}$.
\end{lemma}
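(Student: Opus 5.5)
We have to show that $F(S)$ consists of compact objects and generates $\mathcal{D}$, in the sense that an object $Y \in \mathcal{D}$ with $\Hom_\mathcal{D}(F(s)[n],Y)=0$ for all $s \in S$ and $n \in \Z$ is zero. Compactness is immediate: $S$ consists of compact objects of $\mathcal{C}$, and $F$ preserves compact objects by hypothesis. (If "preserves compact objects" were instead read as "preserves coproducts", the standard criterion would give the same conclusion: $G$ would then preserve coproducts, and $\Hom(F(s),-)\simeq\Hom(s,G(-))$ would commute with coproducts.)

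For generation, let $Y \in \mathcal{D}$ be as above. Since $F$ is triangulated it commutes with shifts, so $F(s)[n]\simeq F(s[n])$. The adjunction then gives
\[\Hom_\mathcal{C}(s[n],G(Y)) \simeq \Hom_\mathcal{D}(F(s[n]),Y) \simeq \Hom_\mathcal{D}(F(s)[n],Y)=0\]
for all $s\in S$ and $n\in\Z$. Because $S$ compactly generates $\mathcal{C}$, this forces $G(Y)=0$.

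It remains to pass from $G(Y)=0$ to $Y=0$. Since $G$ is a right adjoint of a triangulated functor, it is itself triangulated, so it preserves the zero object. Hence $G(Y)\simeq G(0)$, and conservativity gives $Y\simeq 0$. To see this precisely, apply $G$ to the zero morphism $Y\to 0$: the result $G(Y)=0\to 0$ is an isomorphism, so $Y\to 0$ is an isomorphism by conservativity.

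The only point needing care is this last step: conservativity must be applied to a morphism, and one has to know that $G$ is exact. Both are standard, so the argument is straightforward.
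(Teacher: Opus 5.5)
Your argument is correct and is the paper's argument: adjunction turns $\Hom_\mathcal{D}(F(s)[n],Y)=0$ into $\Hom_\mathcal{C}(s[n],G(Y))=0$, compact generation of $\mathcal{C}$ then forces $G(Y)\simeq 0$, and conservativity of $G$ gives $Y\simeq 0$; you also spell out the compactness of $F(S)$ and how conservativity is applied, which the paper leaves implicit. One caveat is that your parenthetical aside is false as written and should be dropped, because a left adjoint always preserves coproducts while its right adjoint need not (e.g.\ $-\otimes\Q$ on $\D(\Z)$ has right adjoint $\mathbf{R}\Hom_\Z(\Q,-)$, which does not preserve coproducts); the correct criterion, for compactly generated $\mathcal{C}$, is that $F$ preserves compact objects if and only if $G$ preserves coproducts.
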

\begin{proof}
    Let $X \in \mathcal{D}$ be an object. Suppose for all $F(A) \in F(S)$ and $n \in \Z$,
    \[\Hom_\mathcal{D}(\Sigma^nF(A),X)=0.\]
    Then by adjunction we have
    \[\Hom_\mathcal{C}(\Sigma^n A,G(X))=0\]
    for all $A \in S$, so compact generation of $\mathcal{C}$ by $S$ gives us $G(X) \simeq 0$. $G$ is conservative, so this implies $X \simeq 0$. This proves the lemma.
\end{proof}
\begin{proof}[Proof of \Cref{theorem:compactgenerationofDM/cone(rho)}]
    By \Cref{cor:compactgenerationofSH/cone(rho)}, $\SH(k)[\rho^{-1}]$ is compactly generated by the unit. Consider the following commutative diagram
    \[\begin{tikzcd}
        \SH(k) \arrow[r, twoheadrightarrow, shift left] \arrow[d, shift right, swap, "M"] \arrow[d, hookleftarrow, shift left, "H"] & \arrow[l, hookrightarrow, shift left] \SH(k)[\rho^{-1}] \arrow[d, dashrightarrow] \\
        \DM(k;\F_p) \arrow[r, twoheadrightarrow, shift left] & \arrow[l, hookrightarrow, shift left] \DM(k;\F_p)[\rho^{-1}]
    \end{tikzcd}\]
    where the upper horizontal arrows are left adjoints to the lower horizontal arrows, and the solid left vertical arrow $M$ is left adjoint to the solid right vertical arrow $H$; here, hooked arrows denote faithful functors, and the double-headed arrows denote localisations. The dashed vertical arrow on the right is induced by the universal property of localising at $\rho$: it clearly preserves coproducts, so it has a right adjoint
    \[\begin{tikzcd}
        \DM(k;\F_p)[\rho^{-1}] \arrow[r] & \SH(k)[\rho^{-1}].
    \end{tikzcd}\]
    We claim this right adjoint is conservative. This follows since its composition with the fully faithful right adjoint
    \[\begin{tikzcd}
        \SH(k)[\rho^{-1}] \arrow[r] & \SH(k)
    \end{tikzcd}\]
    agrees with the composition of the right adjoints
    \[\begin{tikzcd}
        \DM(k;\F_p)[\rho^{-1}] \arrow[r] & \DM(k;\F_p) \arrow[r, "H"] & \SH(k),
    \end{tikzcd}\]
    which is conservative.

    We now conclude by \Cref{lemma:descendingcompactgeneration}; the final assertion that we have an equivalence
    \[\DM(k;\F_p)[\rho^{-1}] \simeq \DTM(k;\F_p)[\rho^{-1}]\]
    is now immediate.
\end{proof}

\begin{cor}\label{cor:openofcone(rho)inDATM}
    The open
    \[U(\cone(\rho)) \subseteq \Spc(\DATM_\gm(k;\F_p))\]
    is homeomorphic to the corresponding open in $\Spc(\DTM_\gm(k;\F_p))$.
\end{cor}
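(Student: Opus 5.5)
The plan is to identify both opens with spectra of the same essentially small tt-category, and then check that the two identifications are compatible with the restriction map $\Spc(\DATM_\gm(k;\F_p)) \to \Spc(\DTM_\gm(k;\F_p))$.

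By \Cref{remark:principalquotientgiveshomeoontoopen}, for $\mathcal{K}$ either $\DTM_\gm(k;\F_p)$ or $\DATM_\gm(k;\F_p)$, the open $U(\cone(\rho))$ is homeomorphic to $\Spc(\mathcal{K}/\ideal{\cone(\rho)})$. Idempotent completion does not change the spectrum, so we may replace this quotient by $(\mathcal{K}/\ideal{\cone(\rho)})^\natural$. Via the smashing localisation with idempotent $\hocolim(\unit \xrightarrow{\rho} \unit(1)[1] \xrightarrow{\rho} \cdots)$, this is the category of compact objects of $\mathcal{K}^{\mathrm{big}}[\rho^{-1}]$, where $\mathcal{K}^{\mathrm{big}}$ denotes $\DTM(k;\F_p)$ or $\DATM(k;\F_p)$ respectively. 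This is the same argument used for $\tau$ at the start of Section 3.

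Next I use \Cref{theorem:compactgenerationofDM/cone(rho)}. It says that $\DM(k;\F_p)[\rho^{-1}]$ is compactly generated by $\unit$. Hence $\DTM(k;\F_p)[\rho^{-1}]$, which is the localising subcategory generated by $\unit$, equals all of $\DM(k;\F_p)[\rho^{-1}]$. Now $\DATM(k;\F_p)[\rho^{-1}]$ sits between these two:
\[\DTM(k;\F_p)[\rho^{-1}] \subseteq \DATM(k;\F_p)[\rho^{-1}] \subseteq \DM(k;\F_p)[\rho^{-1}].\]
All three localisations are computed inside $\DM(k;\F_p)[\rho^{-1}]$, since the $\rho$-local idempotent lies in $\DTM$. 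So all three inclusions are equalities. Passing to compact objects, which are the idempotent completions of the geometric quotients, the inclusion $\DTM_\gm \hookrightarrow \DATM_\gm$ induces an equivalence
\[(\DTM_\gm(k;\F_p)/\ideal{\cone(\rho)})^\natural \xrightarrow{\ \sim\ } (\DATM_\gm(k;\F_p)/\ideal{\cone(\rho)})^\natural.\]

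Finally, naturality of the Verdier quotient maps gives a commutative square. Its horizontal maps are the restriction $\Spc(\DATM_\gm) \to \Spc(\DTM_\gm)$ and the homeomorphism induced by this equivalence; its vertical maps are the open embeddings of \Cref{remark:principalquotientgiveshomeoontoopen}. The restriction map sends $U(\cone(\rho))$ into $U(\cone(\rho))$, since $\cone(\rho)$ lies in $\DTM_\gm$. The square then shows that this restriction is a homeomorphism between the two opens.

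The main obstacle is the compact-objects step: I need that the compact objects of $\mathcal{K}^{\mathrm{big}}[\rho^{-1}]$ are exactly $(\mathcal{K}/\ideal{\cone(\rho)})^\natural$, for both $\mathcal{K}=\DTM_\gm$ and $\mathcal{K}=\DATM_\gm$. I would argue this via Neeman's localisation theorem, because $\Loc\ideal{\cone(\rho)}$ is generated by compact objects in each big category.
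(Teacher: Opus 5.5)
Your proposal is correct and follows the paper's route. The paper's proof simply cites \Cref{theorem:compactgenerationofDM/cone(rho)} for the equivalence $(\DATM_\gm(k;\F_p)/\ideal{\cone(\rho)})^\natural \simeq (\DTM_\gm(k;\F_p)/\ideal{\cone(\rho)})^\natural$ and then applies \Cref{remark:principalquotientgiveshomeoontoopen}. You spell out what it leaves implicit: the sandwich of $\DATM(k;\F_p)[\rho^{-1}]$ between the Tate and full $\rho$-localisations, the Neeman identification of compact objects, and the check that the homeomorphism is induced by restriction to Tate motives, which is the form later used in the proof of \Cref{prop:topologyofSpc(DATM)}.
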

\begin{proof}
    \Cref{theorem:compactgenerationofDM/cone(rho)} furnishes an equivalence of tt-categories
    \[(\DATM_\gm(k;\F_p)/\ideal{\cone(\rho)})^\natural \simeq (\DTM_\gm(k;\F_p)/\ideal{\cone(\rho)})^\natural\]
    and so their spectra are equivalent; applying \Cref{remark:principalquotientgiveshomeoontoopen} gives the desired homeomorphism.
\end{proof}
\begin{cor}\label{cor:Spc(DATM)astwopieces}
    Suppose $k$ has a primitive $p^{th}$ root of unity. Then
    \[\Spc(\DATM_\gm(k;\F_p))\]
    is the union of the closed subspace
    \[\supp(\cone(\rho) \otimes \cone(\tau)) \cong \Spc(\DAM_\gm(k;\F_p))\]
    and its complementary open
    \[U(\cone(\rho) \otimes \cone(\tau)),\]
    which is homeomorphic to the corresponding open subspace
    \[U(\cone(\rho) \otimes \cone(\tau)) \cong \Spc(\DTM_\gm(k;\F_p)) \setminus \{0\}\]
    of $\Spc(\DTM_\gm(k;\F_p))$.
\end{cor}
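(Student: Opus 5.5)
The closed half of the statement is \Cref{cor:supp(cone(rho)cone(tau))}, so the work is in identifying the open complement $U \coloneqq U(\cone(\rho)\otimes\cone(\tau)) = U(\cone(\rho)) \cup U(\cone(\tau))$ with the punctured Tate spectrum. The plan is to compare $U$ with the corresponding open in $\Spc(\DTM_\gm(k;\F_p))$ via the surjective restriction map
\[
\pi:\Spc(\DATM_\gm(k;\F_p)) \longrightarrow \Spc(\DTM_\gm(k;\F_p)).
\]
Since $\rho$ and $\tau$ are Tate morphisms, $\pi^{-1}$ of $U^{\mathsf{T}}(\cone(\rho)\otimes\cone(\tau))$ is exactly $U$. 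It then suffices to show that $\pi$ restricts to a homeomorphism between these two opens, and to compute the Tate-side open.

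I would work separately on each of the two elementary opens. On $U(\cone(\tau))$, \Cref{prop:tau-localisation-etale} identifies the localisations with rigid étale Artin--Tate motives and rigid étale Tate motives respectively. Both have spectra mapping homeomorphically to $\Spec^h(K^M_*(k)/p)$ by \Cref{thm:gallauer-etale-spectrum} and \Cref{lemma:homeoofcomparisonmapdescendstosubcategories}. These identifications are compatible with restriction along the inclusion by naturality of the graded comparison map, so $\pi$ restricts to a homeomorphism $U(\cone(\tau)) \to U^{\mathsf{T}}(\cone(\tau))$. On $U(\cone(\rho))$, \Cref{cor:openofcone(rho)inDATM} gives a homeomorphism with $U^{\mathsf{T}}(\cone(\rho))$. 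Here I must check that this homeomorphism is again induced by $\pi$. This holds because the equivalence of \Cref{theorem:compactgenerationofDM/cone(rho)} is induced by the inclusion of Tate motives, so the localisation of $\DTM_\gm \hookrightarrow \DATM_\gm$ at $\rho$ is an equivalence after idempotent completion.

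Next I glue the two pieces. Each restriction of $\pi$ is a homeomorphism onto an open of the Tate spectrum, and the two restrictions agree on the overlap. Hence $\pi|_U$ is a continuous bijection $U \to U^{\mathsf{T}}(\cone(\tau)) \cup U^{\mathsf{T}}(\cone(\rho))$, and it is open because it is open on each member of an open cover. So it is a homeomorphism. One point needs care: injectivity across the two pieces. If $\mathcal{P}\in U(\cone(\tau))$ and $\mathcal{P}'\in U(\cone(\rho))$ have the same image, then $\mathcal{P}'$ also lies in $U(\cone(\tau))$, since membership of $\cone(\tau)$ is detected on the Tate restriction. So both points lie in a single piece, where $\pi$ is injective.

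Finally I compute the Tate-side open. By \Cref{thm:spectrum-tate-root-unity} and \Cref{cor:explicit-tate-generators-topology}, $\supp^{\mathsf{T}}(\cone(\rho)\otimes\cone(\tau))$ corresponds under $\varrho^h$ to $V(\rho,\tau)=\{\mathfrak{m}+(\tau)\}$, which is the point $\ideal{0}$. When $p$ is odd or $k$ is not formally real, $\rho$ is nilpotent and this degenerates to $V(\tau)=\{\ideal{0}\}$. In either case the complement is $\Spc(\DTM_\gm(k;\F_p))\setminus\{\ideal{0}\}$. The main obstacle I expect is the compatibility check on $U(\cone(\rho))$, namely that the abstract equivalence really is the restriction along the inclusion. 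I would verify it directly from the construction via compact generation by $\unit$.
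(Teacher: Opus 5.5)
Your proposal is correct and follows the paper's route. The paper's proof simply combines \Cref{remark:open(cone(tau))inDATM}, \Cref{cor:supp(cone(rho)cone(tau))} and \Cref{cor:openofcone(rho)inDATM}, which are exactly your three ingredients; you additionally spell out what the paper leaves implicit, namely that both open identifications are induced by the single restriction map $\pi$ so that they glue, and that $V(\rho,\tau)=\{\mathfrak{m}+(\tau)\}$ corresponds to the point $\ideal{0}$.
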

\begin{proof}
    Combine \Cref{remark:open(cone(tau))inDATM}, \Cref{cor:supp(cone(rho)cone(tau))}, and \Cref{cor:openofcone(rho)inDATM}.
\end{proof}
\begin{remark}
    If $k$ does not contain a primitive $p^{th}$ root of unity, the analogous statement still holds if one replaces $\tau$ with $\beta$ from \Cref{prop:hSpec-motcohom-no-root-unity}.
\end{remark}

\subsection{The topology}
Throughout this subsection, we assume that $k$ contains a primitive $p^{th}$ root of unity. By \Cref{cor:Spc(DATM)astwopieces} the spectrum
\[\Spc(\DATM_\gm(k;\F_p))\]
decomposes into a closed piece and an open piece, both of which are already understood. It remains to describe how these two pieces `\emph{glue together}.' To fix notation, for the remainder of this section we shall denote
\[V \coloneqq \supp(\cone(\rho) \otimes \cone(\tau)) \subseteq \Spc(\DATM_\gm(k;\F_p));\]
\[U \coloneqq U(\cone(\rho) \otimes \cone(\tau)) \subseteq \Spc(\DATM_\gm(k;\F_p)).\]
\Cref{cor:Spc(DATM)astwopieces} furnishes homeomorphisms
\[V \cong \Spc(\DAM_\gm(k;\F_p));\]
\[U \cong \Spc(\DTM_\gm(k;\F_p)) \setminus \{\ideal{0}\}.\]
From now on, we shall refer to the subspace $U$ as the \emph{Tate open}.

To understand the topology, we begin by describing the specialisations from $U$ to $V$. We shall denote by
\[a_E \in \Spc(\DATM_\gm(k;\F_p))\]
the restriction of the isotropic point of $\Spc(\DM_\gm(k;\F_p))$ corresponding to a field extension $E/k$.

\begin{prop}\label{prop:specialisationsforcone(tau)}
    The étale point
    \[\mathcal{P}_{\acute{e}t} \coloneqq \ideal{\cone(\tau)} \in \Spc(\DATM_\gm(k;\F_p))\]
    specialises to $a_{\overline{k}}$. Moreover, $\mathcal{P}_{\acute{e}t}$ specialises to no other point in $V \subseteq \Spc(\DATM_\gm(k;\F_p))$.
\end{prop}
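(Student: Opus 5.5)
The proof has two parts: produce the specialisation by base change to $\overline{k}$, and rule out all other specialisations using the retraction $\Psi$ of \Cref{cor:supp(cone(rho)cone(tau))}.

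\emph{Existence.} Consider the base-change tt-functor $\phi_{\overline{k}}:\DATM_\gm(k;\F_p)\to\DATM_\gm(\overline{k};\F_p)=\DTM_\gm(\overline{k};\F_p)$. Its spectrum is the Sierpiński space $\{\ideal{0},\ideal{\cone(\tau)}\}$ with $\ideal{\cone(\tau)}\rightsquigarrow\ideal{0}$.
\begin{itemize}
\item The preimage of $\ideal{\cone(\tau)}$ contains $\cone(\tau)$. It also lies in $U(\cone(\tau))$, since the restriction to Tate motives is $\mathcal{P}_{\acute{e}t}$ by \Cref{prop:tate-specialisations}. Hence it equals $\mathcal{P}_{\acute{e}t}=\ideal{\cone(\tau)}$, using \Cref{remark:open(cone(tau))inDATM} and the description of the open as $\Spec^h(R_k)$ in the odd/non-real case (in general, take the point over $\mathfrak m$).
\item The preimage of $\ideal{0}$ is $\ker(\phi_{\overline{k}})$. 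By definition of isotropic points this is the restriction of the isotropic point for $\overline{k}$, i.e.\ $a_{\overline{k}}$ (the isotropic point of an algebraically closed field being the kernel of base change).
\end{itemize}
Continuity of $\Spc(\phi_{\overline{k}})$ transports the specialisation, giving $\mathcal{P}_{\acute{e}t}\rightsquigarrow a_{\overline{k}}$.

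\emph{Uniqueness.} Suppose $\mathcal{P}_{\acute{e}t}\rightsquigarrow\mathcal{Q}$ with $\mathcal{Q}\in V$, i.e.\ $\mathcal{Q}\subseteq\mathcal{P}_{\acute{e}t}$. Restricting along the inclusion $\iota:\DAM_\gm(k;\F_p)\hookrightarrow\DATM_\gm(k;\F_p)$ preserves this, so $\iota^{-1}(\mathcal{Q})\subseteq\iota^{-1}(\mathcal{P}_{\acute{e}t})$. By \Cref{cor:supp(cone(rho)cone(tau))}, $\Spc(\iota)$ restricted to $V$ is a homeomorphism inverse to $\Spc(\Psi)$, so $\mathcal{Q}$ is determined by $\iota^{-1}(\mathcal{Q})$. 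It therefore suffices to identify $\iota^{-1}(\mathcal{P}_{\acute{e}t})$ and show it is a closed point of $\Spc(\DAM_\gm(k;\F_p))$ corresponding to $a_{\overline{k}}$.

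First, $\iota^{-1}(\mathcal{P}_{\acute{e}t})$ is the set of Artin motives killed by étale realisation. Étale realisation is faithful on Artin motives: permutation modules give étale sheaves, and morphisms between Artin motives are $\Hom$'s of permutation modules, which are computed identically in both categories. Hence $\iota^{-1}(\mathcal{P}_{\acute{e}t})=\ideal{0}$.

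Second, $\iota^{-1}(a_{\overline{k}})$ is the kernel of $\DAM_\gm(k;\F_p)\to\DAM_\gm(\overline{k};\F_p)=\D^b(\F_p)$, i.e.\ the restriction to the trivial subgroup. Since $\iota^{-1}(a_{\overline{k}})\subseteq\iota^{-1}(\mathcal{P}_{\acute{e}t})=\ideal{0}$ by the first part, this kernel is also $\ideal{0}$. Indeed, in $\K^b(\mathsf{perm}(\Gamma_k;\F_p)^\natural)$, $\ideal{0}$ is the unique closed point when it is prime, which is automatic here since we have just exhibited it as a prime preimage. So $a_{\overline{k}}$ corresponds to the minimal prime $\ideal{0}$.

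Now any $\mathcal{Q}\in V$ with $\mathcal{Q}\subseteq\mathcal{P}_{\acute{e}t}$ has $\iota^{-1}(\mathcal{Q})\subseteq\ideal{0}$, hence $\iota^{-1}(\mathcal{Q})=\ideal{0}=\iota^{-1}(a_{\overline{k}})$. Injectivity of $\Spc(\iota)|_V$ then forces $\mathcal{Q}=a_{\overline{k}}$.

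The main obstacle is the claim that étale realisation is faithful on geometric Artin motives, i.e.\ that $\iota^{-1}(\mathcal{P}_{\acute{e}t})=\ideal{0}$. I would prove it by using \Cref{prop:tau-localisation-etale}: $\Hom$'s between $M(E)[i]$'s are motivic cohomology groups $H^{*,0}_{\mathcal M}$ of finite étale $k$-schemes, and $\tau$-multiplication $H^{*,0}\to H^{*,n}$ is injective on these (see \Cref{prop:detectingnilpotenceoncone(tau)forDATM}). Hence $\Hom$'s of Artin motives inject into their $\tau$-localisations, which are the étale $\Hom$'s. This injectivity on generators gives conservativity, and then via rigidity, faithfulness on the ideal level, which is all that is needed.
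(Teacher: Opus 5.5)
Your existence argument is fine and is the paper's: base change to $\overline{k}$ sends $\ideal{0}$ and $\ideal{\cone(\tau)}$ to $a_{\overline{k}}$ and $\mathcal{P}_{\acute{e}t}$, and continuity transports the specialisation.

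\textbf{The uniqueness argument rests on a false claim.} Étale realisation is \emph{not} conservative on geometric Artin motives. So $\iota^{-1}(\mathcal{P}_{\acute{e}t})\neq\ideal{0}$ in general, and $\ideal{0}$ need not even be prime in $\DAM_\gm(k;\F_p)$.
\begin{itemize}
\item Take $k$ real closed and $p=2$. Then $\Spc(\DAM_\gm(k;\F_2))$ has two closed points, $\ideal{M(\overline{k})}$ and $\ideal{\widetilde{M}(\overline{k})}$.
\item The reduced Rost motive $\widetilde{M}(\overline{k})$ corresponds, up to shift, to the acyclic complex $\F_2\to\F_2[C_2]\to\F_2$ of permutation modules. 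It is therefore a non-zero Artin motive with zero étale realisation.
\item Indeed, in the six-point space one has $\ideal{\widetilde{M}(\overline{k})}\subseteq\ideal{\cone(\tau)}$, and $\Spc(\iota)(a_{\overline{k}})=\ideal{\widetilde{M}(\overline{k})}$, not $\ideal{0}$.
\item Over a finite field, $\Spc(\DAM_\gm(k;\F_p))$ even has infinitely many closed points.
\end{itemize}

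\textbf{Where your justification breaks.} You pass from injectivity on Hom groups between the generators $M(E)[i]$ to faithfulness. Injectivity does not propagate through extensions and retracts: the five-lemma argument needs bijections. The functor is also far from full. In étale motives, $\Hom(M(E),M(F)[i])=H^i_{\acute{e}t}(E\otimes_k F;\F_p)$, which can be non-zero for $i>0$, whereas weight-zero motivic cohomology vanishes there. On $\K^b(\mathsf{perm}(\Gamma_k;\F_p)^\natural)$, étale realisation is passage to the derived category of discrete $\Gamma_k$-modules, which kills every acyclic complex of permutation modules.

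\textbf{What the paper uses instead.} Two genuine inputs replace your false step.
\begin{enumerate}
\item Use the commutative square of spectra given by base change to $\overline{k}$ and the inclusions of Artin motives. Since $\Spc(\DAM_\gm(\overline{k};\F_p))$ is a point, this gives $\Spc(\iota)(\mathcal{P}_{\acute{e}t})=\Spc(\iota)(a_{\overline{k}})$. So the two preimages you compare are indeed equal, but both are the kernel of restriction to the trivial subgroup, not $\ideal{0}$.
\item You need that this point is \emph{closed} in $\Spc(\DAM_\gm(k;\F_p))$. This is \cite[Proposition~10.8]{Gallauer:Periods}, which rests on the Gallauer--Miller computation of the periodic locus. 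It is a real theorem and cannot be obtained by identifying the point with $\ideal{0}$.
\end{enumerate}
With these, a specialisation $\mathcal{P}_{\acute{e}t}\rightsquigarrow\mathcal{Q}\in V$ gives $a_{\overline{k}}=\Spc(\iota)(\mathcal{P}_{\acute{e}t})\rightsquigarrow\Spc(\iota)(\mathcal{Q})=\mathcal{Q}$, so closedness forces $\mathcal{Q}=a_{\overline{k}}$. Your use of the retraction from \Cref{cor:supp(cone(rho)cone(tau))}, that $\Spc(\iota)$ is injective on $V$, then goes through as you intended.
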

\begin{proof}
    Consider the base-change functor from Artin--Tate motives over $k$ to Artin--Tate motives over the algebraic closure $\overline{k}$ of $k$:
    \[\begin{tikzcd}
        F_{\overline{k}}:\DATM_\gm(k;\F_p) \arrow[r] & \DATM_\gm(\overline{k};\F_p).
    \end{tikzcd}\]
    On spectra, this induces a continuous map
    \[\begin{tikzcd}
        \Spc(F_{\overline{k}}):\Spc(\DTM_\gm(\overline{k};\F_p)) \cong \Spc(\DATM_\gm(\overline{k};\F_p)) \arrow[r] & \Spc(\DATM_\gm(k;\F_p)).
    \end{tikzcd}\]
    Recall that by \cite{Gallauer:TateSpectrum} (cf.~\Cref{cor:explicit-tate-generators-topology}), the domain of the above is the Sierpiński space
    \[\begin{tikzcd}[ampersand replacement=\&,sep=scriptsize]
	   {\ideal{0}} \\
	   \\
	   {\ideal{\cone(\tau)}}
	   \arrow[no head, from=1-1, to=3-1]
    \end{tikzcd}\]
    
    The closed point $\ideal{0}$ is the restriction to $\Spc(\DTM_\gm(\overline{k};\F_p))$ of the isotropic point in the spectrum $\Spc(\DM_\gm(\overline{k};\F_p))$ corresponding to the base field $\overline{k}$, whilst the generic point $\ideal{\cone(\tau)}$ is the restriction of the étale point of $\Spc(\DM_\gm(k;\F_p))$. Now, functoriality of the spectrum produces the commutative diagram
    \[\begin{tikzcd}
        \Spc(\DM_\gm(\overline{k};\F_p)) \arrow[r] \arrow[d] & \Spc(\DM_\gm(k;\F_p)) \arrow[d] \\
        \Spc(\DTM_\gm(\overline{k};\F_p)) \arrow[r, "\Spc(F_{\overline{k}})"] & \Spc(\DATM_\gm(k;\F_p))
    \end{tikzcd}\]
    from which we deduce
    \[\Spc(F_{\overline{k}})(\ideal{0})=a_{\overline{k}};\]
    \[\Spc(F_{\overline{k}})(\ideal{\cone(\tau)})=\mathcal{P}_{\acute{e}t}.\]
    Since continuous maps preserve specialisations, we obtain
    \[U \ni \mathcal{P}_{\acute{e}t} \rightsquigarrow a_{\overline{k}} \in V.\]
    We must verify there are no other specialisations from $\mathcal{P}_{\acute{e}t}$ to points in $V$. Consider the commutative diagram
    \[\begin{tikzcd}
        \DATM_\gm(k;\F_p) \arrow[r] & \DATM_\gm(\overline{k};\F_p) \\
        \DAM_\gm(k;\F_p) \arrow[r] \arrow[u, hookrightarrow, "\iota"] & \DAM_\gm(\overline{k};\F_p) \arrow[u, hookrightarrow]
    \end{tikzcd}\]
    By the proof of \Cref{cor:supp(cone(rho)cone(tau))}, these inclusions induce retractions of $\Spc(\DATM_\gm(k;\F_p))$ onto $V$, and likewise for $\overline{k}$ in place of $k$. Now suppose
    \[\mathcal{P}_{\acute{e}t} \rightsquigarrow b \in V\]
    is a specialisation. Applying $\Spc(-)$ to the above yields the commutative square
    \[\begin{tikzcd}
        \Spc(\DATM_\gm(k;\F_p)) \arrow[d, twoheadrightarrow, swap, "\Spc(\iota)"] & \arrow[l] \Spc(\DATM_\gm(\overline{k};\F_p)) \arrow[d, twoheadrightarrow] \\
        \Spc(\DAM_\gm(k;\F_p)) & \arrow[l] \Spc(\DAM_\gm(\overline{k};\F_p))
    \end{tikzcd}\]
    which gives us
    \[\Spc(\iota)(\mathcal{P}_{\acute{e}t})=a_{\overline{k}}.\]
    Since $\mathcal{P}_{\acute{e}t} \rightsquigarrow b$, we have
    \[a_{\overline{k}}=\Spc(\iota)(\mathcal{P}_{\acute{e}t}) \rightsquigarrow \Spc(\iota)(b)=b\]
    by continuity, where we have used that $\Spc(\iota)$ is a retraction onto the subspace $V$ to get the right-hand side equality. But the isotropic point $a_{\overline{k}}$ is closed in $\Spc(\DAM_\gm(k;\F_p))$ by \cite[Proposition~10.8]{Gallauer:Periods}, which forces $b=a_{\overline{k}}$. This concludes the proof.
\end{proof}

\begin{prop}\label{prop:specialisationsforrealpoints}
    Suppose $k$ is formally real and $p=2$. Let $\mathcal{P}^{\alpha} \in U$ be the point in the Tate open corresponding to an ordering $\alpha \in \Sper(k)$ of $k$. If we denote by $k_\alpha$ the real closure of the ordered field $(k,\alpha)$, then
    \[\mathcal{P}^{\alpha} \rightsquigarrow a_{k_\alpha}.\]
    Moreover, $\mathcal{P}^{\alpha}$ specialises to no other point in $V \subseteq \Spc(\DATM_\gm(k;\F_2))$.
\end{prop}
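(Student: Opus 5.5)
We will follow the template of \Cref{prop:specialisationsforcone(tau)}, replacing the algebraic closure $\overline{k}$ by the real closure $k_\alpha$. First we consider the base-change tt-functor
\[\begin{tikzcd}
    F_\alpha:\DATM_\gm(k;\F_2) \arrow[r] & \DATM_\gm(k_\alpha;\F_2)
\end{tikzcd}\]
and its map on spectra. The target spectrum is the six-point space of Balmer--Gallauer recalled in the preliminaries. Its Tate part contains the points $\ideal{\cone(\rho_\alpha)}$ and $\langle M(\overline{k_\alpha})\rangle$, with the specialisation $\ideal{\cone(\rho_\alpha)} \rightsquigarrow \langle M(\overline{k_\alpha})\rangle$ drawn in that picture. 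The point $\langle M(\overline{k_\alpha})\rangle$ is the closed point of the spectrum of Artin--Tate motives over $k_\alpha$ on the $M(\overline{k_\alpha})$ side. It is the restriction of the isotropic point of $\Spc(\DM_\gm(k_\alpha;\F_2))$ attached to the base field $k_\alpha$, since the kernel of isotropic realisation over a real closed field kills $M(\overline{k_\alpha})$, the norm quadric of $\rho$.

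We then compute the two relevant images under $\Spc(F_\alpha)$.

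\textbf{Image of $\ideal{\cone(\rho_\alpha)}$.} We restrict it further to $\DTM_\gm(k;\F_2)$. In the proof of \Cref{prop:tate-specialisations}, base change to $k_\alpha$ sends $\ideal{\cone(\rho_\alpha)}$ to $\mathcal{P}^\alpha$. The point of $\Spc(\DATM_\gm(k;\F_2))$ obtained here lies in $U$, because $\cone(\tau)\otimes\cone(\rho)$ maps into $\ideal{\cone(\rho_\alpha)}$. Under the identification $U \cong \Spc(\DTM_\gm(k;\F_2))\setminus\{\ideal{0}\}$ of \Cref{cor:Spc(DATM)astwopieces}, which is compatible with restriction to Tate motives, this point is therefore $\mathcal{P}^\alpha$.

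\textbf{Image of $\langle M(\overline{k_\alpha})\rangle$.} Functoriality of the spectrum, applied to the square of restrictions from $\DM_\gm$ as in \Cref{prop:specialisationsforcone(tau)}, sends the isotropic point for $k_\alpha$ to $a_{k_\alpha}$.

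Since $\Spc(F_\alpha)$ is continuous, the specialisation $\ideal{\cone(\rho_\alpha)}\rightsquigarrow \langle M(\overline{k_\alpha})\rangle$ pulls back to $\mathcal{P}^\alpha \rightsquigarrow a_{k_\alpha}$.

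For uniqueness we use the retraction $\Spc(\iota):\Spc(\DATM_\gm(k;\F_2))\to V$ from \Cref{cor:supp(cone(rho)cone(tau))}, together with the commutative square of base change to $k_\alpha$ on Artin and Artin--Tate motives, with their respective retractions. Commutativity gives that $\Spc(\iota)(\mathcal{P}^\alpha)$ is the image of the Artin component of $\ideal{\cone(\rho_\alpha)}$ in $\Spc(\DAM_\gm(k_\alpha;\F_2))$. In the three-point space, $\ideal{\cone(\rho)}$ restricts to the closed point corresponding to $\langle M(\overline{k_\alpha})\rangle$, so $\Spc(\iota)(\mathcal{P}^\alpha)=a_{k_\alpha}$.

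Now suppose $\mathcal{P}^\alpha\rightsquigarrow b\in V$. Applying $\Spc(\iota)$ gives $a_{k_\alpha}\rightsquigarrow b$. If $a_{k_\alpha}$ is closed in $\Spc(\DAM_\gm(k;\F_2))$, then $b=a_{k_\alpha}$ and we are done. Closedness of isotropic points in the Artin spectrum is exactly the input \cite[Proposition~10.8]{Gallauer:Periods} used before, and we would cite it again here.

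The main obstacle is the identification of Artin components of points over $k_\alpha$. We need that the Artin restriction of $\ideal{\cone(\rho_\alpha)}$, equivalently $\Spc(\iota)$ applied to it, is the closed point that the isotropic point also restricts to. We would check this directly with the retraction $\Psi$ over $k_\alpha$. The functor $\Psi$ kills $\rho_\alpha$, so $\cone(\rho_\alpha)\mapsto \unit\oplus\unit[1]$ up to twist. The required identification therefore reduces to the Balmer--Gallauer description of the restriction map from the six-point space to the three-point space.
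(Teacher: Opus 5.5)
Your proposal is correct and follows the paper's proof. You pull back Balmer--Gallauer's specialisation $\ideal{\cone(\rho_\alpha)} \rightsquigarrow \langle M(\overline{k_\alpha})\rangle$ along base change to $k_\alpha$, identifying the images as $\mathcal{P}^\alpha$ and $a_{k_\alpha}$, and you get uniqueness from $\Spc(\iota)(\mathcal{P}^\alpha)=a_{k_\alpha}$ together with closedness of $a_{k_\alpha}$ via \cite[Proposition~10.8]{Gallauer:Periods}.

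Two minor corrections. First, $\langle M(\overline{k_\alpha})\rangle$ lies in the Artin piece, not the Tate part. Second, the equality $\Spc(\iota)(\ideal{\cone(\rho_\alpha)})=\langle M(\overline{k_\alpha})\rangle\cap\DAM_\gm(k_\alpha;\F_2)$ cannot be read off from $\Psi$, since $\Spc(\Psi)$ only sees points of $V$. It comes instead from the Balmer--Gallauer description: real realisation kills $M(\overline{k_\alpha})$ but not $\widetilde{M}(\overline{k_\alpha})$.
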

\begin{proof}
    Arguing as in the proof of \Cref{prop:specialisationsforcone(tau)}, consider the commutative diagram
    \[\begin{tikzcd}
        \Spc(\DATM_\gm(k;\F_2)) \arrow[d, twoheadrightarrow, swap, "\Spc(\iota)"] & \arrow[l] \Spc(\DATM_\gm(k_\alpha;\F_2)) \arrow[d, twoheadrightarrow] \\
        \Spc(\DAM_\gm(k;\F_2)) & \arrow[l] \Spc(\DAM_\gm(k_\alpha;\F_2))
    \end{tikzcd}\]
    induced by base change to $k_\alpha$. Since $k_\alpha$ is real closed, we may examine the description of $\Spc(\DATM_\gm(k_\alpha;\F_2))$ from \cite{Balmer:ThreeRealArtinTates} to find the kernel of real realisation $\ideal{\cone(\rho_\alpha)}$ specialises to the (restriction of the) isotropic point associated to the base field $k_\alpha$ in $\Spc(\DATM_\gm(k_\alpha;\F_2))$. From this and the above diagram we deduce
    \[\mathcal{P}^{\alpha} \rightsquigarrow a_{k_\alpha}.\]

    This can be the only specialisation from $\mathcal{P}^{\alpha}$ to a point in $V$. Indeed, if
    \[\mathcal{P}^{\alpha} \rightsquigarrow b \in V\]
    then
    \[a_{k_\alpha}=\Spc(\iota)(\mathcal{P}^{\alpha}) \rightsquigarrow \Spc(\iota)(b)=b\]
    in $\Spc(\DAM_\gm(k;\F_p))$. Another application of \cite[Proposition~10.8]{Gallauer:Periods} finishes the proof.
\end{proof}

For a formally real field $k$ and an ordering $\alpha \in \Sper(k)$, we shall denote by
\[b_\alpha \in V \subseteq \Spc(\DATM_\gm(k;\F_2))\]
the point corresponding to the image of the generic point
\[\ideal{M(\overline{k_\alpha}),\widetilde{M}(\overline{k_\alpha})} \in \Spc(\DAM_\gm(k_\alpha;\F_2)) \subseteq \Spc(\DATM_\gm(k_\alpha;\F_2))\]
(see \cite{Balmer:ThreeRealArtinTates}) under the map on spectra induced by base change to $k_\alpha$. Here, $\widetilde{M}(\overline{k_\alpha})$ is the (completely) reduced Rost motive associated to the pure symbol $\rho \in K^M_1(k_\alpha)/2$: see Appendix~A for a recollection of the definition of such a motive.

\begin{prop}\label{prop:specialisationsforrealtaupoints}
    Suppose $k$ is formally real and $p=2$, and let $\alpha \in \Sper(k)$. The point $\mathcal{P}^{\alpha}_{\acute{e}t}$ in the Tate open $U \subseteq \Spc(\DATM_\gm(k;\F_2))$ specialises to $b_\alpha \in V$. Moreover,
    \[\mathcal{P}^{\alpha}_{\acute{e}t} \rightsquigarrow \mathcal{Q} \in V\]
    if and only if $\mathcal{Q} \in \overline{\{b_\alpha\}}$.
\end{prop}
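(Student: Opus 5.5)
We follow the pattern of \Cref{prop:specialisationsforcone(tau)} and \Cref{prop:specialisationsforrealpoints}: base change to the real closure $k_\alpha$ gives the existence of the specialisation, and the retraction $\Spc(\iota)$ onto $V$ controls which points can be reached.

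\emph{Existence.} Consider the base-change functor $F_\alpha:\DATM_\gm(k;\F_2)\to\DATM_\gm(k_\alpha;\F_2)$. Over the real closed field $k_\alpha$, the six-point description of Balmer--Gallauer gives
\[\ideal{\cone(\rho_\alpha),\cone(\tau)} \rightsquigarrow \ideal{M(\overline{k_\alpha}),\widetilde{M}(\overline{k_\alpha})}.\]
By \Cref{prop:tate-specialisations}, the restriction of $\ideal{\cone(\rho_\alpha),\cone(\tau)}$ to Tate motives over $k$ is $\mathcal{P}^\alpha_{\acute{e}t}$. We then need that $\Spc(F_\alpha)$ sends this point to $\mathcal{P}^\alpha_{\acute{e}t}\in U$ in $\Spc(\DATM_\gm(k;\F_2))$. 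The point $\ideal{\cone(\rho_\alpha),\cone(\tau)}$ lies in $U(\cone(\rho_\alpha)\otimes\cone(\tau))$, since $\cone(\rho_\alpha)\otimes\cone(\tau)\in\ideal{\cone(\rho_\alpha),\cone(\tau)}$ over $k_\alpha$. Because $F_\alpha(\cone(\rho)\otimes\cone(\tau))=\cone(\rho_\alpha)\otimes\cone(\tau)$, its image lies in $U$. By \Cref{cor:Spc(DATM)astwopieces}, points of $U$ are determined by their restriction to $\Spc(\DTM_\gm(k;\F_2))$, so the image is $\mathcal{P}^\alpha_{\acute{e}t}$. The image of $\ideal{M(\overline{k_\alpha}),\widetilde{M}(\overline{k_\alpha})}$ is $b_\alpha$ by definition, since it lies in the image of $\Spc(\DAM_\gm(k_\alpha))$ and the map is compatible with the retractions. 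Continuity then gives $\mathcal{P}^\alpha_{\acute{e}t}\rightsquigarrow b_\alpha$, and hence, by transitivity, $\mathcal{P}^\alpha_{\acute{e}t}\rightsquigarrow\mathcal{Q}$ for every $\mathcal{Q}\in\overline{\{b_\alpha\}}$.

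\emph{Converse.} Suppose $\mathcal{P}^\alpha_{\acute{e}t}\rightsquigarrow\mathcal{Q}\in V$. Applying the retraction $\Spc(\iota)$ onto $V$, which is continuous and is the identity on $V$ by \Cref{cor:supp(cone(rho)cone(tau))}, gives $\Spc(\iota)(\mathcal{P}^\alpha_{\acute{e}t})\rightsquigarrow\mathcal{Q}$. It therefore suffices to show
\[\Spc(\iota)(\mathcal{P}^\alpha_{\acute{e}t})=b_\alpha.\]
This follows from the commutative square of base change and restriction to Artin motives, as in \Cref{prop:specialisationsforcone(tau)}. Concretely, $\mathcal{P}^\alpha_{\acute{e}t}=\Spc(F_\alpha)(\ideal{\cone(\rho_\alpha),\cone(\tau)})$, and the restriction of $\ideal{\cone(\rho_\alpha),\cone(\tau)}$ to $\DAM_\gm(k_\alpha;\F_2)$ is the generic point $\ideal{M(\overline{k_\alpha}),\widetilde{M}(\overline{k_\alpha})}$. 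The latter is visible in the Balmer--Gallauer picture: the minimal prime $\ideal{\cone(\rho),\cone(\tau)}$ restricts to the minimal point of the Artin spectrum, since the restriction is surjective and order-preserving.

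\emph{Main obstacle.} The delicate step is this last identification over $k_\alpha$, namely checking that $\ideal{\cone(\rho_\alpha),\cone(\tau)}\cap\DAM_\gm(k_\alpha;\F_2)$ is the generic Artin point rather than one of the two closed points. We would verify it by noting that $\ideal{\cone(\rho_\alpha),\cone(\tau)}$ lies in $V$ for $k_\alpha$. On $V$, $\Spc(\iota)$ is the identity, so under $V\cong\Spc(\DAM_\gm(k_\alpha;\F_2))$ this point corresponds to its restriction. As the unique point of $V$ specialising to the other two points of $V$, it must be the generic point.
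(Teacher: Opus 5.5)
Your argument is correct and is essentially the paper's proof. Base change along $F_{k_\alpha}$ transports the Balmer--Gallauer specialisation $\ideal{\cone(\rho_\alpha),\cone(\tau)}\rightsquigarrow\ideal{M(\overline{k_\alpha}),\widetilde{M}(\overline{k_\alpha})}$ to $\mathcal{P}^\alpha_{\acute{e}t}\rightsquigarrow b_\alpha$. The retraction $\Spc(\iota)$, together with the base-change/restriction square, then gives $\Spc(\iota)(\mathcal{P}^\alpha_{\acute{e}t})=b_\alpha$. You also supply details the paper leaves implicit: why the image lies in $U$ and is determined by its Tate restriction, and the transitivity step for the ``if'' direction.

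Your ``Main obstacle'' paragraph, however, is wrong and should be dropped. The prime $\ideal{\cone(\rho_\alpha),\cone(\tau)}$ contains $\cone(\rho_\alpha)\otimes\cone(\tau)$, so it lies in $U$, not in $V=\supp(\cone(\rho_\alpha)\otimes\cone(\tau))$. You observed exactly this in your existence step. Indeed, if it lay in $V$, its image under $\Spc(F_{k_\alpha})$ would lie in $V$ over $k$, since $\Spc(F)(\supp(F(a)))\subseteq\supp(a)$, and could not be $\mathcal{P}^\alpha_{\acute{e}t}\in U$. The unique point of $V$ specialising to the other two points of $V$ is $\ideal{M(\overline{k_\alpha}),\widetilde{M}(\overline{k_\alpha})}$, which is a different prime.

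The step you flagged as delicate is in fact already settled by the one-line argument in your Converse paragraph. The prime $\ideal{\cone(\rho_\alpha),\cone(\tau)}$ is the generic point of the six-point space, and restriction to Artin motives is continuous and surjective. Such a map sends a generic point $x$ to a generic point, since every $y=f(x')$ satisfies $f(x)\rightsquigarrow f(x')$. So its restriction is the generic point of $\Spc(\DAM_\gm(k_\alpha;\F_2))$.

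Relatedly, call this point the generic point rather than the ``minimal prime''. With respect to inclusion it is the maximal prime; the minimal primes are the two closed points $\ideal{M(\overline{k_\alpha})}$ and $\ideal{\widetilde{M}(\overline{k_\alpha})}$.
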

\begin{proof}
    This time, consider the commutative diagram of spectra
    \[\begin{tikzcd}
        \Spc(\DATM_\gm(k;\F_2)) & \arrow[l, swap, "\Spc(F_{k_\alpha})"] \Spc(\DATM_\gm(k_\alpha;\F_2)) \\
        \Spc(\DAM_\gm(k;\F_2)) \arrow[u, "\Spc(\Psi_k)"] & \arrow[l] \Spc(\DAM_\gm(k_\alpha;\F_2)) \arrow[u, swap, "\Spc(\Psi_{k_\alpha})"]
    \end{tikzcd}\]
    where $\Psi_E = \operatorname{gr} \circ t$ denotes the section of the inclusion from Artin--Tate motives over $E$ to Artin motives over $E$, and
    \[\begin{tikzcd}
        F_{k_\alpha}:\DATM_\gm(k;\F_2) \arrow[r] & \DATM_\gm(k_\alpha;\F_2)
    \end{tikzcd}\]
    is the base change functor from $k$ to $k_\alpha$.
    Recall from \cite{Balmer:ThreeRealArtinTates} we have a specialisation
    \[\ideal{\cone(\rho_\alpha),\cone(\tau)} \rightsquigarrow \ideal{M(\overline{k_\alpha}),\widetilde{M}(\overline{k_\alpha})}\]
    in $\Spc(\DATM_\gm(k_\alpha;\F_2))$. Since
    \[\Spc(F_{k_\alpha})(\ideal{\cone(\rho_\alpha),\cone(\tau)})=\mathcal{P}^{\alpha}_{\acute{e}t}\]
    and
    \[\Spc(F_{k_\alpha})(\ideal{M(\overline{k_\alpha}),\widetilde{M}(\overline{k_\alpha})})=b_\alpha\]
    by definition, we have our sought after specialisation $\mathcal{P}^{\alpha}_{\acute{e}t} \rightsquigarrow b_\alpha$.

    For the second statement, suppose $\mathcal{P}^{\alpha}_{\acute{e}t}$ admits a specialisation to some $\mathcal{Q} \in V$. Commutativity of
    \[\begin{tikzcd}
        \Spc(\DATM_\gm(k;\F_2)) \arrow[d, twoheadrightarrow, swap, "\Spc(\iota)"] & \arrow[l] \Spc(\DATM_\gm(k_\alpha;\F_2)) \arrow[d, twoheadrightarrow] \\
        \Spc(\DAM_\gm(k;\F_2)) & \arrow[l] \Spc(\DAM_\gm(k_\alpha;\F_2))
    \end{tikzcd}\]
    coupled with the fact that $\ideal{\cone(\rho_\alpha),\cone(\tau)}$ gets sent to the generic point $\ideal{M(\overline{k_\alpha}),\widetilde{M}(\overline{k_\alpha})}$ of $\Spc(\DAM_\gm(k_\alpha;\F_2))$ under the map induced by the inclusion tells us
    \[\Spc(\iota)(\mathcal{P}^{\alpha}_{\acute{e}t})=b_\alpha.\]
    It follows
    \[b_\alpha=\Spc(\iota)(\mathcal{P}^{\alpha}_{\acute{e}t}) \rightsquigarrow \Spc(\iota)(\mathcal{Q})=\mathcal{Q},\]
    which is to say $\mathcal{Q} \in \overline{\{b_\alpha\}}$.
\end{proof}

\Cref{prop:specialisationsforcone(tau)}, \Cref{prop:specialisationsforrealpoints} and \Cref{prop:specialisationsforrealtaupoints} determine all the specialisation relations in $\Spc(\DATM_\gm(k;\F_p))$: any specialisation relation is either internal to $U$ or $V$, or it is of one of the forms described above.

Analysing the proofs of the preceding three propositions, we obtain the following
\begin{cor}\label{cor:creationofspecialisations}
    The specialisations in $\Spc(\DATM_\gm(k;\F_p))$ from $U$ to $V$ are those created by the inclusion
    \[\begin{tikzcd}
        \iota:\DAM_\gm(k;\F_p) \arrow[r, hookrightarrow] & \DATM_\gm(k;\F_p),
    \end{tikzcd}\]
    that is, for any $\mathcal{P} \in U$ and $\mathcal{Q} \in V$, we have
    \[\mathcal{P} \rightsquigarrow \mathcal{Q}\]
    if and only if
    \[\Spc(\iota)(\mathcal{P}) \rightsquigarrow \Spc(\iota)(\mathcal{Q})=\mathcal{Q}.\]
\end{cor}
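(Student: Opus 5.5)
The plan is to prove the two implications separately, using the retraction $\Spc(\iota)\colon \Spc(\DATM_\gm(k;\F_p)) \to \Spc(\DAM_\gm(k;\F_p))$ together with the identification $V \cong \Spc(\DAM_\gm(k;\F_p))$ from \Cref{cor:supp(cone(rho)cone(tau))}. Under that identification, $\Spc(\Psi)$ and $\Spc(\iota)$ are mutually inverse on $V$, so $\Spc(\iota)(\mathcal{Q})=\mathcal{Q}$ for every $\mathcal{Q}\in V$.

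\emph{Forward direction.} This is formal. $\Spc(\iota)$ is continuous, hence preserves specialisations. So $\mathcal{P}\rightsquigarrow\mathcal{Q}$ gives $\Spc(\iota)(\mathcal{P})\rightsquigarrow\Spc(\iota)(\mathcal{Q})=\mathcal{Q}$.

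\emph{Converse.} We proceed case by case over the points of $U \cong \Spc(\DTM_\gm(k;\F_p))\setminus\{\ideal{0}\}$, which are $\mathcal{P}_{\acute{e}t}$ and, when $p=2$ and $k$ is formally real, the points $\mathcal{P}^\alpha$ and $\mathcal{P}^\alpha_{\acute{e}t}$ for $\alpha\in\Sper(k)$. For each type, the proofs of \Cref{prop:specialisationsforcone(tau)}, \Cref{prop:specialisationsforrealpoints} and \Cref{prop:specialisationsforrealtaupoints} computed $\Spc(\iota)(\mathcal{P})$ by base change to $\overline{k}$ or to $k_\alpha$:
\[\Spc(\iota)(\mathcal{P}_{\acute{e}t})=a_{\overline{k}},\quad \Spc(\iota)(\mathcal{P}^\alpha)=a_{k_\alpha},\quad \Spc(\iota)(\mathcal{P}^\alpha_{\acute{e}t})=b_\alpha.\]
Now suppose $\Spc(\iota)(\mathcal{P})\rightsquigarrow\mathcal{Q}$ in $\Spc(\DAM_\gm(k;\F_p))$.

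For the first two types, $a_{\overline{k}}$ and $a_{k_\alpha}$ are closed points by \cite[Proposition~10.8]{Gallauer:Periods}, so $\mathcal{Q}$ equals that isotropic point. The specialisations $\mathcal{P}_{\acute{e}t}\rightsquigarrow a_{\overline{k}}$ and $\mathcal{P}^\alpha\rightsquigarrow a_{k_\alpha}$ were exhibited in the respective propositions by pulling back specialisations along base change.

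For $\mathcal{P}^\alpha_{\acute{e}t}$, the hypothesis says $\mathcal{Q}\in\overline{\{b_\alpha\}}$. By \Cref{prop:specialisationsforrealtaupoints}, this is exactly the condition for $\mathcal{P}^\alpha_{\acute{e}t}\rightsquigarrow\mathcal{Q}$. Concretely, $\mathcal{P}^\alpha_{\acute{e}t}\rightsquigarrow b_\alpha$, and $b_\alpha\rightsquigarrow\mathcal{Q}$ holds inside $V$ because $V\cong\Spc(\DAM_\gm(k;\F_2))$ is a homeomorphism. Transitivity of specialisation then gives $\mathcal{P}^\alpha_{\acute{e}t}\rightsquigarrow\mathcal{Q}$.

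\emph{Main obstacle.} The delicate step is the last one. It relies on the specialisation $b_\alpha\rightsquigarrow\mathcal{Q}$ in $V$, viewed as a subspace of $\Spc(\DATM_\gm(k;\F_2))$, matching the specialisation in $\Spc(\DAM_\gm(k;\F_2))$. This uses that $\Spc(\Psi)$ is a homeomorphism onto the closed subset $V$, and that $\Psi$-images and $\iota$-restrictions are compatible with the base change identifications of $b_\alpha$. I would check this compatibility via the commutative squares already used, namely $\Psi_{k_\alpha}\circ F_{k_\alpha}\simeq F_{k_\alpha}\circ\Psi_k$ on Artin motives.
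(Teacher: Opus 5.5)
Your proposal is correct and matches the paper, which obtains this corollary simply by reading off the proofs of \Cref{prop:specialisationsforcone(tau)}, \Cref{prop:specialisationsforrealpoints} and \Cref{prop:specialisationsforrealtaupoints}. As in your argument, one direction is continuity of the retraction $\Spc(\iota)$, and the other uses the computed values $\Spc(\iota)(\mathcal{P})\in\{a_{\overline{k}},a_{k_\alpha},b_\alpha\}$, the closedness of the isotropic points, and transitivity through $\mathcal{P}^\alpha_{\acute{e}t}\rightsquigarrow b_\alpha$. The compatibility you flag as the main obstacle is the same base-change square the paper draws in the proof of \Cref{prop:specialisationsforrealtaupoints}.
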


We are now ready to deduce the topology of $\Spc(\DATM_\gm(k;\F_p))$.

\begin{prop}\label{prop:topologyofSpc(DATM)}
    A subset $Z \subseteq \Spc(\DATM_\gm(k;\F_p))$ is closed if and only if
    \begin{itemize}
        \item[$(i)$] $Z \cap V$ is closed in $V$;
        \item[$(ii)$] $Z \cap U$ is closed in $U$;
        \item[$(iii)$] $Z$ is closed under specialisations.
    \end{itemize}
\end{prop}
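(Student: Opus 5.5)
The forward direction is formal. If $Z$ is closed, then $Z\cap V$ and $Z\cap U$ are closed in the subspace topologies, and closed subsets of any space are specialisation closed. The content lies in the converse. My plan is to exploit that $V$ is closed and $U$ is open, together with the fact that $\Spc(\DATM_\gm(k;\F_p))$ is spectral. Since spectral spaces are characterised among one another by their specialisation order together with their constructible topology, I can test closedness by the criterion: a subset of a spectral space is closed if and only if it is specialisation closed and pro-constructible (closed in the constructible topology). So, given $Z$ satisfying $(i)$--$(iii)$, it suffices to show $Z$ is pro-constructible.

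First I handle $V$. By $(i)$, $Z\cap V$ is closed in $V$, and $V$ is closed in the whole space, so $Z\cap V$ is closed and in particular pro-constructible. For the $U$-part, $U=U(\cone(\rho)\otimes\cone(\tau))$ is a quasi-compact open (complement of the support of an object). Its spectral structure as a subspace agrees with $\Spc$ of the Verdier quotient by \Cref{prop:Verdierlocgiveshomeo}. By $(ii)$, $Z\cap U$ is closed in the spectral space $U$, hence pro-constructible in $U$. Because $U$ is quasi-compact open, it is a constructible subset of the ambient space. Hence the constructible topology on $U$ is the subspace of the ambient constructible topology, and a pro-constructible subset of $U$ is pro-constructible in the whole space.

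Then $Z=(Z\cap V)\cup(Z\cap U)$ is a finite union of pro-constructible sets, hence pro-constructible. Combined with $(iii)$, $Z$ is closed, using the fact that the closure of a pro-constructible set is its specialisation closure (Hochster). This also explains why $(iii)$ is needed: the closure of $Z\cap U$ in the whole space may acquire points of $V$. Those points are exactly the specialisations recorded in \Cref{cor:creationofspecialisations}.

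The main obstacle is making sure the cited spectral-space facts apply verbatim. In particular, I must check that the subspace $U$ with the induced topology really is spectral with quasi-compact opens given by restricting those of the ambient space. This holds because $U=U(a)$ for $a=\cone(\rho)\otimes\cone(\tau)$, which is quasi-compact by Balmer's theory. I must also check that pro-constructibility transfers along this inclusion. If one prefers to avoid the constructible topology, I would instead argue directly: write $Z\cap U=\bigcap_i (\supp(x_i)\cap U)$, then show $\overline{Z\cap U}\subseteq Z$. I would do this by taking a point $\mathcal{Q}\in V$ in the closure, using quasi-compactness of the opens $U(y)\ni\mathcal{Q}$ to produce a point of $Z\cap U$ specialising to $\mathcal{Q}$, and then invoking $(iii)$. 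That compactness step is the delicate one.
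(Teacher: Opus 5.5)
Your proof is correct, but it takes a different route from the paper's.

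\textbf{The paper's route.} The paper argues concretely with the two restriction maps $g\colon \Spc(\DATM_\gm(k;\F_p))\to\Spc(\DTM_\gm(k;\F_p))$ and $f\colon \Spc(\DATM_\gm(k;\F_p))\to\Spc(\DAM_\gm(k;\F_p))$. For $Z$ satisfying (i)--(iii) with $Z\cap U\neq\emptyset$, it shows
\[
Z=f^{-1}(Z\cap V)\cap g^{-1}\bigl(\overline{Z\cap U}\bigr).
\]
The ingredients are:
\begin{itemize}
\item $\ideal{0}$ is the unique closed point of the Tate spectrum, so $\overline{Z\cap U}=(Z\cap U)\cup\{\ideal{0}\}$;
\item $g^{-1}(\{\ideal{0}\})=V$;
\item $f$ retracts onto $V$;
\item every $\mathcal{P}\in U$ specialises to $f(\mathcal{P})$ by \Cref{cor:creationofspecialisations}, so condition (iii) forces $Z\cap U\subseteq f^{-1}(Z\cap V)$.
\end{itemize}

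\textbf{Your route.} You show the statement is a general fact about spectral spaces. Whenever $U$ is a quasi-compact open with closed complement $V$, conditions (i)--(iii) characterise the closed sets, via pro-constructibility and Hochster's lemma that the closure of a pro-constructible set is its set of specialisations.

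\textbf{Comparison.} Your argument is independent of all the preceding computations. It uses only that $U=U(\cone(\rho)\otimes\cone(\tau))$ (resp.\ $U(\cone(\beta))$) is quasi-compact. You rightly flag this as the essential hypothesis: for the non-compact open $\mathbb{N}\subset\mathbb{N}\cup\{\infty\}$, the set $Z=\mathbb{N}$ satisfies (i)--(iii) but is not closed. The paper's argument, in exchange, is elementary and gives an explicit formula for closed sets as pullbacks along $f$ and $g$. That formula is what underlies the description of the topology as the subspace topology inside $\Spc(\DTM_\gm(k;\F_p))\times\Spc(\DAM_\gm(k;\F_p))$.

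\textbf{A simplification.} Your step on $U$ can be shortened. Write $Z\cap U=C\cap U$ with $C$ closed in the ambient space. Both $C$ and the constructible set $U$ are pro-constructible, hence so is their intersection, and you need not compare the constructible topology of $U$ with that of the ambient space.
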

\begin{proof}
    Clearly conditions $(i)$-$(iii)$ are necessary to be closed; we must show they are sufficient.
    To this end, consider the maps on spectra induced by the inclusions of Tate and Artin motives into Artin--Tate motives:
    \[\begin{tikzcd}
        \Spc(\DTM_\gm(k;\F_p)) & \arrow[l, swap, "g"] \Spc(\DATM_\gm(k;\F_p)) \arrow[r, "f"] & \Spc(\DAM_\gm(k;\F_p))
    \end{tikzcd}\]
    Let $Z$ be a subset of $\Spc(\DATM_\gm(k;\F_p))$ satisfying $(i)$-$(iii)$, and write
    \[Z_U \coloneqq Z \cap U;\]
    \[Z_V \coloneqq Z \cap V.\]
    If $Z_U = \emptyset$, then $Z$ is clearly closed by condition $(i)$, so let us assume $Z_U$ is non-empty. We claim that
    \begin{equation}\label{eqn:closedinDATM}
        Z=f^{-1}(Z_V) \cap g^{-1}(\overline{Z_U}).
    \end{equation}

    By condition $(ii)$, $Z_U$ is closed in the Tate open, and so because it is non-empty we have
    \[\overline{Z_U}=Z_U \cup \{\ideal{0}\}.\]
    Moreover, since $g^{-1}(\{\ideal{0}\})=V$, it immediately follows that
    \[g^{-1}(\overline{Z_U}) \cap U = Z_U\]
    and
    \[g^{-1}(\overline{Z_U}) \supseteq V.\]
    On the other hand,
    \[f^{-1}(Z_V) \cap V = Z_V.\]
    Furthermore, if $\mathcal{P} \in Z_U$, then
    \[\mathcal{P} \rightsquigarrow f(\mathcal{P}) \in V\]
    by \Cref{cor:creationofspecialisations}. By condition $(iii)$, we deduce
    \[f(\mathcal{P}) \in Z_V.\]
    Hence
    \[Z_U \subseteq f^{-1}(Z_V).\]
    
    From this we deduce the equality (\ref{eqn:closedinDATM}), since the above tells us it holds after intersecting with $U$ and $V$ separately, which cover $\Spc(\DATM_\gm(k;\F_p))$. The assumptions on $Z$ and continuity of $f,g$ give that $f^{-1}(Z_V)$ and $g^{-1}(\overline{Z_U})$ are closed, so we conclude $Z$ is too.
\end{proof}

\begin{theorem}\label{theorem:ArtinTatespectrumwithrootofunity}
    \Cref{theorem:ArtinTatespectrum} holds when $k$ contains a primitive $p^{th}$ root of unity.
\end{theorem}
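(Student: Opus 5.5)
The plan is to assemble \Cref{theorem:ArtinTatespectrum} from the pieces established in this section, under the root-of-unity assumption. Nothing new needs to be proved; the work is matching each clause of the statement with an earlier result.

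\textbf{Set-level decomposition.} By \Cref{cor:Spc(DATM)astwopieces}, the spectrum is the disjoint union of the closed subspace $V=\supp(\cone(\rho)\otimes\cone(\tau))$ and its open complement $U$. We have
\[V\cong\Spc(\DAM_\gm(k;\F_p)),\]
realised by $\Spc(\Psi)$ with inverse $\Spc(\iota)$, as in \Cref{cor:supp(cone(rho)cone(tau))}. We also have
\[U\cong\Spc(\DTM_\gm(k;\F_p))\setminus\{\ideal{0}\}.\]
I would check that this last identification matches the labelling of points. It is obtained from $U(\cone(\tau))$ via \Cref{remark:open(cone(tau))inDATM}, together with $U(\cone(\rho))$ via \Cref{cor:openofcone(rho)inDATM}. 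Both of these are compatible with the restriction map $g$ to $\Spc(\DTM_\gm(k;\F_p))$, so the points of $U$ are exactly those called $\mathcal{P}_{\acute{e}t}$, $\mathcal{P}^\alpha$ and $\mathcal{P}^\alpha_{\acute{e}t}$ in \Cref{cor:explicit-tate-generators-topology}.

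\textbf{Specialisations from $U$ to $V$.} These are given by the three propositions:
\begin{itemize}
\item \Cref{prop:specialisationsforcone(tau)} handles $\mathcal{P}_{\acute{e}t}\rightsquigarrow a_{\overline{k}}$ and shows it is the only specialisation from $\mathcal{P}_{\acute{e}t}$ into $V$.
\item \Cref{prop:specialisationsforrealpoints} handles $\mathcal{P}^\alpha\rightsquigarrow a_{k_\alpha}$, again with uniqueness.
\item \Cref{prop:specialisationsforrealtaupoints} shows that $\mathcal{P}^\alpha_{\acute{e}t}$ specialises to $\mathcal{Q}\in V$ exactly when $\mathcal{Q}\in\overline{\{b_\alpha\}}$.
\end{itemize}
When $p$ is odd or $k$ is not formally real, $U$ consists of the single point $\mathcal{P}_{\acute{e}t}$. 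In that case only the first bullet of the theorem is relevant.

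No specialisation can go from $V$ to $U$, since $V$ is closed. Specialisations inside $U$ or inside $V$ are governed by the homeomorphisms above.

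\textbf{Topology.} The characterisation of closed sets is exactly \Cref{prop:topologyofSpc(DATM)}. The description as the ``finest'' topology satisfying the conditions is a restatement of it: the closed sets are precisely the specialisation-closed sets whose traces on $U$ and $V$ are closed.

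\textbf{Main point of care.} I expect the main subtlety to be the identification of $U$, and in particular the claim that the union of the two opens $U(\cone(\tau))\cup U(\cone(\rho))$ is homeomorphic, as a whole, to the punctured Tate spectrum. The individual pieces are known to be homeomorphic to the corresponding opens in the Tate spectrum. To globalise, I would argue that $g$ restricted to $U$ is a continuous bijection onto $\Spc(\DTM_\gm(k;\F_p))\setminus\{\ideal{0}\}$ which restricts to homeomorphisms on an open cover. Such a map is a homeomorphism, because openness of a map can be checked locally on an open cover of the source.
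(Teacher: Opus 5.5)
Your proposal is correct and follows the paper's proof. The paper likewise assembles \Cref{theorem:ArtinTatespectrum} from the decomposition in \Cref{cor:Spc(DATM)astwopieces}, then the three specialisation results \Cref{prop:specialisationsforcone(tau)}, \Cref{prop:specialisationsforrealpoints} and \Cref{prop:specialisationsforrealtaupoints}, and finally \Cref{prop:topologyofSpc(DATM)}. Your gluing argument for $U=U(\cone(\tau))\cup U(\cone(\rho))$ via the restriction map $g$ is a valid way to spell out what that corollary leaves implicit. It works because $g^{-1}$ of each Tate open $U(\cone(\tau))$, $U(\cone(\rho))$ is the corresponding Artin--Tate open, so both pieces map homeomorphically onto open subsets of the target.
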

\begin{proof}
    This is \Cref{prop:specialisationsforcone(tau)}, \Cref{prop:specialisationsforrealpoints}, \Cref{prop:specialisationsforrealtaupoints}, and \Cref{prop:topologyofSpc(DATM)}.
\end{proof}

\subsection{Removing the root of unity assumption}
Suppose now that $k$ does not contain a primitive $p^{th}$ root of unity, and let
\[k' \coloneqq k(\zeta_p)\]
be the field obtained by adjoining one to $k$. Note $p$ is necessarily odd, since $\operatorname{char}(k) \neq p$.

Recall we have the adjunction
\[\begin{tikzcd}
    f^*:\DM_\gm(k;\F_p) \arrow[r, shift left] & \arrow[l, shift left] \DM_\gm(k';\F_p):f_*
\end{tikzcd}\]
where the left adjoint $f^*$ is given by base change: it is faithful by \Cref{prop:adoining-root-unity-faithful}. Letting $d \coloneqq [k':k]$, we know by \Cref{prop:hSpec-motcohom-no-root-unity} that there is a unique class
\[\beta \in H_\mathcal{M}^{0,d}(k;\F_p)\]
such that $f^*(\beta)=\tau^d \in H_\mathcal{M}^{0,d}(k';\F_p)$.

Note we still have the Milnor weight structure on $\DATM_\gm(k;\F_p)$. The main modification we need to make is to replace \Cref{prop:detectingnilpotenceoncone(tau)forDATM} with the following analogue.

\begin{prop}\label{prop:nilpotenceonbeta}
    Let $g:X \longrightarrow Y$ be a morphism in $\DATM_\gm(k;\F_p)$. If $t(g)=0$, then $g$ is $\otimes$-nilpotent on $\cone(\beta)$. In particular, the Milnor weight complex functor detects $\otimes$-nilpotence on $\cone(\beta)$.
\end{prop}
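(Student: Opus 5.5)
We reduce to the root-of-unity case by base change along $f:\Spec(k')\to\Spec(k)$, and then use faithfulness of $f^*$ to descend. By rigidity of $\DATM_\gm(k;\F_p)$ we may assume $X=\unit$. Since $t(g)=0$, the argument of \Cref{prop:Milnor-t_-detects-nilp-tau} (via \cite[Proposition 2.1.1, Proposition 2.3.1 (1)]{Bondarko:KillingWeights}) shows that $g$ factors as $\unit\xrightarrow{h} Y_{\leq -1}\to Y$ with $Y_{\leq -1}$ of weight $\leq -1$ for the Milnor weight structure. So it suffices to show that $h\otimes\cone(\beta)$ is $\otimes$-nilpotent.

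Now apply $f^*$. The extension $k'/k$ is finite separable, so $f^*$ sends $M(E)(n)[n]$ to a finite sum of objects $M(L)(n)[n]$ with $L/k'$ finite separable. Hence $f^*$ is weight exact for the Milnor weight structures, and $f^*(Y_{\leq-1})$ has weight $\leq -1$ in $\DATM_\gm(k';\F_p)$. The computation in \Cref{prop:detectingnilpotenceoncone(tau)forDATM} shows that every morphism from $\cone(\tau)$ into $\cone(\tau)\otimes W$, with $W$ of weight $\leq -1$, vanishes. Since $\tau^d\otimes\cone(\tau)=0$ and $\cone(\tau^d)$ is an iterated extension of twists of $\cone(\tau)$, we obtain $f^*(h)\otimes\cone(\tau^d)^{\otimes d}=0$. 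Using $f^*(\beta)=\tau^d$, this gives
\[f^*\bigl(h\otimes \cone(\beta)^{\otimes d}\bigr)=0.\]
Equivalently, we can argue directly in $\Hom$-groups: the maps from $\cone(\tau^d)$ into twists of weight $\leq -1$ vanish by the same bidegree count, because $H^{*,*}_{\mathcal{M}}(L;\F_p)/\tau^d$ is still concentrated near the diagonal.

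Next we descend using \Cref{prop:adoining-root-unity-faithful}. That result gives faithfulness of $f^*$ on $\DM_\gm(k;\F_p)$: indeed $f_*f^*\simeq M(\Spec k')\otimes-$ has $\unit$ as a retract, since $d$ is invertible in $\F_p$. Therefore $h\otimes\cone(\beta)^{\otimes d}=0$ in $\DATM_\gm(k;\F_p)$, which is a full subcategory. Since $\ideal{\cone(\beta)^{\otimes d}}=\ideal{\cone(\beta)}$ by rigidity, and the objects on which $h$ is $\otimes$-nilpotent form a tt-ideal \cite[Proposition~2.10]{Balmer:Spectra}, it follows that $h$, and hence $g$, is $\otimes$-nilpotent on $\cone(\beta)$. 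The final assertion then follows as in \Cref{cor:jointdetectingfunctorsDATM} via \Cref{prop:detectingnilpotenceonanobject}.

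The main obstacle is the vanishing step over $k'$. We must check that the weight complex vanishing is compatible with $f^*$, or else bypass $t$ over $k'$ entirely by working with the factorisation through $Y_{\leq-1}$ as above. We also have to handle $\cone(\tau^d)$ in place of $\cone(\tau)$. For the latter, we first try the filtration of $\cone(\tau^d)$ by the cones $\cone(\tau)(i)$ together with the tt-ideal property, which avoids redoing the bidegree count.
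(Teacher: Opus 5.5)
Your overall route works, and it differs from the paper's in a harmless way. The paper pushes $t_k(g)=0$ to $t_{k'}(f^*g)=0$ through a commutative square of weight complex functors and then quotes \Cref{prop:detectingnilpotenceoncone(tau)forDATM}. You instead factor $g$ through $Y_{\leq -1}$ over $k$ before base changing, which only needs $f^*$ to preserve weight $\leq -1$ objects.

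\textbf{The gap} is in the passage from $\cone(\tau)$ to $\cone(\tau^d)$: you put the tensor power on the object instead of on the morphism. From $f^*(h)\otimes\cone(\tau)=0$ and a filtration of $\cone(\tau^d)$ by twists of $\cone(\tau)$, you only get $f^*(h)^{\otimes d}\otimes\cone(\tau^d)=0$, because each extension costs one extra tensor factor of the morphism. Your claim $f^*(h)\otimes\cone(\tau^d)^{\otimes d}=0$ is false in general. So is your ``direct Hom-group'' alternative: $H^{a,b}_{\mathcal{M}}(L;\F_p)/\tau^d$ occupies the band $0\le b-a\le d-1$, not the diagonal.

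\emph{Counterexample.} Take $p=3$ and $k=\Q$, so $d=2$. Let $g\colon\unit\to M(k')(1)$ correspond to $\tau\in H^{0,1}_{\mathcal{M}}(k';\F_3)\cong\Hom(\unit,M(k')(1))$.
\begin{enumerate}
\item $t(g)=0$, since $M(k')(1)$ is pure of weight $-1$.
\item $f^*(g)\colon\unit\to\unit(1)^{\oplus 2}$ is nonzero by faithfulness, with components in $H^{0,1}_{\mathcal{M}}(k';\F_3)=\F_3\tau$.
\item Over $k'$ one checks $\Hom(\cone(\tau^2),\cone(\tau^2)(2))=0$. Hence $\tau^2\otimes\cone(\tau^2)=0$, and every $\cone(\tau^2)^{\otimes n}$ is a sum of twists and shifts of $\cone(\tau^2)$.
\item On the other hand $\tau\otimes\cone(\tau^2)\neq 0$. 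Its composite with $\unit(2)\to\cone(\tau^2)$ is the image of $\tau$ under $H^{0,1}_{\mathcal{M}}(k';\F_3)\to\Hom(\unit(2),\cone(\tau^2)(1))$. This map is injective because $H^{0,-1}_{\mathcal{M}}=0$.
\end{enumerate}
Hence $g\otimes\cone(\beta)^{\otimes n}\neq 0$ for every $n$, so no factorisation $g=u\circ h$ can satisfy $h\otimes\cone(\beta)^{\otimes 2}=0$. The same morphism $\tau\otimes\cone(\tau^2)$ is a nonzero element of $\Hom(\cone(\tau^2),\cone(\tau^2)\otimes\unit(1))$ with $\unit(1)$ of weight $-1$. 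This refutes the Hom-vanishing alternative directly.

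\textbf{The repair} is immediate, and it is exactly the paper's step. The objects $c$ with $f^*(h)^{\otimes n}\otimes c=0$ for some $n$ form a tt-ideal \cite[Proposition~2.10]{Balmer:Spectra}. This ideal contains $\cone(\tau)$, hence also $\cone(\tau^d)\simeq f^*(\cone(\beta))$. So $f^*\bigl(h^{\otimes n}\otimes\cone(\beta)\bigr)=0$ for some $n$, and faithfulness of $f^*$ gives $h^{\otimes n}\otimes\cone(\beta)=0$. Writing $g=u\circ h$, it follows that $g^{\otimes n}\otimes\cone(\beta)=0$. With this correction your final radicality step is no longer needed.
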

\begin{proof}
    Observe the functor $f^*$ is weight exact with respect to the Milnor weight structure: this gives rise to the commutative diagram
    \[\begin{tikzcd}
        \DATM_\gm(k;\F_p) \arrow[d, swap, "f^*"] \arrow[r, "t_k"] & \K^b(\mathcal{H}^\mathsf{AT}_k) \arrow[d] \\
        \DATM_\gm(k';\F_p) \arrow[r, "t_{k'}"] & \K^b(\mathcal{H}^\mathsf{AT}_{k'})
    \end{tikzcd}\]
    where the horizontal arrows are the weight complex functors, and all functors are weight exact. Here we have chosen to denote the heart of the Milnor weight structure on $\DATM_\gm(E;\F_p)$ by $\mathcal{H}^\mathsf{AT}_E$, and by $t_E$ the associated weight complex functor.

    Now let $g:X \longrightarrow Y$ be as in the statement. We wish to show that $g$ is $\otimes$-nilpotent on $\cone(\beta)$. But, as $f^*$ is a faithful tt-functor, it suffices to prove that $f^*(g)$ is $\otimes$-nilpotent on  $f^*(\cone(\beta))$. We have $f^*(\cone(\beta)) \simeq \cone(f^*(\beta)) = \cone(\tau^d)$ by exactness of $f^*$ and construction of $\beta$. The condition $t_k(g)=0$ gives $t_{k'}(f^*(g))=0$ by commutativity of the above diagram. We now apply \Cref{prop:detectingnilpotenceoncone(tau)forDATM} to $f^*(g)$ to deduce the desired claim: note $f^*(g)$ being $\otimes$-nilpotent on $\cone(\tau)$ implies $f^*(g)$ is $\otimes$-nilpotent on $\cone(\tau^d)$. 
\end{proof}

\begin{theorem}\label{theorem:ArtinTatespectrumwithnorootofunity}
    \Cref{theorem:ArtinTatespectrum} holds when $k$ does not contain a primitive $p^{th}$ root of unity.
\end{theorem}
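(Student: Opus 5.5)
The plan is to run the argument of \Cref{theorem:ArtinTatespectrumwithrootofunity} again with $\tau$ replaced by $\beta$ throughout. Since $k$ has no primitive $p^{th}$ root of unity, $p$ is odd. By \Cref{lemma:nilpotentpuresymbolsaretensornilpotent}, $\rho$ is therefore $\otimes$-nilpotent, so $\ideal{\cone(\rho)}=\DATM_\gm(k;\F_p)$. Moreover, no formally real phenomena occur: only the first bullet of \Cref{theorem:ArtinTatespectrum} needs proof.

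First I split the spectrum along $\cone(\beta)$.

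For the closed piece, \Cref{prop:nilpotenceonbeta} and \Cref{prop:detectingnilpotenceonanobject} give $\supp(\cone(\beta))\subseteq\im(\Spc(t))$. The reverse inclusion holds because $t(\beta)=0$: indeed $\beta\in H^{0,d}_{\mathcal{M}}$ is a morphism $\unit\to\unit(d)$ between heart objects of different diagonal weights, and such morphisms vanish in $\K^b(\mathcal{H}^{\mathsf{AT}})$ because $\mathcal{H}^{\mathsf{AT}}$ sees only diagonal morphisms. Next, \Cref{prop:grdetectsnilpotenceoncone(rho)} shows that $\operatorname{gr}$ detects $\otimes$-nilpotence outright in this case. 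Hence $\Psi=\operatorname{gr}\circ t$ detects nilpotence on $\cone(\beta)$ and kills $\beta$, so $\im(\Spc(\Psi))=\supp(\cone(\beta))$. The retraction argument of \Cref{cor:supp(cone(rho)cone(tau))}, which uses only the Milnor weight structure and not the root of unity, then identifies $V:=\supp(\cone(\beta))\cong\Spc(\DAM_\gm(k;\F_p))$.

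For the open piece $U:=U(\cone(\beta))$, I need it to be the single point $\mathcal{P}_{\acute{e}t}$. I would use base change $f^*$ to $k'$. Since $f^*$ is faithful (\Cref{prop:adoining-root-unity-faithful}), $\Spc(f^*)$ is surjective by \Cref{theorem:nilfaithfulfunctors}. By \Cref{cor:Spc(DATM)astwopieces} over $k'$, and because $\rho$ is nilpotent, $U(\cone(\tau))$ is a single point of $\Spc(\DATM_\gm(k';\F_p))$. Now $f^*(\cone(\beta))=\cone(\tau^d)$ and $\supp(\cone(\tau^d))=\supp(\cone(\tau))$. So the preimage of $U$ under $\Spc(f^*)$ is $U(\cone(\tau))$, and surjectivity makes $U$ a single point. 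It is nonempty because it contains the restriction of the étale point, since $\Re_{\acute{e}t}(\cone(\beta))=0$. This identifies $U$ with $\Spc(\DTM_\gm(k;\F_p))\setminus\{\ideal{0}\}$ by \Cref{cor:tate-spectrum-no-root-unity}.

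For the specialisations and topology, I repeat \Cref{prop:specialisationsforcone(tau)} verbatim. Base change to $\overline{k}$, which contains $k'$, sends $\ideal{\cone(\tau)}\mapsto\mathcal{P}_{\acute{e}t}$ and $\ideal{0}\mapsto a_{\overline{k}}$, giving $\mathcal{P}_{\acute{e}t}\rightsquigarrow a_{\overline{k}}$. The retraction $\Spc(\iota)$ together with closedness of $a_{\overline{k}}$ in $\Spc(\DAM_\gm(k;\F_p))$ (\cite[Proposition~10.8]{Gallauer:Periods}) rules out any other specialisation. This also gives the analogue of \Cref{cor:creationofspecialisations}, and the proof of \Cref{prop:topologyofSpc(DATM)} then goes through unchanged, because it used only the continuous maps $f,g$ and the decomposition $U\sqcup V$.

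I expect the main obstacle to be the singleton claim for $U(\cone(\beta))$. The delicate points are the correct identification of preimages of opens under $\Spc(f^*)$ and the verification that the point $\mathcal{P}_{\acute{e}t}$ of $U$ indeed restricts to the Tate point $\mathcal{Q}$ of \Cref{cor:tate-spectrum-no-root-unity}. I would handle the latter by the naturality of restriction maps along $f^*$.
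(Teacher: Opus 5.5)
Your proposal is correct and is essentially the paper's own argument: split along $\cone(\beta)$, identify $V=\supp(\cone(\beta))=\im(\Spc(\Psi))\cong\Spc(\DAM_\gm(k;\F_p))$ via \Cref{prop:nilpotenceonbeta}, \Cref{prop:grdetectsnilpotenceoncone(rho)} and the retraction, show $U$ is a singleton as the surjective image under $\Spc(f^*)$ of $U(\cone(\tau^d))=U(\cone(\tau))$, and then rerun the specialisation and topology arguments verbatim. One small correction: $\unit(d)$ is not a heart object but $\Sigma^{-d}$ of the heart object $\unit(d)[d]$, so $t(\beta)=0$ because it is a chain map from a complex concentrated in degree $0$ to one concentrated in degree $-d$ (the weight complex functor is the identity on the heart, so it does not kill morphisms between heart objects).
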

\begin{proof}
    $\Spc(\DATM_\gm(k;\F_p))$ is partitioned into the closed subset
    \[V \coloneqq \supp(\cone(\beta))\]
    and its open complement
    \[U \coloneqq U(\cone(\beta)).\]
    Observe $t(\beta)=0$, so \Cref{prop:detectingnilpotenceonanobject} together with \Cref{prop:nilpotenceonbeta} and \Cref{prop:grdetectsnilpotenceoncone(rho)} gives
    \[V=\im(\Spc(\Psi))\]
    where $\Psi \coloneqq \operatorname{gr} \circ t$. To see $\Spc(\Psi)$ is a homeomorphism onto $V$, the same argument as for the case $k=k'$ works: $\Psi$ is a section of the inclusion of $\DAM_\gm(k;\F_p)$ into $\DATM_\gm(k;\F_p)$. Observe this also gives
    \[V \cong \Spc(\DAM_\gm(k;\F_p)).\]

    It remains to identify $U$, which we note is non-empty. Faithfulness of $f^*$ induces a surjection
    \[\begin{tikzcd}
        \Spc(f^*):\Spc(\DATM_\gm(k';\F_p)) \arrow[r, twoheadrightarrow] & \Spc(\DATM_\gm(k;\F_p))
    \end{tikzcd}\]
    on spectra. Since $f^*(\beta)=\tau^d$, this induces a surjection
    \[\begin{tikzcd}
        U(\cone(\tau)) = U(\cone(\tau^d)) \arrow[r, twoheadrightarrow] & U(\cone(\beta)) = U.
    \end{tikzcd}\]
    As $p$ is odd, $U(\cone(\tau))$ is a singleton, and hence so is $U$.
    
    It remains to determine the specialisation relations from the étale point
    \[\mathcal{P}^{\beta} \in U \subseteq \Spc(\DATM_\gm(k;\F_p))\]
    to points in $V$. The same argument as in the proof of \Cref{prop:specialisationsforcone(tau)} gives that the only specialisation from this point is to the (restriction of the) isotropic point corresponding to the algebraic closure $\overline{k}$ of $k$.

    Finally, the proof of \Cref{prop:topologyofSpc(DATM)} goes through verbatim, and we conclude.
\end{proof}

\begin{proof}[Proof of \Cref{theorem:ArtinTatespectrum}]
    This is \Cref{theorem:ArtinTatespectrumwithrootofunity} and \Cref{theorem:ArtinTatespectrumwithnorootofunity}.
\end{proof}
\section{Remarks on the spectrum of geometric motives}
We conclude with some consequences of the preceding computations. Let $k$ be a field with characteristic different from $p$, not necessarily containing a primitive $p^{th}$ root of unity. Choose a root $\zeta_p$ in a separable closure of $k$ and let $d=[k(\zeta_p): k]$. Let $\beta: \unit \longrightarrow \unit(d)$ be the class constructed in \Cref{prop:hSpec-motcohom-no-root-unity}, with $\beta=\tau$ when $k$ contains a primitive $p^{th}$ root of unity.

\begin{theorem}\label{theorem:TateopeninSpc(DM)}
    The spectrum
    \[\Spc(\DM_\gm(k;\F_p))\]
    contains a homeomorphic copy of
    \[\Spc(\DTM_\gm(k;\F_p)) \setminus \{\ideal{0}\} \cong \Proj(H_\mathcal{M}^{*,*}(k;\F_p)),\]
    given by
    \[U(\cone(\rho)) \cup U(\cone(\beta))=U(\cone(\rho) \otimes \cone(\beta)).\]
    In particular, the punctured Tate spectrum is an open subset of $\Spc(\DM_\gm(k;\F_p))$.
\end{theorem}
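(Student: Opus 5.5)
The plan is to compare the open subset $U \coloneqq U(\cone(\rho)\otimes\cone(\beta))$ of $\Spc(\DM_\gm(k;\F_p))$ with the corresponding open subset of $\Spc(\DTM_\gm(k;\F_p))$ along the restriction map $r\colon\Spc(\DM_\gm(k;\F_p))\to\Spc(\DTM_\gm(k;\F_p))$. This map is continuous and surjective by \Cref{remark:inclusionsinducesurjections}. Since $\cone(\rho)\otimes\cone(\beta)$ already lies in $\DTM_\gm(k;\F_p)$, its support in the big spectrum is $r^{-1}$ of its support in the Tate spectrum, so $U = r^{-1}(U_{\mathsf T})$, where $U_{\mathsf T}$ is the corresponding open of the Tate spectrum. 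It therefore suffices to prove that $r$ restricts to homeomorphisms $U(\cone(\beta))\to U_{\mathsf T}(\cone(\beta))$ and $U(\cone(\rho))\to U_{\mathsf T}(\cone(\rho))$. These are open subsets on both sides, and the two restrictions agree on the overlap since they are the same map $r$. Hence they glue to a homeomorphism $U\cong U_{\mathsf T}$.

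Finally we identify $U_{\mathsf T}$. By \Cref{thm:tate-comparison} and \Cref{cor:explicit-tate-generators-topology} (or \Cref{cor:tate-spectrum-no-root-unity} when there is no root of unity), the only point of $\Spc(\DTM_\gm(k;\F_p))$ lying in both $\supp(\cone(\beta))$ and $\supp(\cone(\rho))$ is $\ideal{0}$. Indeed, in the real case the $\mathcal{P}^\alpha$ contain $\cone(\rho)$, since $\rho\notin\mathfrak p_\alpha$. Thus $U_{\mathsf T}=\Spc(\DTM_\gm(k;\F_p))\setminus\{\ideal{0}\}$, which $\varrho^h$ identifies with the complement of the irrelevant-type closed point, namely $\Proj(H^{*,*}_{\mathcal M}(k;\F_p))$.

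For the $\rho$-piece, \Cref{theorem:compactgenerationofDM/cone(rho)} gives $(\DM_\gm(k;\F_p)/\ideal{\cone(\rho)})^\natural\simeq(\DTM_\gm(k;\F_p)/\ideal{\cone(\rho)})^\natural$, compatibly with the inclusion. Combined with \Cref{remark:principalquotientgiveshomeoontoopen} and naturality of $\Spc$, this shows that $r$ restricts to a homeomorphism on the $\rho$-opens. One must check that the equivalence is induced by the inclusion $\DTM\subseteq\DM$; it is, since both sides are the compacts of the same big $\rho$-local category generated by $\unit$.

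For the $\beta$-piece, when $k$ contains $\zeta_p$ this is \Cref{prop:tau-localisation-etale} restricted to Tate motives. Together with \Cref{thm:gallauer-etale-spectrum} and \Cref{lemma:homeoofcomparisonmapdescendstosubcategories}, the inclusion $\DTM_\et^\vee\subseteq\DM_\et^\vee$ induces a homeomorphism of spectra, since both map homeomorphically to $\Spec^h(K^M_*(k)/p)$ compatibly. When $k$ lacks $\zeta_p$, $p$ is odd and $\rho$ is nilpotent, so only the $\beta$-piece matters. Here I would argue by base change to $k'=k(\zeta_p)$. By \Cref{prop:adoining-root-unity-faithful}, $\Spc(f^*)$ is surjective on both $\DM_\gm$ and $\DTM_\gm$, and it maps $U(\cone(\tau^d))$ onto $U(\cone(\beta))$. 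Over $k'$ the étale open of $\Spc(\DM_\gm(k';\F_p))$ is a singleton, since $\Spec^h(K^M_*(k')/p)$ is a point for odd $p$. Hence $U(\cone(\beta))$ in $\Spc(\DM_\gm(k;\F_p))$ is a single point, and it maps to the single point of the Tate étale open.

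The main obstacle I expect is this non-root-of-unity step. The rest is bookkeeping once the opens are identified.
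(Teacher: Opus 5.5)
Your proposal is correct and follows the paper's proof. With a root of unity you combine the étale identification of the $\tau$-open (\Cref{cor:tau-open-tates}, \Cref{rem:open-tau-DM}) with \Cref{theorem:compactgenerationofDM/cone(rho)} for the $\rho$-open, and without one you use faithfulness of base change to $k(\zeta_p)$ to show that $U(\cone(\beta))$ is a singleton. Your use of the restriction map $r$, the identity $U=r^{-1}(U_{\mathsf T})$ and the gluing of the two open pieces simply makes explicit the compatibilities that the paper's brief proof leaves implicit.
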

\begin{proof}
If $k$ contains a primitive $p^{th}$ root of unity, combine \Cref{cor:tau-open-tates}, \Cref{rem:open-tau-DM}, and \Cref{theorem:compactgenerationofDM/cone(rho)}. Otherwise, $p$ is odd, so $U(\cone(\rho))=\emptyset$. By \Cref{prop:adoining-root-unity-faithful}, base change to $k(\zeta_p)$ is faithful, thus $U(\cone(\tau)) \subseteq \Spc(\DM_\gm(k(\zeta_p);\F_p))$ surjects onto $U(\cone(\beta))\subseteq \Spc(\DM_\gm(k;\F_p))$. This implies that the latter must be a singleton, which proves the claim.
\end{proof}

We first observe that, when $k$ is a formally real field with infinitely many orderings, the spectrum of (Tate) motives with $\F_2$-coefficients is non-Noetherian.

\begin{lemma}\label{lem:infinite-sper-non-noetherian}
If $k$ has infinitely many orderings, then $\Sper(k)$ is not
Noetherian.
\end{lemma}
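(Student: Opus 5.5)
The plan is to use that $\Sper(k)$, with the Harrison topology, is a Boolean space: it is compact, Hausdorff and totally disconnected, with basis the clopen sets $H(a)=\{\alpha \mid a>_\alpha 0\}$. A Noetherian topological space is one in which every subspace is quasi-compact, or equivalently one satisfying the descending chain condition on closed subsets. So I would argue by contradiction: assume $\Sper(k)$ is Noetherian and show it must then be finite.

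The first step is to note that a Noetherian Hausdorff space is finite. Every subset of a Noetherian space is quasi-compact in the subspace topology. In a Hausdorff space, quasi-compact subsets are closed. Hence every subset of $\Sper(k)$ would be closed, so $\Sper(k)$ would be discrete. Since it is also quasi-compact, it would be finite, contradicting the assumption of infinitely many orderings.

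For a more hands-on version avoiding the Hausdorff fact, I would instead build a strictly descending chain of closed sets. Take an infinite compact Hausdorff totally disconnected space $X$. Because $X$ is infinite and compact, it is not discrete, so some point is not isolated. Choose a clopen $C_0=X$ and split it repeatedly as follows. Given an infinite clopen $C_n$, pick two distinct points of $C_n$ and separate them by a clopen set; one of the two pieces is infinite. Call that infinite piece $C_{n+1}\subsetneq C_n$. This yields a strictly decreasing chain of clopen, hence closed, subsets, violating the descending chain condition.

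The only input needed beyond general topology is that the Harrison topology makes $\Sper(k)$ Hausdorff with a clopen basis. This is standard and implicit in calling $\Sper(k)$ a Boolean space, as in \Cref{prop:spectralsheavescompactgeneration} and its appeal to \cite{craven1975boolean}. I do not expect a real obstacle here. The one point to check is that Noetherianity is meant as the descending chain condition on closed sets, so that the chain of clopens above suffices.
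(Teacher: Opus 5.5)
Your proof is correct, and it rests on the same input as the paper's, namely that the Harrison topology on $\Sper(k)$ is Hausdorff. The key lemma you use is different, though.

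The paper argues via irreducible components. In a Hausdorff space every irreducible subset is a singleton, so the irreducible components of $\Sper(k)$ are its points, while a Noetherian space has only finitely many irreducible components.

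Your first argument instead uses two facts: every subspace of a Noetherian space is quasi-compact, and quasi-compact subsets of a Hausdorff space are closed. Together these force $\Sper(k)$ to be discrete, hence finite since it is quasi-compact. This is just as short, purely point-set, and proves the slightly sharper statement that a Noetherian Hausdorff space is finite and discrete.

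Your second argument uses more structure than needed (the clopen basis, not just Hausdorffness). In exchange it exhibits an explicit strictly descending chain of clopen subsets, which directly witnesses the failure of the descending chain condition. The remark about some point being non-isolated plays no role in that construction and can be dropped.

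In all three versions the Hausdorff hypothesis is essential: an infinite set with the cofinite topology is Noetherian and $T_1$.
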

\begin{proof}
Note that $\Sper(k)$ is Hausdorff, so in particular every irreducible subset is a singleton. However, a Noetherian space has only finitely many
irreducible components. Since $\Sper(k)$ is infinite, it therefore cannot be Noetherian.
\end{proof}

\begin{cor}
\label{cor:non-noetherian-motivic-spectra}
If $k$ has infinitely many orderings, then $\Spc(\DTM_\gm(k;\F_2))$ and $\Spc(\DM_\gm(k;\F_2))$ are not Noetherian.
\end{cor}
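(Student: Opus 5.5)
The strategy is to embed $\Sper(k)$, with its Harrison topology, as a subspace of each of the two spectra, and then use that a subspace of a Noetherian space is Noetherian. Lemma~\ref{lem:infinite-sper-non-noetherian} then gives the contradiction. Throughout, $p=2$ and $k$ is formally real (having infinitely many orderings forces this), so $\tau$ exists since $-1$ is a primitive square root of unity.

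\emph{The Tate case.} Theorem~\ref{thm:tate-comparison} identifies $\Spc(\DTM_\gm(k;\F_2))$ with $\Spec^h((K^M_*(k)/2)[\tau])$ via $\varrho^h$. By Corollary~\ref{cor:hspectrum-MilnorKtau}, the open subset $D(\tau)$ is homeomorphic to $\Spec^h(K^M_*(k)/2)$. Inside this, the subset $\Proj(K^M_*(k)/2)=\Spec^h(K^M_*(k)/2)\setminus\{\mathfrak m\}$ is open, being the complement of the closed point $V(\mathfrak m)$. By Theorem~\ref{thm:hspectrum-MilnorK}, this subset is homeomorphic to $\Sper(k)$. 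Concretely, the points $\mathcal P^\alpha_{\acute{e}t}$ form a subspace of $\Spc(\DTM_\gm(k;\F_2))$ homeomorphic to $\Sper(k)$. I would double-check that the subspace topology is indeed the Harrison one. This follows because the homeomorphisms involved (the open embedding $D(\tau)$ and the Proj identification) are topological, not merely bijections of sets. The Corollary~B description of closed sets via $W_2(Z)$ confirms this directly.

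\emph{The motivic case.} Theorem~\ref{theorem:TateopeninSpc(DM)} provides an open embedding of $\Spc(\DTM_\gm(k;\F_2))\setminus\{\ideal{0}\}$ into $\Spc(\DM_\gm(k;\F_2))$. The copy of $\Sper(k)$ found above lies in this punctured spectrum, since $\mathcal P^\alpha_{\acute{e}t}\neq\ideal{0}$. It is a subspace there, and hence also a subspace of $\Spc(\DM_\gm(k;\F_2))$.

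Finally, I use the standard fact that every subspace of a Noetherian space is Noetherian. An ascending chain of open sets of the subspace comes from traces of open sets of the ambient space, and taking unions of those ambient open sets yields an ascending chain, which stabilises. Since $\Sper(k)$ is not Noetherian by Lemma~\ref{lem:infinite-sper-non-noetherian}, neither spectrum is Noetherian. There is no real obstacle here. The only point needing care is ensuring that the identifications used are homeomorphisms onto subspaces, which the cited results supply.
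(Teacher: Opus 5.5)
Your proposal is correct and follows essentially the same argument as the paper. The paper likewise realises $\Sper(k)\cong\Proj(K^M_*(k)/2)$ as an open subset of $U(\cone(\tau))\cong\Spec^h(K^M_*(k)/2)$ in both spectra (namely the points $\mathcal{P}^\alpha_{\acute{e}t}$, i.e.\ $U(\cone(\rho)\oplus\cone(\tau))$), and concludes because subspaces of Noetherian spaces are Noetherian; the only cosmetic difference is that for $\DM_\gm(k;\F_2)$ you pass through the open embedding of the punctured Tate spectrum rather than directly through the identification of $U(\cone(\tau))$ with the spectrum of rigid étale motives.
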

\begin{proof}
By \Cref{cor:tau-open-tates} and \Cref{rem:open-tau-DM} (cf.~\Cref{theorem:TateopeninSpc(DM)}), $\Sper(k)$ identifies with an open subset of
\[U(\cone(\tau))\]
in each of the two spectra (it corresponds to $U(\cone(\rho) \oplus \cone(\tau))$). Since every open subspace of a Noetherian space is Noetherian, the result follows from \Cref{lem:infinite-sper-non-noetherian}.
\end{proof}

Our second application is that the cardinality of the spectrum of the motivic category may be arbitrarily large. This is a consequence of the following result.

\begin{theorem}[\cite{craven1975boolean}]\label{thm:craven-boolean-realisation}
For every Boolean space $X$, there exists a formally real field $k$
such that
\[\Sper(k) \cong X.\]
\end{theorem}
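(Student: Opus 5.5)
The statement is Craven's realisation theorem, quoted from \cite{craven1975boolean}. If I had to supply a proof, I would build the field in two steps: first produce a field whose space of orderings is a full Cantor cube $\{\pm1\}^I$, then cut this down to an arbitrary closed subspace by passing to a suitable algebraic extension. The starting observation is topological. Every Boolean space $X$ embeds as a closed subspace of a cube $\{\pm 1\}^I$. One can take $I$ to be the set of clopen subsets of $X$ and send $x$ to the tuple of its membership signs. This map is continuous and injective, hence a closed embedding because $X$ is compact and the cube is Hausdorff.

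\textbf{Step 1: realise the cube.} Let $\Gamma \coloneqq \bigoplus_{i\in I}\Z$ and let $F \coloneqq \R((\Gamma))$ be a generalised power series (Hahn) field, after choosing an ordering of $\Gamma$. By the Baer--Krull theory for the natural valuation, with residue field $\R$ (which has a unique ordering) and value group $\Gamma$, orderings of $F$ correspond bijectively to characters $\Gamma/2\Gamma \to \{\pm1\}$. These characters form $\{\pm 1\}^I$. Under this correspondence the Harrison subbasic sets $H(\pm t_i)$, where $t_i$ is the monomial of degree $e_i$, are the coordinate cylinders. Every element of $F$ has the sign of its leading monomial times a square. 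Hence every Harrison set $H(a)$ is a cylinder, and the bijection is a homeomorphism $\Sper(F)\cong\{\pm1\}^I$. If one prefers to avoid Hahn fields, the iterated Laurent field $\R((t_1))\cdots$ works for countable $I$, and a direct limit handles the general case.

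\textbf{Step 2: cut down to a closed subset $Y\subseteq\Sper(F)$.} Let $T_Y \coloneqq \bigcap_{P\in Y}P$, which is a preordering of $F$. Fix an algebraic closure of $F$. By Zorn's lemma, choose a maximal algebraic extension $K/F$ inside it with two properties: every ordering in $Y$ extends to $K$, and every ordering of $K$ restricts into $Y$. I would then show the following.
\begin{enumerate}
\item Restriction $\Sper(K)\to Y$ is surjective. This is built into the Zorn condition, and the condition is preserved under unions of chains by a compactness argument in the Harrison topology.
\item Restriction is injective. If two orderings of $K$ over the same $P\in Y$ differed on some $a\in K$, then $\sqrt{a}$ or $\sqrt{-a}$ could be adjoined to enlarge $K$ without violating the two conditions, contradicting maximality. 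The elements totally positive on the preimage of $Y$ should become squares, which is what makes this adjunction available.
\item The resulting continuous bijection between compact Hausdorff spaces is a homeomorphism.
\end{enumerate}
Applying Step 2 with $Y$ the image of $X$ under the embedding of Step 1 gives $\Sper(K)\cong X$.

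\textbf{Main obstacle.} The hard step is (2) of Step 2, together with the verification that every ordering of $K$ restricts into $Y$. Adjoining square roots of elements of $T_Y$ does not in general give unique extensions of orderings, so the maximality argument has to be arranged carefully. I would try to show that in the maximal $K$ the preordering $\bigcap_{Q\in\Sper(K)}Q$ equals $K^2$, so that $K$ is $Y$-Pythagorean. I would also show that the extension of each $P\in Y$ is then pinned down by $P$ together with its sign data on $F$. If this Zorn argument proves too delicate, the fallback is Craven's original route. That route passes through the abstract space of orderings $\bigl(X, C(X,\{\pm1\})\bigr)$, which has the SAP property, and realises it by an explicit field construction by transfinite induction on a clopen basis.
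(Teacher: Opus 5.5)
The paper gives no proof of this statement; it is quoted from Craven. Your sketch therefore has to stand on its own. The embedding $X\hookrightarrow\{\pm1\}^I$ and Step~1 are fine. Step~2, however, cannot work with the field produced in Step~1.

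\textbf{The base field is too rigid.} The Hahn field $F=\R((\Gamma))$ is Henselian with residue field $\R$, and so is every formally real algebraic extension $K$ of $F$. Indeed, the valuation extends uniquely and the residue field is algebraic over $\R$. It cannot be $\mathbb{C}$, since Hensel's lemma would then make $-1$ a square in $K$.
\begin{itemize}
\item Hence $1+\mathfrak m_K\subseteq K^{\times 2}$, so every ordering of $K$ is compatible with the valuation.
\item Baer--Krull then says $\Sper(K)$ consists of \emph{all} characters $\sigma$ of $K^\times/K^{\times 2}$ with $\sigma(-1)=-1$. That is, $\Sper(K)$ is a fan.
\item Consequently $|\Sper(K)|$ is either a power of $2$ or infinite.
\item The image of $\Sper(K)\to\Sper(F)$ is the set of orderings of $F$ that are positive on $F^\times\cap K^{\times 2}$. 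Since $\Sper(F)$ is itself a fan, this image is closed under $(\sigma_1,\sigma_2,\sigma_3)\mapsto\sigma_1\sigma_2\sigma_3$.
\end{itemize}
So if $X$ has three points, no algebraic extension of $F$ works at all. More generally, for any closed $Y\subseteq\Sper(F)$ not closed under triple products (for instance any three-point $Y$), the family of extensions satisfying your two conditions is empty, and Zorn's lemma produces nothing. Step~2 can only ever output Cantor cubes again. The iterated Laurent series variant has the same problem, being Henselian as well.

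\textbf{Step~2 lacks its two key inputs, whatever the base field.}
\begin{itemize}
\item \emph{Nonemptiness.} Your poset is nonempty only if some algebraic extension already has image \emph{exactly} $Y$. The condition that every ordering of $K$ restricts into $Y$ fails for $F$ itself, and you never construct such an extension. Adjoining square roots of elements of $T_Y$ only cuts out $\{P : P\supseteq T_Y\}$, which in general is strictly larger than $Y$.
\item \emph{Injectivity.} The maximality argument is not valid as stated. Adjoining $\sqrt{a}$ destroys every ordering of $K$ at which $a$ is negative, possibly all extensions of some other $P'\in Y$, which breaks your first condition. Moreover, any extension $K'/K$ preserving that condition has closed image in $\Sper(K)$ meeting every fibre over $Y$. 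So if $Q_1$ and $Q_2$ both lie in the closure of the set of orderings of $K$ that are alone in their fibre, every such $K'$ keeps both $Q_1$ and $Q_2$ alive. Maximality then yields no contradiction.
\end{itemize}
What is needed is a genuine construction controlling both the image and the fibres. This is where something like the SAP property (every clopen set of orderings is a single Harrison set) has to enter. Your fallback names Craven's route through SAP spaces but does not carry it out.
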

\begin{cor}\label{cor:arbitrarily-large-balmer-motivic-spectrum}
For every infinite cardinal $\lambda$, there exists a field $k$ such that
\[\vert \Spc(\DM_\gm(k;\Z)) \vert > \lambda.\]
\end{cor}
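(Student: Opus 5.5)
We plan to use Craven's realisation theorem (\Cref{thm:craven-boolean-realisation}) to choose a field with a huge real spectrum, and then transport the size of $\Sper(k)$ into $\Spc(\DM_\gm(k;\Z))$. Concretely, given an infinite cardinal $\lambda$, take $X$ to be a Boolean space of cardinality greater than $\lambda$. For instance, $X=\{0,1\}^{\lambda}$ with the product topology is compact, Hausdorff and totally disconnected, and $|X|=2^\lambda>\lambda$. Craven then gives a formally real field $k$ with $\Sper(k)\cong X$. Since $k$ is formally real, it has characteristic $0$, so the exponential characteristic is $1$ and we may work with $\Z$-coefficients throughout.

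Next, we bound $\Spc(\DM_\gm(k;\F_2))$ from below. By \Cref{theorem:TateopeninSpc(DM)}, this spectrum contains a homeomorphic copy of the punctured Tate spectrum $\Spc(\DTM_\gm(k;\F_2))\setminus\{\ideal{0}\}$. By \Cref{cor:explicit-tate-generators-topology}, the punctured Tate spectrum contains the pairwise distinct points $\mathcal{P}^\alpha$ for $\alpha\in\Sper(k)$, which are distinct because $\varrho^h$ is a bijection and the primes $\mathfrak{p}_\alpha+(\tau)$ are distinct. Hence $|\Spc(\DM_\gm(k;\F_2))|\geq|\Sper(k)|>\lambda$.

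Finally, we pass to integral coefficients via the change of coefficients functor
\[cc_2^*:\DM_\gm(k;\Z)\longrightarrow\DM_\gm(k;\F_2).\]
As recalled at the start of the subsection on integral coefficients (via \cite[Theorem~1.7]{Balmer:Conservative} and \cite{Gallauer:TateSpectrum}), the induced map $\Spc(cc_2^*)$ identifies $\Spc(\DM_\gm(k;\F_2))$ with the fibre $\supp(\cone(2))=\varrho^{-1}((2))$, and in particular it is injective. Therefore $|\Spc(\DM_\gm(k;\Z))|\geq|\supp(\cone(2))|>\lambda$.

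The main point to check is this last injectivity. If one prefers not to rely on the full homeomorphism onto $\supp(\cone(2))$, there is a more hands-on alternative. The points $\mathcal{P}^\alpha$ can be separated after pulling back along $cc_2^*$: by \Cref{cor:explicit-tate-generators-topology}, $\mathcal{P}^\alpha=\ideal{S^\alpha}$, and the generators $\cone(\{a\})$ lift to integral Milnor symbols. Given distinct orderings $\alpha\neq\beta$, choose $a$ with $a>_\alpha0$ and $a<_\beta0$. Then the mod-$2$ reduction of the integral object $\cone(\{a\})\otimes\cone(2)$ lies in one pulled-back prime but not the other. This is the same type of argument used in the proof of \Cref{thm:integral-tate-spectrum}(2) for $\mathcal{P}_{2,1,\alpha}$. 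It yields $>\lambda$ distinct primes in $\Spc(\DM_\gm(k;\Z))$ directly, without needing the full homeomorphism.
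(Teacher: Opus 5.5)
\paragraph{The unsupported step.} Your main line claims that $\Spc(cc_2^*)$ identifies $\Spc(\DM_\gm(k;\F_2))$ with $\supp(\cone(2))$ and is therefore injective, and attributes this to the start of the integral subsection. What is recalled there is weaker: only $\im(\Spc(cc_p^*))=\supp(\cone(p))$, via Balmer's image theorem. Only the rational fibre is identified homeomorphically with a spectrum.

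Injectivity does not follow formally from the image statement, and the paper treats it as something to prove. In \Cref{thm:integral-tate-spectrum}(2) it establishes injectivity of $\Spc(cc_p^*)$ only for Tate motives, and by hand, by exhibiting objects that separate the pulled-back primes. Nothing in the paper gives this for all of $\DM_\gm$, so this version of your last step should be dropped.

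\paragraph{The fallback.} Your fallback is correct, and it is what actually proves the corollary, up to two small points.
\begin{enumerate}
\item The generators $\mathcal{P}^\beta=\ideal{S^\beta}$ only give membership: $\cone(\{a\})\in\mathcal{P}^\beta$ when $a<_\beta 0$. The non-membership $\cone(\{a\})\notin\mathcal{P}^\alpha$ when $a>_\alpha 0$ must come from the comparison map. Indeed $\mathcal{P}^\alpha\in\supp(\cone(\{a\}))$ if and only if $\{a\}\in\varrho^h(\mathcal{P}^\alpha)=\mathfrak{p}_\alpha+(\tau)$, which holds here.
\item Tensoring with $\cone(2)$ is unnecessary. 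Take $X=\cone(\{a\}_\Z)$ with $\{a\}_\Z\in H^{1,1}_{\mathcal M}(k;\Z)\cong k^\times$. Then $cc_2^*X\simeq\cone(\{a\})$, so $X$ lies in the pullback of any lift of $\mathcal{P}^\beta$ but not in the pullback of any lift of $\mathcal{P}^\alpha$.
\end{enumerate}

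\paragraph{Comparison with the paper.} The paper argues via
\[\lambda<\vert\Sper(k)\vert\le\vert\Spc(\DTM_\gm(k;\Z))\vert\le\vert\Spc(\DM_\gm(k;\Z))\vert.\]
The middle inequality is the injectivity from \Cref{thm:integral-tate-spectrum}(2), proved by the same integral-symbol separation you use. The last inequality is surjectivity of restriction to a full rigid subcategory (\Cref{remark:inclusionsinducesurjections}).

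You instead go up to $\DM_\gm(k;\F_2)$ first and pull back afterwards. For this you do not need \Cref{theorem:TateopeninSpc(DM)}, which rests on Bachmann's theorem. Any lifts of the $\mathcal{P}^\alpha$ along the surjection $\Spc(\DM_\gm(k;\F_2))\twoheadrightarrow\Spc(\DTM_\gm(k;\F_2))$ are already separated by the Tate object $\cone(\{a\})$.

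What your version gains is transparency: it shows directly that no assumption on the rational Tate spectrum is needed. The paper's chain passes through a theorem stated under \Cref{hyp:rational-tate-spectrum}, even though part (2) does not use that hypothesis. Using $\{0,1\}^\lambda$ instead of $\beta Y$ makes no difference.
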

\begin{proof}
For an infinite discrete set $Y$ of cardinality $\lambda$, its Stone--Čech compactification $\beta Y$ is a Boolean space of cardinality no less than $\lambda$. Thus, by \Cref{thm:craven-boolean-realisation}, we may choose a field $k$ such that the cardinality of $\Sper(k)$ exceeds the cardinality of $\lambda$. Finally, the following inequality
\[\lambda < \vert \Sper(k) \vert \leq \vert \Spc(\DTM_\gm(k;\Z)) \vert \leq \vert \Spc(\DM_\gm(k;\Z)) \vert\]
proves the claim.
\end{proof}
\appendix
\section{Generalised Periodicity}
In \cite{Gallauer:Periods}, Gallauer introduced a notion of \emph{periodicity} and investigated it in the setting of tensor-triangular geometry. He successfully applied these tools to deepen our understanding of the structure of the spectrum of Artin motives $\Spc(\DAM_\gm(k;\F))$: for instance, the fact that the restriction of the isotropic points in $\Spc(\DM_\gm(k;\F_p))$ to $\Spc(\DAM_\gm(k;\F_p))$ are closed relies on the proof that the periodic locus of $\Spc(\DAM_\gm(k;\F_p))$ is the complement of the closed points, as proved by Gallauer--Miller in Appendix~B of \emph{loc. cit.}. Let us recall Gallauer's definition of periodicity:

\begin{definition}\label{def:Gallauerperiodicity}
    A tt-category $\mathcal{K}$ has \emph{period} $d \in \Z_{>0}$ if there exists an isomorphism
    \[\unit \simeq \Sigma^d\unit\]
    in $\mathcal{K}$. If $\mathcal{K}$ has period $d$ for some $d>0$, then we say $\mathcal{K}$ is \emph{periodic}, and we refer to the minimal such $d$ as its \emph{periodicity}. Otherwise, we say $\mathcal{K}$ is \emph{non-periodic}, and of period $0$.
\end{definition}

One then defines the \emph{periodicity} of a point $\mathcal{P} \in \Spc(\mathcal{K})$ as the periodicity of the Verdier localisation $\mathcal{K}/\mathcal{P}$. In \emph{loc. cit.}, these periodic loci proved very fruitful in categorising different types of points in tt-categories. Moreover, \emph{propagation of periodicity} allows one to efficiently deduce information regarding specialisation relations amongst points of the spectrum.

In this appendix, we extend Gallauer's theory to the setting of periodicity with respect to arbitrary invertible objects $u \in \Pic(\mathcal{K})$ in place of $\Sigma\unit \in \Pic(\mathcal{K})$. Our motivation stems from the fact that in motivic settings, there are two clear notions of twisting/shifting: by the usual suspension, and by the Tate twist. It is thus natural to develop the theory of periodicity in the generality that permits one to include such non-trivial twists (and of course, this notion of generalised periodicity is a natural extension in other contexts, not just in the motivic setting). The reason this extension is useful is that, as we will see, there are examples of motivic categories for which the periodicity in the sense of \Cref{def:Gallauerperiodicity} does not distinguish points in their entirety, but for which our generalised notion of periodicity \emph{does} distinguish points, that is, every distinct point has a different periodicity. For instance, the spectrum of motives of quadrics $\Spc(\DQM_\gm(F;\F_p))$ over a real closed field $F$ enjoys this property, see \Cref{prop:periodisinjectiveforrealquadrics}. On the other hand, we will see that even this more general notion of periodicity can fail to distinguish points: the spectrum of Tate motives $\Spc(\DTM_\gm(k;\F_p))$ has its points distinguished by their periodicity if and only if $p$ is odd or $k$ admits at most one ordering.

We begin by developing the key foundational aspects of generalised periodicity, and conclude by exploring the handful of motivic examples we have at our disposal. To make our definition of generalised periodicity, we adopt a parallel perspective of \Cref{def:Gallauerperiodicity}, also discussed in \cite{Gallauer:Periods}.

\begin{example}
    Let $\mathcal{K}$ be the homotopy category of the compact part of a stable presentably symmetric monoidal $\infty$-category $\mathcal{C} \in \CAlg(\Pr^L_{st})$. By the universal property of spectra $\Sp$, there exists an essentially unique cocontinuous tt-functor
    \[\begin{tikzcd}
        F:\Sp \arrow[r] & \mathcal{C},
    \end{tikzcd}\]
    and the periodicity $d$ of $\mathcal{K}$ of \Cref{def:Gallauerperiodicity} coincides with the unique non-negative generator of the kernel of the induced map on Picard groups:
    \[\begin{tikzcd}
        (d) = \ker(\Pic(F):\Z \simeq \Pic(\Sp) \arrow[r] & \Pic(\mathcal{C}))
    \end{tikzcd}\]
    (recall the Picard group $\Pic(\Sp)$ is generated by the suspension of the unit $\Sigma\unit$).
\end{example}

\begin{definition}\label{def:generalisedperiodicity}
    Let $\mathcal{K}$ be a tt-category, and let $\mathcal{P} \in \Spc(\mathcal{K})$ be a point of its spectrum. Denote by
    \[\begin{tikzcd}
        q:\mathcal{K} \arrow[r] & \mathcal{K}/\mathcal{P}.
    \end{tikzcd}\]
    the Verdier localisation of $\mathcal{K}$ by $\mathcal{P}$. We define the \emph{periodicity} of $\mathcal{P} \in \Spc(\mathcal{K})$ to be the subgroup
    \[\begin{tikzcd}
        \ker(\Pic(q):\Pic(\mathcal{K}) \arrow[r] & \Pic(\mathcal{K}/\mathcal{P})) \subseteq \Pic(\mathcal{K}),
    \end{tikzcd}\]
    which we denote $\per_\mathcal{K}(\mathcal{P}) \subseteq \Pic(\mathcal{K})$. We call $u \in \Pic(\mathcal{K})$ a \emph{period} of $\mathcal{P} \in \Spc(\mathcal{K})$ if $u \in \per_\mathcal{K}(\mathcal{P})$.
\end{definition}

\begin{remark}
    We remark that, unlike in \Cref{def:Gallauerperiodicity}, we are unable to speak of the periodicity of a \emph{category} $\mathcal{K}$: we can only speak of the periodicity of the \emph{points} of its spectrum. The reason for this is that our definition of generalised periodicity is a relative notion: it depends on the Picard group of a source category, namely $\mathcal{K}$. Moreover, since the universal example---the $\infty$-category of spectra---has $\Pic(\Sp) \simeq \Z$ generated by $\Sigma\unit$, any notion of periodicity different from \cite{Gallauer:Periods} needs to be relative in some sense.
\end{remark}

\begin{example}\label{example:biperiodicity}
    Consider $\DM_\gm(k;R)$. The Tate motives $\unit(i)[j] \in \Pic(\DM_\gm(k;R))$ provide a natural notion of biperiodicity for $\DM_\gm(k;R)$: the \emph{biperiodicity} of a point $\mathcal{P} \in \Spc(\DM_\gm(k;R))$ is the intersection of $\per_{\DM_\gm(k;R)}(\mathcal{P})$ with the subgroup $\Z^2 \simeq \ideal{\unit(i)[j] \mid i,j \in \Z}$ of $\Pic(\DM_\gm(k;R))$ generated by the Tate motives:
    \[\per_{\DM_\gm(k;R)}(\mathcal{P}) \cap \Z^2 \subseteq \Z^2.\]
    Equivalently, the biperiodicity of a point $\mathcal{P}$ is the kernel of the map
    \[\begin{tikzcd}
        \Z^2 \arrow[r] & \Pic(\DM_\gm(k;R)) \arrow[r, "\Pic(q)"] & \Pic(\DM_\gm(k;R)/\mathcal{P})
    \end{tikzcd}\]
    where the first map sends $(a,b)$ to the class of $\unit(a)[b]$ in $\Pic(\DM_\gm(k;R))$.
    
    We say $(a,b) \in \Z^2$ is a \emph{biperiod} of $\mathcal{P} \in \Spc(\DM_\gm(k;R))$ if the corresponding element resides in $\per_{\DM_\gm(k;R)}(\mathcal{P}) \cap \Z^2 \subseteq \Z^2$.

    Of course, the notion of biperiodicity holds for any subcategory of $\DM_\gm(k;R)$ which contains $\DTM_\gm(k;R)$.
\end{example}

Before going further into examples, we state some results concerning the theory of generalised periodicity: these are mostly direct generalisations of the results from \cite{Gallauer:Periods}.

\begin{lemma}\label{lemma:periodicityalongttfunctors}
    Let $F: \mathcal{K} \longrightarrow \mathcal{L}$ be a tt-functor between tt-categories $\mathcal{K},\mathcal{L}$. Then for $\mathcal{P} \in \Spc(\mathcal{L})$, we have
    \[\per_{\mathcal{K}}(\Spc(F)(\mathcal{P})) \subseteq \Pic(F)^{-1}(\per_{\mathcal{L}}(\mathcal{P})).\]
\end{lemma}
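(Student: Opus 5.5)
The plan is to factor the Verdier localisation at $\mathcal{Q} \coloneqq \Spc(F)(\mathcal{P}) = F^{-1}(\mathcal{P})$ through the localisation at $\mathcal{P}$, and then use functoriality of $\Pic$. Write $q_\mathcal{Q}:\mathcal{K} \to \mathcal{K}/\mathcal{Q}$ and $q_\mathcal{P}:\mathcal{L}\to\mathcal{L}/\mathcal{P}$ for the two localisations.

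First, the composite $q_\mathcal{P}\circ F:\mathcal{K}\to\mathcal{L}/\mathcal{P}$ is a tt-functor. It kills every object of $F^{-1}(\mathcal{P})=\mathcal{Q}$, since such objects are sent into $\mathcal{P}$. By the universal property of the Verdier quotient (which is compatible with the tensor structure, $\mathcal{Q}$ being a tt-ideal), it therefore factors uniquely through $q_\mathcal{Q}$ via a tt-functor
\[\overline{F}:\mathcal{K}/\mathcal{Q}\to\mathcal{L}/\mathcal{P}, \qquad \overline{F}\circ q_\mathcal{Q}\cong q_\mathcal{P}\circ F.\]

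Second, a tt-functor is strong monoidal, so it sends invertible objects to invertible objects and units to units. Hence $\Pic(-)$ is functorial on tt-functors, and we obtain a commutative square of group homomorphisms
\[\Pic(\overline{F})\circ\Pic(q_\mathcal{Q})=\Pic(q_\mathcal{P})\circ\Pic(F).\]

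Third, let $u\in\per_\mathcal{K}(\mathcal{Q})$, so that $q_\mathcal{Q}(u)\simeq\unit$. Then
\[q_\mathcal{P}(F(u))\simeq\overline{F}(q_\mathcal{Q}(u))\simeq\overline{F}(\unit)\simeq\unit.\]
This says $F(u)\in\per_\mathcal{L}(\mathcal{P})$, that is, $u\in\Pic(F)^{-1}(\per_\mathcal{L}(\mathcal{P}))$, which is the claimed inclusion.

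There is no real obstacle in this argument. The only point needing care is the first step: the factorisation of $q_\mathcal{P}\circ F$ through $\mathcal{K}/\mathcal{Q}$ must be symmetric monoidal, not merely triangulated. This holds because the tensor product on a Verdier quotient by a tt-ideal is induced from the one on $\mathcal{K}$.
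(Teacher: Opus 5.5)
Your proposal is correct and follows essentially the same route as the paper: both factor $q_\mathcal{P}\circ F$ through the Verdier quotient $\mathcal{K}/F^{-1}(\mathcal{P})$, pass to Picard groups to get a commutative square, and then observe that the kernel of the left vertical map is sent into the kernel of the right one. Your extra remark that the induced functor on quotients is symmetric monoidal is the one detail the paper leaves implicit.
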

\begin{proof}
    We have a commutative diagram
    \[\begin{tikzcd}
        \mathcal{K} \arrow[d] \arrow[r, "F"] & \mathcal{L} \arrow[d] \\
        \mathcal{K}/F^{-1}(\mathcal{P}) \arrow[r] & \mathcal{L}/\mathcal{P}
    \end{tikzcd}\]
    which upon taking Picard groups produces the commutative diagram
    \[\begin{tikzcd}
        \Pic(\mathcal{K}) \arrow[d] \arrow[r, "\Pic(F)"] & \Pic(\mathcal{L}) \arrow[d] \\
        \Pic(\mathcal{K}/F^{-1}(\mathcal{P})) \arrow[r] & \Pic(\mathcal{L}/\mathcal{P})
    \end{tikzcd}\]
    Since, by definition, the kernels of the vertical arrows are the periodicities of the points $\Spc(F)(\mathcal{P})$ and $\mathcal{P}$ respectively, it is clear that $x \in \per_{\mathcal{K}}(\Spc(F)(\mathcal{P}))$ implies $F(x) \in \per_{\mathcal{L}}(\mathcal{P})$.
\end{proof}

\begin{lemma}\label{lemma:propagationforgeneralisedperiodicity}
    Let $\mathcal{K}$ be a tt-category, and let $\mathcal{P},\mathcal{Q} \in \Spc(\mathcal{K})$ be two points of its spectrum. Suppose we have a specialisation relation $\mathcal{P} \rightsquigarrow \mathcal{Q}$. Then
    \[\per_\mathcal{K}(\mathcal{Q}) \subseteq \per_\mathcal{K}(\mathcal{P}).\]
\end{lemma}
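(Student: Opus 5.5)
The plan is to reduce the statement to the functoriality result just proved, \Cref{lemma:periodicityalongttfunctors}, applied to a Verdier quotient. Recall that for primes in $\Spc(\mathcal{K})$ the specialisation $\mathcal{P} \rightsquigarrow \mathcal{Q}$ means $\mathcal{Q} \in \overline{\{\mathcal{P}\}}$, which for Balmer spectra is equivalent to the inclusion $\mathcal{Q} \subseteq \mathcal{P}$. So I assume $\mathcal{Q} \subseteq \mathcal{P}$ and aim to show that every $u \in \Pic(\mathcal{K})$ becoming isomorphic to $\unit$ in $\mathcal{K}/\mathcal{Q}$ also becomes isomorphic to $\unit$ in $\mathcal{K}/\mathcal{P}$.

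The key step is the factorisation of the localisation $q_\mathcal{P}:\mathcal{K} \to \mathcal{K}/\mathcal{P}$ through $q_\mathcal{Q}:\mathcal{K}\to\mathcal{K}/\mathcal{Q}$. Since $q_\mathcal{P}$ kills $\mathcal{Q} \subseteq \mathcal{P}$, the universal property of the Verdier quotient gives a tt-functor
\[\begin{tikzcd}
    \overline{q}:\mathcal{K}/\mathcal{Q} \arrow[r] & \mathcal{K}/\mathcal{P}
\end{tikzcd}\]
with $\overline{q}\circ q_\mathcal{Q} \simeq q_\mathcal{P}$ as tt-functors. The monoidal structure on the quotient is the one induced from $\mathcal{K}$, so $\overline{q}$ is strong monoidal. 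I should also check this explicitly: $\overline{q}$ is itself the Verdier localisation of $\mathcal{K}/\mathcal{Q}$ at the tt-ideal $\mathcal{P}/\mathcal{Q}$.

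Applying $\Pic(-)$ gives $\Pic(q_\mathcal{P}) = \Pic(\overline{q})\circ\Pic(q_\mathcal{Q})$ as group homomorphisms. Hence $\ker\Pic(q_\mathcal{Q}) \subseteq \ker\Pic(q_\mathcal{P})$, which is exactly $\per_\mathcal{K}(\mathcal{Q}) \subseteq \per_\mathcal{K}(\mathcal{P})$.

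Alternatively, one can phrase this through \Cref{lemma:periodicityalongttfunctors}. Apply it to $q_\mathcal{Q}$ and the prime $\mathcal{P}/\mathcal{Q}$ of $\mathcal{K}/\mathcal{Q}$, using \Cref{prop:Verdierlocgiveshomeo} to identify $\Spc(q_\mathcal{Q})(\mathcal{P}/\mathcal{Q}) = \mathcal{P}$. However, the direct factorisation argument is cleaner. I do not expect a real obstacle. The only point needing care is the direction convention for specialisation in Balmer spectra ($\overline{\{\mathcal{P}\}}=\{\mathcal{Q}\mid\mathcal{Q}\subseteq\mathcal{P}\}$), together with the fact that the induced functor between quotients is monoidal.
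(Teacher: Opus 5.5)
Your proposal is correct and is essentially the paper's own proof. The specialisation $\mathcal{P} \rightsquigarrow \mathcal{Q}$ (i.e.\ $\mathcal{Q} \subseteq \mathcal{P}$) yields a factorisation $q_\mathcal{P} \simeq \overline{q} \circ q_\mathcal{Q}$, and comparing the kernels of the induced maps on $\Pic$ gives $\per_\mathcal{K}(\mathcal{Q}) \subseteq \per_\mathcal{K}(\mathcal{P})$; your checks on the direction convention and the monoidality of $\overline{q}$ are sound details that the paper leaves implicit.
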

\begin{proof}
    The existence of a specialisation $\mathcal{P} \rightsquigarrow \mathcal{Q}$ is equivalent to the existence of a factorisation of the Verdier quotient for $\mathcal{P}$ by that of $\mathcal{Q}$:
    \[\begin{tikzcd}
        \mathcal{K} \arrow[r, "q_\mathcal{Q}"] \arrow[rd, swap, "q_\mathcal{P}"] & \mathcal{K}/\mathcal{Q} \arrow[d, "\exists F"] \\
        & \mathcal{K}/\mathcal{P}
    \end{tikzcd}\]
    It follows
    \[\per_\mathcal{K}(\mathcal{P}) = \ker(\Pic(q_\mathcal{P})) = \ker(\Pic(F) \circ \Pic(q_\mathcal{Q})) \supseteq \ker(\Pic(q_\mathcal{Q})) = \per_\mathcal{K}(\mathcal{Q}).\]
\end{proof}

Let us denote by $\mathrm{S}({\Pic(\mathcal{K})})$ the poset of subgroups of the Picard group of $\mathcal{K}$, ordered by $\supseteq$. By construction, generalised periodicity gives a map of sets
\[\begin{tikzcd}
    \per_{\mathcal{K}}: \Spc(\mathcal{K}) \arrow[r] & \mathrm{S}({\Pic(\mathcal{K})}).
\end{tikzcd}\]
which we call the \emph{(generalised) period map}. Note that, when we consider $\Spc(\mathcal{K})$ as a poset whose order is given by $\rightsquigarrow$, this function is order-preserving by virtue of \Cref{lemma:propagationforgeneralisedperiodicity}.

For a given subgroup $H \subseteq \Pic(\mathcal{K})$, we consider the following subset of $\mathrm{S}(\Pic(\mathcal{K}))$:
\[U(H) = \{L \in \mathrm{S}(\Pic(\mathcal{K})) \mid L \supseteq H\}.\]

\begin{definition}
    Let $H \subseteq \Pic(\mathcal{K})$ be a subgroup. We define the \emph{$H$-periodic locus} $\per_H(\mathcal{K})$ of $\Spc(\mathcal{K})$ via
    \[\per_H(\mathcal{K}) \coloneqq \per_\mathcal{K}^{-1}(U(H)) \subseteq \Spc(\mathcal{K}).\]
\end{definition}
\begin{remark}
    Unwinding the definition, we see that the $H$-periodic locus of $\Spc(\mathcal{K})$ consists of those points $\mathcal{P} \in \Spc(\mathcal{K})$ for which $H \subseteq \per_\mathcal{K}(\mathcal{P})$.
\end{remark}
In \cite{Gallauer:Periods}, it was demonstrated that the Alexandrov topology on $\mathrm{S}(\Z) \cong \Z_{\geq 0}$, generated by the basic opens $U(H)$ for $H \in \mathrm{S}(\Z)$, 
makes the period map $\per_\mathcal{K}:\Spc(\mathcal{K}) \longrightarrow \Z_{\geq 0}$ of \emph{loc. cit.} continuous. It turns out that in our generality this can fail.
\begin{example}\label{ex: failure-continuity-quadrics}
Let $F$ be a real closed field and $\mathcal{K}=\DQM_\gm(F;\F_2)$. Recall from \cite{schemeil2026motivic} that $\Spc(\mathcal{K})$ is the following space
\[
\begin{tikzcd}[ampersand replacement=\&,sep=tiny]
    {\p_{\infty,0}} \& \&
    {\p_{k+1,0}} \& {\p_{k,0}} \& {} \&
    {\p_{3,0}} \& {\p_{2,0}} \& {\p_{1,0}} \\
    \& \cdots \& \& \& \cdots \\
    \& {} \&
    {\p_{k+1,1}} \& {\p_{k,1}} \& {} \&
    {\p_{3,1}} \& {\p_{2,1}} \& {\p_{1,1}} \\
    {\p_{\infty,1}} \\
    \& \& \& \& {\p_{\infty,2}}
    \arrow[no head, from=1-1, to=4-1]
    \arrow[between={0}{0.4}, no head, from=1-3, to=3-2]
    \arrow[no head, from=1-3, to=3-3]
    \arrow[no head, from=1-4, to=3-3]
    \arrow[no head, from=1-4, to=3-4]
    \arrow[between={0}{0.4}, no head, from=1-6, to=3-5]
    \arrow[no head, from=1-6, to=3-6]
    \arrow[no head, from=1-7, to=3-6]
    \arrow[no head, from=1-7, to=3-7]
    \arrow[no head, from=1-8, to=3-7]
    \arrow[no head, from=1-8, to=3-8]
    \arrow["\cdots"{description}, no head, from=3-2, to=5-5]
    \arrow[no head, from=3-3, to=5-5]
    \arrow[between={0}{0.4}, no head, from=3-4, to=1-5]
    \arrow[no head, from=3-4, to=5-5]
    \arrow[no head, from=3-6, to=5-5]
    \arrow[no head, from=3-7, to=5-5]
    \arrow[shift right=3, no head, from=4-1, to=5-5]
    \arrow["\cdots"{description, pos=0.3},
        between={0}{0.6}, no head, from=5-5, to=2-5]
    \arrow[shift right=3, no head, from=5-5, to=3-8]
\end{tikzcd}
\]
where a subset $Z$ is closed if and only if it is specialisation closed and is finite whenever $\p_{\infty,0} \not \in Z$. We show that $\Pic(\mathcal{K})$ contains a subgroup $L$ whose periodic locus is not open, from which it immediately follows that the period map is not continuous with respect to the Alexandrov topology. For $n \geq 1$, let $M_n$ be the Rost motive associated with the pure symbol $\rho^n \in K^M_n(F)/2$, and let $R_n=\fib(M_n \to \unit)$; in particular, $R_n$ is the reduced motive of an affine quadric, hence it is $\otimes$-invertible by \cite{Bachmann:Picard}. We shall use the following diamond
\[\begin{tikzcd}[ampersand replacement=\&,sep=small]
	\&\& \unit \\
	\\
	{M_{n}} \&\& {R_n} \&\& {\widetilde{M}_n} \\
	\\
	\&\& {\unit\{d_{n-1}\}}
	\arrow["{[1]}", from=1-3, to=3-3]
	\arrow["{[1]}", from=1-3, to=3-5]
	\arrow[""{name=0, anchor=center, inner sep=0}, from=3-1, to=1-3]
	\arrow[from=3-3, to=3-1]
	\arrow[from=3-3, to=3-5]
	\arrow[""{name=1, anchor=center, inner sep=0}, "{[1]}", from=3-5, to=5-3]
	\arrow[from=5-3, to=3-1]
	\arrow[from=5-3, to=3-3]
	\arrow["\star"'{pos=0.6}, shift left=3, draw=none, from=0, to=3-3]
	\arrow["\star", shift right=3, draw=none, from=1, to=3-3]
\end{tikzcd}\]
of Vishik, where $d_{n}=2^{n}-1$, and $\widetilde{M}_n$ is the corresponding (completely) reduced Rost motive. In this diagram, a star $\star$ indicates a distinguished triangle; the remaining triangles are commutative.

Consider the subgroup
\[L \coloneqq \left\langle R_n[1] \mid n \geq 1\right\rangle \subseteq \Pic(\DQM_\gm(F; \F_2)).\]
We claim that its periodic locus is given by
\[\per_L(\mathcal{K})=\{\p_{\infty,0}, \p_{\infty,1}, \p_{\infty,2}\}.\]
Since
\[\p_{\infty,0}=\left\langle M_n \mid n \geq 1 \right\rangle,\]
the above diamond shows that every $R_n[1]$ becomes isomorphic to the unit $\unit$ in $\mathcal{K}/\p_{\infty,0}$. In particular, $L \subseteq \per_{\mathcal{K}}(\p_{\infty,0})$; moreover, the specialisations
\[\p_{\infty,2} \rightsquigarrow \p_{\infty,1} \rightsquigarrow \p_{\infty,0}\]
and \Cref{lemma:propagationforgeneralisedperiodicity} give
\[\{\p_{\infty,0}, \p_{\infty,1}, \p_{\infty,2}\} \subseteq \per_L(\mathcal{K}).\]
For the reverse inclusion, note that by \Cref{lemma:propagationforgeneralisedperiodicity} it suffices to show that for every finite $n$, $\p_{n,1} \not \in \per_L(\mathcal{K})$. We begin with the case $n=1$: recall from \cite{schemeil2026motivic} that $\p_{1,1}$ is the kernel of base change to the algebraic closure followed by étale realisation:
\[\begin{tikzcd}
    F_1:\mathcal{K} \arrow[r] & \DTM_\gm(\overline{F};\F_2) \arrow[r] & \DTM_\et^\vee(\overline{F};\F_2).
\end{tikzcd}\]
In particular $F_1(R_1[1]) \simeq \unit_{\acute{e}t}[1]$, which is not isomorphic to $\unit_{\acute{e}t}$. Therefore, $R_1[1] \not \in \per_{\mathcal{K}}(\p_{1,1})$, so $\p_{1,1} \not \in \per_L(\mathcal{K})$. For $n \geq 2$, recall that $\p_{n,1}$ is the kernel of the following composition:
\[\begin{tikzcd}
    F_n:\mathcal{K} \arrow[r, "\pi_n"] & \DTM_{\tilde{\mathcal{X}}_{\rho^{n-1}}}(F(Q_n);\F_2)^{\omega} \arrow[r] & \DTM_{\tilde{\mathcal{X}}_{\rho^{n-1}}}(F(Q_n);\F_2)[r_{n-2}^{-1}]^{\omega},
\end{tikzcd}\]
where $\pi_n$ is the tt-functor defined in \cite[Section 3]{schemeil2026motivic}, and the second arrow is the localisation that inverts the unique non-zero morphism $r_{n-2}: \tilde{\mathcal{X}}_{\rho^{n-1}} \longrightarrow \tilde{\mathcal{X}}_{\rho^{n-1}}(-d_{n-2})[-2d_{n-2}-1]$. Since $\pi_n(\tilde{M}_n)=0$, Vishik's diamond gives the following isomorphism in $\DTM_{\tilde{\mathcal{X}}_{\rho^{n-1}}}(F(Q_n);\F_2)[r_{n-2}^{-1}]^{\omega}$:
\[F_n(R_n[1]) \simeq \unit(d_{n-1})[2d_{n-1}+1].\]
However, $\unit(d_{n-1})[2d_{n-1}+1] \not \simeq \unit $ in $\DTM_{\tilde{\mathcal{X}}_{\rho^{n-1}}}(F(Q_n);\F_2)[r_{n-2}^{-1}]^{\omega}$. Thus, $R_n[1] \not \in \per_{\mathcal{K}}(\p_{n,1})$, so $\p_{n,1} \not \in \per_L(\mathcal{K})$. In particular, 
\[\per_L(\mathcal{K})=\{\p_{\infty,0}, \p_{\infty,1}, \p_{\infty,2}\},\]
and it is clear that this subset cannot be open since, by the description of the topology of $\Spc(\mathcal{K})$, any open neighbourhood of $\p_{\infty,0}$, must be cofinite.
\end{example}

To rectify this issue, we introduce the notion of a \emph{relative} period map: given a subgroup $H \subseteq \Pic(\mathcal{K})$, we define the following order-preserving function:
\[\begin{tikzcd}
    \per_\mathcal{K}^H:\Spc(\mathcal{K}) \arrow[r, "\per_\mathcal{K}"] & \mathrm{S}(\Pic(\mathcal{K})) \arrow[r, "- \cap H"] & \mathrm{S}(H).
\end{tikzcd}\]

\begin{prop}\label{prop:fgmeanscontinuous}
    Let $H \subseteq \Pic(\mathcal{K})$ be a finitely-generated subgroup. Then the relative period map
    \[\begin{tikzcd}
        \per_\mathcal{K}^H:\Spc(\mathcal{K}) \arrow[r] & \mathrm{S}(H)
    \end{tikzcd}\]
    is continuous when $\mathrm{S}(H)$ is equipped with the Alexandrov topology.
\end{prop}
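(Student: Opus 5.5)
Since the Alexandrov topology on $\mathrm{S}(H)$ has basic opens $U_H(L)=\{L' \in \mathrm{S}(H) \mid L' \supseteq L\}$ for subgroups $L \subseteq H$, it suffices to show that for every subgroup $L \subseteq H$ the preimage
\[(\per_\mathcal{K}^H)^{-1}(U_H(L)) = \{\mathcal{P} \in \Spc(\mathcal{K}) \mid L \subseteq \per_\mathcal{K}(\mathcal{P})\} = \per_L(\mathcal{K})\]
is open in $\Spc(\mathcal{K})$. The key input is that $H$ is finitely generated, hence Noetherian as an abelian group. So every subgroup $L \subseteq H$ is finitely generated, say by $u_1,\dots,u_r$. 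Since $\per_\mathcal{K}(\mathcal{P})$ is a subgroup, we get
\[\per_L(\mathcal{K}) = \bigcap_{i=1}^r \per_{\langle u_i\rangle}(\mathcal{K}).\]
A finite intersection of opens is open, so we are reduced to showing that for a single invertible object $u \in \Pic(\mathcal{K})$ the locus $\{\mathcal{P} \mid u \simeq \unit \text{ in } \mathcal{K}/\mathcal{P}\}$ is open.

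For this single-object step, I would mimic Gallauer's argument. Fix $\mathcal{P}$ with $q_\mathcal{P}(u) \simeq \unit$. An isomorphism in the Verdier quotient $\mathcal{K}/\mathcal{P}$ is represented by a fraction $u \xleftarrow{s} w \xrightarrow{f} \unit$ in which $\cone(s) \in \mathcal{P}$ and $\cone(f) \in \mathcal{P}$. Set $a \coloneqq \cone(s) \oplus \cone(f)$. Then $\mathcal{P} \in U(a)$, which is open. For every $\mathcal{Q} \in U(a)$ we have $a \in \mathcal{Q}$, so both $s$ and $f$ become invertible in $\mathcal{K}/\mathcal{Q}$, and hence $u \simeq \unit$ there. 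Therefore $U(a) \subseteq \per_{\langle u\rangle}(\mathcal{K})$, and this locus is open.

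The main subtlety is really the reduction step, not the local argument. Openness is only available for finitely many generators at a time, which is exactly why finite generation of $H$ (hence of all its subgroups) is needed. \Cref{ex: failure-continuity-quadrics} shows that this fails for the infinitely generated $L$ there. I would also check that the relevant preimages are genuinely of the form $\per_L(\mathcal{K})$: for $L \subseteq H$, the condition $\per_\mathcal{K}(\mathcal{P}) \cap H \supseteq L$ is equivalent to $\per_\mathcal{K}(\mathcal{P}) \supseteq L$, which is immediate.
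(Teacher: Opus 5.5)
Your proposal is correct and follows essentially the same route as the paper. Both arguments use that every $L\subseteq H$ is finitely generated, take roofs in $\mathcal{K}/\mathcal{P}$ witnessing $u_i\simeq\unit$ whose cones lie in $\mathcal{P}$, and use the resulting finite intersection of elementary opens $U(\cone(-))$ as an open neighbourhood of $\mathcal{P}$ inside the preimage; the differences are only cosmetic, in that you first write $\per_L(\mathcal{K})$ as an intersection over generators and package the two cones as $\cone(s)\oplus\cone(f)$, while the paper intersects all $2n$ elementary opens at once.
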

\begin{proof}
    For a subgroup $L \subseteq H$, let us denote $U_H(L) \coloneqq U(L) \cap \mathrm{S}(H)$. We must show 
    \[(\per_\mathcal{K}^H)^{-1}(U_H(L)) \subseteq \Spc(\mathcal{K})\] 
    is open for all $L \subseteq H$. Since $H$ is finitely-generated, so is $L$, so we may write
    \[L= \langle a_1, \dots, a_n \rangle\]
    for some $a_1,\ldots,a_n \in \Pic(\mathcal{K})$. Suppose $\mathcal{P} \in \Spc(\mathcal{K})$ is such that $\mathcal{P} \in \per_{\mathcal{K}}^{-1}(U(L))$. Then there exists objects $x_1,\dots,x_n \in \mathcal{K}$ and morphisms $\alpha_i:x_i \longrightarrow \mathbb{1}$, $\beta_i: x_i \longrightarrow a_i$ for $i=1,\dots,n$ such that the fractions
    \[\begin{tikzcd}
        \unit & \arrow[l, swap, "\alpha_i"] x_i \arrow[r, "\beta_i"] & a_i
    \end{tikzcd}\]
    witness the isomorphisms $\mathbb{1} \simeq a_i \in \mathcal{K}/\mathcal{P}$. In particular, for $i=1,\ldots,n$ we have
    \[\cone(\alpha_i),\cone(\beta_i) \in \mathcal{P}.\]
    This gives us
    \[\mathcal{P} \in \bigcap_{i=1,\dots,n} (U(\cone(\alpha_i)) \cap U(\cone(\beta_i))),\]
    and the right-hand side is an open neighbourhood of $\mathcal{P}$ in $\Spc(\mathcal{K})$ contained in $(\per_\mathcal{K}^H)^{-1}(U_H(L))$. This completes the proof.
\end{proof}
\begin{remark}
    Observe that the subgroup $L$ of \Cref{ex: failure-continuity-quadrics} is not finitely-generated. Take caution, however, as there are examples where the Picard group of the category is not finitely generated, yet the period map is still continuous.
\end{remark}

\begin{definition}\label{def:limittopology}
    Define the \emph{limit topology} on $\mathrm{S}(\Pic(\mathcal{K}))$ to be the one determined by the basis of opens
    \[\{U(H) \subseteq \mathrm{S}(\Pic(\mathcal{K})) \mid H \text{ finitely-generated subgroup}\}.\]
\end{definition}

\begin{prop}
    Equipped with the limit topology, the period map
    \[\begin{tikzcd}
        \per_\mathcal{K}:\Spc(\mathcal{K}) \arrow[r] & \mathrm{S}(\Pic(\mathcal{K}))
    \end{tikzcd}\]
    is continuous.
\end{prop}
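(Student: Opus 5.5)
It suffices to check that preimages of basic opens are open. So fix a finitely generated subgroup $H \subseteq \Pic(\mathcal{K})$, and show that
\[\per_\mathcal{K}^{-1}(U(H)) = \{\mathcal{P} \in \Spc(\mathcal{K}) \mid H \subseteq \per_\mathcal{K}(\mathcal{P})\}\]
is open in $\Spc(\mathcal{K})$.

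I would reduce this to \Cref{prop:fgmeanscontinuous}. Since $H \subseteq H$ and $H$ is finitely generated, that proposition applies to the relative period map $\per^H_\mathcal{K}$. The basic open $U_H(H) = \{L \in \mathrm{S}(H) \mid L \supseteq H\} = \{H\}$ is open in the Alexandrov topology on $\mathrm{S}(H)$, so $(\per^H_\mathcal{K})^{-1}(U_H(H))$ is open in $\Spc(\mathcal{K})$.

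It remains to identify the two preimages. For any $\mathcal{P}$ we have
\[\per_\mathcal{K}(\mathcal{P}) \supseteq H \iff \per_\mathcal{K}(\mathcal{P}) \cap H = H \iff \per^H_\mathcal{K}(\mathcal{P}) \in U_H(H).\]
Hence $\per_\mathcal{K}^{-1}(U(H)) = (\per^H_\mathcal{K})^{-1}(U_H(H))$, which is open.

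Alternatively, I could repeat the argument of \Cref{prop:fgmeanscontinuous} directly. Write $H = \langle a_1,\dots,a_n\rangle$. A point $\mathcal{P}$ with $H \subseteq \per_\mathcal{K}(\mathcal{P})$ admits, for each $i$, a fraction $\unit \xleftarrow{\alpha_i} x_i \xrightarrow{\beta_i} a_i$ whose cones lie in $\mathcal{P}$. The finite intersection
\[\bigcap_i \bigl(U(\cone(\alpha_i)) \cap U(\cone(\beta_i))\bigr)\]
is then an open neighbourhood of $\mathcal{P}$. Every point $\mathcal{Q}$ in it has the same fractions invertible in $\mathcal{K}/\mathcal{Q}$, so it still has each $a_i$, and therefore all of $H$, as periods.

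The one point needing care is that this intersection is really open. This holds because it is a finite intersection of elementary opens, and that finiteness is exactly where finite generation of $H$ enters. I would also check that the sets $U(H)$, for $H$ finitely generated, actually form a basis. This follows from $U(H) \cap U(H') = U(H + H')$ together with the fact that $H + H'$ is again finitely generated.
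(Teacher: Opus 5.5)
Your proof is correct and is exactly the reduction the paper has in mind: the paper only says the statement is straightforward given \Cref{prop:fgmeanscontinuous}, and your identification $\per_\mathcal{K}^{-1}(U(H)) = (\per^H_\mathcal{K})^{-1}(U_H(H))$ for finitely generated $H$ supplies the omitted details. Your alternative direct argument and the basis check are also fine, though neither is needed.
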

\begin{proof}
    This is straightforward given \Cref{prop:fgmeanscontinuous}; we omit the details.
\end{proof}
\begin{remark}
    There is a more conceptual way to view the limit topology. Indeed, denote by $\mathrm{S}^{\operatorname{fg}}(\Pic(\mathcal{K})) \subseteq \mathrm{S}(\Pic(\mathcal{K}))$ the subset of $\mathrm{S}(\Pic(\mathcal{K}))$ consisting of those finitely-generated subgroups. Restriction gives rise to a cofiltered system
    \[\{S(H)\}_{H \in \mathrm{S}^{fg}(\Pic(\mathcal{K}))}\]
    which yields a canonical map of sets
    \[\begin{tikzcd}
        \varphi:\mathrm{S}(\Pic(\mathcal{K})) \arrow[r] & \underset{H \in \mathrm{S}^{\operatorname{fg}}(\Pic(\mathcal{K}))}{\lim}\mathrm{S}(H); \ L \longmapsto (L \cap H)_H.
    \end{tikzcd}\]
    One can verify that $\varphi$ is an isomorphism of sets, and so we may define a topology on $\mathrm{S}(\Pic(\mathcal{K}))$ via declaring $\varphi$ to be a homeomorphism to the space
    \[\underset{H \in \mathrm{S}^{\operatorname{fg}}(\Pic(\mathcal{K}))}{\lim}\mathrm{S}(H) \in \Top\]
    where each $S(H)$ has its Alexandrov topology (recall the forgetful functor from $\Top$ to $\Set$ preserves limits). This is the weakest topology on $\mathrm{S}(\Pic(\mathcal{K}))$ such that the period map is compatible with the relative period maps for finitely generated subgroups, and one can show that this topology coincides with that of \Cref{def:limittopology}.\footnote{This justifies the terminology for the limit topology.}
\end{remark}

We now turn our attention to motivic examples and applications. 
\begin{prop}\label{prop:Pic(DTM)}
    Let $k,\F$ be fields where the exponential characteristic of $k$ is nonzero in $\F$. The Tate motives $\unit(i)[j]$ determine an isomorphism between the Picard group $\Pic(\DTM_\gm(k;\F))$ and $\Z^2$:
    \[\begin{tikzcd}
        \Z^2 \arrow[r, "\sim"] & \Pic(\DTM_\gm(k;\F)); \, (a,b) \longmapsto [\unit(a)[b]].
    \end{tikzcd}\]
\end{prop}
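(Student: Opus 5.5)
The plan is to compare $\DTM_\gm(k;\F)$ with a purely algebraic category via a weight complex functor, and to compute the Picard group there. We use the \emph{Chow} weight structure rather than the Milnor one: its heart has no non-trivial morphisms between distinct twists, for any coefficient field $\F$.

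\emph{Step 1: the Chow weight structure.} Apply \Cref{theorem:constructingweightstructures} to $S=\{\unit(n)[2n]\mid n\in\Z\}$.
\begin{itemize}
\item Generation: $S$ thickly generates $\DTM_\gm(k;\F)$, since $\unit(n)$ is a shift of $\unit(n)[2n]$.
\item Orthogonality: for $r\geq1$ we have $\Hom(\unit(i)[2i],\unit(j)[2j+r])\simeq H^{2q+r,q}_{\mathcal M}(k;\F)$ with $q=j-i$. This vanishes: for $q<0$ motivic cohomology is zero, and for $q\geq0$ the cohomological degree $2q+r$ exceeds the weight $q$, where motivic cohomology of a field vanishes.
\end{itemize}
Thus we obtain a bounded weight structure, and it is $\otimes$-compatible by \Cref{remark:weightsstructureistensor}.

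Next we identify the heart. In the heart, $\Hom(\unit(i)[2i],\unit(j)[2j])\simeq H^{2q,q}_{\mathcal M}(k;\F)=CH^q(\Spec k)\otimes\F$. This is $\F$ for $q=0$ and $0$ otherwise. Hence the additive closure of $S$ is equivalent, as a symmetric monoidal category, to finite-dimensional $\Z$-graded $\F$-vector spaces $\mathsf{grvect}_\F$. That category is idempotent complete, so it is the whole heart. \Cref{theorem:existenceofweightcomplexfunctor} then gives a tt-functor
\[t:\DTM_\gm(k;\F)\longrightarrow\K^b(\mathsf{grvect}_\F).\]

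\emph{Step 2: Picard group of the target, and injectivity.} Since $\mathsf{grvect}_\F$ is semisimple, every object of $\K^b(\mathsf{grvect}_\F)$ is a sum of its homology, so it is a bigraded vector space with the usual sign rule. An object is $\otimes$-invertible exactly when its total dimension is $1$, i.e.\ it is $\F$ placed in a single (internal, homological) bidegree. Hence $\Pic(\K^b(\mathsf{grvect}_\F))\cong\Z^2$.

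Under this identification, the composite $\Z^2\to\Pic(\DTM_\gm(k;\F))\to\Z^2$ sends $(a,b)$ to $(a,b-2a)$, because $\unit(a)[b]=\unit(a)[2a]\,[b-2a]$. This map is an isomorphism. It follows that $(a,b)\mapsto[\unit(a)[b]]$ is injective, and also that it suffices to prove $\Pic(t)$ is injective.

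\emph{Step 3: surjectivity, the main point.} Let $u$ be invertible. By Step 2 we may twist and shift so that $t(u)\simeq\unit$, and we must then show $u\simeq\unit$.

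Here I would invoke Bondarko's weight-detection property of the weight complex functor for bounded weight structures (cf.\ \cite{Bondarko:KillingWeights}): if $t(X)$ is homotopy equivalent to a complex concentrated in degrees $\geq0$ (resp.\ $\leq0$), then $X\in\mathcal K_{\geq0}$ (resp.\ $X\in\mathcal K_{\leq0}$). Since $t(u)\simeq\unit$ is concentrated in degree $0$, this places $u$ in the heart. So $u\simeq\bigoplus_n\unit(n)[2n]^{m_n}$, and $t$ restricts to the identity on the heart. Comparing with $t(u)\simeq\unit$ forces $m_0=1$ and all other $m_n=0$, so $u\simeq\unit$.

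I expect this detection step to be the only delicate point. If a direct citation is unavailable, I would prove it by induction on the weight length of $X$. Choose a weight decomposition $w_{\leq m-1}X\to X\to w_{\geq m}X$ with $m$ minimal. If $t(X)$ vanishes in the lowest weight, the first morphism factors through a lower-weight piece, and one shortens the weight range. Bondarko's "weight-exactness and conservativity" lemmas give exactly this.
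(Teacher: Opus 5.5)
Your proposal is correct and follows the paper's proof. Both use the Chow weight complex functor $t:\DTM_\gm(k;\F)\to\K^b(\mathsf{grvect}_\F)$, observe that the invertibles of the target are exactly the one-dimensional objects in a single bidegree (the images of the $\unit(i)[j]$), and conclude from injectivity of $t$ on Picard groups. The only difference is that the paper obtains this injectivity by citing \cite[Lemma~20]{Bachmann:Picard}, whereas you derive it from Bondarko's weight-detection for bounded weight structures, which forces an invertible $u$ with $t(u)\simeq\unit$ into the heart; this is the same mechanism that lemma packages.
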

\begin{proof}
    The Chow weight structure on $\DTM_\gm(k;\F)$ determines a tt-functor
    \[\begin{tikzcd}
        t:\DTM_\gm(k;\F) \arrow[r] & \K^b(\mathsf{grvect}_\F)
    \end{tikzcd}\]
    from $\DTM_\gm(k;\F)$ to the bounded homotopy category of finite-dimensional graded $\F$-vector spaces: it sends the Tate motive $\unit(i)[j]$ to the graded vector space $\F$ in graded degree $i$, concentrated in homological degree $j-2i$. The only invertibles of $\K^b(\mathsf{grvect}_\F)$ are the $1$-dimensional vector spaces concentrated in a single $(\text{graded},\text{homological})$-bidegree, which are precisely the images of $\unit(i)[j]$ under $t$. By \cite[Lemma~20]{Bachmann:Picard}, $t$ is $\Pic$-injective, which proves the claim.
\end{proof}

The above proposition tells us, for Tate motives, generalised periodicity is equivalent to biperiodicity as in \Cref{example:biperiodicity}. In the following examples we completely determine the biperiodicities of the points of $\Spc(\DTM_\gm(k;\F_p))$.

\begin{example}\label{example:periodicityofTatesforpodd}
    Let $p$ be an odd prime and consider $\mathcal{K} \coloneqq \DTM_\gm(k;\F_p)$: its spectrum is the Sierpiński space, see \Cref{cor:explicit-tate-generators-topology} and \Cref{cor:tate-spectrum-no-root-unity}. The closed point has no non-trivial periods, since the category is local and so the associated Verdier localisation for this point is the identity functor. Therefore, we focus on the generic (étale) point $\mathcal{P}_{\acute{e}t}$. From the description of $\Pic(\DTM_\gm(k;\F_p))$ from \Cref{prop:Pic(DTM)}, we see that any period of $\mathcal{P}_{\acute{e}t}$ is first and foremost of the form $\unit(a)[b]$ for some $a,b \in \Z$. Recalling the definition of $\beta \in H_\mathcal{M}^{0,d}(k;\F_p)$ from \Cref{prop:hSpec-motcohom-no-root-unity} (where $d=[k(\zeta_p):k]$ is the degree of the extension of $k$ formed by adjoining to it a primitive $p^{th}$ root of unity $\zeta_p$; when $d=1$ we have $\beta=\tau$), we know
    \[\cone(\beta) \in \mathcal{P}_{\acute{e}t}\]
    so that $\unit(d) \in \per_\mathcal{K}(\mathcal{P}_{\acute{e}t})$. Moreover, by construction, étale realisation factors through the Verdier quotient of $\mathcal{K}$ by $\mathcal{P}_{\acute{e}t}$:
    \[\begin{tikzcd}
        \Re_{\acute{e}t}:\DTM_\gm(k;\F_p) \arrow[r] & \DTM_\gm(k;\F_p)/\mathcal{P}_{\acute{e}t} \arrow[r] & \DTM_\et^\vee(k;\F_p).
    \end{tikzcd}\]
    So, if $\unit(a)[b] \in \per_\mathcal{K}(\mathcal{P}_{\acute{e}t})$ is a period of $\mathcal{P}_{\acute{e}t}$, \Cref{lemma:propagationforgeneralisedperiodicity} informs us of an isomorphism
    \[\mu_p^{\otimes a}(k^s)[b]=\Re_{\acute{e}t}(\unit(a)[b]) \simeq \Re_{\acute{e}t}(\unit)=\unit\]
    in $\DM_\et^\vee(k;\F_p)$. But $\mu_p^{\otimes a}(k^s)[b]$ is trivial if and only if $b=0$ and $a$ is a multiple of $d$. It follows
    \[\per_\mathcal{K}(\mathcal{P}_{\acute{e}t})=\ideal{\unit(d)}.\]
\end{example}

\begin{example}\label{example:periodicityofTatesforpeven}
    Consider the case where $p=2$, and let $\mathcal{K} \coloneqq \DTM_\gm(k;\F_2)$. In the case $k$ is not formally real, the argument of \Cref{example:periodicityofTatesforpodd} carries over verbatim to tell us the periodicity of the points of $\Spc(\DTM_\gm(k;\F_2))$. Also, the same argument applies to the étale point for any $k$ and any prime $p$.
    
    Suppose now $k$ is formally real. For an ordering $\alpha$ on $k$, the point $\mathcal{P}^{\alpha} \in \Spc(\DTM_\gm(k;\F_2))$ has periodicity $\ideal{\unit(1)[1]} \subseteq \Z^2 \simeq \Pic(\DTM_\gm(k;\F_2))$. Indeed, we know the generators of this prime, and since under the generalised comparison map
    \[\begin{tikzcd}
        \varrho^h:\Spc(\DTM_\gm(k;\F_2)) \arrow[r] & \Spec^h(K^M_*(k)/2[\tau])
    \end{tikzcd}\]
    $\varrho^h(\mathcal{P}^{\alpha})$ is not the closed point (i.e. the irrelevant ideal) and contains $\tau$, we know there exists some $f:\unit \rightarrow \unit(1)[1]$, corresponding to some pure symbol $\{a\} \in K^M_1(k)/2$, for which $\cone(f) \in \mathcal{P}^{\alpha}$. It immediately follows $\unit(1)[1]$ is a period of $\mathcal{P}^{\alpha}$.

    The existence of another period which is not a $\otimes$-multiple of $\unit(1)[1]$ is forbidden, due to the existence of the tt-functor
    \[\begin{tikzcd}
        \DTM_\gm(k;\F_2) \arrow[r] & \DTM_\gm(k;\F_2)/\mathcal{P}^{\alpha} \arrow[r, "\operatorname{Re}_\alpha"] & \D^b(\F_2)
    \end{tikzcd}\]
    landing in a non-periodic category (employing a similar argument to \Cref{example:periodicityofTatesforpodd}).

    The periodicity of any one of the points $\mathcal{P}^\alpha_{\acute{e}t}$ is $\Z^2$. Indeed, propagation of periodicity (\Cref{lemma:propagationforgeneralisedperiodicity}) tells us both $\unit(1)[1]$ and $\unit(1)$ are periods: these generate the Picard group $\Z^2$, so exhaust all periods.
\end{example}
\begin{remark}\label{remark:noGallauerperiodicity}
    Note that none of the points of $\mathcal{P}^{\alpha},\mathcal{P}_{\acute{e}t} \in \Spc(\DTM_\gm(k;\F_p))$ are periodic in the sense of \Cref{def:Gallauerperiodicity}: for each such $\mathcal{P}$ we have $\per_{\DTM_\gm(k;\F_p)}^{\ideal{\unit[1]}}(\mathcal{P})=0$.
\end{remark}

\begin{cor}
    Generalised periodicity distinguishes points of $\Spc(\DTM_\gm(k;\F_p))$ except when $p=2$ and $k$ admits multiple orderings.
\end{cor}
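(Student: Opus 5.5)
We read the periodicities off Corollary~\ref{cor:explicit-tate-generators-topology}, Example~\ref{example:periodicityofTatesforpodd} and Example~\ref{example:periodicityofTatesforpeven}, using Proposition~\ref{prop:Pic(DTM)} to identify $\Pic(\DTM_\gm(k;\F_p))\cong\Z^2$ via $(a,b)\mapsto[\unit(a)[b]]$. The statement then reduces to a case-by-case comparison of the subgroups $\per(\mathcal{P})\subseteq\Z^2$. Its negative part says that if $p=2$ and $k$ admits at least two orderings, then the points are not all distinguished.

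\emph{Case $p$ odd, or $k$ not formally real.} The spectrum is the Sierpiński space $\{\ideal{0},\mathcal{P}_{\acute{e}t}\}$. The closed point $\ideal{0}$ has trivial periodicity, since the Verdier quotient by it is the identity functor and so $\Pic(q)$ is injective. By Example~\ref{example:periodicityofTatesforpodd}, and its verbatim transfer to $p=2$ with $k$ not formally real in Example~\ref{example:periodicityofTatesforpeven}, we have $\per(\mathcal{P}_{\acute{e}t})=\ideal{\unit(d)}$ with $d\geq1$, which is non-trivial. Hence the two points are distinguished.

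\emph{Case $p=2$, $k$ formally real.} The points are $\ideal{0}$, $\mathcal{P}_{\acute{e}t}$, $\mathcal{P}^\alpha$ and $\mathcal{P}^\alpha_{\acute{e}t}$ for $\alpha\in\Sper(k)$. Their periodicities are:
\begin{itemize}
\item $\per(\ideal{0})=0$;
\item $\per(\mathcal{P}_{\acute{e}t})=\ideal{\unit(1)}$, since $d=1$ and $\tau=\beta$;
\item $\per(\mathcal{P}^\alpha)=\ideal{\unit(1)[1]}$;
\item $\per(\mathcal{P}^\alpha_{\acute{e}t})=\Z^2$.
\end{itemize}
These four subgroups of $\Z^2$ are pairwise distinct, because $(1,0)$ and $(1,1)$ generate different cyclic subgroups. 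So periodicity separates the four types of points.

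Within a type, however, the periodicity is independent of $\alpha$. Thus if $k$ has exactly one ordering, each type consists of a single point and all points are distinguished. If $k$ has two distinct orderings $\alpha\neq\beta$, then $\mathcal{P}^\alpha\neq\mathcal{P}^\beta$, because $\varrho^h$ is a bijection and $\mathfrak{p}_\alpha\neq\mathfrak{p}_\beta$ by Theorem~\ref{thm:hspectrum-MilnorK}. Yet $\per(\mathcal{P}^\alpha)=\per(\mathcal{P}^\beta)$, so the period map fails to be injective.

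The main point to check carefully is that the periodicities computed in the examples are exact equalities rather than just inclusions. Two of these are not immediate:
\begin{itemize}
\item For $\mathcal{P}_{\acute{e}t}$ we need $b=0$ and $d\mid a$. This comes from étale realisation, using that $\unit_{\acute{e}t}[b]\not\simeq\unit$ for $b\neq0$, as witnessed by $\Hom(\unit,-)$.
\item For $\mathcal{P}^\alpha$ we need to exclude any period outside $\ideal{(1,1)}$. This uses the real realisation to $\D^b(\F_2)$, under which $\unit(a)[b]\mapsto\unit[b-a]$. In this non-periodic category the image is trivial only when $a=b$.
\end{itemize}
Both facts are already established in the examples, so the corollary follows by comparing the resulting subgroups.
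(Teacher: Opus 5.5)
Your proposal is correct and follows essentially the same route as the paper. Both read off the periodicities from the two examples: $0$ and $\ideal{\unit(d)}$ in the Sierpiński case, and $0$, $\ideal{\unit(1)}$, $\ideal{\unit(1)[1]}$, $\Z^2$ in the formally real case at $p=2$. Both then compare these subgroups, noting that for distinct orderings $\alpha\neq\beta$ the points $\mathcal{P}^\alpha,\mathcal{P}^\beta$ (and likewise $\mathcal{P}^\alpha_{\acute{e}t},\mathcal{P}^\beta_{\acute{e}t}$) have the same periodicity. Your extra checks supply details the paper leaves implicit: that $\mathcal{P}^\alpha\neq\mathcal{P}^\beta$ because $\varrho^h$ is a bijection, and that the computed periodicities are equalities rather than mere inclusions.
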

\begin{proof}
If $p$ is odd or $k$ is not formally real, the closed point has trivial periodicity and the nonclosed point has periodicity $\ideal{\unit(d)}$, where $d=[k(\zeta_p):k]$, by \Cref{example:periodicityofTatesforpeven}; in particular, generalised periodicity distinguishes the points of the spectrum.

    In the case $k$ is formally real and $p=2$, if $k$ admits a unique ordering then the periodicities of the points $\ideal{0},\mathcal{P}^\alpha,\mathcal{P}_{\acute{e}t},\mathcal{P}^\alpha_{\acute{e}t}$ are
    \[\{\unit\}, \ideal{\unit(1)[1]}, \ideal{\unit(1)},\Z^2\]
    respectively, by \Cref{example:periodicityofTatesforpeven}. All these periodicities are distinct, so we are able to distinguish points by their periodicity in this case. If $k$ admits at least two orderings, say $\alpha,\beta$, then the periodicities of $\mathcal{P}^{\alpha}$ and $\mathcal{P}^{\beta}$ (as well as the periodicities of $\mathcal{P}^\alpha_{\acute{e}t}$ and $\mathcal{P}^\beta_{\acute{e}t}$) agree, again by \Cref{example:periodicityofTatesforpeven}. This proves the statement.
\end{proof}

\begin{remark}
    It is interesting to note that the generalised periodicities of points
    \[\mathcal{P} \in \Spc(\DTM_\gm(k;\F_p))\]
    are related to their depths in the spectrum. Indeed, the depth of $\mathcal{P} \in \Spc(\DTM_\gm(k;\F_p))$ coincides with the rank $\operatorname{rk}_\Z(\per_\DTM(\mathcal{P}))$ of the periodicity of $\mathcal{P}$, as can be seen from \Cref{example:periodicityofTatesforpodd} and \Cref{example:periodicityofTatesforpeven}.
\end{remark}

We return to the example of motives of quadrics over real closed fields. In this case, we shall see that generalised periodicity distinguishes every point of the spectrum. 

\begin{prop}\label{prop:periodisinjectiveforrealquadrics}
Let $F$ be a real closed field, $p$ a prime, and $\mathcal{K}=\DQM_\gm(F;\F_p)$. The generalised period map
\[\begin{tikzcd}
    \per_{\mathcal{K}}:\Spc(\mathcal{K}) \arrow[r] & \mathrm{S}(\Pic(\mathcal{K}))
\end{tikzcd}\]
is injective.
\end{prop}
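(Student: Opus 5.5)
We treat $p$ odd and $p=2$ separately, since only the latter has a non-trivial spectrum from \cite{schemeil2026motivic}.

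\emph{Case $p$ odd.} Here $\rho$ is $\otimes$-nilpotent and the Rost motives of $\rho^n$ split. We expect $\DQM_\gm(F;\F_p)$ to reduce to (a summand-closure of) Tate motives, so that its spectrum is the Sierpiński space of \Cref{cor:explicit-tate-generators-topology}. \Cref{example:periodicityofTatesforpodd} then gives distinct periodicities: trivial at the closed point, and $\ideal{\unit(d)}$ at the étale point, using \Cref{lemma:periodicityalongttfunctors} along the inclusion of Tate motives. If the reduction is only up to a faithful tt-functor, the same conclusion follows by pulling back periods.

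\emph{Case $p=2$.} We use the picture in \Cref{ex: failure-continuity-quadrics}. By \Cref{lemma:propagationforgeneralisedperiodicity} the period map is order-preserving, so it suffices to find enough invertible objects whose membership in $\per_\mathcal{K}(\mathcal{P})$ separates points. We use three families of test objects.
\begin{itemize}
\item $\unit(1)$: it is a period exactly of the points containing $\cone(\tau)$.
\item $\unit(1)[1]$: it is a period exactly of the points containing $\cone(\rho)$.
\item $R_n[1]$, for $n\geq 1$: by Vishik's diamond, $R_n[1]\simeq\unit$ modulo a prime containing $M_n$, while $R_n[1]\simeq \unit(d_{n-1})[2d_{n-1}+1]$ modulo a prime containing $\widetilde{M}_n$.
\end{itemize}
For each point $\p_{m,i}$, using the explicit generators and the realisation functors $F_m$ from \cite{schemeil2026motivic} (as in the example), we will compute which $R_n[1]$, and which twists $R_n(a)[b]$, are periods. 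The expectation is the following. Going up the diamond, $\p_{m,0}$ and $\p_{m+1,0}$ have different sets of $n$ for which $M_n$ or $\widetilde M_n$ lies in the prime, so some $R_n[1]$ or $R_n(-d_{n-1})[-2d_{n-1}]$ is a period of one point but not the other. The three points $\p_{\infty,i}$ are separated from the finite points by $L$, as in \Cref{ex: failure-continuity-quadrics}. Within each column, $\p_{m,0}$, $\p_{m,1}$ and (for $\infty$) $\p_{\infty,2}$ are separated by $\unit(1)$ and $\unit(1)[1]$.

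To show that a given object is \emph{not} a period, we will push it along $F_m$, or along the analogous realisations for the $\p_{m,0}$, into a localised cellular category. There it becomes a specific Tate twist $\unit(a)[b]$, and we check non-triviality using the Picard description of \Cref{prop:Pic(DTM)} together with \Cref{lemma:periodicityalongttfunctors}.

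We expect the main obstacle to be the top-row points $\p_{m,0}$ and $\p_{m+1,0}$, which are incomparable but share a common specialisation. To separate them, we need a careful identification of $\pi_m(R_n)$ for $n=m$ and $n=m+1$, showing that the diamond collapses on opposite sides in the two quotients. Our first step will be to compute $\pi_m(M_n)$ and $\pi_m(\widetilde M_n)$ directly from the construction of $\pi_m$ in \cite[Section~3]{schemeil2026motivic}.
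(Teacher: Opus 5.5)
Your odd-$p$ argument and your use of $R_n[1]$ to separate different columns follow the paper. The paper in fact needs only a one-sided test: $M_n\in\p_{m,j}$ for all $n<m$, so $R_n[1]$ is a period of every $\p_{m,j}$ with $m>n$. On the other hand, $R_n[1]$ is not a period of $\p_{n,1}$, hence by \Cref{lemma:propagationforgeneralisedperiodicity} not of $\p_{n,0}$. So the ``opposite sides'' analysis you flag as the main obstacle is unnecessary.

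\textbf{Gap 1: the within-column step.} You claim that $\unit(1)$ and $\unit(1)[1]$ separate $\p_{m,0}$ from $\p_{m,1}$. This holds only for $m=1$ and $m=\infty$. For $2\le m<\infty$, neither prime contains $\cone(\tau)$ or $\cone(\rho)$. Indeed, by \Cref{prop:tau-localisation-etale} and \Cref{theorem:compactgenerationofDM/cone(rho)}, the opens $U(\cone(\tau))$ and $U(\cone(\rho))$ of $\Spc(\mathcal{K})$ are two-point spaces, namely $\{\p_{1,1},\p_{\infty,2}\}$ and $\{\p_{\infty,1},\p_{\infty,2}\}$. So, by your own description of these test objects, they are periods of neither point.

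This is not just a problem with your word ``exactly''. In the target of $F_2$ one has $\unit[1]\simeq\unit$, since $r_0$ is inverted and $d_0=0$. If $\unit(1)$ or $\unit(1)[1]$ were a period of $\p_{2,1}$, you would get $F_2(R_2[1])\simeq\unit(1)[3]\simeq\unit$ there, contradicting the computation in the quadrics example.

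The missing idea is the invertible object $\unit(d_{m-2})[2d_{m-2}+1]$. Both $M_{m-1}$ and $\widetilde{M}_{m-1}$ lie in $\p_{m,1}$, so Vishik's diamond for $n=m-1$ gives $\unit\simeq R_{m-1}[1]\simeq\unit(d_{m-2})[2d_{m-2}+1]$ in $\mathcal{K}/\p_{m,1}$. It is not a period of $\p_{m,0}=\ker(\pi_m)$, because the target of $\pi_m$ carries a Chow weight structure, so $\Hom(\unit,\unit(d)[2d+1])=0$ there.

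\textbf{Gap 2: certifying non-periods.} \Cref{prop:Pic(DTM)} concerns $\DTM_\gm(k;\F)$ over a field and is proved via the Chow weight complex and Bachmann's $\Pic$-injectivity. It says nothing about the localised targets of $F_m$, of $\pi_\infty$, or of $\rho$-localisation. In those categories Tate twists genuinely become trivial: $\unit[1]\simeq\unit$ after inverting $r_0$, and $\unit(1)[1]\simeq\unit$ after inverting $\rho$. The proposition only settles $\p_{1,0}$, the kernel of base change to $\DTM_\gm(\overline{F};\F_2)$.

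For the remaining points you need target-specific input, as in the paper:
\begin{itemize}
\item the Chow weight structure on $\DTM_{\tilde{\mathcal{X}}_{\rho^{n-1}}}(F(Q_n);\F_2)^{\omega}$, for $\p_{n,0}$;
\item the non-triviality of $\unit(d_{n-1})[2d_{n-1}+1]$ after inverting $r_{n-2}$, for $\p_{n,1}$;
\item the non-periodicity of $\DTM(F;\F_2)[\rho^{-1}]^\omega$, to see that $\unit[1]$ is not a period of $\p_{\infty,1}$;
\item the bigraded endomorphism ring of the unit of $\DTM_{\Upsilon^{iso}}(F;\F_2)^\omega$, to see that $\unit(1)[1]$ is not a period of $\p_{\infty,0}$.
\end{itemize}

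A minor point: in the odd case, \Cref{lemma:periodicityalongttfunctors} only yields $\ideal{\unit(d)}\subseteq\per_{\mathcal{K}}(\mathcal{P}_{\acute{e}t})$, not equality, since $\Pic(\mathcal{K})$ is larger than $\Z^2$. This inclusion is enough for injectivity.
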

\begin{proof}
When $p$ is an odd prime, $\Spc(\mathcal{K})$ is the two-point Sierpiński space: the periodicity of the closed point is $0$, whereas the generic point, being the kernel of étale realisation, contains $\unit(2)$ as a period. For $p=2$, recall from \Cref{ex: failure-continuity-quadrics} the description of $\Spc(\mathcal{K})$. Keeping the same notation of the example, we know that $M_n \in \p_{m,j}$ for every $1 \leq n < m \leq \infty$ and $j=0,1,2$. In particular, Vishik's diamond gives us that $R_n[1]$ is a period of $\p_{m,j}$. However, as shown in \Cref{ex: failure-continuity-quadrics}, $R_n[1]$ is not a period of either $\p_{n,0}$ or $\p_{n,1}$. This shows that if $n \neq m$, then $\p_{n,i}$ and $\p_{m,j}$ have distinct periodicities for $i,j=0,1,2$. For $n=1$, $\unit(1)$ is a period of $\p_{1,1}$, since this point is the kernel of étale realisation. It is clearly not a period of $\p_{1,0}$, since this point is the kernel of base change to $\overline{F}$. For $2 \leq n < \infty$, we know from \cite{schemeil2026motivic} that $M_{n-1}, \widetilde{M}_{n-1} \in \p_{n,1}$; in particular, Vishik's diamond gives us that $\unit(d_{n-2})[2d_{n-2}+1]$ is a period of $\p_{n,1}$. However, it is clearly not a period of $\p_{n,0}$: indeed, this tt-prime is the kernel of the functor
\[\begin{tikzcd}
    \pi_n:\mathcal{K} \arrow[r] & \DTM_{\tilde{\mathcal{X}}_{\rho^{n-1}}}(F(Q_n);\F_2)^{\omega}
\end{tikzcd}\]
from \cite[Section 3]{schemeil2026motivic}, and the latter category admits a Chow weight structure \cite[Lemma 4.2]{schemeil2026motivic}, in particular there are no morphisms between the unit and positive shifts of objects in its heart. Finally, for $n=\infty$, we know that $\unit[1]$ is a period of $\p_{\infty, 2}$ since $\cone(\rho), \cone(\tau) \in \p_{\infty,2}$. However, $\unit[1]$ is not a period of $\p_{\infty,1}$ since this tt-prime is the kernel of the localisation
\[\begin{tikzcd}
    \mathcal{K} \arrow[r] & (\mathcal{K}[\rho^{-1}])^\natural \simeq \DTM(F;\F_2)[\rho^{-1}]^\omega,
\end{tikzcd}\]
and we already know that the latter category is not periodic. Finally, $\unit(1)[1]$ is a period of $\p_{\infty,1}$ since this tt-prime contains $\cone(\rho)$, however $\unit(1)[1]$ is not a period of $\p_{\infty,0}$: indeed, this tt-prime is the kernel of the tt-functor
\[\begin{tikzcd}
    \pi_{\infty}:\mathcal{K} \arrow[r] & \DTM_{\Upsilon^{iso}}(F;\F_2)^\omega,
\end{tikzcd}\]
and by the computation of the bigraded endomorphism ring of the unit in the target category \cite[Lemma 4.16]{schemeil2026motivic}, we can immediately see that $\unit \not \simeq \unit(1)[1]$. Thus, we have verified that generalised periodicity distinguishes all points, proving the proposition.
\end{proof}
\printbibliography[title=References]
\end{document}